\begin{document}

\setlength{\parindent}{5mm}
\renewcommand{\leq}{\leqslant}
\renewcommand{\geq}{\geqslant}
\newcommand{\N}{\mathbb{N}}
\newcommand{\sph}{\mathbb{S}}
\newcommand{\Z}{\mathbb{Z}}
\newcommand{\R}{\mathbb{R}}
\newcommand{\C}{\mathbb{C}}
\newcommand{\F}{\mathbb{F}}
\newcommand{\g}{\mathfrak{g}}
\newcommand{\h}{\mathfrak{h}}
\newcommand{\RN}{\mathbb{R}^{2n}}
\newcommand{\ci}{c^{\infty}}
\newcommand{\derive}[2]{\frac{\partial{#1}}{\partial{#2}}}
\renewcommand{\S}{\mathbb{S}}
\renewcommand{\H}{\mathbb{H}}
\newcommand{\eps}{\varepsilon}
\newcommand{\B}{\mathbf{B}}
\newcommand{\A}{\mathcal{A}}
\newcommand{\cF}{\mathcal{F}}
\newcommand{\ind}{\mathrm{L}}
\newcommand{\Conj}{\mathrm{Conj}}
\newcommand {\sign}{\mathrm {sign}}

\newcommand{\BX}{\mathbf{B}(X)}

\newcommand{\pfrac}[2]{\frac{\partial #1}{\partial #2}}
\newcommand{\Image}{{\rm Im}}
\newcommand{\Ker}{\rm Ker}
\newcommand{\Hom}{{\rm Hom}}
\newcommand{\hHom}{{\mathcal Hom}}
\newcommand{\Coker}{{\rm Coker}}
\newcommand{\Coim}{{\rm Coim}}
\newcommand{\codim}{{\rm codim}}

\newcommand{\Ob}{{\rm Ob}}
\newcommand{\Mor}{{\rm Mor}}
\newcommand{\Fun}{{\rm Fun}}
\newcommand{\Nat}{{\rm  Nat}}
\newcommand{\Hh}{{\mathcal H}}
\newcommand{\I}{{\mathcal I}}
\newcommand{\cI}{{\mathcal I}^{\bullet}}
\newcommand{\cH}{{\mathcal H}^{\bullet}}
\newcommand{\cK}{{\mathcal K}^{\bullet}}
\newcommand{\cV}{{\mathbf V}^{\bullet}}
\newcommand{\T}{{\mathcal T}}
\newcommand{\J}{{\mathcal J}}
\newcommand{\M}{{\mathcal M}}
\renewcommand{\L}{{\mathcal L}}
\renewcommand{\O}{{\mathcal O}}
\newcommand \id {{\rm id}}
\newcommand \Cat [1] {{\rm{\bf #1}}}
\newcommand{\supp}{{\rm supp}}

\DeclarePairedDelimiter{\ceil}{\lceil}{\rceil}

\theoremstyle{plain}
\newtheorem{theo}{Theorem}%[subsection]
\newtheorem*{theo*}{Theorem}
\newtheorem{prop}[theo]{Proposition}
\newtheorem{lemma}[theo]{Lemma}
\newtheorem{definition}[theo]{Definition}
\newtheorem*{notation*}{Notation}
\newtheorem*{notations*}{Notations}
\newtheorem{corol}[theo]{Corollary}
\newtheorem{conj}[theo]{Conjecture}
 \newtheorem{question}{Question}
\newtheorem*{conj*}{Conjecture}
\newtheorem*{claim*}{Claim}
\newtheorem{claim}[theo]{Claim}

\newcounter{numexo}[section]
\newtheorem{exo}[theo]{Exercice}
\newtheorem{exos}[numexo]{Exercices}

\newenvironment{demo}[1][]{\addvspace{8mm} \emph{Proof #1.
    ~~}}{~~~$\Box$\bigskip}

\newlength{\espaceavantspecialthm}
\newlength{\espaceapresspecialthm}
\setlength{\espaceavantspecialthm}{\topsep} \setlength{\espaceapresspecialthm}{\topsep}

\newtheorem{exple}[theo]{Example}
\renewcommand{\theexple}{}
\newenvironment{example}{\begin{exple}\rm }{\hfill $\blacktriangleleft$\end{exple}}

\newenvironment{remark}[1][]{\refstepcounter{theo} %ref=> ca labelise bien!
\vskip \espaceavantspecialthm \noindent \textsc{Remark~\thetheo
#1.} }%
{\vskip \espaceapresspecialthm}

\newenvironment{remarks}[1][]{\refstepcounter{theo} %ref=> ca labelise bien!
\vskip \espaceavantspecialthm \noindent \textsc{Remarks~\thetheo
#1.} }%
{\vskip \espaceapresspecialthm}

\def\bb#1{\mathbb{#1}} \def\m#1{\mathcal{#1}}

\def\del{\partial}
\def\co{\colon\thinspace}
\def\Homeo{\mathrm{Homeo}}
\def\Hameo{\mathrm{Hameo}}
\def\Diffeo{\mathrm{Diffeo}}
\def\Symp{\mathrm{Symp}}
\def\Sympeo{\mathrm{Sympeo}}
\def\Id{\mathrm{Id}}
\newcommand{\norm}[1]{||#1||}
\def\Ham{\mathrm{Ham}}
\def\Hamtilde{\widetilde{\mathrm{UHam}}}
\def\UHam{\mathrm{UHam}}

\def\Crit{\mathrm{Crit}}
\def\Spec{\mathrm{Spec}}
\def\EssSpec{\mathrm{EssSpec}}
\def\dbot{d_{\mathrm{bot}}}
\def\Leb{\mathrm{Leb}}

\newcommand{\bigslant}[2]{{\raisebox{.2em}{$#1$}\left/\raisebox{-.2em}{$#2$}\right.}}

\def\barcodes{\bigslant{\mathrm{Barcodes}}{\sim}}

\definecolor{claude}{rgb}{0.8,0,0}\newcommand{\claude}{\color{claude}}
\definecolor{sobhan}{rgb}{0,.6,0}\newcommand{\sobhan}{\color{sobhan}}
\definecolor{fred}{rgb}{0,0,0.8}\newcommand{\fred}{\color{fred}}

%\definecolor{lev}{rgb}{0.773,0.294,0.549}\newcommand{\lev}{\color{lev}} % nice color ;-) Thanks :)
%\definecolor{vincent}{rgb}{0,0,1}\newcommand{\vincent}{\color{vincent}}

\title{Barcodes and area-preserving homeomorphisms }
\author{Fr\'ed\'eric Le Roux, Sobhan Seyfaddini, Claude Viterbo}
\date{\today}

\maketitle
\begin{abstract} 
In this paper we use the theory of barcodes as a new  tool for studying dynamics of area-preserving homeomorphisms. We will show that the barcode of a Hamiltonian diffeomorphism of a surface depends continuously on the diffeomorphism, and furthermore define barcodes for Hamiltonian homeomorphisms.

Our main dynamical application concerns the notion of {\it weak conjugacy}, an equivalence relation which arises naturally in connection to $C^0$ continuous conjugacy invariants of Hamiltonian homeomorphisms. We show that for a large class of Hamiltonian homeomorphisms with a finite number of fixed points, the number of fixed points, counted with multiplicity, is a weak conjugacy invariant.  The proof relies, in addition to the theory of barcodes, on techniques from surface dynamics such as Le Calvez's theory of transverse foliations.

In our exposition of barcodes and persistence modules, we present a proof of the Isometry Theorem which incorporates Barannikov's theory of simple Morse complexes.
 \end{abstract}

\tableofcontents

%%%%%%%%%%%%%%%%%%%%%%%%%%%%%%%%%%%%%%%%%%%%%%%%%%
%%%%%%%%%%%%%%%%%%%%%%%%%%%%%%%%%%%%%%%%%%%%%%%%%%%%%%
%%%%%%%%%%%%%%%%%%%%%%%%%%%%%%%%%%%%%%%%%%%%%%%%%%%%%%%

\section{Introduction and main results}
Our goal in writing this paper is to use a new set of tools to study dynamics of area-preserving homeomorphism. Floer homology has played an important role in studying dynamical features of Hamiltonian diffeomorphisms. However, it is not well-defined for non-smooth objects such as area-preserving homeomorphisms. As we will see in this article, \emph{barcodes} provide us with a medium through which one can define and effectively apply Floer theory for studying  dynamics of area-preserving homeomorphisms. 

  A barcode $\B= \{I_j\}_{j \in \N}$ is a countable collection of intervals (or bars) of the form $I_j = (a_j, b_j], a_j \in \mathbb{R}, b_j \in \mathbb{R}\cup \{ +\infty\}$ which satisfy certain finiteness assumptions.   Using Hamiltonian Floer theory one can associate, in a canonical manner, a  barcode  $\B(\phi)$ to every Hamiltonian diffeomorphism $\phi$ which encodes a significant amount of information about the Floer homology of $\phi$:  it completely characterizes the filtered Floer complex associated to $\phi$ up to quasi-isomorphism, and hence it subsumes all filtered Floer theoretic invariants of $\phi$. Barcodes have recently found several interesting applications in PDE, under the guise of Barannikov complex (see \cite{LNV}), and in symplectic topology; see for example the article of Polterovich and Shelukhin \cite{PolShel}.

   We will show in this article that the barcode $\B(\phi)$ depends continuously, with respect to the uniform topology, on $\phi$, and moreover, $\B(\phi)$ is well-defined even when $\phi$ is a Hamiltonian homeomorphism. Of course, when $\phi$ is a diffeomorphism, the barcode $\B(\phi)$ may be interpreted as the (filtered) Floer homology  of $\phi$.  

The barcode of a Hamiltonian homeomorphism $\phi$  is defined via  a limiting process and so the nature of the information it carries about the dynamics of $\phi$ is by no means clear.   To extract dynamical information from $\B(\phi)$, namely about the fixed points of $\phi$, we will use Le Calvez's theory of transverse foliations for dynamical systems on surfaces. 

\subsection{Rokhlin property and Hamiltonian homeomorphism}
Before presenting our results on barcodes, we will describe our main dynamical applications.

\medskip

 Let $(\Sigma, \omega)$ be a closed surface equipped with an area form $\omega$ and denote by $\overline{Ham}(\Sigma, \omega)$ the $C^0$ closure of Hamiltonian diffeomorphisms of $(\Sigma, \omega)$.  This is often referred to as the group of \emph{Hamiltonian homeomorphisms} of $(\Sigma, \omega)$. Although much is known about groups of  volume preserving homeomorphisms (see \cite{fathi}), the algebraic structure of this group is shrouded in mystery:  For example, it is not known whether it is simple or not and in the case where $\Sigma = S^2$ we do not know if it admits any homogeneous quasimorphisms; see \cite{muller-oh, leroux, EPP}.  Investigating questions of this nature, F. B\'eguin, S. Crovisier, and the first author of this paper were lead to ask if there exists $\theta \in \overline{Ham}(\Sigma, \omega)$ whose conjugacy class is dense in $\overline{Ham}(\Sigma, \omega)$?

The question of whether a given topological group has dense conjugacy classes  has been of interest in ergodic theory; see \cite{glasner-weiss01, glasner-weiss08} and the references therein.   Glasner and Weiss,  inspired by influential results of Halmos and Rokhlin, refer to groups with dense conjugacy classes as \emph{Rokhlin} groups.   Examples of Rokhlin groups include the group of measure-preserving automorphisms of a standard measure space equipped with the weak topology (see  p. 5 of \cite{glasner-weiss08}), the group of unitary operators of a separable, infinite-dimensional Hilbert space equipped with the strong topology, and  the group of homeomorphisms, which are isotopic to the identity, for  any even dimensional sphere equipped with the topology of uniform convergence; see  \cite{glasner-weiss08} for further details on these examples. 

Despite the last example of the previous paragraph, it turns out that the symplectic nature of $\overline{Ham}(\Sigma, \omega)$  forces a certain form of rigidity on its conjugacy classes: Indeed, $\overline{Ham}(\Sigma, \omega)$ is not Rokhlin.   In the case of surfaces with  genus greater than one, this is a consequence of Gambaudo-Ghys \cite{gambaudo-ghys} who proved existence of continuous homogeneous quasimorphisms on $\overline{Ham}(\Sigma, \omega)$.  The case of the surface of genus one, follows from a combination of results from Entov-Polterovich-Py \cite{EPP} and Gambaudo-Ghys \cite{gambaudo-ghys} (the latter constructs homogeneous quasimorphism on the group, while the former proves their continuity).  In the case of $S^2$, the negative answer was provided by the second  author in \cite{Sey12}.  In each case, the proof involves constructing continuous conjugacy invariants.  One can not easily associate a dynamical interpretation to these invariants, particularly in the case of $S^2$ where the invariant is the so-called spectral norm which is derived from Hamiltonian Floer theory and  whose construction is rather complicated.  

In this article, we introduce, via the theory of barcodes,  a very simple invariant, whose definition involves a \emph{weighted count of fixed points}, to separate closures of conjugacy classes.  In fact, we will see that our weighted sum of fixed points, and more generally barcodes, are invariants of a natural equivalence relation which we will refer to as \emph{weak conjugacy}:  This  is the strongest Hausdorff equivalence relation on $\overline{\Ham}(M,\omega)$  which is weaker than the conjugacy relation.

The weak conjugacy relation is  characterized by the following universal property: $f$ is weakly conjugate to $g$  if and only if $\theta (f) = \theta(g)$ for any continuous function $\theta : \overline{\Ham}(M, \omega) \to Y$ such that $\theta$ is invariant under conjugation and $Y$ is a Hausdorff topological space. See Definition \ref{def:almost_conj} in Section \ref{sec:almost_conj} for further details. 

It is evident from the above characterization of weak conjugacy that if $\overline{\Ham}(M, \omega)$ possessed a dense conjugacy class then the weak conjugacy relation would be trivial, {\it i.e.}\ any $f$ and $g$ would be weakly conjugate.  We remark  that $f,g$ are weakly  conjugate if they satisfy the following criterion:  there exist $h_1, \ldots, h_N \in \overline{\Ham}(M, \omega)$ such that $h_1= f,  h_N = g$, and $\overline{\mathrm{Conj}}(h_i) \cap \overline{\mathrm{Conj}}(h_{i+1}) \neq \emptyset$; here  $\overline{\mathrm{Conj}}$ stands for closure of conjugacy class.

Observe that any continuous conjugacy invariant is a weak conjugacy invariant as well.  As a consequence, the notion of weak conjugacy arises naturally in settings where one needs to consider continuous conjugacy invariants.  This is for example the case in the study of mapping class group actions on the circle; see the article by K. Mann and M. Wolff \cite{Mann-Wolff} and the references therein.

\subsubsection{Weak conjugacy and the Lefschetz index}
In order to formulate our results, we will need to introduce a few notions.

Let $x$ be an isolated fixed point of a homeomorphism $f$ of a manifold of dimension $k$.  Recall that the Poincar\'e-Lefschetz index of the fixed point $x$, which we will denote by $\ind(f, x)$,  is defined as follows:  Let $U$ be a chart centered at $x$ and denote by $S$ a small sphere (of codimension $1$ in $U$) which is  centered at $x$ as well, oriented as the boundary of its interior in $U$.  For $S$  sufficiently small, the formula
$$x \mapsto \frac{f(x)-x}{\Vert f(x)- x \Vert}$$ yields a well-defined map from $S$ to the unit sphere in $\R^k$ and $\ind(f,x)$ is the degree of this map. For further details on this index, we refer the interested reader to \cite{katok_hassel, hatcher}.

Let $(\Sigma, \omega)$ be a closed and connected symplectic surface.  Given $f \in \overline{\Ham}(\Sigma, \omega)$, we will denote the set of contractible fixed points of $f$ by $Fix_c(f)$; recall that a fixed point $x$ of $f$ is said to be contractible if there exists an isotopy $I_t$, where $t \in [0,1], I_0 = \Id, I_1 = f, I_t \in \overline{\Ham}(\Sigma, \omega)$, such that the loop $I_t(x)$ is contractible.  This notion is well-defined, i.e. does not depend on the choice of the isotopy, because the proof of the Arnold conjecture implies that for a loop in $\Ham (M, \omega)$ based at the identity all trajectories are contractible
(In our situation,  if $\Sigma \neq \S^2$, then $\overline{\Ham}(\Sigma, \omega)$ is simply connected; see \cite{pol_book} for a proof).  

We are now ready to state  our first result.

\begin{theo}\label{theo:lef_index_invariance}
Suppose that $f,g \in \overline{\Ham}(\Sigma, \omega)$ are smooth and have finitely many contractible fixed points.  If $f$ and $g$ are weakly conjugate, then $$\sum_{x\in Fix_c(f)} |\ind(f,x)| = \sum_{x\in Fix_c(g)} | \ind(g,x)|.$$ 
\end{theo}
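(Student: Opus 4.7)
The plan is to show that the quantity $N(f) := \sum_{x \in Fix_c(f)} |\ind(f,x)|$ can be read off from the barcode $\B(f)$ alone, for smooth $f$ with finitely many contractible fixed points. The earlier sections of the paper establish that $\B \colon \overline{\Ham}(\Sigma,\omega) \to \barcodes$ is $C^0$-continuous with respect to the bottleneck distance and is manifestly invariant under conjugation; the target $\barcodes$ is Hausdorff (being a metric space), so by the universal property of weak conjugacy recalled in the introduction, $\B$ descends to a well-defined map on weak conjugacy classes. Thus if $f$ and $g$ are weakly conjugate then $\B(f)=\B(g)$, and to conclude it suffices to identify $N(f)$ with a function of the barcode.

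The candidate invariant is, for each $\eps > 0$, the number $\Phi_\eps(\B)$ of bars of $\B$ of length strictly larger than $\eps$; this count is continuous in the bottleneck distance and hence yields a continuous conjugation-invariant function on $\overline{\Ham}(\Sigma,\omega)$. I would establish $\Phi_\eps(\B(f))=N(f)$ for all sufficiently small $\eps>0$ via two opposite inequalities. For the upper bound, one constructs a $C^1$-small perturbation $\tilde f \in \Ham(\Sigma,\omega)$ of $f$ which is nondegenerate and in which each isolated fixed point $x$ of $f$ of index $k$ has split into exactly $|k|$ nondegenerate fixed points of sign $\sign(k)$, while index-zero fixed points disappear altogether --- a standard local construction in two dimensions. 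Then $\# Fix_c(\tilde f)=N(f)$, so $\B(\tilde f)$ has exactly $N(f)$ bar endpoints, all of lengths bounded away from zero, and $C^0$-continuity of $\B$ yields $\Phi_\eps(\B(f)) \leq N(f)$ for $\eps$ small.

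The reverse inequality is the main content, and it is where Le Calvez's theory of transverse foliations is used. Given $f$ with finitely many contractible fixed points, a transverse foliation $\mathcal F$ on $\Sigma \setminus Fix_c(f)$ encodes, in a $C^0$-invariant fashion, the index $\ind(f,x)$ through the local behavior of $\mathcal F$ near each $x$. Using the foliation as a robust geometric model, one produces at least $|\ind(f,x)|$ bars in $\B(f)$ itself, of lengths bounded below independently of small perturbations of $f$, and summing gives $\Phi_\eps(\B(f)) \geq N(f)$ for $\eps$ small.

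The main obstacle is precisely this reverse inequality: one must show that the Floer-theoretic contribution $|\ind(f,x)|$ associated to each degenerate fixed point is not washed out by short bars cancelling in the bottleneck limit. Le Calvez's foliations supply the topologically stable input needed to extract these bars from $\B(f)$ directly, rather than from some smooth perturbation whose information might be lost on passing to the limit. With both inequalities, $\Phi_\eps(\B(f)) = N(f)$ for $\eps$ small; since $\Phi_\eps$ is a weak conjugacy invariant, applying this to both $f$ and $g$ gives $N(f) = N(g)$.
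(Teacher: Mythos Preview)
Your overall architecture is right: establish that the barcode is a weak conjugacy invariant, and then read off the absolute Lefschetz number from the barcode. The first step is exactly what the paper does (Theorem~\ref{theo:barcode_inv}). The problem lies entirely in the second step.

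The invariant $\Phi_\eps(\B)$ you propose, the number of bars of length $>\eps$, is the wrong quantity: it does \emph{not} equal $N(f)$. Take $f$ to be the time-$1$ map of a $C^2$-small Morse function on $T^2$ with one minimum, three saddles and two maxima. Then $N(f)=6$, but the total barcode has four infinite bars (one per homology class) and one finite bar, so only five bars. In general, the number of endpoints equals $(\#\text{infinite bars}) + 2(\#\text{finite bars})$, whereas the number of bars is $(\#\text{infinite bars}) + (\#\text{finite bars})$; these differ exactly by the number of finite bars. The paper's correct invariant is the number of \emph{endpoints} of $\B(f)$ (Theorem~\ref{theo:lef_endpoints}), not the number of bars. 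Once you use endpoints, your ``upper bound'' argument via a minimal nondegenerate perturbation is essentially Proposition~\ref{prop:lefindex=rank}: on a surface an isolated fixed point of index $k$ admits a local generating function that can be Morsified into exactly $|k|$ critical points, so its local Floer homology has rank $|k|$; combined with Proposition~\ref{prop:rank_endpoints} this gives the exact equality $\#\mathrm{Endpoints}(\B(f)) = N(f)$, with no separate lower bound needed.

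Your invocation of Le Calvez's transverse foliations here is misplaced. In the paper that theory is used only for the \emph{smoothability} results (Section~\ref{sec:Ham-homeo-smoothable}, Theorem~\ref{theo:smoothable}), which feed into Theorem~\ref{theo:lef_index_invariance_homeos} about homeomorphisms. For the smooth case (Theorem~\ref{theo:lef_index_invariance}) the argument is purely Floer-theoretic: local Floer homology identifies the contribution of each fixed point to the barcode, and a two-dimensional computation with local generating functions (Lemmas~\ref{lem:indices} and~\ref{lem:rank=lef_gf}) shows that this contribution is $|L(f,x)|$. There is no need to produce ``robust'' bars via foliations; the exact count comes directly from the algebra.
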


We will refer to $\sum_{x\in Fix_c(f)} |\ind(f,x)|$ as the \emph{absolute Lefschetz number} of $f$. A few remarks are in order.  First, observe that one can immediately conclude from the above that $\overline{\Ham}(\Sigma, \omega)$ is not Rokhlin:  As mentioned earlier, we must produce $f, g$ which are not weakly conjugate.  It is very easy to produce  $f$ and $g$ with different absolute Lefschetz numbers, and hence they cannot be weakly conjugate, by just creating a pair of fixed point of index $\pm 1$. 

Second, we should mention that if $f$ is non-degenerate, then every fixed point has index $\pm 1$ and so the absolute Lefschetz number is just the total number of fixed points of $f$.  However, the above result would not be true if we were to replace the absolute Lefschetz number by the total number of fixed points.  Indeed, it is possible to produce $f,g$ which are weakly conjugate but do not have the same number of fixed points.

Third, observe that if $f$ and $g$ are weakly conjugate, then so are $f^p$ and $g^p$ for every $p\in \Z$.  Hence, if $f,g$ have finitely many periodic points of period $p$, then the absolute Lefschetz numbers of $f^p$ and $g^p$ coincide as well.

Finally, we remark that our proof of Theorem \ref{theo:lef_index_invariance} yields the following refinement: We denote by $\Spec(f)$ the set of action values of a given Hamiltonian diffeomorphism $f$.  Recall that $\Spec(f)$ is well-defined up to a shift.  Now, suppose that $f\in \Ham(M, \omega)$ has finitely many fixed points.  For every value $a \in \Spec(f)$ define $\ind(f,a):= \sum |\ind(f,x)|,$ where the sum is taken over all $x \in Fix_c(f)$ whose action is $a$.  Define 
$ \widetilde{\Spec}(f) : = \{(a, \ind(f,a)): a \in \Spec(f)\}.$   If  $f,g$ are as in the statement of Theorem \ref{theo:lef_index_invariance}, then $\widetilde{\Spec}(f) = \widetilde{\Spec}(g)$. 
 In fact, one can even refine this latter statement to take into account Conley-Zehnder indices of fixed points with a given action.  These statements are immediate consequences of Theorem \ref{theo:barcode_inv} and Proposition \ref{prop:lefindex=rank}.

\medskip  
Our next result is a generalization of Theorem \ref{theo:lef_index_invariance} to the setting where $f,g$ are not assumed to be smooth.   As will be explained in the next section, the removal of the smoothness assumption gives rise to significant complications.  Our strategy for dealing with these complications passes through the following notion:    Consider $f \in \overline{Ham}(\Sigma, \omega)$ with finitely many fixed points. We say that $f$ is \emph{smoothable} if there exists a sequence $f_i \in Ham(\Sigma,\omega )$ which converges uniformly to $f$ and such that $Fix_{c}(f_i)=Fix_{c}(f)$.   

\begin{theo}\label{theo:lef_index_invariance_homeos}
Suppose that $f,g \in \overline{\Ham}(\Sigma, \omega)$ are smoothable and have finitely many contractible fixed points.  If $f$ and $g$ are weakly conjugate, then $$\sum_{x\in Fix_c(f)} |\ind(f,x)| = \sum_{x\in Fix_c(g)} | \ind(g,x)|.$$ 
\end{theo}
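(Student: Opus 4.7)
The plan is to reduce the smoothable case to the already-established smooth case (Theorem \ref{theo:lef_index_invariance}) by an approximation argument, the key tool being the continuity and weak-conjugacy-invariance of the barcode. Since barcodes depend $C^0$-continuously on the Hamiltonian diffeomorphism and are manifestly invariant under conjugation, they descend to weak conjugacy invariants (this is Theorem \ref{theo:barcode_inv}), so the weak conjugacy hypothesis yields $\B(f) = \B(g)$. It therefore suffices to show that, for smoothable $f$ with finitely many contractible fixed points, the quantity $L(f) := \sum_{x \in Fix_c(f)} |\ind(f,x)|$ is determined by the barcode $\B(f)$.

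For smooth $f$ this is the content of Proposition \ref{prop:lefindex=rank}, which identifies $L(f)$ with a combinatorial invariant $N(\B(f))$ of the barcode (essentially the total number of bar-endpoints, appropriately counted). To promote this equality to smoothable $f$, I would fix a sequence $f_i \in \Ham(\Sigma, \omega)$ with $f_i \to f$ uniformly and $Fix_c(f_i) = Fix_c(f)$, as supplied by the smoothability hypothesis. The Poincar\'e--Lefschetz index is locally constant under $C^0$-perturbations at an isolated fixed point: the normalized displacement map $y \mapsto (f_i(y)-y)/\Vert f_i(y)-y\Vert$ on a small sphere centered at $x$ converges uniformly to its analogue for $f$, so its degree stabilizes. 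Since $Fix_c(f_i) = Fix_c(f)$ is a fixed finite set, this gives $\ind(f_i,x) = \ind(f,x)$ for every $x$ and all large $i$, hence $L(f_i) = L(f)$ eventually. Combining with the smooth proposition yields $N(\B(f_i)) = L(f)$ for $i$ large, while barcode continuity gives convergence of $\B(f_i)$ to $\B(f)$ in the bottleneck metric. Running the same argument for a smoothing $g_j \to g$, and establishing that $N(\B(\cdot))$ passes to the limit under these specific approximations, one concludes $L(f) = N(\B(f)) = N(\B(g)) = L(g)$.

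The main obstacle sits in the last limiting step: the endpoint count $N$ is only upper-semicontinuous under bottleneck convergence in general, since bars of length zero are suppressed. Without smoothability one could only approximate $f$ by diffeomorphisms whose fixed points are close to but distinct from those of $f$; this typically produces many short bars that collapse in the limit and makes $N(\B(f_i))$ strictly larger than $N(\B(f))$. The assumption $Fix_c(f_i)=Fix_c(f)$ is engineered precisely to prevent this: the generators of the filtered Floer complex are in a fixed finite correspondence with $Fix_c(f)$, their action values converge to the actions of the corresponding fixed points of $f$, and one should conclude that no additional collapse occurs and the limiting barcode inherits the same endpoint structure. Making this stability statement precise, rather than extending Proposition \ref{prop:lefindex=rank} directly to the continuous setting, is where the bulk of the technical work is expected to lie.
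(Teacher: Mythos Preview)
Your outline matches the paper's approach exactly: the theorem is deduced from Theorem~\ref{theo:barcode_inv} (barcodes are weak conjugacy invariants) together with Theorem~\ref{theo:lef_endpoints_homeos}, which says that for smoothable $f$ the absolute Lefschetz number equals the endpoint count of $\B(f)$. You have also correctly isolated the one real difficulty, namely that the endpoint count is only upper semicontinuous in the bottleneck distance, so mere convergence $\B(f_i) \to \B(f)$ does not immediately give $N(\B(f_i)) \to N(\B(f))$. (Minor point: the reference you want for the smooth case is Theorem~\ref{theo:lef_endpoints}, which combines Propositions~\ref{prop:rank_endpoints} and~\ref{prop:lefindex=rank}; the latter alone only relates the Lefschetz index to the rank of local Floer homology.)

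Your heuristic for why collapse should not occur---that the generators are in fixed bijection with $Fix_c(f)$ and their actions converge---does not quite close the gap: $f$ is not smooth, so there are no intrinsic ``action values of $f$'' to converge to, and nothing a priori prevents two distinct action values of $f_i$ from coalescing in the limit. The paper's resolution (Lemma~\ref{lem:barcodes_stabilize}) is not a passive limiting argument but an active perturbation. One first observes that the set $E_j(H_n)$ of endpoint values of $\B_j(H_n)$ has cardinality bounded by $\#Fix_c(f)$, so after passing to a subsequence its cardinality is constant and each value converges. The key device is then Claim~\ref{cl:modify_action}: for any prescribed small shifts $a_1,\dots,a_k$ there is a $C^2$-small Hamiltonian $K$, constant near each fixed point, such that replacing $H_n$ by $K\#H_n$ leaves $Fix_c$ unchanged but moves the endpoint values by exactly $a_1,\dots,a_k$. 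Applying this, one forces $E_j(H_n)$ to become \emph{equal} to its limit set for all large $n$. Once the endpoint sets are literally stationary, convergence of barcodes in the bottleneck distance upgrades to equality $\B_j(f_n)=\B_j(f)$, and the endpoint count passes through trivially.
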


We conjecture that every $f \in \overline{Ham}(\Sigma, \omega)$ with finitely many fixed points is smoothable, and we verify  this  for a large class of homeomorphisms.    It follows from our results that a non-smoothable $f$ (which conjecturally does not exist) would be dynamically very complicated near its (contractible) fixed point set.   In Section \ref{sec:Ham-homeo-smoothable}, we establish a precise criterion for smoothability which, in particular, implies the following statement.

\begin{theo}\label{theo:smoothable-easy}
Suppose that $f \in \overline{\Ham}(\Sigma, \omega)$ has a finite number of contractible fixed points. Assume that there does not exist $x \in Fix_c(f)$  which is accumulated by periodic orbits of every period. Then, $f$ is smoothable.
\end{theo}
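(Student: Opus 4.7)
I would start from any smooth $C^0$-approximation $g_i \to f$ with $g_i \in \Ham(\Sigma,\omega)$ and modify it by a $C^0$-small Hamiltonian correction so that the contractible fixed point set of the modified map coincides with $Fix_c(f)$.

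Write $Fix_c(f) = \{x_1, \ldots, x_N\}$. The hypothesis supplies, for each $k$, an integer $p_k \geq 1$ and a closed disk $\bar{D}_k$ centered at $x_k$ (the $\bar{D}_k$ pairwise disjoint) such that $x_k$ is the unique fixed point of $f^{p_k}$ in $\bar{D}_k$; this is the strongest local conclusion extractable from the assumption, and is considerably stronger than $x_k$ being merely isolated as a fixed point of $f$. The core of the proof is then a local surgery lemma: for each $i$, replace $g_i$ by a smooth Hamiltonian $f_i$ which is $C^0$-close to $g_i$, agrees with $g_i$ outside a small neighborhood of $\bigcup_k \bar{D}_k$, and has $x_k$ as its unique fixed point in $\bar{D}_k$ for each $k$. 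The global bookkeeping is then routine: outside $\bigcup_k D_k$ the map $f$ has no contractible fixed point, so by uniform convergence and compactness any fixed point of $f_i$ in that complement must sit close to a non-contractible fixed point of $f$, and a standard comparison of Hamiltonian isotopies from $\id$ to $f_i$ with one from $\id$ to $f$ then shows such a nearby fixed point of $f_i$ is itself non-contractible. Therefore $Fix_c(f_i) = Fix_c(f)$, completing the proof of smoothability.

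The main obstacle is the local surgery. Given a smooth Hamiltonian $\psi$ which is $C^0$-close to $f$ on a neighborhood of $x_k$, so that $\psi^{p_k}$ has its fixed points in $\bar{D}_k$ clustered near $x_k$ (the only fixed point of $f^{p_k}$ there), one must construct a $C^0$-small Hamiltonian perturbation of $\psi$ whose sole fixed point in $\bar{D}_k$ is $x_k$. The isolation of $x_k$ as a fixed point of $f^{p_k}$, rather than merely of $f$, is what makes this collapse possible: it forbids rotation-type dynamics in which arbitrarily small invariant circles accumulate periodic orbits of period $p_k$, a situation that would obstruct any such merging of fixed points. I would build the surgery using Le Calvez's theory of transverse foliations --- the central dynamical tool of the paper --- which yields a local dynamical normal form near $x_k$ in which the collapse of nearby fixed points into $x_k$ becomes concrete. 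The main technical difficulty is to realize this correction as an area-preserving Hamiltonian flow of controlled $C^0$-size, rather than as a purely topological isotopy, so that one remains in the symplectic category; once the surgery lemma is in hand the rest is straightforward patching.
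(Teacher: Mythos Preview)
Your proposal correctly recognises a local-to-global structure and correctly names Le Calvez's transverse foliations as the key tool, but it hand-waves precisely the part that constitutes the entire content of the theorem: the local surgery near each fixed point.

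The paper's route is: (i) define ``smoothable at $p$'' (replace $f$ near $p$ by an area-preserving map that is smooth near $p$ and has the same local fixed-point set); (ii) show, via routine patching, that smoothable-at-every-contractible-fixed-point implies globally smoothable (Proposition~\ref{prop.local-global-smoothing}); (iii) prove that $f$ is smoothable at any isolated fixed point whose \emph{local rotation set} is a proper subinterval of $[0,1]$ (Theorem~\ref{theo:smoothable}). The hypothesis ``not accumulated by periodic orbits of every period'' is used only to rule out the \emph{maximally degenerate} case, i.e.\ to ensure the local rotation set is not all of $[0,1]$; after that the periodic-orbit condition plays no further role, and it is the rotation-set constraint that drives the surgery. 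Step~(iii) is the real theorem: it occupies all of Section~\ref{sec:Ham-homeo-smoothable}, splits by Lefschetz index, and requires specific constructions in each case (canonical-position disks and model replacement when the index is nonpositive; the iterated-leaf lemma and parabolic pushing when the index is one). None of this is a ``normal form'' one can simply invoke.

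Your plan inverts the order (start from a global smooth approximation $g_i$ and locally correct its fixed-point set) and extracts from the hypothesis that ``$x_k$ is the unique fixed point of $f^{p_k}$ in $\bar D_k$''. That reading is imprecise: the hypothesis gives a missing \emph{minimal} period, not isolation as a fixed point of an iterate, and this is why the paper routes through the local rotation set rather than using the periodic-orbit statement directly. More importantly, even granting your reading, you then face exactly the local problem the paper faces --- only now for the smooth approximation $\psi$, whose local dynamics near $x_k$ you do not control via the hypothesis on $f$. Your sentence ``build the surgery using Le Calvez's theory \ldots\ which yields a local dynamical normal form near $x_k$ in which the collapse of nearby fixed points into $x_k$ becomes concrete'' is a pointer to Section~\ref{sec:Ham-homeo-smoothable}, not a proof. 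Until you explain how the rotation-set constraint (or your periodic-orbit substitute) actually enables an area-preserving, $C^0$-small collapse --- and why the construction differs in the index~$\leq 0$ and index~$1$ cases --- the proposal has not engaged with the theorem.
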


The above is an immediate consequence of Theorem \ref{theo:smoothable}.

\subsubsection{A homeomorphism which is not weakly conjugate to any diffeomorphism}

It is well-known that  homeomorphisms could be dynamically  more complicated than diffeomorphisms.  The result below on the weak conjugacy relation tells us that barcodes are capable of detecting the wilder dynamics of  homeomorphisms.  
 \begin{theo}\label{theo:almost_conj2}
   There exists $f \in \overline{\Ham}(\Sigma, \omega)$ which is not weakly conjugate to any Hamiltonian diffeomorphism.  In particular, the closure of the conjugacy class of $f$ contains no Hamiltonian diffeomorphisms.
 \end{theo}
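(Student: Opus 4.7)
\begin{demo}
The strategy is to exploit the barcode $\B$---which, by the main results of this paper, is a $C^0$-continuous, conjugation-invariant function with Hausdorff target, and hence is a weak-conjugacy invariant. It suffices to construct $f\in\overline{\Ham}(\Sigma,\omega)$ such that $\B(f)\neq\B(\phi)$ for every $\phi\in\Ham(\Sigma,\omega)$; the ``in particular'' clause then follows since any $\phi\in\overline{\Conj}(f)$ is a $C^0$-limit of conjugates of $f$ and therefore has the same barcode as $f$ by continuity, so $\phi$ would in fact be weakly conjugate to $f$.

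The plan has two main ingredients. First, I would establish the structural constraint that for every $\phi\in\Ham(\Sigma,\omega)$ and every $\varepsilon>0$, $\B(\phi)$ contains only finitely many bars of length $\geq\varepsilon$. The idea is to approximate $\phi$ by a $C^\infty$-small non-degenerate perturbation $\phi_\eta$, whose Floer complex is finitely generated and whose barcode is therefore finite; by stability of the bottleneck distance under Hofer-small perturbations, each bar of $\B(\phi)$ of length $\geq\varepsilon$ is matched with a bar of $\B(\phi_\eta)$ of length $\geq\varepsilon/2$, and only finitely many of the latter exist.

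Second, I would construct $f$ as the uniform limit of Hamiltonian diffeomorphisms $\phi_n=\psi_1\circ\cdots\circ\psi_n$, where each $\psi_k$ is the time-$1$ map of a Hamiltonian $H_k$ supported in a small disc $D_k$, with the $D_k$ pairwise disjoint and accumulating at some point $p\in\Sigma$. Each $H_k$ is designed so that $\psi_k$ creates exactly one pair of contractible fixed points with action difference $1$, contributing a bar of length $1$ to $\B(\psi_k)$. A Hamiltonian supported in an arbitrarily small disc can still have unit action, provided it oscillates; this is the mechanism that lets us make $\phi_n\to f$ uniformly while each $\phi_n$ accumulates at least $n$ bars of length $1$ in its barcode. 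By $C^0$-continuity, $\B(\phi_n)\to\B(f)$ in the bottleneck distance, and lower semicontinuity of the count of long bars under bottleneck convergence forces $\B(f)$ to contain infinitely many bars of length $\geq 1/2$. This contradicts the structural constraint for elements of $\Ham(\Sigma,\omega)$, so no $\phi\in\Ham(\Sigma,\omega)$ is weakly conjugate to $f$.

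I expect the main obstacle to be the structural constraint in the first ingredient: ruling out that a degenerate smooth Hamiltonian could produce infinitely many long bars through its non-degenerate perturbations. The key point is that long bars are robust under $C^\infty$-small perturbations, whereas the excess bars produced when a degenerate fixed point splits have lengths tending to zero with the perturbation. A secondary technical point is to engineer the bumps $H_k$ so that the composition really retains one bar of length $1$ per bump---a cancellation argument ruled out by the disjointness of supports, which lets one compute $\B(\phi_n)$ as a controlled perturbation of $\B(\mathrm{id})$ by $n$ essentially decoupled local contributions.
\end{demo}
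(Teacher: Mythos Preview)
Your approach has a fatal gap: the invariant you propose cannot distinguish homeomorphisms from diffeomorphisms, because \emph{every} barcode in the target space already satisfies your ``structural constraint.'' By construction (see Section~\ref{sec:barcodes_for_hamiltonians} and Remark~\ref{rem:cont_total_barcode}), the map $\B$ lands in $\widehat{\mathcal B}$, the quotient by shift of the completion $\overline{\mathcal B}$; and by Proposition~\ref{prop:completion_barcodes}, every element of $\overline{\mathcal B}$ has only finitely many bars of length $>\varepsilon$ for each $\varepsilon>0$. So $\B(f)$ automatically has finitely many bars of length $\geq 1/2$, for \emph{any} $f\in\overline{\Ham}(\Sigma,\omega)$. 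Your lower-semicontinuity argument is correct as stated, which means the error must lie upstream: the sequence $\B(\phi_n)$ cannot have $n$ bars of length $\geq 1$.

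Concretely, the claim that a Hamiltonian $\psi_k$ supported in a disc $D_k$ of area $a_k\to 0$ contributes a bar of length $1$ is false. Combine Proposition~\ref{prop:displacement} with Hofer continuity~\eqref{eq:hofer_cont}: if $K$ displaces $D_k$ with $\|K\|\leq a_k+\epsilon$, then $\B(\varphi^1_K\psi_k)=\B(\varphi^1_K)$, whence $\dbot(\B(\psi_k),\B(\Id))\leq 2(a_k+\epsilon)$. Since $\B(\Id)$ has only infinite bars, every finite bar of $\B(\psi_k)$ has length $\leq 4a_k$. The disjoint-support trick does not rescue this for the composition: the same estimate applied to the union $\cup_{k>N}D_k$ (which is displaceable with small energy once the $D_k$ accumulate at a point) shows that $\phi_n$ and $\phi_N$ have barcodes within a uniformly small bottleneck distance, so no new long bars appear past stage $N$.

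The paper instead uses a feature that \emph{can} escape the constraints of $\overline{\mathcal B}$: unboundedness of the set of bar endpoints. It builds $f$ from a single Hamiltonian $G$ with a non-smooth singularity ($G\sim 1/r^2$ near a point), whose fast rotation near the singularity creates infinitely many periodic orbits with actions $c_k\to\infty$. These give infinitely many (increasingly short) bars with endpoints marching off to infinity---perfectly compatible with $\B(f)\in\overline{\mathcal B}$---while any smooth $\phi$ has compact action spectrum and hence bounded endpoints.
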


\subsection{Barcodes as invariants of Hamiltonian homeomorphisms}
We will now explain how barcodes enter our story by presenting a brief outline of the proofs of the results mentioned in the previous section.    Here, in order to avoid the complications which arise in the case of $\S^2$, we will focus on surfaces of positive genus.  

As we will see in Section \ref{sec:floer}, using Hamiltonian Floer theory, one can associate a barcode $\B(H)$  to every Hamiltonian $H \in C^{\infty}(\S^1 \times \Sigma).$\footnote{In fact, Hamiltonian Floer theory may be used to construct barcodes on symplectic manifolds far more general than surfaces; see \cite{UZ}.}  It turns out that if $H, G$ are two Hamiltonians the time-$1$ maps of whose flows coincide, then there exists a constant $c$ such that $\B(G) = \B(H) + c$  where $\B(H)+c $ is the barcode obtained from $\B(H)$ by shifting each of the bars in $\B(H)$ by $c$.  This implies that we have a well-defined map $\B: \Ham(\Sigma, \omega)  \rightarrow \widehat{\mathcal B},$ where $\widehat{\mathcal B}$ denotes  the space of barcodes modulo the equivalence relation $\B_1 \sim \B_2$ if $\B_2 = \B_1 + c$ for some constant $c$.  

Now, $\widehat{\mathcal B}$ may be equipped with a natural distance $\dbot$ which is called the bottleneck distance.    Consider the mapping of metric spaces $$\B: (\Ham(\Sigma, \omega), d_{C^0}) \rightarrow (\widehat{\mathcal B}, \dbot), $$ where $d_{C^0}$ denotes the $C^0$ distance.    We prove in Theorem \ref{theo:cont_barcodes_genus}, and Remark \ref{rem:cont_total_barcode}, that the above mapping is continuous and, moreover, extends continuously to $ \overline{\Ham}(\Sigma, \omega)$.  This is what allows us to associate barcodes to Hamiltonian homeomorphisms. This is generalized to aspherical manifolds in \cite{BHS18}. 

It follows from the aforementioned continuity of barcodes, and standard properties of Hamiltonian Floer theory, that if $f,g \in \overline{\Ham}(\Sigma, \omega)$ are weakly conjugate,  then $\B(f) = \B(g)$; see Theorem \ref{theo:barcode_inv}.  Clearly, to prove Theorems \ref{theo:lef_index_invariance} \& \ref{theo:lef_index_invariance_homeos} it is sufficient to show that the absolute Lefschetz number of $f \in \overline{\Ham}(\Sigma, \omega)$ is an invariant of its barcode $\B(f)$.  This is achieved in Theorems \ref{theo:lef_endpoints} \& \ref{theo:lef_endpoints_homeos}: When $f$ is smooth, we use local Floer homology to show that the absolute Lefschetz number of $f$ is simply the total number of endpoints (counted with multiplicity) of the bars of $\B(f)$.  We show that the same conclusion continues to hold when $f$ is not smooth but is smoothable; we do so by proving that for $f$ smoothable, one can find smooth $f_n$'s converging to $f$ such that $\B(f_n) = \B(f)$.  This will then allow us to prove Theorem \ref{theo:lef_index_invariance_homeos} as a consequence of Theorem \ref{theo:lef_index_invariance}.  Without the assumption of smoothability, we have little control over  $\B(f_n)$ and so cannot say much about $\B(f)$.  

\medskip

We end this part of the introduction by giving a sketch of the proof of Theorem \ref{theo:almost_conj2}.  As we will see, if $h$ is a Hamiltonian diffeomorphism, then the endpoints of the bars in  $\B(h)$  correspond to actions of fixed points of $h$.  Now, the set of actions of fixed points of a Hamiltonian diffeomorphism (on a closed surface other than $\S^2$)  is always compact.  To prove the theorem, we simply produce  a Hamiltonian homeomorphism $f$  with the property that the set of endpoints of its barcode $\B(f)$ is not bounded.

\subsection{Smoothability, transverse foliations and local dynamics}
We will end the introduction of our paper with a few words on the proof of Theorem \ref{theo:smoothable-easy}.  This proof relies on techniques  from surface dynamics.  Namely, Le Calvez's theory of transverse foliations for dynamical systems of surfaces as described in~\cite{lecalvez05}, and the notion of local rotation set for an isolated fixed point which was introduced by the third author in \cite{leroux13}.

The local rotation set is a topological conjugacy invariant associated to a germ of orientation preserving surface homeomorphism near a fixed point. It is a closed  interval of $[-\infty, +\infty]$, defined modulo translation by integers, which captures the amount of asymptotic rotation of orbits around the fixed point. Furthermore, when the fixed point $p$ is isolated and the homeomorphism $f$ is area preserving, the local rotation set is a subset of $[0,1]$. The precise content of Theorem~\ref{theo:smoothable}, which implies Theorem \ref{theo:smoothable-easy}, is the following:  if the local rotation set is a \emph{proper} subset of $[0,1]$, then there is an area-preserving homeomorphism $g$ that is smooth near $p$, coincides with $f$ outside some small neighborhood of $p$, and has the same fixed point set as $f$.

The proof of the theorem is divided into two cases. It is known that the Lefschetz index $L(f,p)$ of an isolated fixed point for an area preserving homeomorphism is less or equal to $1$. 
When $L(f,p) <1$, we show that, after a first small perturbation, there exists a small disk $D$ containing $p$ which is in \emph{canonical position} for $f$, which mainly means that $f(D) \cap D$ is connected. This first step makes a fundamental use of Le Calvez's transverse foliations. The canonical position is then used to approximate $f$ by surgery, replacing the restriction of $f$ to $D$ by a smooth model. This approach is inspired by the technique designed by Schmitt and used by Slaminka to remove index $0$ fixed points (see~\cite{schmitt79,slaminka93}, even though some arguments in these papers are rather hard to follow).
When $L(f,p) =1$ we use a different strategy. In this case Le Calvez's transverse foliation provides a coordinates system for which the ``$\theta$'' polar coordinate is essentially increasing along every trajectory of some isotopy from the identity to $f$. Assuming the local rotation set is included in $[0,1)$, we refine Le Calvez's result, finding a coordinates system in which, in addition, at least one ray $\theta=\text{constant}$ is mapped by $f$ to another ray. This is the content of the ``iterated leaf lemma'' (Lemma~\ref{lem:iterated-leaf} below), which is of independent interest. This property allows us to make a series of perturbations by pushing points in the direction positively transverse to the rays, adding no new fixed points and ending up with a map that admits a periodic ray of period 2. This  map is finally (and easily) smoothed into a map that is a periodic 2 rotation near $p$.

\subsection*{Organization of the paper}  
In Section \ref{sec:per-barann-barcodes}, we  present a brief review of persistence modules, Barannikov complexes, barcodes and the metrics which are naturally associated to them.  In Section \ref{sec:floer}, after presenting a brief review of Hamiltonian Floer theory, we explain how one can associate barcodes to Hamiltonian diffeomorphisms via Hamiltonian Floer homology.  In Section \ref{sec:barcodes_surfaces} we prove our main results on $C^0$ continuity of barcodes.  Section \ref{sec:proof_invariance_Lef} contains the proofs of Theorems \ref{theo:lef_index_invariance} and \ref{theo:lef_index_invariance_homeos}.  Theorem \ref{theo:almost_conj2} is proven in Section \ref{sec:almost_conj}.

In Section \ref{sec:Ham-homeo-smoothable}, we state and prove a more general version of Theorem \ref{theo:smoothable-easy}; see Theorem \ref{theo:smoothable}.  This section contains a brief review of the necessary background from dynamical systems such as LeCalvez's theory of transverse foliations and the notion of local rotation set for isolated fixed points of area-preserving homeomorphisms.

\subsection*{Acknowledgments}
Subsequent to the announcement of the results of this article, Theorem \ref{theo:cont_barcodes_genus} on  continuity of barcodes was generalized to higher dimensional aspherical symplectic manifolds 
by Buhovsky, Humili\`ere and the second author (see~\cite{BHS18}). Their methods are very different than ours.   Thus, Theorem 1  generalizes to the aspherical situation, but with $\vert L(f,x)\vert$ replaced by $r(f,x)$, the rank of local Floer homology of $x$. 

Continuity of barcodes could also be proven by combining the results of the articles \cite{shel} and \cite{Sey12}.  This fact was brought to the attention of the second author by Egor Shelukhin in the course of a private communication.   This is the strategy used in \cite{BHS18}.

   We would like to thank Lev Buhovsky, Sylvain Crovisier, Viktor Ginzburg, Vincent Humili\`ere,  Patrice Le Calvez, and Maxime Wolff for helpful conversations.  
 
This paper was partially written during the second author's stay at the Institute for Advanced Study who greatly benefited from the lively research atmosphere of the IAS and would like to thank the members of the School of Mathematics for their warm hospitality.

\section{Preliminaries on Persistence modules, Barannikov complexes and barcodes}\label{sec:per-barann-barcodes}
   In this section, we introduce persistence modules,  barcodes and Barannikov modules and the natural metrics associated to each of these objects.  The main goal here is to prove that the above objects define equivalent categories and that the natural correspondences between them are isometries; see Proposition \ref{prop:isometry}.  
   
  Persistence modules and barcodes have been studied extensively in the topological data analysis community; see \cite{Cohen-Steiner2007, chazal2009} and the references therein.  The isometric correspondence between barcodes and persistence modules is well-known and is referred to   as the Isometry Theorem.  What seems to be less well-known  is the correspondence between Barcodes and persistence homology, on the one hand, and  Barannikov's simple Morse complex  \cite{Barann}, on the other hand. This aspect of Proposition \ref{prop:isometry} is of rather folkloric nature and some features of it have already been exploited in the symplectic community; see for example \cite{PushChek, LNV, laudenbach, UZ}.
  
  We should add that our presentation of Barannikov's work is more closely aligned with the presentation in \cite{LNV} rather than Barannikov's original article \cite{Barann}.
  
  \medskip
   
  For the rest of this section, we consider vector spaces over a fixed field $\mathbb F$. It is important to keep the same field all along as some Morse/Floer homological constructions of the following section (see Section \ref{sec:floer}) depend on the choice of $\mathbb F$.
 
\subsection{Persistence modules}\label{sec:persistence}
We begin by defining persistence modules, their morphisms and the interleaving distance.

\begin{definition} \label{def:per-mod} A {\bf persistence module} $\mathbf V$  is a family $(V_s)_{s\in \mathbb R }$ of vector spaces equipped with morphisms $i_{s,t}:V_s \longrightarrow V_t$, for $s\leq t$, satisfying: 
\begin{enumerate}
\item For all $s\in \R$ we have $i_{s,s}=\Id$ and for every $s\leq t\leq u$ we have  $i_{t,u}\circ i_{s,t}=i_{s,u}$, 
\item There exists a finite subset $ F \subset {\mathbb R} $, often referred to as the spectrum of   $\mathbf V$, such that  $i_{s,t}$ is an isomorphism whenever $s,t$ belong to the same connected component of $ {\mathbb R} \setminus F$,
\item For all $t\in\R$, $\varinjlim_{s<t}V_s=V_t$ ; equivalently, for fixed $t$, $i_{s,t}$ is an isomorphism for $s<t$ sufficiently close to $t$.
\end{enumerate}
\end{definition}

  Here are two examples of persistence modules:  
First, consider an interval $I$ of the form $(a, b]$  and define $Q_s(I) = \F$, if $s\in I$, and $Q_s(I) = \{0\}$, if $s\notin I$.  Then, $Q_s(I)$ is a persistence module, with $i_{s,t}$ equal to $\Id$ if $s,t \in I$ and $0$ otherwise.
Second, let $f: M\rightarrow \R$ be a continuous function on a closed manifold $M$.  The family of vector spaces $V_s := H(M^s)$, the singular homology of $M^s := \{x\in M : f(x) < s\}$, is a persistence module, where the $i_{s,t}$'s are induced by inclusion of $M^{s}$ into $M^t$.  
\medskip

We denote by $\mathcal P$ the category of persistence modules. An element in $Mor( \mathbf V, \mathbf W)$  is given by a family $u_s: V_s \longrightarrow W_s$ commuting with $i_{s,t}$  and $j_{s,t}$.
$$
\xymatrix {V_{s}\ar[r]^{u_s}\ar[d]^{i_{s,t}}&W_{s} \ar[d]^{j_{s,t}}\\ V_t \ar[r]^{u_t} & W_t }
$$

  If $\mathbf V$ is a persistence module, we denote by $\tau_a\mathbf V $ the persistence module defined by $(\tau_aV)_s=V_{a+s}$. For $a>0$, we have a natural morphism $i_a: \mathbf V \longrightarrow \tau_a\mathbf V$ induced by $i_{s,s+a}$. 
  
  \medskip 
  
  The set of persistence modules can be equipped with the so-called interleaving pseudo-distance.

\begin{definition} 
Let ${\mathbf V}=(V_s)_{s\in {\mathbb R} }$ and ${\mathbf W}=(W_s)_{s\in {\mathbb R} }$ be two persistence modules. The pseudo-distance $d_{int}(\mathbf V, \mathbf W)$,  called the {\bf interleaving} distance, is defined as the infimum of the set of $ \varepsilon $ such that there are morphisms $\varphi_s: V_s \longrightarrow W_{s+ \varepsilon }$ and $\psi_s: W_s \longrightarrow V_{s+ \varepsilon }$  compatible with the $i_{s,t}, j_{s,t}$ 

$$\xymatrix{V_{s- \varepsilon }\ar[r]^{\varphi_{s- \varepsilon }}\ar[d]^{i_{s- \varepsilon ,t- \varepsilon }}& W_{s }\ar[d]^{j_{s ,t }} \ar[r]^{\psi_{s }} & V_{s+ \varepsilon }\ar[d]^{i_{s+ \varepsilon ,t+ \varepsilon} }\ar[r]^{\varphi_{s+ \varepsilon }}& W_{s+ 2 \varepsilon }\ar[d]^{j_{s+2 \varepsilon ,t+2 \varepsilon }}\\ V_{t- \varepsilon } \ar[r]^{\varphi_{t- \varepsilon }}&W_{t } \ar[r]^{\psi_t}& V_{t+ \varepsilon } \ar[r]^-{\varphi_{t+ \varepsilon }} & W_{t+2 \varepsilon } & \\ 
}$$
such that $\psi_s\circ \varphi_{s- \varepsilon } =i_{s- \varepsilon , s+ \varepsilon }$ and $\varphi_{s+ \varepsilon }\circ \psi_s= j_{s,s+2\varepsilon }$.   In other words, $\varphi: \mathbf V \longrightarrow \tau_ \varepsilon \mathbf W$ and $\psi: \mathbf W \longrightarrow \tau_ \varepsilon \mathbf V$ are such that $\varphi\circ \tau_ \varepsilon \psi = j_{2 \varepsilon }, \psi\circ  \tau_ \varepsilon \varphi=i_{2 \varepsilon }$. 
\end{definition}Note that the pseudo-distance takes value in $[0,+\infty]$.
\begin{prop} [See \cite{chazal2009}]
The interleaving distance defines a pseudometric on the set of persistence modules. Furthermore, two persistence modules at distance zero are isomorphic.  
\end{prop}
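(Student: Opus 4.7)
The plan is to verify the three pseudometric axioms and then prove that vanishing interleaving distance implies isomorphism, using the finite-spectrum hypothesis in an essential way.

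Reflexivity $d_{int}(\mathbf V, \mathbf V) = 0$ is obtained by taking $\varepsilon = 0$ and $\varphi = \psi = \Id$, and symmetry is built into the definition (swap the roles of $\mathbf V$ and $\mathbf W$ together with the roles of $\varphi$ and $\psi$). For the triangle inequality, suppose $(\varphi, \psi)$ is an $\varepsilon_1$-interleaving between $\mathbf V$ and $\mathbf U$ and $(\varphi', \psi')$ is an $\varepsilon_2$-interleaving between $\mathbf U$ and $\mathbf W$. Using the shift-functor notation $\varphi : \mathbf V \to \tau_{\varepsilon_1}\mathbf U$ etc., I set
$$\Phi := \tau_{\varepsilon_1}\varphi' \circ \varphi : \mathbf V \to \tau_{\varepsilon_1+\varepsilon_2}\mathbf W, \qquad \Psi := \tau_{\varepsilon_2}\psi \circ \psi' : \mathbf W \to \tau_{\varepsilon_1+\varepsilon_2}\mathbf V.$$
Inserting the two interleaving identities and using that $\psi, \psi'$ are morphisms of persistence modules (so they commute with shifts), a direct computation yields $\Psi \circ \tau_{\varepsilon_1+\varepsilon_2}\Phi = i_{2(\varepsilon_1+\varepsilon_2)}$ and $\Phi \circ \tau_{\varepsilon_1+\varepsilon_2}\Psi = j_{2(\varepsilon_1+\varepsilon_2)}$. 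Hence $\mathbf V$ and $\mathbf W$ are $(\varepsilon_1+\varepsilon_2)$-interleaved, and taking infima proves the triangle inequality.

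For the separation statement assume $d_{int}(\mathbf V, \mathbf W) = 0$. Let $F := F_V \cup F_W$ be the union of the two finite spectra, and let $\eta > 0$ be strictly smaller than the minimum gap between consecutive points of $F$. For every $\varepsilon \in (0, \eta/4)$ we may pick an $\varepsilon$-interleaving $(\varphi^\varepsilon, \psi^\varepsilon)$. Fix a component $C$ of $\mathbb R \setminus F$ and $s \in C$ with $[s-\varepsilon, s+2\varepsilon] \subset C$. Then $i_{s-\varepsilon, s+\varepsilon}$ and $j_{s, s+2\varepsilon}$ are isomorphisms by axiom (2), so the identities $\psi^\varepsilon_s \circ \varphi^\varepsilon_{s-\varepsilon} = i_{s-\varepsilon, s+\varepsilon}$ and $\varphi^\varepsilon_{s+\varepsilon} \circ \psi^\varepsilon_s = j_{s, s+2\varepsilon}$ force $\psi^\varepsilon_s : W_s \to V_{s+\varepsilon}$ to be both surjective and injective, hence an isomorphism. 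Composing with $i_{s, s+\varepsilon}^{-1}$ yields a candidate isomorphism $u_s^\varepsilon : W_s \to V_s$.

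The main obstacle — and the subtle step — is assembling these pointwise isomorphisms into a bona fide morphism of persistence modules. My plan is, for each component $C$ of $\mathbb R \setminus F$, to fix one base-point $s_C \in C$ and a small $\varepsilon_C$ and to define $u_{s_C}$ by the construction above; then extend $u$ throughout $C$ by transporting via the isomorphisms $i_{s_C, s}$ and $j_{s_C, s}$, which commute with $u_{s_C}$ by naturality of the interleaving maps. Compatibility across the finitely many points of $F$ is handled by condition (3): at each $t\in F$ the values $V_t, W_t$ are forced by the direct limits of $V_s, W_s$ for $s<t$ in the neighbouring component, so $u$ extends uniquely and compatibly. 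Independence of the choice $\varepsilon_C$ follows because any two candidates differ by a self-map of $V_{s_C}$ which by the same squeezing argument is an isomorphism, and the full inverse $u^{-1}$ is obtained by the symmetric construction starting from $\varphi^\varepsilon$. This produces the desired isomorphism $\mathbf W \cong \mathbf V$.
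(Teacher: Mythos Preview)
Your handling of the pseudometric axioms is fine and in fact more explicit than the paper, which leaves that part to the reader. The overall strategy for the second statement---produce pointwise isomorphisms away from the spectrum via the interleaving maps, then extend across spectral points using left-continuity (condition (3))---is exactly the paper's approach.

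There is, however, a genuine gap. You allow a \emph{different} $\varepsilon_C$ (and hence a different interleaving $(\varphi^{\varepsilon_C},\psi^{\varepsilon_C})$) on each component $C$, and then assert that ``independence of the choice $\varepsilon_C$ follows because any two candidates differ by a self-map of $V_{s_C}$ which by the same squeezing argument is an isomorphism.'' That sentence does not prove what you need: knowing that two candidate maps $W_{s_C}\to V_{s_C}$ are both isomorphisms does not make them equal. And equality matters, because compatibility across a spectral point $t$ requires that the map built on the left component and the map built on the right component intertwine $i_{s_-,s_+}$ and $j_{s_-,s_+}$. With independently chosen interleavings there is no reason for this. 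A concrete counterexample: take $\mathbf V=\mathbf W$ with $V_s=\mathbb F$ for $s\leq 0$ and $V_s=\mathbb F^2$ for $s>0$, structure map the inclusion into the first factor. For any $\varepsilon$ one may take $\psi^\varepsilon$ to act by a scalar $a$ on the left component and by an upper-triangular matrix with top-left entry $a$ on the right; different $\varepsilon$'s may carry different scalars $a$, and then the two pieces do not glue to a morphism of persistence modules.

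The fix is simple and is implicit in the paper's sketch: fix a \emph{single} $\varepsilon<\eta/4$ and use the one interleaving $(\varphi^\varepsilon,\psi^\varepsilon)$ throughout. Then $\psi^\varepsilon$ is already a morphism $\mathbf W\to\tau_\varepsilon\mathbf V$, so the transports you perform within and across components are automatically compatible by naturality. Once you have a single morphism $u:\mathbf W\to\mathbf V$ which is bijective at every $s$, its pointwise inverse is automatically a morphism, so you need not invoke a separate ``symmetric construction from $\varphi^\varepsilon$'' to produce $u^{-1}$.
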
 
\begin{proof}  
We  leave the proof of the fact that $d_{int}$ is a pseudo-distance as an exercise for the reader and will only sketch the proof of the second statement.

Let us assume that $d_{int}(\mathbf V, \mathbf W) = 0$. Thus,  we have morphisms $\varphi_ \varepsilon : \mathbf V \longrightarrow \tau_ \varepsilon \mathbf W$ and $\psi_ \varepsilon : \mathbf W \longrightarrow \tau_ \varepsilon \mathbf V$ such that $\varphi_ \varepsilon \circ \tau_ \varepsilon \psi_ \varepsilon = j_{2 \varepsilon }, \psi_ \varepsilon \circ  \tau_ \varepsilon \varphi_ \varepsilon =i_{2 \varepsilon }$, where $\varepsilon > 0$ maybe picked to be as small as one wishes.   We must show that $\mathbf V \simeq \mathbf W$. 
The idea is simple: as $ \varepsilon $ converges to zero, $i_{2 \varepsilon }$ and $j_{2 \varepsilon }$ ``converge'' to the identity morphism, and so  we would like to show that $\varphi_ \varepsilon , \psi_ \varepsilon $ ``converge'' to $\varphi_0, \psi_0$ and get that $\varphi_0\circ \psi_0=\psi_0\circ \varphi_0=\id$.

%In fact, we have maps $\varphi_ \varepsilon : V_s \longrightarrow W_{s+ \varepsilon }$ and we claim that either for $ \varepsilon $ small enough they are all injective, or the limit is not injective
Fix any $s\in \R$ which is neither in the spectrum of $\mathbf V$ nor in the spectrum of $\mathbf W$.  For $ \varepsilon $ small enough, the vector spaces $W_{s+ \varepsilon }$ and $V_{s + \varepsilon }$ are stationary in the sense  that $ W_s \simeq W_{s+\varepsilon}$ and $V_s \simeq V_{s+\varepsilon}$. Thus, we can  identify $\varphi_ \varepsilon: V_s \longrightarrow W_{s+ \varepsilon} $ and $\psi_\varepsilon : W_s \longrightarrow V_{s+ \varepsilon} $ to  maps $\varphi_0: V_s \longrightarrow W_s$  and $\psi_0: W_s \longrightarrow V_s$, respectively.  It is not difficult to see that $\varphi_0\circ \psi_0=\psi_0\circ \varphi_0=\id$.  One can further check, since $V_s=V_{s- \varepsilon }$ for $s$ in the spectrum and $ \varepsilon $ small enough,  that  for $s$  in the spectrum of  $\bf V$, or  $\bf W$, one can still define  $\varphi_0: V_s \longrightarrow W_s$,  $\psi_0: W_s \longrightarrow V_s$ and that the two maps continue to be inverse to one another.
\end{proof}

Note that for any $t\in \R$, there exists $\varepsilon> 0$ such that  $i_{s,u}:V_s \longrightarrow V_u$ is an isomorphism if $s, u \in (t-\varepsilon,t]$ or if $s, u \in (t, t+\varepsilon)$.  Pick $t^- \in (t- \varepsilon, t]$ and $t^+ \in (t, t + \varepsilon)$ and let $j(t)=\dim (\Ker (i_{t^-,t^+}))+ \codim (\Image (i_{t^-,t^+}))$.  Observe that $j(t)$ is zero except for $t\in F$.  We say that $\mathbf V$ is {\bf generic} if  $j(t) \leq1$ for all $t\in \R$.

\begin{exo} \label{exo-10}
Prove that generic persistence modules are dense for the interleaving distance. 
\end{exo} 

\begin{remarks} 
\begin{enumerate}

\item The notion of persistence modules can be translated in term of sheaves, as in \cite{kashiwara-schapira2}. We endow $ {\mathbb R} $ with the (non-separated) topology for which open sets are the sets $]s, +\infty[$, and denote by $\widetilde{\mathbb R}$ this topological space,  then the $\mathbf V ( ]s,+\infty[)=V_s$ defines a presheaf on $ \widetilde{\mathbb R} $. This will be a sheaf provided $V_t=V_{t^+}=\lim_{s>t}V_s$, which we assume.  
 We moreover assume this sheaf is constructible, that is there is a finite subset of $ {\mathbb R} $, $F$,  such that %are points $x_1,....,x_p$ such that 
$i_{s,t}$ is an isomorphism whenever $s,t$ belong to the same connected component of $ {\mathbb R} \setminus F$. 
The sheaf is flabby hence injective,  if and only if $i_{s,t}$ is onto for all $s<t$. 
We shall not insist on the sheaf-theoretic point of view, for which we refer to \cite{kashiwara-schapira2}.

\item
One can generalize the notion of persistence modules to that of persistence complexes, where $V_s$ is graded in a manner compatible with the filtration, i.e. $V_s=\oplus_pV_s^p$, and the maps respect the grading and differentials. This corresponds to the abelian category of complexes of constructible sheaves.  %We shall explain that systematically in section \ref{sec-2.5}. 
\end{enumerate}
\end{remarks}

\subsection{Barannikov modules}\label{sec:barannikov}
This section is dedicated to the introduction of Barannikov modules.  Before presenting the definition, let us  recall that a filtration on a vector space $C$ is a family $C^s$, $s \in \R $, of vector subspaces of $C$  which is increasing ({\it i.e.\ }$C^s \subset C^t$ if $s<t$) and such that $\displaystyle C^s = \cup_{t<s} C^t$.  We will furthermore require that $C^s = C$ for $s>>0$  and $C^{s_1}= C^{s_2}$ for $s_1, s_2 <<0$.  A filtration on a chain complex $(C, \partial)$ is a filtration of $C$ such that $\partial(C^s) \subset C^s$.

\begin{definition}
A {\bf Barannikov module}, also called a {\bf simple module} or a {\bf simple chain complex}, is a  finite dimensional filtered chain complex $\mathbf C = (C, \partial)$ which is endowed with a  preferred basis {\bf $B$} such that:

\begin{enumerate} 
\item $C$ admits a decomposition of the form $C=C_+\oplus C_-\oplus C_0$ which  is compatible with the filtration, i.e. $C^s=C^s_+\oplus C^s_-\oplus C^s_0$, where $C^s_+,C^s_-,C^s_0$ are the filtrations induced on $C_+, C_-,C_0$.

\item The preferred basis $ B$ is compatible with the decomposition and the filtration in the sense that  $C^s_+, C^s_-,C^s_0$ are generated by $B \cap C^s_+, B \cap  C^s_-,  B \cap C^s_0$, respectively. We will denote  $ B_+ =  B \cap C_+,  B_- =  B \cap C_-, B_0 = B \cap C_0$.  %We denote by $e_j^0,e_k^-,e_l^+$ the elements of the special base belonging to $C_0,C_+,C_-$, respectively. 

\item The differential $\partial$ gives a bijection from $ B_+$ to $B_-$, and $\partial(B_-) = \partial(B_0) = \{0\}.$
\end{enumerate}
\end{definition}  

Let us emphasize that $\partial$ yields an isomorphism between $C_+$ and $C_-$ sending basis elements to basis elements and that $\partial$ vanishes on $C_-\oplus C_0$.

%\begin{definition} 
%A {\bf Barannikov complex} is a Barannikov module which is moreover endowed with a grading such that the differential has degree one, and the preferred basis is homogeneous with respect to the grading. {\sobhan It looks like we do not use Barannikov complexes in the paper. Should we remove this definition?}
%\end{definition} 
\medskip 

We will denote by  $\mathcal C$  the category of Barannikov modules.  
 Given two Barannikov modules $\mathbf C = (C,\partial_C)$ and $\mathbf D = (D,\partial_D)$, an element in  $Mor( \mathbf C, \mathbf D)$  is a chain map which respects the filtration and the preferred bases. 
 
 Given a Barannikov module $\mathbf C = (C, \partial)$ and any $a \in \R$ we define the Barannikov module $\tau_a \mathbf C$  by shifting the filtration by $a$.  For $a>0$, we have  a natural map $i_a : (C_*,\partial_C) \longrightarrow (\tau_aC_*,\partial_C)$.
 
 \medskip

As we will now see, the set of Barannikov modules may be equipped with a natural metric.

\begin{definition} 
The distance between two simple modules, $\mathbf C=(C,\partial_C)$ and $\mathbf D=(D,\partial_D)$ is defined as the infimum of the set of $ \varepsilon $ such that there are filtration preserving  morphisms $\varphi: (C,\partial_C) \longrightarrow (\tau_ \varepsilon D,\partial_D)$ and $\psi: (D,\partial_D) \longrightarrow (\tau_ \varepsilon C,\partial_C)$ such that $\psi \circ \varphi =j_{ 2 \varepsilon }$ and $\varphi \circ \psi = i_{ 2 \varepsilon }$. 
We denote it by $d_{s}(\mathbf C, \mathbf D)$. 
\end{definition} 

%We will denote by  $\mathcal C$  the category of Barannikov modules equipped with the above metric.   
We will now define maps $H : \mathcal C \rightarrow \mathcal P$ and $\gamma: \mathcal P \rightarrow \mathcal C$, which will  be proven to be isometries, between the two categories of Barannikov and persistence modules.

\begin{prop} \label{prop:barann-to-persistence}
Let $\mathbf C = (C,\partial_C)$ be a simple chain complex.  Then, the family of vector spaces $V_s :=H(C^s,\partial_C)$, the homology of $C^s$, equipped with the maps $i_{s,t}: V_s \longrightarrow V_t$  induced by  the inclusion map $ C^s \longrightarrow C^t$, is a persistence module.  
\end{prop}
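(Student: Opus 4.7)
The plan is to verify the three axioms of Definition~\ref{def:per-mod} in turn. Since the filtration $C^s$ is compatible with $\partial_C$, each $C^s$ is a subcomplex of $(C,\partial_C)$, so $V_s := H(C^s,\partial_C)$ is a well-defined vector space. The inclusion $C^s \hookrightarrow C^t$ for $s \leq t$ is a chain map, and the induced linear map on homology is precisely $i_{s,t}$. Functoriality of homology immediately gives $i_{s,s} = \mathrm{Id}$ and $i_{t,u} \circ i_{s,t} = i_{s,u}$, verifying condition (1).

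For the finite-spectrum condition (2), I would exploit the preferred basis. Because $\mathbf{C}$ is finite-dimensional, $B$ is a finite set. For each $b \in B$, compatibility of the basis with the filtration implies that $\{s \in \R : b \in C^s\}$ is an upper half-line; let $a(b)$ denote its infimum and define $F := \{a(b) : b \in B\} \subset \R$, which is finite. If $s < t$ lie in the same connected component of $\R \setminus F$, then no $a(b)$ belongs to the interval between them, so $C^s$ and $C^t$ are generated by the same subset of $B$; hence $C^s = C^t$, $V_s = V_t$, and $i_{s,t} = \mathrm{Id}$ is an isomorphism.

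For condition (3), fix $t \in \R$. Since $F$ is finite, there exists $\varepsilon > 0$ with $(t-\varepsilon,t) \cap F = \emptyset$. Left-continuity of the filtration, $C^t = \bigcup_{s' < t} C^{s'}$, combined with the previous observation, shows that $C^s = C^t$ for every $s \in (t-\varepsilon,t)$; hence $i_{s,t}$ is an isomorphism for such $s$.

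The main ``obstacle,'' if one can call it that, is purely notational: one must be careful about the left-continuity convention $C^s = \bigcup_{t<s} C^t$ to decide at which value the basis element $b$ first enters the filtration. Once this is settled, the argument is a direct unpacking of the definitions. Notably, neither the splitting $C = C_+ \oplus C_- \oplus C_0$ nor the specific action of $\partial$ on $B$ is used at this step; these features of a Barannikov module will enter the picture only when one extracts the barcode from the persistence module.
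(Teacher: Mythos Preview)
Your proof is correct and complete; the paper itself leaves this proposition to the reader, so there is no argument to compare against. The only microscopic quibble is that the filtration as defined in the paper allows $C^s$ to be nonzero for all $s$, so some $a(b)$ could equal $-\infty$; in that case simply omit such values from $F$, and everything goes through unchanged.
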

We leave the proof of this proposition to the reader.  Note that for $s$ large enough, we have $V_s=H(C^s,\partial_C)= C_0$. 

\medskip The map $\gamma: \mathcal P \rightarrow \mathcal C$ is constructed/defined below.  We will not prove in detail that $\gamma$ is well-defined as the proof, albeit in a different setting, can be found in \cite{LNV}; see Section 2 therein. 

\begin{definition} \label{Def-1.10}
Let  $\mathbf V=(V_s)_{s\in {\mathbb R} }$ be a generic persistence module.  We define a simple module $\mathbf C = \gamma(\mathbf V)$ as follows:

The preferred basis $B$ consists of the set of $t\in \R$ such that $j(t)=1$; this is precisely the spectrum of $\mathbf V$.  We define $C$ to be the span, over $\mathbb{F}$, of $B$.  It is clear that $C$ is a filtered vector space.  

We define $B_+$, the preferred basis for $C_+$, to be the set of $t \in B$ such that $\dim (\Ker (i_{t^-,t^+}))=1$ (and so  $\codim (\Image (i_{t^-,t^+}))=0$).  Note that $B \setminus B_+$ consists of $t\in B$ such that $\codim (\Image (i_{t^-,t^+}))=1$ (and so $ \dim (\Ker (i_{t^-,t^+}))=0$).

We will next define $B_-$, the preferred basis for $C_-$, and the differential $\partial: B_+ \rightarrow B_-$.  Consider $t\in B_+$.  One can show that there exists a unique $s \in  B \setminus B_+$ satisfying the following property:  Let  $x\in V_{t^-}$ represent a non-zero element in $ \Ker (i_{t^-,t^+})$.  The element $x$ is in the image of $i_{s^+,t^-}: V_{s^+} \rightarrow V_{t^-}$ but $x$ is not in the image of $i_{s^-,t^-}: V_{s^-} \rightarrow V_{t^-}$.  We set $\partial(t) = s$ and define $B_-$ to be the set of all such $s$.

Lastly, we define $B_0$, the preferred basis for $C_0$, by $B_0 := B \setminus (B_+ \cup B_-)$ and we set $\partial = 0$ on $B_- \cup B_0$.
\end{definition} 

%\begin{definition} \label{Def-1.10}
%Given a generic  persistence module $\mathbf V=(V_s)_{s\in {\mathbb R} }$ we define a simple module $\mathbf C= \gamma(\mathbf V)$ by associating to each $s$ such that $j(s)=1$ a generator. We say that $t \in C_-$ {\sobhan [$C_-$]}if $ \codim (\Image (i_{t^-,t^+}))=1, \dim (\Ker (i_{t^-,t^+}))=0$ and $t\in C_+$ if $\dim (\Ker (i_{t^-,t^+}))=1, \codim (\Image (i_{t^-,t^+}))=0$. We set $\partial : C^+ \longrightarrow C^-$ to be defined as follows. Let $x\in V_{t^-}$ represent a non-zero element in $ \Ker (i_{t^-,t^+})$. Let $s<t$ be such that $x$ is in the image of  $V_{s^+}$ {\sobhan specify morphism.} but not in the image of $V_{s^-}$ and $s$ is a generator in $C^-$ {\sobhan not automatic?} and we set $\partial (t)=s$. {\sobhan Maybe a word on why $s$ exists.} If $x$ is in the image of $V^s$ for all $s$, we set $\partial (t)=0$.   {\sobhan How is $C_0$ defined?}
%\end{definition} 

We will extend this definition to the non-generic persistence modules in Section \ref{sec:isometry}.

\subsection{Barcodes}\label{sec:barcodes}
We introduce in this section barcodes and the bottleneck distance.  Let us begin by introducing some preliminary notions.  A family  of intervals, $\mathfrak B$, is a list of intervals of the form $((a_j,b_j])_{j\in \{1, \ldots, n\}}$, where $-\infty \leq  a_j\leq b_j\leq + \infty$. It is convenient for us to allow trivial intervals of the form $(a,a]$ and so we do permit them in our families.  We allow a segment to appear multiple times and we identify two families which may be  obtained from each other by a permutation.  We say two families  are equivalent if 
removing all singletons $(a,a]$ from them yields the same family. For example, the families $\mathfrak B_1 = ((-2,-2], (3,3], (-1,0], (0,1])$,  $\mathfrak B_2 = ( (-1,0], (-5,-5], (0,1])$  and  $\mathfrak B_3 = ( (0,1],$ $ (-1,0])$ are equivalent.  

\begin{definition} \label{def:barcode}
A barcode $\mathbf B$ is the equivalence class of a family of intervals $\mathfrak B$.   %{\sobhan Is it necessary to allow for singletons in the definition of barcodes?} {\claude Yes for a shorter definition of distance. Otherwise, no, it is not necessary.} 
\end{definition}  

 Let us emphasize that in the above definition the same interval can appear multiple times  and that the order in which we list the intervals is irrelevant.   Note also that adding  or removing  empty intervals of the type $(c,c]$ to a barcode does not change it.  %It is sometimes convenient, and also more consistent with the conventions of other authors such as \cite{PolShel}, to identify  a given barcode witha  family of half-open intervals of the type $(a,b]$ for $a \leq b$ on the real line (including the empty intervals corresponding to $a=b$). 

A barcode represented by a family of segments where no two intervals have a common finite endpoint is called a {\bf generic} barcode. %We denote by $\mathbf B$ the class of  $\mathfrak B$, if we need to distinguish a barcode and its representative. 

\medskip

Let $a\leq b, c \leq d$ be four elements of  $\R \cup \pm \infty$. We set $d((a,b],(c,d])= \max \{ \vert c-a \vert , \vert d-b \vert \}$.  Note that if $c=d = \frac{a+b}{2}$, then $d((a,b],(c,d]) = \frac{b - a}{2}$. %If $a\neq b, c\neq d$ we set  $d(]a,b[,]c,d[)=d([a,b],[c,d])$.
\begin{definition}
Let $\mathbf B_1, \mathbf B_2$ be barcodes and take representatives $\mathfrak B_1=(I_j^1)_{ j\in A_1}$, $\mathfrak B_2=(I_k^2)_{k \in A_2}$.  The {\bf bottleneck distance} distance between $\mathbf B_1, \mathbf B_2$, denoted by  $d_{bot}(\mathbf B_1,\mathbf B_2)$,  is the infimum of the set of $ \varepsilon $ such that there is a bijection $\sigma$ between two subsets $A'_1, A'_2$ of $A_1, A_2$ with the property that  $d(I_j^1, I_{\sigma(j)}^2 ) \leq \varepsilon $ and all the remaining intervals $I_j^1, I_k^2$ for $j\in A_1\setminus A'_1, k \in A_2\setminus A'_2$, have length less than $ 2\varepsilon $.   \end{definition}  

Note that the bottleneck distance is an {\bf extended metric} as  it takes value in ${\mathbb R}_+\cup \{+\infty\}$.

\begin{remark} 
An equivalent definition, is that $d_{bot}(\mathbf B_1, \mathbf B_2) \leq \varepsilon $ if and only if there are representatives $\mathfrak B_1, \mathfrak B_2$ of $\mathbf B_1, \mathbf B_2$ and a bijection $\sigma$ between the segments of $\mathfrak B_1$ and $\mathfrak B_2$, such that $d(I_j^1,I_{\sigma(j)}^2)\leq \varepsilon $.
\end{remark}

 \begin{remark} Note that if $\mathbf{B_1, B_2}$ consist of one interval each, say $I_1 = [x_1, y_1]$ and $I_2 = [x_2, y_2]$, respectively, then $$d_{bot}(\mathbf B_1, \mathbf B_2) = \min( \max(\frac{y_1 -x_1}{2}, \frac{y_2-x_2}{2}), \max(\vert x_2 - x_1 \vert, \vert y_2 - y_1 \vert)).$$
\end{remark}

\medskip

 We denote by $\mathcal B$ the category of barcodes. We will now give a description of $Mor(\mathbf B_1, \mathbf B_2)$:  Let  $I = (a,b]$,  $J= (c,d]$ be two non-trivial  intervals, i.e.\ $a<b$ and $c<d$; we write $I \leqslant J$ if  $a \leq c$ and $b \leq d$.  First, suppose that $\mathbf{B_1, B_2}$ (have representative which) consist of one interval each, say $I_1 = (x_1, y_1]$ and $I_2 = (x_2, y_2]$, respectively.  Then, there exists a non-trivial morphism from $\mathbf B_1$ to $\mathbf{B_2}$ if and only if $I_2 \leqslant I_1$.  In other words, $Mor(\mathbf B_1, \mathbf B_2) =  \F$ if $I_2 \leqslant I_1$ and  $Mor(\mathbf B_1, \mathbf B_2) =  \{0\}$, otherwise. If either of $I_1, I_2$ is a trivial interval, then we set   $Mor(\mathbf B_1, \mathbf B_2) =  \{0\}$, even if $I_1 = I_2$.
 
  A morphism between $\mathbf B_1$ and $\mathbf B_2$ is a collection of morphisms between the intervals constituting them.  More precisely,  suppose that $\mathbf B_1=(I^1_j)_{j\in A_1},  \mathbf B_2=(I^2_j)_{j\in A_2}$.  Note that some of the intervals may be trivial.   A morphism from $\mathbf{B_1}$ to $\mathbf{B_2}$ is represented by a map $j \mapsto S_j$, from $A_1$ to the set of subsets of $A_2$, such that $I^1_j \leq  I^2_{k}$ for $k \in S_j$.   Equivalently, and more concisely, $Mor(\mathbf{B_1}, \mathbf{B_2}) = \oplus Mor(I_j^1, I_k^2)$ where $j \in A_1$ and $k\in A_2$.

% Next, we define morphisms between barcodes. %We shall prove that the definition extends to  $\overline {\mathcal B}$, so that $\overline {\mathcal B}$ is indeed a category. 
%
%\begin{definition} 
%Let $\mathcal I$ be the category of segments of the real line, considered as a poset for the partial order for which $[x_1,y_1] \leq [x_2,y_2]$ if and only if $x_1\leq x_2 < y_1\leq y_2$. Here,  the space of morphisms  from $[x_1,y_1]$ to $[x_2,y_2]$ is a one-dimensional vector space if $[x_1,y_1] \leq [x_2,y_2]$, and $\{0\}$ otherwise. The empty interval is both initial and final object (since $\emptyset =]-\infty,-\infty[=]+\infty,+\infty[$). 
%In the category $\mathcal B$ a morphism between two barcode  $\mathbf B_1, \mathbf B_2$ is a collection of morphisms between the intervals constituting them, after adding finitely many empty intervals. In other words we have for $\mathbf B_1=(I^1_j)_{j\in A_1},  \mathbf B_2=(I^2_j)_{j\in A_2}$ that we can add to them finitely many empty intervals and then the morphism is represented by a map $j \mapsto S_j$ from $A_1$ to $\mathcal P(A_2)$, such that  $I^1_j \leq  I^2_{k}$ for $k \in S_j$. 
%\end{definition} 

\begin{remark} 
For a real number $a$, we define $\tau_a (\mathbf B)$ to be  the barcode obtained by shifting  all the intervals of $\mathbf B$ by $-a$, i.e.\ if $\mathbf{B} = ((x_j, y_j])_{j\in A}$, then $\tau_a \mathbf B = ((x_j - a, y_j - a])_{j\in A} $.   There exists a canonical morphism $i_a: \mathbf B \longrightarrow \tau_a \mathbf B$ which we will  describe in the the case when $\mathbf B$ consists of a single interval $(x,y]$, leaving the description in the case of more general barcodes to the reader.  First, suppose that the  interval is non-trivial.  Then, for $0 \leq  a < y-x$ we have  canonical morphism $i_a: \mathbf B \longrightarrow \tau_a \mathbf B$, corresponding to the unique morphism $(x,y] \longrightarrow (x-a,y-a]$.   For other values of $a$, or when $\mathbf{B}$ consists of a trivial interval, the morphism $i_a$ is zero as $Mor(\mathbf{B}, \mathbf{\tau_a \mathbf B}) =\{0\}$.  

Note also that given a morphism $u: \mathbf{B_1}\rightarrow \mathbf{B_2}$, there exists a corresponding morphism $\tau_a u: \tau_a \mathbf{B_1} \rightarrow \tau_a \mathbf{B_2}$.

%\begin{enumerate} 
%\item  If we associate to $]x_1,y_1[$ the sheaf $k_{[x_1,y_1[}$ then $]x_1,y_1[ \leq ]x_2,y_2[$ if and only if $\Mor (k_{[x_1,y_1[},k_{[x_2,y_2[})=k$, otherwise $\Mor (k_{[x_1,y_1[},k_{[x_2,y_2[})=0$ (see \cite{kashiwara-schapira}). As a result if we set $k_{\mathbf B}=\bigoplus_j k_{I_j}$, we have $\Mor(k_{\mathbf B_1}, k_{\mathbf  B_2})=k^{\#\Mor(\mathbf B_1,\mathbf B_2)}$
%\item  For a real number $a$,  we define $\tau_a (\mathbf B)$ to be  the barcode obtained by shifting  all the intervals of $\mathbf B$ by $-a$, i.e.\ if $\mathbf{B} = ([x_j, y_j])_{j\in A}$, then $\tau_a \mathbf B = ([x_j - a, y_j - a])_{j\in A} $.  For $a> 0$, we have a canonical morphism $i_a: \mathbf B \longrightarrow \tau_a \mathbf B$, corresponding to the unique morphisms $[x,y] \longrightarrow [x-a,y-a]$. 
%\end{enumerate} 
\end{remark} 

The bottleneck distance admits the following characterization.

\begin{prop} \label{Prop-1.19}
The bottleneck distance $d_{bot}(\mathbf B_1,\mathbf B_2)$ is given by the infimum of the set of $\delta >0$ with the property that there are morphisms $u: \mathbf B_1 \longrightarrow \tau_\delta \mathbf B_2, v: \mathbf B_2 \longrightarrow \tau_\delta \mathbf B_1$ such that $\tau_\delta u\circ v =i^2_{2\delta},  \tau_\delta v\circ u =i^1_{2\delta}$. 
\end{prop}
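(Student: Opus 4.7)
The plan is to prove the two inequalities separately, writing $\delta^\ast$ for the infimum on the right-hand side; the goal is to show $d_{bot}(\mathbf B_1, \mathbf B_2) = \delta^\ast$. For $\delta^\ast \leq d_{bot}$, suppose a partial matching $\sigma$ realises $d_{bot}(\mathbf B_1, \mathbf B_2) < \delta$, so matched pairs $(I_j^1, I_{\sigma(j)}^2)$ have endpoint distance at most $\eps < \delta$, and unmatched intervals have length less than $2\eps$. Define $u : \mathbf B_1 \to \tau_\delta \mathbf B_2$ by taking the $(j, \sigma(j))$ component to be the canonical generator of $Mor(I_j^1, \tau_\delta I_{\sigma(j)}^2)$, with all other components zero; define $v$ symmetrically. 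The endpoint bound $\eps < \delta$ ensures $\tau_\delta I_{\sigma(j)}^2 \leq I_j^1$ and $\tau_\delta I_j^1 \leq I_{\sigma(j)}^2$ in the excerpt's sense, together with the overlap that makes both Mor spaces equal to $\F$. The identities $\tau_\delta v \circ u = i^1_{2\delta}$ and $\tau_\delta u \circ v = i^2_{2\delta}$ are then checked component by component in the persistence-module picture, where the key observation is that the elementary identity $I_j^1 \cap \tau_\delta I_{\sigma(j)}^2 \cap \tau_{2\delta} I_j^1 = I_j^1 \cap \tau_{2\delta} I_j^1$ (a consequence of $\eps < \delta$) forces the composition to agree with the canonical shift $i^1_{2\delta}|_{I_j^1}$ on its support; on unmatched intervals of length less than $2\eps \leq 2\delta$ both sides vanish.

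For $d_{bot} \leq \delta^\ast$, given morphisms $u, v$ at parameter $\delta$ we construct a $\delta$-matching. Call an interval \emph{long} if its length exceeds $2\delta$. The identity $\tau_\delta v \circ u = i^1_{2\delta}$, restricted to a long $I_j^1$, produces the canonical non-zero morphism $I_j^1 \to \tau_{2\delta} I_j^1$; hence some $k \in A_2$ has $u_{jk}$ and $v_{kj}$ both non-zero, and unpacking the inclusion and overlap conditions for these non-vanishings forces $d(I_j^1, I_k^2) \leq \delta$. We then extract an injective partial matching between long $\mathbf B_1$-intervals and $\mathbf B_2$-intervals from this bipartite incidence relation via Hall's marriage theorem: for any finite subset $S$ of long $\mathbf B_1$-intervals, the restriction of $\tau_\delta v \circ u$ to $\bigoplus_{j \in S} Q(I_j^1)$ is the direct sum of canonical non-zero shifts, which has ``rank'' $|S|$; factoring this composition through $\tau_\delta \mathbf B_2 = \bigoplus_k Q(\tau_\delta I_k^2)$ then implies that the image of $u$ meets at least $|S|$ summands $Q(\tau_\delta I_k^2)$, each necessarily incident to some $j \in S$, verifying Hall's condition. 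A symmetric argument handles long $\mathbf B_2$-intervals, and the unmatched intervals on both sides are non-long and hence of length at most $2\delta$, so the resulting matching has bottleneck cost at most $\delta$.

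The main obstacle is the rank argument supporting Hall's condition: one must translate the ``rank'' of the composition on a finite family of long intervals into a lower bound on the number of incident summands of $\tau_\delta \mathbf B_2$. This relies on the block-matrix description of morphisms between direct sums of interval modules and is essentially the content of the induced matching theorem in persistence theory. Alternatively, one can re-interpret $(u, v)$ as a genuine $\delta$-interleaving of the persistence modules underlying $\mathbf B_1$ and $\mathbf B_2$ and invoke the standard induced matching construction directly.
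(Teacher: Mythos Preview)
Your argument is correct in outline, and for the direction $\delta^\ast \le d_{bot}$ it is essentially the paper's: build $u,v$ diagonally from the matching and check the compositions. For the harder direction $d_{bot}\le\delta^\ast$ you take a genuinely different route from the paper. The paper first observes that one may discard from $\mathbf B_1,\mathbf B_2$ all bars of length $<2\delta$ (modifying $u,v$ accordingly) without affecting either side of the equality; once every bar is long, $i^1_{2\delta}$ and $i^2_{2\delta}$ are each ``bijections on bars'', and the paper asserts that $u,v$ then correspond to a full bijection $\sigma$ with $d(I^1_j,I^2_{\sigma(j)})\le\delta$. You instead keep all bars and run a Hall's marriage argument on the long bars only.

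Both approaches hinge on the same non-trivial lemma, which you rightly isolate as the ``main obstacle'': a factorisation of $\bigoplus_{j\in S} i_{2\delta}|_{Q(I^1_j)}$ (all $I^1_j$ long) through a sum of $|T|$ interval modules forces $|T|\ge |S|$. The paper's reduction makes this lemma slightly easier to state and use (one gets a bijection rather than a partial matching, and one can argue symmetrically with $u$ and $v$ at once), but it does not avoid the lemma; the sentence ``one could show that the maps $u,v$ correspond to a bijection'' is exactly where it is hidden. Your Hall formulation is cleaner and is precisely the Bauer--Lesnick induced-matching mechanism you allude to; invoking that result directly, as you suggest at the end, is the most honest way to close the gap. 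What the paper's simplification buys is that after discarding short bars one only needs the ``rank'' lemma once to get $|A_1|=|A_2|$, rather than verifying Hall's condition for every subset.
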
 
\begin{proof} We will only provide a sketch of the proof and leave the details to the reader.

We leave it to the reader to check that the proposition is true when $\mathbf{B_1, B_2}$ consist of single non-trivial intervals.

Suppose that $u,v, \delta$ are as in the statement of the proposition.  We will show that this implies that $d_{bot}(\mathbf B_1,\mathbf B_2)< \delta$.  We may assume that $\mathbf{B_1, B_2}$ have no intervals of length less than $2\delta$.  
Indeed, one can check that we may remove from $\mathbf{B_1, B_2}$ intervals of length less than $2\delta$ and modify $u,v$ so that we still have $\tau_\delta u\circ v =i^2_{2\delta},  \tau_\delta v\circ u =i^1_{2\delta}$ ; furthermore, this does not affect the inequality  $d_{bot}(\mathbf B_1,\mathbf B_2)< \delta$.

Now, if there are no intervals of length less than $2\delta$, then $i^1_{2\delta}, i^2_{2\delta}$ yield bijections among the bars of $\mathbf{B_1, B_2}$, respectively. Using this, one could show that the maps $u,v$ correspond to a bijection $\sigma$ between the intervals of $\mathbf{B_1}$ and the intervals of $\mathbf{B}_2$.  Furthermore, we have $d(I^1_j, I^2_{\sigma(j)})\leq \delta$.  This implies that $d_{bot}(\mathbf B_1,\mathbf B_2)< \delta$.

Next, suppose that $d_{bot}(\mathbf B_1,\mathbf B_2)< \delta$.  This means that we can find a bijection $\sigma$ between the intervals of $\mathbf{B}_1$ and $\mathbf{B_2}$ which are of length greater than two delta.  Using $\sigma$, one can easily construct $u, v$ satisfying the statement of the proposition.  
%First of all if the barcodes are  single (nonempty ) intervals $I_1=]x_1,y_1[,I_2=]x_2,y_2[$, this means we have $I_1\leq  I_2+\delta, I_2\leq I_1+\delta$, so that $I_1\leq I_2+\delta\leq I_1+2\delta $ and $I_2\leq I_1+\delta\leq I_2+2\delta $ , and this is equivalent to $x_1\leq x_2+\delta \leq x_1+2\delta$ and $x_2\leq x_1+\delta \leq x_2+2\delta$ and this is equivalent to $ \vert x_1-x_2 \vert \leq \delta$, and similarly for $y$, $ \vert y_1-y_2 \vert \leq \delta$. If one of the intervals is empty, it is like if one of the intervals is equal to $]z,z[$, so it means for some $z$  $ \vert x-z \vert \leq \delta , \vert y- z \vert \leq \delta$ and this means $ \vert x-y \vert \leq 2\delta$.  Now in the general case, the maps $u$ and $v$ yield a correspondence between the intervals of $\mathbf B_1$ and $\mathbf B_2$. Since $i^1_{2\delta}, i^2_{2\delta}$ yield a bijection between the intervals, $u$ and $v$ must also correspond to a bijection, but of course we can add empty intervals. So after adding a finite number of empty intervals, we get a bijection $\tilde \rho$ from the list of intervals from $\mathbf B_1$ to the list of intervals of $\mathbf B_2$ and $d(I^1_j, I^2_{\tilde\rho(j)})\leq \delta$, and removing the empty intervals $I^1_j$ (resp. $I^2_{\tilde \rho (j)}$), corresponds to removing intervals of size less than $2\delta$ fro $\mathbf B_1$ (resp $\mathbf B_2$). What is  is left is the bijection $\rho$.  
\end{proof} 

%\begin{remark} 
%The above proposition is  equivalent to the following: We have $d_{bot}(\mathbf B_1,\mathbf B_2)\leq \delta $ if and only if there are representatives $\mathfrak B_1, \mathfrak B_2$ of $\mathbf B_1,\mathbf B_2$ and morphisms $\mathfrak u: \mathfrak B_1 \longrightarrow \mathfrak B_2,  \mathfrak v : \mathfrak B_2\longrightarrow \mathfrak B_1$ such that $\mathfrak u\circ \tau_\delta \mathfrak v =\mathfrak i^1_{2\delta}, \mathfrak v\circ \tau_\delta \mathfrak u =\mathfrak i^2_{2\delta}$.  
%\end{remark} 

\medskip

We end this section by introducing the completion of the space of barcodes for the bottleneck distance.  Note that in Definition \ref{def:barcode} of a barcode we restricted ourselves to families of intervals consisting of a finite number of intervals.  One could also consider families consisting of infinitely many intervals and define an infinite barcode to be the equivalence class of one such family.  (The equivalence relation in consideration is, of course, the one introduced before Definition \ref{def:barcode}.)

\begin{prop}\label{prop:completion_barcodes} 
The completion of $\mathcal B$ for the bottleneck distance consists of the set of infinite barcodes $\mathbf B$ which satisfy the following finiteness property: for any positive $ \varepsilon $,  only finitely many intervals of $\B$ have length greater than $ \varepsilon $. We denote by $\overline {\mathcal B}$ the set of these barcodes. 
\end{prop}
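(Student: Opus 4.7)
The plan is to show two inclusions: that every infinite barcode satisfying the finiteness property lies in the completion (density of finite barcodes inside the proposed completion), and conversely that every Cauchy sequence of finite barcodes converges in $d_{bot}$ to such an infinite barcode (the proposed target is Cauchy-complete).

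For density, I start with an infinite barcode $\B$ satisfying the finiteness hypothesis, and truncate to the finite sub-barcode $\B_n$ consisting of those bars of $\B$ of length strictly greater than $1/n$, which is finite by assumption. Pairing each bar of $\B_n$ with its copy in $\B$ and leaving the remaining bars of $\B$ unmatched yields $d_{bot}(\B_n, \B) \leq 1/(2n)$, since those remaining bars have length $\leq 1/n$. Hence $\B_n \to \B$ in $d_{bot}$.

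For completeness, I take a Cauchy sequence $(\B_n)$ of finite barcodes; passing to a subsequence I may assume $d_{bot}(\B_n, \B_{n+1}) < 2^{-n-1}$, and I fix partial bijections $\sigma_n$ realizing these bounds. I then introduce \emph{threads}, defined as maximal sequences of bars $(I_n \in \B_n)_{n\in J}$ with $J\subset\N$ a (possibly infinite) interval and $I_{n+1} = \sigma_n(I_n)$. Each bar of each $\B_n$ belongs to a unique thread. The two endpoints of consecutive bars on a thread differ in absolute value by at most $2^{-n-1}$, so along an infinite thread they form Cauchy sequences and converge to a limit interval $I_\infty$. The candidate limit barcode is $\B := \{I_\infty : \text{infinite thread}\}$. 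Moreover, using the definition of $d_{bot}$, any bar that starts a thread at index $M$ (i.e., is not in the image of $\sigma_{M-1}$) must be unmatched in $\sigma_{M-1}$ and so has length $< 2^{-M+1}$, and similarly for bars terminating a thread.

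The main work is verifying that $\B$ satisfies the finiteness hypothesis and that $d_{bot}(\B_n, \B) \to 0$. For the finiteness: telescoping gives $\vert \text{length}(I_n) - \text{length}(I_\infty) \vert \leq 2^{-n+1}$, so any bar of $\B$ of length greater than $\varepsilon$ arises from an infinite thread whose bars in $\B_n$ have length greater than $\varepsilon/2$ for all $n$ larger than some $N_\varepsilon$; moreover, the length bound on late-starting threads forces such a thread to have started at an index $\leq N_\varepsilon$, and finiteness of $\B_{N_\varepsilon}$ then bounds the number of such threads. For the convergence estimate: match each bar of $\B_n$ lying on an infinite thread with its limit $I_\infty$ (endpoints agree up to $\sum_{k\geq n} 2^{-k-1} = 2^{-n}$), leaving the bars on finite threads or late-starting threads unmatched; the length estimates on thread endpoints show these unmatched bars all have length $O(2^{-n})$, which is enough to conclude $d_{bot}(\B_n,\B) \to 0$. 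The main obstacle is precisely this bookkeeping of when bars appear, disappear, or persist through the sequence of partial bijections $\sigma_n$, which is controlled throughout by the telescoping $\sum_k 2^{-k-1} < \infty$.
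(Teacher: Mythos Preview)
Your proof is correct. The paper itself does not prove this proposition: it explicitly states ``We leave the proof of the above proposition to the reader,'' so there is no argument to compare against. Your approach---truncating long bars for density, and tracking ``threads'' through a rapidly Cauchy subsequence for completeness---is the standard one, and your bookkeeping is sound.

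Two minor points worth tightening. First, bars may have infinite length (endpoint $+\infty$); such bars can never be unmatched in any $\sigma_n$, so they lie on infinite threads and their infinite endpoint is constant along the thread---this should be said explicitly so that the ``Cauchy endpoints converge'' step is literally correct. Second, in your convergence estimate the unmatched bars of $\B_n$ lying on finite threads have length bounded by $2^{-M'} + 2^{-n+1} \leq 3\cdot 2^{-n}$ rather than $2\cdot 2^{-n}$; this still gives $d_{bot}(\B_n,\B) \leq 2\cdot 2^{-n}$, consistent with your $O(2^{-n})$ claim, but the constant is worth tracking if you want a clean statement.
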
 
We leave the proof of the above proposition to the reader.  We should add that barcodes satisfying similar finiteness conditions as above have appeared in \cite{chazal2016} where they are referred to as q-tame barcodes.

\subsection{The functorial triangle}\label{sec:isometry}

We now define the following commutative triangle of functors relating the categories of persistence modules, Barannikov modules and barcodes.
$$
\xymatrix{{\mathcal P}\ar@/^0.5pc/[rrdd]^{\beta} \ar@/^0.5pc/[rrrr]^{\gamma}&&&& {\mathcal C}\ar@/^0.5pc/[llll]^{H} \\
&&&&\\
&&{\mathcal B}\ar[uurr]_{\alpha}\ar@/^0.5pc/[uull]^{Q}&&
} 
\hskip 1cm
\xymatrix{(V_s)_{s\in {\mathbb R} }\ar@/^0.5pc/[rrdd]^{\beta} \ar@/^0.5pc/[rrrr]^{\gamma}&&&& (C^s)_{s\in {\mathbb R} }\ar@/^0.5pc/[llll]^{H} \\
&&&&\\
&&{\mathbf B}\ar[uurr]_{\alpha}\ar@/^0.5pc/[uull]^{Q}&&
}
$$

The functors appearing in the above diagrams are defined as follows:

 \begin{enumerate} 
 \item The functor $H$ was defined in Section \ref{sec:barannikov}.  Recall that it associates to a Barannikov module $\mathbf{C} = (C, \partial_C)$ the persistence module $V_s= H(C^s,\partial_C)$.

\item The functor $\alpha$ associates to a barcode  $\mathbf B = \{ (a_j,b_j] \}_{j\in \{1,...,n\}}$  the simple module $C({\mathbf B})$, whose generators (\emph{i.e.}\  elements of the preferred basis) are as follows: If  $a_j < b_j< \infty$, then $a_j, b_j$ are both generators. If $b_j = \infty$, then $a_j$ is a generator, and if $a_j =b_j$, then neither one is a generator.  
We define the differential by  $\partial_C b_j=a_j, \partial_C a_j=0$. 

\item
We begin by defining the functor $\beta$ for a generic persistence module $\mathbf{V } = (V_s)_{s\in {\mathbb R} }$.  %The fact that $\beta$ may be defined for all presistence modules will be a consequence of Proposition \ref{prop:isometry}.   

 Denote by $B$ the spectrum of $\mathbf{V}$.  Consider the set of $t\in B $ such that $\dim (\Ker (i_{t^-,t^+}))=1$ (and so  $\codim(\Image (i_{t^-,t^+}))=0$).   Label its elements $b_1, \ldots, b_n$.  For each $b_j$, there exists a unique  $a_j\in \R$ with the following property:   Let  $x\in V_{b_j^-}$ represent a non-zero element in $ \Ker (i_{b_j^-,b_j^+})$.  The element $x$ is in the image of $i_{a_j^+,t^-}: V_{a_j^+} \rightarrow V_{b_j^-}$ but $x$ is not in the image of $i_{a_j^-,b_j^-}: V_{a_j^-} \rightarrow V_{b_j^-}$.  One can easily check that $a_j$ has the property that $\codim(\Image (i_{a_j^-,a_j^+}))= 1$ and thus each $a_j$ belongs to the spectrum of $\mathbf{V}$.  We label the remaining elements of the spectrum of $\mathbf{V}$ by $\{c_1, \ldots, c_m\}$.  The barcode $\beta(\mathbf{V})$ consists of the  list of intervals:  $\left( (a_j, b_j], (c_k, \infty] \right)$, where $1\leq j\leq n$ and  $1\leq k\leq m$.
 
Let us explain how to extend $\beta$ to the set of all persistence modules.
 One can  check that $\beta$, defined as above, commutes with the shift morphisms $\tau_a$, and as a consequence it is $1$-Lipschitz.  Since, according to Exercise \ref{exo-10},  generic persistence modules are dense for the interleaving distance, we can extend $\beta$ to $\mathcal P$.

%The functor $\beta$ sends a generic persistence module that $(V_s)_{s\in {\mathbb R} }$ to the barcode such that $a_j$ equals a number $t$ such that $\codim (\Image(i_{t^-,t^+})=1$. If $x \in V_{t^+}\setminus \Image(i_{t^-,t^+})$ then  $b_j$ is the smallest $s>t$ such that $i_{t,s}(x)=0$ (thus $b_j$ can be $+\infty$). To that we add bars $]-\infty, c_j[$ for each $x\in V_{-\infty}$ such that there exists $s$ such that $i_{-\infty, s}(x)=0$ and $c_j$ is the infimum of such $s$. Finally we add one interval $]-\infty, +\infty[$ for each dimension of the space $\dim (V_{-\infty}/\Ker (i_{-\infty,+\infty}))$. 

%Since according to Exercice \ref{exo-10} generic persistence modules are dense for the interleaving distance, and $\beta$, commuting with the shift,  is $1$-Lipschitz from $d_{int} $ to $d_{bot}$, we can extend $\beta$ to $\mathcal P$. 

\item We define $Q:=H\circ \alpha$.  Let us point out that for a  non-trivial interval $I= (a, b]$  we have $Q(I)_s = \F$, if $s\in I$, and $Q_s(I) = \{0\}$, if $s\notin I$.

 \item The functor $\gamma$ is defined as $\gamma:= \alpha\circ \beta $.    This definition of $\gamma$ does coincide with Definition \ref{Def-1.10} in the generic case.   
\end{enumerate} 

The  definitions presented above only describe the action of the functors at the level of objects.  We leave the description of how the functors should act on morphisms to the reader.

% \begin{example}
% \begin{enumerate} 
% \item For $I= ]-\infty, +\infty[$ we have $Q(I)=\mathbb F_{ {\mathbb R} }$ the constant module such that $V_s=\mathbb F$ for all $s$. 
% \item For $I=]0,+\infty[$, we have $V_s=\mathbb F$ for $s>0$ and $0$ for $s<0$. 
% \end{enumerate} 
% \end{example} 
 
\begin{prop} \label{prop:isometry}
The functors defined above satisfy the following properties:
\begin{enumerate} 
\item $\beta \circ H\circ \alpha = \Id_{\mathcal{B}},$
\item Each of the maps $\beta, H, \alpha$ is an isometry. 
\end{enumerate} 

\end{prop}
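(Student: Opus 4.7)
The plan is to establish part (1) by a direct computation on generic barcodes followed by an extension by density, and part (2) by combining three ingredients: the 1-Lipschitz property of each functor, the identification of the two other cyclic compositions with identities (up to isomorphism), and a telescoping of inequalities.

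For part (1), take a generic barcode $\mathbf{B} = \{(a_j,b_j]\}_j \cup \{(c_k,\infty]\}_k$. The simple complex $\alpha(\mathbf{B})$ decomposes as $\bigoplus_j \alpha((a_j,b_j]) \oplus \bigoplus_k \alpha((c_k,\infty])$, and passing to filtered homology yields $H\circ\alpha(\mathbf{B}) = \bigoplus_j Q((a_j,b_j]) \oplus \bigoplus_k Q((c_k,\infty])$. The pairing procedure defining $\beta$, applied to this direct sum, detects the spectrum $\{a_j,b_j,c_k\}$ and, for each death value $b_j$, locates $a_j$ as the unique earlier birth whose class dies at $b_j$; each $c_k$ is flagged as the birth of a surviving class. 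Thus $\beta\circ H\circ\alpha(\mathbf{B}) = \mathbf{B}$. The non-generic case, where several bars share an endpoint, reduces to the generic case by perturbing endpoints within an arbitrarily small neighborhood and invoking the joint 1-Lipschitz continuity of the composition established next.

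For part (2), one verifies that each functor is 1-Lipschitz. For $\alpha$: a pair of morphisms $u\colon\mathbf{B}_1\to\tau_\delta\mathbf{B}_2$ and $v\colon\mathbf{B}_2\to\tau_\delta\mathbf{B}_1$ realizing a $\delta$-interleaving of barcodes translates, summand by summand on interval pairs $(I_j^1, I_k^2)$ with $\mathrm{Mor}(I_j^1,I_k^2)\neq\{0\}$, into filtration-preserving chain maps between $\alpha(\mathbf{B}_1)$ and $\tau_\delta\alpha(\mathbf{B}_2)$ satisfying the simple-module interleaving identities. For $H$: chain maps between simple complexes descend to maps on filtered homology and preserve shift-interleavings. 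For $\beta$: this is built into its definition, since $\beta$ commutes with $\tau_a$ on generic persistence modules and is then extended by density. Next, the two remaining compositions must be identified with identities up to isomorphism. Every Barannikov module $\mathbf{C}$ is isomorphic to $\alpha(\mathbf{B}_{\mathbf{C}})$ for the barcode $\mathbf{B}_{\mathbf{C}}$ read off from the pairs $\partial\colon B_+\to B_-$ and the generators in $B_0$, so part (1) yields $\alpha\circ\beta\circ H\simeq \mathrm{Id}_{\mathcal{C}}$; dually, every persistence module $\mathbf{V}$ splits as a direct sum of interval modules and is therefore isomorphic to some $H\circ\alpha(\mathbf{B}_{\mathbf{V}})$, so $H\circ\alpha\circ\beta\simeq \mathrm{Id}_{\mathcal{P}}$. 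The isometry claim is then a threefold telescoping:
\[
d_{bot}(\mathbf{B}_1,\mathbf{B}_2) = d_{bot}(\beta H\alpha\mathbf{B}_1,\beta H\alpha\mathbf{B}_2) \leq d_{int}(H\alpha\mathbf{B}_1,H\alpha\mathbf{B}_2) \leq d_s(\alpha\mathbf{B}_1,\alpha\mathbf{B}_2) \leq d_{bot}(\mathbf{B}_1,\mathbf{B}_2),
\]
forcing equality throughout and hence $\alpha$ isometric; the two analogous telescopings starting at $d_{int}$ and $d_s$ yield the isometry of $\beta$ and of $H$.

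The main obstacle is the decomposition statement $H\circ\alpha\circ\beta\simeq \mathrm{Id}_{\mathcal{P}}$, which is essentially the Krull--Schmidt / Crawley-Boevey classification of finite persistence modules. For a generic persistence module with finite spectrum, this can be proved by induction on the number of spectrum points: using the pairing produced by $\beta$, one splits off one interval summand at a time by a change of basis at the corresponding birth/death values. Once the generic case is in hand, the non-generic case follows by density together with the 1-Lipschitz bounds established above. The identification $\alpha\circ\beta\circ H\simeq \mathrm{Id}_{\mathcal{C}}$ is considerably easier, since the Barannikov structure itself already encodes a barcode in the pairing $\partial\colon B_+\to B_-$.
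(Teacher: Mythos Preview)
Your argument is correct and closely parallels the alternative route the paper itself sketches in Remark~\ref{rem:isometry}(1): verify that each functor is $1$-Lipschitz and then telescope through the cyclic composition. The paper's primary proof differs mainly in emphasis. For part~(1) it reduces immediately to barcodes consisting of a single interval (all three functors respect direct sums), which is slightly cleaner than your density extension. For part~(2) it leans on Proposition~\ref{Prop-1.19} to recast $d_{bot}$ itself as an interleaving distance, so that all three metrics share the same categorical shape; commutation of the functors with $\tau_a$ and $i_a$ then gives the $1$-Lipschitz bounds for free, and the paper finishes by checking directly that $\alpha$ is an isometry (which is easy since $\alpha$ is essentially a bijection between $\mathcal B$ and $\mathcal C$ by definition of a Barannikov module).

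Where your write-up genuinely diverges is in treating $H\circ\alpha\circ\beta\simeq\Id_{\mathcal P}$ as an \emph{input}: in the paper the interval decomposition of persistence modules (the Structure Theorem) is presented only as a \emph{corollary} of the proposition, so importing it into the proof is logically heavier than the paper intends. That said, your instinct here is sound---to extend the isometry of $\beta$ from the image of $H\circ\alpha$ to all of $\mathcal P$ one really does need essential surjectivity of $H\circ\alpha$, which amounts precisely to the generic-case interval decomposition you outline by induction; the paper is simply terse about this step. Your version trades the paper's concision for an explicit accounting of that dependency.
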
 

\begin{proof} 
To prove that $\beta \circ H\circ \alpha = \Id_{\mathcal{B}}$, it is sufficient to verify this for barcodes which consist of a single interval which can be checked quite easily.   
%We start with (1). We have for an interval $I=]a,b[$, that $\alpha(I)$ is the chain complex with two generators denoted $a,b$, and with $\partial b=a$. The notations understands that $C^t= \langle a,b\rangle $ for $t\geq b$, $C^t= \langle a\rangle $ for $a\leq t<b$ and $C^t=0$ for $t<a$. Therefore $H(C^t)$ is the following persistence module :$V^t=0$ for $t<a$, $V^t=\langle [a]\rangle$ for $a\leq t<b$, $V^t=0$ for $t\geq b$.  It is easy to check that the barcode associated to this persistence module is exactly $]a,b[$. 

Now we present the proof of the second statement.  We have a functor shifting the filtration by $a$ on each of the three categories which we denote by the same notation $\tau_a$. We also have natural maps from an object to the $a$-shift of the object, which are denoted by the same notation $i_a$. In both  $\mathcal P$ and $\mathcal B$, the distance between two objects $X,Y$  is given as the smallest $\delta$ such that there are morphisms $u : X \longrightarrow Y$ and $v: Y \longrightarrow X$ such that $v\circ \tau_\delta u = i_{2\delta}, u\circ \tau_\delta v= j_{2\delta}$; note that here we are relying on Proposition \ref{Prop-1.19}.  So the fact that the functors $\beta$ and $H\circ \alpha$ are isometries follows from the fact that they commute with $\tau_a, i_a$ combined with the first part of the proposition.  One can easily verify that $\alpha: \mathcal{B}\rightarrow \mathcal{C}$ is an isometry, which in turn implies that $H$ is also an isometry.
\end{proof} 

\begin{remark} \label{rem:isometry}
\begin{enumerate} 
\item  There exists another, somewhat simpler, proof which avoids Proposition \ref{Prop-1.19}:  One could directly check that $\beta, H$, and $\alpha$ are contractions. Since the composition of these maps is the identity they must all be isometries. 
%A simpler proof does not need Proposition \ref{Prop-1.19}. Indeed it is clear that $\beta$ is a contraction, and the same holds for $H\circ \alpha$. But since their composition is the identity, they must be isometries. We obviously have that $\alpha$ and $H$ are  contractions  hence $H$ and $\alpha$ are isometries. 
\item Since we have isometries, the completion $\overline {\mathcal B}$ yields the completion of $\mathcal P, \mathcal C$ that we denote $\overline{\mathcal P}, \overline { \mathcal C}$. 

\item It follows from the above proposition that $Q, \gamma$ are also isometries as they are compositions of isometries.  Furthermore, we also have $H\circ \gamma = \Id_{\mathcal{P}},  \gamma\circ H = \Id_{\mathcal{C}}, Q \circ \beta = \Id_{\mathcal{P}}$.  We will verify here that $H\circ \gamma = \Id_{\mathcal{P}}$ and leave the rest to the reader:  We know that $\beta\circ H\circ \alpha=\Id_{\mathcal B}$, hence composing with $\beta$ on the right, $ \beta \circ H\circ \gamma=\beta$, in other words we have $H\circ \gamma =\Id_{\mathcal C}$ provided $\beta$ is injective. But $\beta$ is an isometry, and so is obviously injective.
\end{enumerate} 
\end{remark} 

\begin{corol}[Isometry Theorem \cite{chazal2016, Cohen-Steiner2007}]
The map $\beta$ is an isometry. Thus the interleaving distance between two persistence modules coincides with the bottleneck distance of their barcodes. 
\end{corol}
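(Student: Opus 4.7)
The plan is very short, since the corollary is essentially a restatement of part (2) of Proposition~\ref{prop:isometry}. The first sentence, asserting that $\beta$ is an isometry, is already contained in that proposition. So the only thing to do is unpack the meaning of ``isometry'' in terms of the two distances $d_{int}$ on $\mathcal{P}$ and $d_{bot}$ on $\mathcal{B}$ to obtain the second sentence.

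Concretely, I would write: since $\beta : \mathcal{P} \to \mathcal{B}$ is an isometry by Proposition~\ref{prop:isometry}(2), we have for any two persistence modules $\mathbf{V}, \mathbf{W}$ the equality
\[
d_{int}(\mathbf{V}, \mathbf{W}) \;=\; d_{bot}(\beta(\mathbf{V}), \beta(\mathbf{W})),
\]
which is exactly the content of the Isometry Theorem once one recalls that $\beta(\mathbf{V})$ is the barcode associated to $\mathbf{V}$.

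If one wanted to be pedantic and not merely cite Proposition~\ref{prop:isometry}, the plan would be to recall the two pieces of input behind that proposition: first, the identity $\beta \circ H \circ \alpha = \Id_{\mathcal{B}}$, and second, the fact that both $H$ and $\alpha$ are contractions in the sense that they do not increase the natural distances (as follows from their commutation with the shift functors $\tau_a$ and the natural maps $i_a$, combined with the characterization of $d_{bot}$ given by Proposition~\ref{Prop-1.19}). Composing these observations forces $\beta$ to also be a contraction, and then the identity $\beta \circ H \circ \alpha = \Id_{\mathcal{B}}$ forces all three of $\alpha, H, \beta$ to be genuine isometries rather than mere contractions. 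There is no real obstacle here; the work has been done upstream in constructing $\gamma$ (or equivalently $\beta$) carefully and in verifying Proposition~\ref{Prop-1.19}, after which the corollary is immediate.
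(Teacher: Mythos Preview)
Your proposal is correct and matches the paper's approach: the corollary is stated without proof immediately after Proposition~\ref{prop:isometry}, so it is meant to follow directly from part~(2) of that proposition, exactly as you say. Your optional ``pedantic'' paragraph also aligns with the alternative argument the paper sketches in Remark~\ref{rem:isometry}(1).
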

\begin{corol} [Structure Theorem, \cite{zomorodian-carlsson}]  Each persistence module can be written as $\mathbf V=\bigoplus_{I\in S}  Q(I)$ where $S$ is a list of intervals. 
\end{corol}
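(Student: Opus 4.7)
The plan is to read off the decomposition from the functorial triangle already built in the section. Given a persistence module $\mathbf V$, form the barcode $\mathbf B = \beta(\mathbf V)$, which by definition is a (possibly infinite) list of intervals $S = \{I_j\}_{j}$. Remark \ref{rem:isometry}(3) gives the identity $Q \circ \beta = \Id_{\mathcal P}$, so $\mathbf V$ is already known to equal $Q(\mathbf B)$ (in the infinite / q-tame case this uses that the completions of the three categories correspond under the same functors). Hence the entire task reduces to showing $Q(\mathbf B) \cong \bigoplus_{I \in S} Q(I)$.

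For this, I would first check at the level of Barannikov modules that $\alpha$ is additive on its input list: by the very construction of $\alpha$ in Section~\ref{sec:isometry}, the preferred basis of $\alpha(\mathbf B)$ is the disjoint union of the preferred bases of the $\alpha(I_j)$, the direct sum decomposition $C_+ \oplus C_- \oplus C_0$ respects this partition, and the differential $\partial$ acts on each pair $(a_j,b_j]$ independently. Thus $\alpha(\mathbf B) = \bigoplus_j \alpha(I_j)$ as filtered chain complexes with preferred basis. Applying $H$, which commutes with finite direct sums because taking the homology of a direct sum of filtered complexes gives the direct sum of homologies at every filtration level $s$, yields $Q(\mathbf B) = H(\alpha(\mathbf B)) = \bigoplus_j H(\alpha(I_j)) = \bigoplus_j Q(I_j)$, as required.

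The only genuinely non-routine point is the passage to the infinite case: one must justify that $H$ still commutes with the countable direct sum $\bigoplus_j \alpha(I_j)$ and that the resulting persistence module $\bigoplus_j Q(I_j)$ really satisfies Definition \ref{def:per-mod} (in particular the local finiteness of the spectrum). This is exactly where the q-tameness built into $\overline{\mathcal B}$ (Proposition~\ref{prop:completion_barcodes}) is used: since for each $\varepsilon > 0$ only finitely many $I_j$ have length $> \varepsilon$, the direct sum is locally finite in the filtration parameter, and both the constructibility condition and the colimit condition in Definition~\ref{def:per-mod} hold for $\bigoplus_j Q(I_j)$. With this verification, the identity $Q \circ \beta = \Id$ extended to the completions finishes the proof.
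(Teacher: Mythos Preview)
Your argument is correct and follows essentially the same route as the paper: use $Q\circ\beta=\Id_{\mathcal P}$ to write $\mathbf V=Q(\mathbf B)$ and then observe that $Q(\mathbf B)=\bigoplus_j Q(I_j)$. You have simply unpacked the latter identity more carefully (via the additivity of $\alpha$ and of $H$), whereas the paper states it in one line. One remark: your final paragraph about the infinite/q-tame case is unnecessary here, since by Definition~\ref{def:per-mod} every object of $\mathcal P$ has finite spectrum, so $\beta(\mathbf V)$ is a finite barcode and only finite direct sums are involved.
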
 
\begin{proof} 
Indeed, for a barcode $\mathbf B$ with representative $(I_{j})_{j \in A}$ we have
 $Q(\mathbf B)= \oplus_{j \in A} Q(I_{j})$ and $Q$ is surjective, so we get the result. 
\end{proof} 
%\begin{prop} 
%There is an equivalence of categories between the category  $\mathcal C$ of generic simple complexs and the category $\mathcal B$ of barcodes. It induces an isometry. 
%
%\end{prop} 
%
%\begin{thm}\label{equiv-theorem}
%Let $\mathbf V = (V_s)_{s\in {\mathbb R} }$ be a generic persistence module $(C_*,\partial)$ be the associated simple chain complex and $\mathbf W= (W_s)_{s\in {\mathbb R} }$ be the persistence module $H_*(C_*^s, \partial))$. Then $\mathbf V \longrightarrow \mathbf W$ is an equivalence of category from $\mathcal P$ to itself. Moroever $\mathbf V$ and $\mathbf W$ are equivalent. 
%\end{thm}  
%
%\begin{cor} 
%The functors $ \mathbf V \longrightarrow \mathbf C$ is an isometry. 
%\end{cor} 
%\begin{proof} 
%This follows from Proposition \ref{Lip-1}, \ref{Lip-2} and  Theorem \ref{equiv-theorem}
%\end{proof} 

\subsection{Filtered homology}

We would like to state explicitly certain consequences of Proposition \ref{prop:isometry} which will be used in the following sections of our article.

\begin{prop} \label{prop:filtered_hom}
Let $\mathbf V$ be a persistence module and $\mathbf C= \gamma (\mathbf V)$, where $\gamma$ is as defined in the previous section.  Then, for all $t\in {\mathbb R} $ we have an isomorphism  $H(C^t, \partial_B) \longrightarrow V_t$. 
\end{prop}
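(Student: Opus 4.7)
The plan is to recognize this proposition as essentially an unpacking of the identity $H \circ \gamma = \Id_{\mathcal{P}}$ already verified in Remark \ref{rem:isometry}(3). Indeed, $\gamma(\mathbf{V}) = \mathbf{C}$ by hypothesis, and the functor $H$ sends a Barannikov module to the persistence module whose value at level $t$ is $H(C^t, \partial_B)$. Thus the equality of persistence modules $H(\mathbf{C}) = \mathbf{V}$ is, level by level, the asserted isomorphism $H(C^t, \partial_B) \cong V_t$, with the isomorphisms automatically compatible with the structure morphisms $i_{s,t}$ since they arise from an equality of functors.

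For a more hands-on proof, I would use the Structure Theorem (the corollary following Remark \ref{rem:isometry}): writing $S = \beta(\mathbf{V})$ for the barcode of $\mathbf{V}$, we have $\mathbf{V} = \bigoplus_{I \in S} Q(I)$. Since $\gamma = \alpha \circ \beta$ and $\alpha$ sends a disjoint union of intervals to the corresponding direct sum of Barannikov modules, we obtain $\mathbf{C} = \bigoplus_{I \in S} \alpha(I)$. Both the operation of taking the filtration sublevel at $t$ and the operation of taking homology commute with direct sums, so it suffices to check the isomorphism $H(\alpha(I)^t, \partial) \cong Q(I)_t$ for a single interval $I$.

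This single-interval case reduces to a direct computation. For a non-trivial finite interval $I = (a,b]$, the Barannikov module $\alpha(I)$ has basis $\{a,b\}$ with $\partial b = a$, and these generators sit at filtration levels $a$ and $b$ respectively. One checks that $\alpha(I)^t = 0$ for $t \leq a$, $\alpha(I)^t = \mathbb{F}\langle a \rangle$ with zero differential for $a < t \leq b$ (giving homology $\mathbb{F}$), and $\alpha(I)^t = \mathbb{F}\langle a,b\rangle$ with $\partial b = a$ for $t > b$ (giving homology $0$); this is exactly $Q(I)_t$. The infinite interval $I = (a, \infty]$ is handled analogously with a single generator $a$ and trivial differential.

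The main technical point to be careful about is that $\gamma$ is originally defined on generic persistence modules and extended by continuity, so in the non-generic case one must verify that $H \circ \gamma$ still equals the identity after this extension; but this is exactly what Remark \ref{rem:isometry}(3) guarantees via the injectivity of $\beta$ (which is an isometry). Granted this, the proposition follows either by the short conceptual argument above or by the decomposition argument combined with the single-interval computation.
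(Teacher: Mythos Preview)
Your proof is correct and your primary argument is exactly the paper's: the proposition is simply the level-by-level translation of $H\circ\gamma = \Id_{\mathcal P}$, which was established in Remark~\ref{rem:isometry}. The additional hands-on argument via the Structure Theorem and the single-interval computation is a valid alternative, but the paper contents itself with the one-line reduction to Remark~\ref{rem:isometry}.
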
 
\begin{proof} 
This is simply stating that $H\circ \gamma = \Id_{\mathcal{P}}$ which we proved in Remark \ref{rem:isometry}.
\end{proof}

\begin{corol}
Suppose that $f$ is a  Morse function on $M$ and denote $M^t:= \{x \in M: f(x) <t\}$.  Let $\mathbf{V} $ be the persistence module $(H(M^t))$   and let $\mathbf{C}$ be the  associated Barannikov module.  We have 
$H(C^t,\partial_C)=H(M^t)$. 
\end{corol}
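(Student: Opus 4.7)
The plan is that this corollary is essentially an immediate specialization of Proposition \ref{prop:filtered_hom} to the persistence module arising from sublevel sets of a Morse function. So the only real work is to verify that $\mathbf{V} = (H(M^t))_{t \in \R}$ genuinely satisfies the axioms of Definition \ref{def:per-mod}, and then one reads off the corollary.

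First, I would check the three conditions in the definition of a persistence module. Condition (1) is automatic: the inclusions $M^s \hookrightarrow M^t \hookrightarrow M^u$ compose, so the induced maps on singular homology satisfy $i_{t,u} \circ i_{s,t} = i_{s,u}$ and $i_{s,s} = \Id$. For condition (2), take $F$ to be the (finite, since $M$ is closed and $f$ is Morse) set of critical values of $f$; on each connected component of $\R \setminus F$ the sublevel sets $M^s$ are related by a smooth isotopy obtained by flowing along $-\nabla f$, so the inclusion is a homotopy equivalence and $i_{s,t}$ is an isomorphism. For condition (3), we have $M^t = \bigcup_{s < t} M^s$ and singular homology commutes with such nested unions, giving $\varinjlim_{s<t} H(M^s) = H(M^t)$.

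Once $\mathbf{V}$ is recognized as a persistence module, the Barannikov module $\mathbf{C} = \gamma(\mathbf{V})$ associated to it is well-defined via the functor constructed in Section \ref{sec:isometry} (using the isometry $\beta$ to handle the possibly non-generic case). Proposition \ref{prop:filtered_hom} then gives, for every $t \in \R$, a canonical isomorphism $H(C^t, \partial_C) \cong V_t$, which is precisely the desired identity $H(C^t, \partial_C) \cong H(M^t)$.

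There is no substantive obstacle here; the main thing to be careful about is confirming that the persistence module axioms of Definition \ref{def:per-mod} are indeed met, in particular the finite-spectrum condition, which is where Morseness of $f$ and closedness of $M$ enter. If $f$ were only smooth or $M$ were non-compact, one would need to replace the definition by a more general one (e.g.\ the q-tame setting alluded to in the remark after Proposition \ref{prop:completion_barcodes}), but under the stated hypotheses the identification follows at once from $H \circ \gamma = \Id_{\mathcal{P}}$.
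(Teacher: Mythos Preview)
Your proposal is correct and matches the paper's approach exactly: the paper presents this corollary without proof, as an immediate specialization of Proposition~\ref{prop:filtered_hom} (i.e.\ of $H\circ\gamma=\Id_{\mathcal P}$) to the persistence module $V_t=H(M^t)$, which was already introduced as an example of a persistence module in Section~\ref{sec:persistence}. Your explicit verification of the axioms of Definition~\ref{def:per-mod} is more careful than the paper, but the substance is the same.
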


\begin{remark}
%\item
%One should be careful about the fact that the Barannikov complex associated to a function $f$ {\bf does} depend on the choice of the field $\mathbb F$.
 An analogous statement holds for Floer homology. This plays a crucial role in the following sections of our article.

\end{remark}
\section{Barcodes via Hamiltonian Floer theory}\label{sec:floer}
  The goal of this section is to explain how one may use Hamiltonian Floer theory to associate  barcodes to Hamiltonian diffeomorphisms.  In the case of surfaces other than the sphere, we will have maps
   $$\B_j: \Ham (\Sigma, \omega) \rightarrow \widehat{\mathcal B},$$ for each $j \in \Z$. The index $j$ corresponds to the Conley-Zehnder index and $\widehat{\mathcal B}$ denotes the space of barcode considered up to shift.  In the case of the sphere, we will have similar maps  $$\B_j: \UHam (\S^2, \omega) \rightarrow \widehat{\mathcal B},$$ which are only well-defined on the universal cover of $\Ham(\S^2, \omega)$.  Since this is a two sheeted covering, for each $\varphi \in \Ham(\S^2, \omega)$, and each $ j \in \Z$, we will have two barcodes $\B_j(\tilde \varphi_1), \B_j(\tilde \varphi_2)$.  Lastly, we will also introduce the \emph{total barcode} $\B$ which will combine all the $\B_j$'s into a single barcode.  
   
  We should point out that one can associate barcodes to Hamiltonian diffeomorphisms on symplectic manifolds more general than what we consider here; see \cite{UZ}.

  \medskip
  
  We will begin by introducing some of our conventions and recalling some basic symplectic geometry.   Suppose that $M$ is equipped with a symplectic form $\omega$.  A symplectic diffeomorphism is a diffeomorphism $\theta: M \to M$ such that $\theta^* \omega = \omega$. The set of all symplectic diffeomorphisms of $M$ is denoted by $\Symp(M, \omega)$.  Hamiltonian diffeomorphisms provide an important class of examples of symplectic diffeomorphisms. Recall that a smooth Hamiltonian $H: \S^1 \times M \to \R$ gives rise to a time-dependent vector field $X_H$ which is defined via the equation: $\omega(X_H(t), \cdot) = -dH_t$.  The Hamiltonian flow of $H$, denoted by  $\varphi^t_H$, is by definition the flow of $X_H$.  A Hamiltonian diffeomorphism is a diffeomorphism which arises as the time-one map of a Hamiltonian flow.  The set of all Hamiltonian diffeomorphisms, which is denoted by $\Ham(M, \omega)$, forms a normal subgroup of $\Symp(M, \omega)$. 

The inverse of a Hamiltonian flow  $(\varphi_H^t)^{-1}$ is itself a Hamiltonian flow whose generating Hamiltonian is given by $\bar{H}(t,x) = -H(t,\varphi^t_H(x))$.  Given two Hamiltonian flows $\varphi^t_H, \varphi^t_K$ the composition $\varphi_H^t \varphi_K^t$ is also a Hamiltonian flow, generated by $H \# K(t,x) := H(t,x)+K(t,(\varphi^t_H)^{-1}(x))$.

Given a Hamiltonian $H$, we define
$$\displaystyle \| H \| = \int_{\S^1} [ \max_{x \in M} H(t,\cdot) - \min_{x \in M} H(t, \cdot)] \; dt.$$  Recall that the Hofer distance \cite{hofer, lalonde-mcduff} between $\phi, \psi \in \Ham(M, \omega)$ is defined to be $d_{Hofer}(\phi, \psi):= \inf\{\|H - G\| : \phi = \varphi^1_H, \psi = \varphi^1_G\}.$

We will denote by $\UHam(M, \omega)$ the universal of cover of $\Ham(M, \omega)$.  An element $\tilde \varphi$ of $\UHam (M, \omega)$  consists of  the homotopy class of  a Hamiltonian path $\{\varphi^t_H\}_{0 \leq t \leq 1}$, relative to its endpoints which are $\Id$ and $\varphi^1_H$.

 \subsection{A brief review of Hamiltonian Floer theory}\label{sec:floer_review}
  
  We will now briefly recall the aspects of Hamiltonian Floer theory which will be needed to construct barcodes for Hamiltonian diffeomorphisms.   Throughout this section, unless otherwise stated, $(M, \omega)$  will denote a $2n$ dimensional, closed, connected and monotone symplectic manifold.  The latter condition means that there exists $\lambda \geq 0$ such that $\omega|_{\pi_2} = \lambda c_1|_{\pi_2}$.  All closed surfaces are examples of monotone symplectic manifolds.\footnote{Symplectic manifolds on  which $\omega|_{\pi_2} =0= c_1|_{\pi_2}$ are usually called symplectically aspherical.  All closed surfaces other than the sphere fall into this category.  Note that we are treating symplectically aspherical manifolds as monotone.}

   We denote by $\Omega_0(M)$ the space of contractible loops in $M$.  The Novikov covering of $\Omega_0(M)$ is defined by 
   $$\tilde{\Omega}_0(M) = \frac{ \{ [z,u]: z \in \Omega_0(M) , u: D^2 \rightarrow M , u|_{\partial D^2} = z \}}{[z,u] = [z', u'] \text { if } z=z' \text{ and } \omega (\bar{u} \# u') = 0},$$  
   where $\bar{u} \# u'$ is the sphere obtained by gluing $u$, with its orientation reversed, to $u'$ along their common boundary.  The disc $u$ in $[z, u]$, is referred to as the capping disc of the loop $z$.     
   
   We should point out that in the case of surfaces of non-zero genus, since $\pi_2(M) = 0$, we have $\tilde{\Omega}_0(M) = \Omega_0(M)$.  More generally, in the case of the sphere and other monotone manifolds, $\tilde{\Omega}_0(M)$ is a covering space of $\Omega_0(M)$ whose group of deck transformations is given by $\Gamma:= \frac{\pi_2(M)}{\ker(c_1)} = \frac{\pi_2(M)}{ \ker([\omega])} \simeq  \Z$.  An element   $A$ of $ \Gamma$  acts on $\tilde{\Omega}_0(M)$ by $A  [z,u] = [z, u \# A]$.
   
\medskip

\noindent \textbf{The action functional and its spectrum.}  
 Recall that the action functional $\mathcal{A}_H: \tilde{\Omega}_0(M) \rightarrow \mathbb{R}$, associated to a Hamiltonian $H$, is defined by
   $$\mathcal{A}_H([z,u]) =  \int_{0}^{1} H(t,z(t))dt \text{ }- \int_{D^2} u^*\omega.$$
Note that $\mathcal{A}_H([z,u \# A ]) = \mathcal{A}_H([z,u ]) - \omega(A),$    for every $A \in \Gamma$.

The set of critical points of $\mathcal{A}_H$, denoted by $\Crit(\mathcal{A}_H)$, consists of equivalence classes,  $[z,u] \in \tilde{\Omega}_0(M)$, such that $z$ is a $1$--periodic orbit of the Hamiltonian flow $\varphi^t_H$.  We will often refer to such  $[z,u]$ as capped $1$--periodic orbits of $\varphi^t_H$.

The action spectrum of $H$, denoted by $\Spec(H)$, is the set of critical values of $\mathcal{A}_H$; it has Lebesgue measure zero.   It turns out that the action spectrum $\Spec(H)$ is independent of $H$ in the following sense: If  $H'$ is another Hamiltonian such that $\varphi^1_H = \varphi^1_{H'}$, then  there exists a constant $C \in \R$ such that  $\Spec(H) = \Spec(H') + C,$ where $\Spec(H') + C$ is the set obtained from $\Spec(H')$ by adding the value $C$ to every element of $\Spec(H')$. It then follows that we can define the action spectrum of a Hamiltonian diffeomorphism $\phi$ by setting $\Spec(\phi) = \Spec(H)$, where $H$ is any Hamiltonian such that $\phi = \varphi^1_H$.  Of course, $\Spec(\phi)$ is well-defined up to a shift by a constant.

\medskip

\noindent \textbf{The Conley-Zehnder index.}  We say a diffeomorphism $\varphi$ is non-degenerate if its graph intersects the diagonal in $M \times M$ transversely.  A Hamiltonian $H$ is called non-degenerate if the time-1 map of its flow $\varphi^1_H$ is non-degenerate.  When $H$ is non-degenerate the set $\Crit(\A_H)$ can be indexed by the  Conley-Zehnder index, $\mu_{cz}: \Crit(\mathcal{A}_H) \rightarrow \mathbb{Z}$, which satisfies the following properties:
\begin{enumerate}
\item For every critical point $p$ of  a $C^2$-small Morse function $f$, we have $i_{Morse}(p) = \mu_{cz}([p, u_p]),$  where $u_p$ is a trivial capping disc and $i_{Morse}$ denotes the Morse index.  

  \item  For every $A \in \pi_2(M)$, we have \begin{equation} \label{eq:cZ_capping}
\mu_{cz}([z,u\#A]) = \mu_{cz}([z,u]) - 2 c_1(A),
\end{equation} where $u\#A$ denotes the capping disc obtained by attaching the sphere $A$ to the disc $u$.
\end{enumerate}

\noindent \textbf{Filtered Floer homology.}  Fix a ground field $\F$ and let $H$ be a non-degenerate Hamiltonian on $M$.  Being non-degenerate means that at each fixed point $x$ of the time--$1$ map $\varphi^1_H$, the derivative $D_x\varphi^1_H$ does not have $1$ as an eigenvalue.  

 For $t \in (-\infty, \infty] \setminus \Spec(H)$, we define $CF_m^t(H)$ to be the $\F$--vector space of formal linear combinations of the form
$$\displaystyle \alpha = \sum_{[z,u] \in \Crit(\A_H)} {\alpha_{[z,u]}[z,u]} ,$$
where $\alpha_{[z,u]}\in \F, \mu_{cz}([z,u]) = m$, and $\A_H([z,u]) < t$.  Furthermore, $\alpha$ is required to satisfy the following finiteness criterion:  for any given $c \in \R$, the number of terms such that $ \A_H([z,u]) > c$ and $\alpha_{[z,u]} \neq 0$ must be finite\footnote{Note that on surfaces of positive genus this finiteness criterion is void as the set $\Crit(A_H)$ is itself finite.}.  

The vector space $CF_*^t(H)$ is equipped with the Floer boundary map  $\partial: CF_*^t(H) \rightarrow CF_{*-1}^t(H)$ which counts $L^2$ negative gradient trajectories of $\mathcal{A}_H$:  these are maps $u(s,t): \R \times S^1 \rightarrow M$ satisfying the PDE $\partial_s u + J(t,u)(\partial_t u + X_H) = 0,$ where $J$ denotes an almost complex structure on $M$ which is compatible with $\omega$.  It is a theorem of Floer that $\partial^2 = 0$. The homology of the chain complex $(CF_*^t(H), \partial)$ is referred to as the \emph{filtered Floer homology} of $H$ and is denoted by $HF_*^t(H)$.  

More generally, filtered Floer homology groups may be defined for any  interval of the form $(a,b)$, where $-\infty \leq a,b \leq \ \infty$ are not in  $\Spec(H)$:  $HF_*^{(a,b)}(H)$ is defined to be the homology of the quotient complex $CF_*^{(a,b)}(H) = CF_*^b(H)/CF_*^a(H)$.  Let us remark that the filtered Floer homology groups do not depend on the choice of the almost complex structure $J$.

For our purposes, we will be needing the following property of filtered  Floer homology groups:  Suppose that two Hamiltonian paths $\{\varphi^t_{H_0} \}_{0 \leq t \leq 1}$ and $\{\varphi^t_{H_1} \}_{0 \leq t \leq 1}$  are homotopic relative endpoints in $\Ham(M, \omega)$, \emph{i.e.}\ they represent the same element of $\UHam (M, \omega)$.  Then, there exists a constant $c \in \R$ such that 

\begin{equation}\label{eq:homotopy_inv}
HF_*^t(H_0) = HF_*^{t+c} (H_1), \forall t \in \R.
\end{equation}

In other words, for $\tilde \varphi \in \UHam(M, \omega)$, the filtered homology groups $HF_*^t(\tilde \varphi)$ are well-defined up to a shift by a constant.
 
 We should mention that the filtered Floer homology groups are well-defined for degenerate $H$ as well: given $a, b \notin \Spec(H)$, one simply defines $HF_*^{(a,b)}(H)$ to be $HF_*^{(a,b)}(\tilde H)$ for $\tilde H$ which is non-degenerate and sufficiently $C^2$ close to $H$.  The definition does not depend on the choice of $\tilde H$.
 
Note that the action of $\Gamma$ on  $\tilde \Omega_0(M)$, which we described above, induces natural isomorphisms on Floer homology:  Given $A \in \Gamma$, the action $[z, u] \mapsto [z, u \# A]$ induces an isomorphism of filtered chain complexes $CF_*^t(H)$ and $CF_{*-2 c_1(A)}^{t- \omega(A)}(H).$    

\subsection{Barcodes for Hamiltonians}\label{sec:barcodes_for_hamiltonians}
Fix an integer $j$ and let $H$ be a strongly non-degenerate Hamiltonian.  Being strongly non-degenerate means that $H$ is  non-degenerate and no two if its capped 1--periodic orbits have the same action.  As explained in \cite{pol-shel-stojis}, the family of vector spaces $HF_j^s(H)$ forms a persistence module where the morphisms  $i_{s,t}: HF_j^s(H) \rightarrow HF_j^t(H)$ of Definition \ref{def:per-mod} are induced by the inclusion of $CF_j^s(H) \hookrightarrow CF_j^t(H)$.

Applying the map $\gamma$ of Section \ref{sec:barannikov} to this persistence module, we obtain a corresponding Barannikov complex which we will denote by $(BC^s_j(H), \partial_B)$.  Let us emphasize that the generators of this complex are given by \emph{actions}, $\A_H([z,u)]$, of capped 1--periodic orbits $[z,u] \in \Crit(\A_H)$ whose Conley-Zehnder indices are either $j$ or $j+1$.  The boundary map, which is given by the construction carried out in Section \ref{sec:barannikov}, sends the action of an orbit of index $j+1$ to the action of an orbit of index $j$ and it is zero on  actions  orbits of index $j$.

Next, applying the map $\beta$ of Section \ref{sec:isometry} to the persistence module $HF_j^s(H)$, we obtain a corresponding barcode which we will denote by $\B_j(H)$.   The lower ends of the bars in $\B_j(H)$ are actions of orbits of index $j$ while the upper ends of the bars are actions of orbits of index $j+1$.
It is easy to see that, since $H$ is strongly non-degenerate, the action of an orbit appears as one endpoint of exactly one bar in exactly one of the barcodes $\B_j(H)$. 

It can be shown, via standard Floer theoretic arguments (see e.g.  Equation (4) in \cite{PolShel}), that barcodes are 1-lipschitz with respect to Hofer distance, namely
\begin{equation}\label{eq:continuity}
\dbot(\B_j(H), \B_j(G)) \leq \| H - G\|.
\end{equation}
%where $\displaystyle \| H \| = \int_{\S^1} [ \max_{x \in M} H(t,\cdot) - \min_{x \in M} H(t, \cdot)] \; dt$ denotes the Hofer norm.  

The above inequality allows us to define the barcode of any smooth, or even continuous, function $H:\S^1 \times M \rightarrow \R$.  Indeed, for an arbitrary smooth, or continuous, $H$ take $H_i$ to be a sequence of  non-degenerate Hamiltonians such that $\|H - H_i\| \to 0 $, and define $\B_j(H) := \lim \B_j(H_i)$ where the limit is taken with respect to the bottleneck distance. Note that $\B_j(H)$ belongs to the completion of the space of barcodes $\overline {\mathcal B}$ which we described in Section \ref{sec:barcodes}; see Proposition \ref{prop:completion_barcodes}.  Hence, for each integer $j$, we obtain  a map $$\B_j: C^{\infty}(\S^1 \times M) \rightarrow \overline {\mathcal B}$$ which continues to satisfy  Equation \eqref{eq:continuity}.  The rest of this section is dedicated to describing  some of the properties of the barcodes  $\B_j(H)$.

\medskip

\noindent \textbf{Spectrality:}  If $H$ is non-degenerate, the set of endpoints of $\B_j(H)$ forms a subset of $\Spec(H)$.  In fact, this statement continues to hold for any $H \in C^{\infty}(\S^1 \times M)$.  This  can  be proven  by writing $H$ as the limit, in $C^2$ topology, of a sequence of non-degenerate Hamiltonians $H_i$ and applying the continuity and spectrality properties to the $H_i$'s.

\medskip

\noindent \textbf{Symplectic Invariance:} For any $\psi \in \Symp(M, \omega)$ and any $H \in C^{\infty}(\S^1 \times M)$, we have
\begin{equation} \label{eq:conj_inv1}
\B_j(H \circ \psi) = \B_j(H).
\end{equation}
The above follows from the fact that, for non-degenerate $H$, the filtered Floer complexes $CF_*^t(H)$ and $CF_*^t(H \circ \psi)$ are isomorphic.  See, for example, \cite{PolShel} for further details.

\medskip 

\noindent  \textbf{Periodicity:}  Let $N, \tau$ denote the minimal non-negative generators of $c_1(\\ \pi_2(M))$, $\omega (\pi_2(M))$, respectively. 
\begin{equation} \label{eq:periodicity}
\B_j(H ) = \B_{j-2N}(H)- \tau,
\end{equation}
where $\B_{j-2N}(H)- \tau $ is the barcode obtained from  $\B_{j-2N}(H)$ by shifting each of its bars by  $-\tau$.   The above follows immediately from the last paragraph of Section \ref{sec:floer_review}.  Let us point out that in the case of surfaces this property is of interest only for $S^2$ as  $N = \tau = 0$ for other  surfaces.  In the case of the sphere, $N =2$ and the symplectic form may be normalized to impose $\tau = 1$.

\subsubsection{The total barcode of a Hamiltonian}
Let $H$ denote a non-degenerate Hamiltonian and define the \emph{total barcode} of $H$ to be $$\B(H) :=  \sqcup_j \B_j(H).$$  
This is not a standard barcode in the sense of Definition \ref{def:barcode} because it has infinitely many bars. In fact, we can see from the periodicity property that if $I$ is a bar in $\B(H)$, then so are  the shifted bars $I + n \tau$ for every integer $n$.  Nevertheless, we can still work with these barcodes:  The definition of the bottleneck distance easily extends to this class of barcodes.  Note that $\dbot(\B(H), \B(G)) \leq \sup_j \{ \dbot(\B_j(H), \B_j(G))\}$.   One can easily check that the total barcode satisfies Inequality \eqref{eq:continuity} and so it can be defined for any $H$.  Moreover, $\B(H)$ satisfies the spectrality and symplectic invariance properties from the previous section.   Lastly, observe that the periodicity property translates to the following shift invariance property:
\begin{equation}\label{eq:shift_inv}
\B(H) = \B(H) + \tau,
\end{equation}
where $\tau$ is the non-negative generator of $\omega(\pi_2(M))$.

We should emphasize that in the case of aspherical manifolds, such as surfaces of positive genus, $\B_j(H)$ is non-trivial for only finitely many values of $j$.  Note also that, on aspherical manifolds,  the family of vector spaces $\oplus_j HF_{j}^t(H)$ is a persistence module and so as in the previous section we can associate a barcode to this persistence module.  This barcode will be exactly the total barcode $\B(H)$.

\medskip

The following  property of the total barcode will be used in Section \ref{sec:proof_invariance_Lef}.

\begin{prop}\label{prop:endpoints_in_interval}
Let $(a,b)$ denote an interval whose endpoints are not in the spectrum of $H$.  The number of bars of $\B(H)$ which have exactly one endpoint in the interval $(a,b)$ is given by the rank of $HF_*^{(a,b)}(H)$.   

In particular, if $c\in \Spec(H)$ is isolated, then the total number of bars with one end-point at value $c$ is given by the rank of $HF_*^{(c -\varepsilon,c + \varepsilon)}(H)$ for sufficiently small $\varepsilon > 0$.
\end{prop}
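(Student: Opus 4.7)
The plan is to combine the Structure Theorem (the corollary following Proposition \ref{prop:isometry}) with the long exact sequence of the pair $(CF^a(H), CF^b(H))$.

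First, assume that $H$ is strongly non-degenerate. The tautological short exact sequence of filtered complexes
$$0 \to CF^a(H) \to CF^b(H) \to CF^{(a,b)}(H) \to 0$$
yields a long exact sequence in homology; splitting it into short exact sequences and summing over the grading produces
$$0 \to \Coker(i_{a,b}) \to HF^{(a,b)}(H) \to \Ker(i_{a,b}) \to 0,$$
where $i_{a,b} \colon HF^a(H) \to HF^b(H)$ is the inclusion-induced map. Thus $\mathrm{rank}\, HF^{(a,b)}(H) = \mathrm{rank}\, \Ker(i_{a,b}) + \mathrm{rank}\, \Coker(i_{a,b})$.

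Next, by the Structure Theorem applied to the persistence module $s \mapsto HF^s(H)$, there is a decomposition $HF^s(H) \simeq \bigoplus_k Q(I_k)_s$, where $I_k = (a_k, b_k]$ runs over the bars of $\mathbf{B}(H)$; the map $i_{a,b}$ splits as a direct sum of elementary maps $Q(I_k)_a \to Q(I_k)_b$, each of which is the identity when $a, b \in I_k$ and zero otherwise. Since $a, b \notin \Spec(H)$ and bar endpoints lie in $\Spec(H)$, we have $a, b \notin \{a_k, b_k\}$ for every $k$. A direct case analysis on each summand then shows that $Q(I_k)$ contributes $1$ to $\Ker(i_{a,b})$ exactly when $a_k < a$ and $b_k \in (a,b)$ (equivalently, $b_k$ is the unique bar endpoint lying in $(a,b)$), and contributes $1$ to $\Coker(i_{a,b})$ exactly when $a_k \in (a,b)$ and $b_k \geq b$ (equivalently, $a_k$ is the unique endpoint in $(a,b)$); all other configurations, including the cases where both endpoints lie in $(a,b)$ or where $I_k$ contains $[a,b]$, give zero contribution. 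The two contributing cases are mutually exclusive, and together enumerate exactly the bars with a single endpoint in $(a,b)$, which establishes the formula in the non-degenerate case.

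For general $H$, approximate in $C^2$ topology by strongly non-degenerate $H_i \to H$. Since $a, b \notin \Spec(H)$, for large $i$ one has $HF^{(a,b)}(H_i) \cong HF^{(a,b)}(H)$ by definition, and by continuity of $\mathbf{B}$ in the bottleneck distance together with discreteness of the spectrum, the count of bars of $\mathbf{B}(H_i)$ with exactly one endpoint in $(a,b)$ stabilizes to the corresponding count for $\mathbf{B}(H)$; this yields the general case. The second statement is then immediate: if $c \in \Spec(H)$ is isolated, choose $\varepsilon$ small enough that $\Spec(H) \cap (c-\varepsilon, c+\varepsilon) = \{c\}$, so that any bar with a unique endpoint in that interval places it at $c$, and no bar can have both endpoints equal to $c$. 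The only real point of care in this proof is the case analysis of the middle paragraph, together with verifying that the count of bars passes correctly to the bottleneck limit when the interval endpoints avoid the spectrum.
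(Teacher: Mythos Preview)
Your argument is correct and, in the aspherical case, arguably cleaner than the paper's. The paper reduces to the non-degenerate case first (same idea as your last paragraph), and then invokes Proposition~\ref{prop:filtered_hom} to identify $HF_*^{(a,b)}(H)$ with the homology of the quotient Barannikov complex $C^b/C^a$; since in that complex the differential is a bijection between two halves of a preferred basis, the rank computation is immediate. Your route via the long exact sequence and the Structure Theorem is the same computation in a different packaging: the kernel/cokernel of $i_{a,b}$ are exactly what the Barannikov presentation makes visible, and your case analysis on interval modules is correct (including for semi-infinite bars).

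There is one technical point you glossed over that the paper treats explicitly. When $\Sigma=\S^2$, the family $s\mapsto HF_*^s(H)$ is \emph{not} a persistence module in the sense of Definition~\ref{def:per-mod}: the spectrum is infinite because of recappings, and the vector spaces are infinite-dimensional. So the Structure Theorem, as stated, does not apply to it. The paper handles this by observing that only finitely many capped orbits have action in $(a,b)$, restricting to the finite range of Conley--Zehnder indices they occupy, and applying the finite theory to $\bigoplus_{N-1\le j\le M} HF_j^s(H)$. Your argument is easily repaired in the same way, or more simply by applying the Structure Theorem to each graded piece $HF_j^s(H)$ separately (each of which is a genuine persistence module) and then summing; the long exact sequence respects the grading, so the kernel/cokernel computation goes through degree by degree. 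Either way the fix is routine, but you should say a word about it rather than invoke the Structure Theorem on an object to which it does not literally apply.
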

\begin{proof}
Observe that a sufficiently $C^2$--small perturbation of $H$ does not change the total number of bars which have exactly one endpoint in the interval $(a,b)$:  Indeed,  such perturbation might create new bars but both endpoints of these bars will be either inside $(a,b)$ or outside of it.  On the other hand, $HF_*^{(a,b)}(H)$ is by definition $HF_*^{(a,b)}(\tilde H)$ where $\tilde H$ is sufficiently $C^2$ close to $H$.  We conclude that we may suppose that $H$ is non-degenerate and that no two of its  periodic orbits have the same action. 

First, suppose that $\Sigma \neq \S^2$.  In that case, the barcode $\B(H)$ is a standard (finite) barcode.  We leave it to the reader to check that in this case the result follows from Proposition  \ref{prop:filtered_hom}.  

Now, suppose that $\Sigma = \S^2$.  Since $\B(H)$ is not a standard barcode, in the sense of Definition \ref{def:barcode}, we cannot immediately apply  Proposition \ref{prop:isometry}.\footnote{Although the proposition does not apply directly, 
one can check that its proof may be modified to encompass more general barcodes such as the one being considered here.} We will reduce to the case of finite barcodes by noting that the set of periodic orbits of $H$ whose actions are in $(a, b)$ is finite.  Let $N,M$ denote the minimal, and respectively maximal, Conley-Zehnder indices of periodic orbits whose actions are in the interval $(a,b)$.  Define the barcode $\B'(H) = \sqcup \B_j(H)$ where $N-1 \leq j \leq M$; recall that the action of a periodic orbit of index $j$ is the endpoint of a bar in either $B_j(H)$ or $B_{j-1}(H)$. We leave it to the reader check that any bar in $\B(H)$ which has an endpoint in $(a,b)$ appears in $\B'(H)$.   

The barcode $\B'(H)$  is the barcode of the persistence module $V_t = \oplus_j HF_j^t \\(H)$, where $N-1 \leq j \leq M$ .  It follows, once again from Proposition \ref{prop:isometry}, that the number of bars in $\B'(H)$ with exactly one endpoint in $(a,b)$ coincides with the rank of $\oplus_j HF_j^{(a,b)}(H)$, where $N-1 \leq j \leq M$.  But the latter is exactly the rank of $HF_*^{(a,b)}(H)$.  This completes the proof.
\end{proof}

Let us point out a consequence of Equation \eqref{eq:shift_inv}   which will play an important role in our story: Given a barcode $\B$ and an interval $I \subset \R$, denote by $\# \mathrm{Endpoints}(\B)\cap I$ the total number of endpoints of bars of $\B$ which are in the interval $I$.  We should emphasize that, here, we count endpoints with their multiplicities, \emph{i.e.}\ if the same value appears as an endpoint of $k$ different bars, then it is counted $k$ times.  

\begin{corol}\label{corol:endpoints_shift_inv}
Suppose that $\B$ is a barcode which is invariant under shift by $\tau$, i.e.\  $\B + \tau = \B$.  Then, for any $c\in \R$ we have  $$\# \mathrm{Endpoints}(\B) \cap [0, \tau) = \# \mathrm{Endpoints}(\B + c) \cap [0, \tau).$$ 
\end{corol}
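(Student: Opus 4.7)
The plan is to reduce the statement to the trivial observation that a $\tau$-periodic multiset in $\R$ has the same number of elements in every half-open interval of length $\tau$. The hypothesis $\mathbf{B} = \mathbf{B} + \tau$ says that the families of non-trivial bars of $\mathbf{B}$ and $\mathbf{B} + \tau$ agree (as multisets, up to permutation, per the equivalence relation defining a barcode), so the multiset $E := \mathrm{Endpoints}(\mathbf{B})$ of finite endpoints satisfies $E + \tau = E$ at the level of multisets. Likewise the endpoints of $\mathbf{B} + c$ form the multiset $E + c$.

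Next, I would observe that a direct change of variables yields
$$\# \mathrm{Endpoints}(\mathbf{B} + c) \cap [0, \tau) = \#(E + c) \cap [0, \tau) = \# E \cap [-c,\, \tau - c),$$
so it is enough to prove that $\# E \cap [a, a + \tau) = \# E \cap [0, \tau)$ for every $a \in \R$. Writing $a = k\tau + r$ with $k \in \Z$ and $r \in [0, \tau)$, I would split
$$[a, a+\tau) = [k\tau + r,\; (k+1)\tau) \ \sqcup \ [(k+1)\tau,\; (k+1)\tau + r).$$
Applying the $\tau$-periodicity of $E$ to each piece (translating the first by $-k\tau$ and the second by $-(k+1)\tau$), these two sub-intervals correspond to $[r, \tau)$ and $[0, r)$ respectively, whose disjoint union is precisely $[0, \tau)$. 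This gives the desired equality.

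I do not expect a real obstacle. The single point worth a moment of care is that $E$ must be handled as a \emph{multiset} rather than a set, since different bars may share an endpoint, and that trivial intervals $(a,a]$ contribute nothing. Since the equivalence relation defining a barcode respects both of these conventions, the identity $E + \tau = E$ is genuinely an identity of multisets, and the cardinality manipulations above remain valid (the argument also works formally if the counts are infinite, in which case both sides of the desired equality are $+\infty$, though in the applications of interest the multiset $E \cap [0,\tau)$ is finite).
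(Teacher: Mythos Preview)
Your proof is correct. The paper does not give a proof of this corollary at all, presenting it simply as an immediate consequence of the shift invariance $\mathbf B = \mathbf B + \tau$; your argument is precisely the elementary unpacking of that observation (the multiset of finite endpoints is $\tau$-periodic, hence has the same count in every half-open interval of length $\tau$), so there is nothing to compare.
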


\subsection{Barcodes for Hamiltonian diffeomorphisms}\label{sec:barcodes_for_ham_diffeos}
Recall that given a barcode $\B  \in \overline {\mathcal B} $ and $c\in \mathbb{R}$ , we have defined $\B + c$ to be the barcode obtained from $\B$ by shifting each of its intervals by $c$.  Let $\sim$ denote the equivalence relation on $\overline {\mathcal B}$  given by  $\B_1 \sim \B_2$  if  $\B_2 = \B_1 +c$ for some $c \in \mathbb{R}$; we will denote the quotient space by $\widehat{\mathcal B}$.  The bottleneck distance defines a distance on $\widehat{\mathcal B}$ which we will continue to denote by $\dbot$. 

It is an immediate consequence of Equation \eqref{eq:homotopy_inv} that if  two Hamiltonian paths $\{\varphi^t_{H_0} \}_{0 \leq t \leq 1}$ and $\{\varphi^t_{H_1} \}_{0 \leq t \leq 1}$  are homotopic relative to endpoints in $\Ham(M, \omega)$, and so they represent the same element of $\UHam (M, \omega)$, then $\B_j(H_0) = \B_j(H_1)$ in $\widehat{\mathcal B}$. Hence, we obtain a map, which we will continue to denote by the same symbol, $$\B_j: \UHam (M, \omega) \rightarrow \widehat{\mathcal B}.$$

The barcode $\B_j(\tilde \varphi)$, where $\tilde \varphi \in \UHam (M, \omega)$, inherits appropriately restated versions of the properties listed in the previous section.  
\medskip
Of course, we may also define the total barcode  
$$\B: \UHam(M, \omega) \rightarrow \widehat{\mathcal{B}}.$$  Indeed, $\B(\tilde \varphi)$ is simply $\B(H)$ considered as a barcode up to shift, where $H$ is any Hamiltonian whose flow represents $\tilde \varphi$.

\begin{remark}\label{rem:not_shifted}
Alternatively to our approach in this article, one could define $\B_j(\tilde \varphi), \B(\tilde \varphi)$ to be $\B_j(H), \B(H)$ where $H$ is a \emph{mean-normalized} Hamiltonian whose flow is a representative of $\tilde \varphi$.  Being mean-normalized means $\int_0^1 \int_M H \omega^n = 0$.  This defines $\B_j(\tilde \varphi), \B(\tilde \varphi)$ without any ambiguity  as a barcode, as opposed to a barcode up to shift, and so one obtains  maps $\B_j, \B: \UHam (M, \omega) \rightarrow \bar{\mathcal B}.$  This is the  manner in which barcodes are defined in \cite{PolShel}, and in fact, it is a more natural approach from the point of view of Hofer geometry.  However, this approach is not suitable for our purposes as it yields barcodes which are not continuous in the uniform topology; see Remark \ref{rem:cont_shift_necessary}.
\end{remark}

\medskip  
As in the previous section, denote by $\tau$ the minimal positive generator of $\omega(\pi_2(M))$.  
\begin{definition}
For any $\tilde \varphi \in \UHam(M, \omega)$, we define $$\# \mathrm{Endpoints}(\B(\tilde \varphi)) \cap [0, \tau) = \# \mathrm{Endpoints}(\B(H)) \cap [0, \tau),$$ where $H$ is any Hamiltonian whose flow represents $\tilde \varphi$.
\end{definition}

The above definition is well-defined as a consequence of Corollary \ref{corol:endpoints_shift_inv}.\footnote{The quantity$ \# \mathrm{Endpoints}(\B(\tilde \varphi)) \cap [0, \tau)$ is, in fact, well-defined for $\varphi \in \Ham(M, \omega)$, i.e. $ \# \mathrm{Endpoints}(\B(\tilde \varphi_1)) \cap [0, \tau) =  \# \mathrm{Endpoints}(\B(\tilde \varphi_2)) \cap [0, \tau)$ for any two lifts  $\tilde \varphi_1,  \tilde \varphi_2$  of $\varphi$ to $\UHam(M, \omega)$.  This fact, which will not be used in our paper, is a consequence of Proposition \ref{prop:rank_endpoints} and Remark \ref{rem:rank_endpoints_general}.}

\subsubsection{Barcodes on surfaces of positive genus }  \label{sec:barcode_ham_diffeos_surfaces_genus}
Let us first consider the case of a closed surface $\Sigma$ which is of positive genus.   It is well-known that, in this case, the fundamental group of $\Ham(\Sigma, \omega)$ is trivial. Therefore, $\UHam (\Sigma, \omega) = \Ham(\Sigma, \omega)$ and hence, $\B_j: \Ham (\Sigma, \omega) \rightarrow \widehat{\mathcal B}$ is defined directly for Hamiltonian diffeomorphisms.   

As an immediate consequence of Equation \eqref{eq:continuity} we wee that 
\begin{equation}\label{eq:hofer_cont}
d_{bot}(\B_j(\phi), \B_j(\psi)) \leq d_{Hofer}(\phi, \psi).
\end{equation}

The symplectic invariance property described  in Section \ref{sec:barcodes_for_hamiltonians} translates to the following conjugacy invariance property: For any $\psi \in \Symp(\Sigma, \omega)$ and any $\phi \in \Ham(M, \omega)$, 

\begin{equation}\label{eq:conj_inv2}
\B_j(\psi^{-1} \phi \psi) = \B_j(\phi).
\end{equation}
This follows from Equation \eqref{eq:conj_inv1} and the fact that  $\varphi^t_{H\circ 
\psi} = \psi^{-1} \varphi^t_H \psi$ for any Hamiltonian $H$.  

\medskip 
Clearly,  the total barcode is also well-defined  directly for Hamiltonian diffeomorphisms.  Hence, we have a map $\B: \Ham (\Sigma, \omega) \rightarrow \widehat{\mathcal B}$ which also satisfies Equations \eqref{eq:hofer_cont} and  \eqref{eq:conj_inv2}.

\subsubsection{Barcodes on the sphere}
  In the case of the sphere, the maps $\B_j : \UHam(\S^2, \omega) \rightarrow \widehat{\mathcal B}$ do not descend to $\Ham(\S^2, \omega)$.  This is because the fundamental group of $\Ham(\S^2, \omega)$ is non-trivial.  In fact, it is $\Z / 2\Z$ and is generated by a full rotation around the North-South axis of the sphere; see for example \cite{pol_book}.  We will denote this rotation by $\mathrm{Rot}$.  Hence, we see that every element $\varphi \in \Ham(\S^2, \omega)$ has two lifts $\tilde \varphi_1, \tilde \varphi_2  \in \UHam(\S^2, \omega)$ where $\tilde \varphi_2 = \mathrm{Rot} \circ \tilde \varphi_1.$  Therefore, for each $\varphi \in \Ham(\S^2, \omega)$ we have  barcodes $\B_j(\tilde{\varphi_1}), \B_j(\tilde{\varphi_2})$ as well as the total barcodes $\B(\tilde{\varphi_1}), \B(\tilde{\varphi_2})$.  These maps satisfy appropriately restated versions of Equations \eqref{eq:hofer_cont} \& \eqref{eq:conj_inv2}. 
  
 Lastly, from Equation \eqref{eq:periodicity} we obtain the following property:  
  \begin{equation}\label{eq:periodicity_sphere}
  \B_j(\tilde \varphi) = \B_{j-4}(\tilde \varphi).
  \end{equation}
  Note that the shift by $\tau$ disappears as we're considering barcodes up to shift.

\section{Continuity of barcodes on surfaces}\label{sec:barcodes_surfaces}
In this section we prove our main results on continuity of barcodes and their extension to Hamiltonian homeomorphisms.  The results of this section allows to effectively define Hamiltonian Floer homology for Hamiltonian homeomorphisms of surfaces.

Before giving precise statements of these results we will introduce some of our conventions.  Let $d$ be a Riemannian distance on a closed manifold $M$. Given two maps $\phi, \psi \co M \to M,$ we   denote
$$d_{C^0}(\phi,\psi)=\sup_{x\in M}d(\phi(x),\psi(x)).$$
The $C^0$ topology on $\Ham(M, \omega)$ is the topology induced by $d_{C^0}$.  We will denote by $\overline{\Ham}(M, \omega)$ the $C^0$ closure of $\Ham(M, \omega)$ taken inside the group of homeomorphisms of $M$.  In the case where $M$ is a surface, $\overline{\Ham}(M, \omega)$ coincides with the set of  area-preserving homeomorphisms of $M$ with vanishing flux or, equivalently, mean rotation vector; for a proof of this fact see \cite{fathi80}. We will refer to elements of $\overline{\Ham}(M, \omega)$ as Hamiltonian homeomorphisms.

\subsection*{Surfaces of positive genus}
  Here is our main result concerning continuity of barcodes on surfaces other than the sphere.  
\begin{theo}\label{theo:cont_barcodes_genus}
 Let $(\Sigma, \omega)$ denote a closed symplectic surface other than the sphere.  For each integer $j$, the mapping 
 $$\B_j: (\Ham(\Sigma, \omega), d_{C^0}) \rightarrow (\widehat{\mathcal B}, \dbot)$$
 is continuous. Furthermore, $\B_j$ extends continuously to $\overline{\Ham}(\Sigma, \omega)$.
\end{theo}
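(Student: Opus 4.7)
The essential difficulty is that Equation \eqref{eq:continuity} provides only Hofer--Lipschitz continuity of barcodes, whereas the Hofer metric on $\Ham(\Sigma,\omega)$ is not comparable to the $C^0$ metric: a Hamiltonian diffeomorphism supported in a small disk of area $\eta$ and performing $N$ twists is arbitrarily close to the identity in $C^0$ while having Hofer norm of order $N\eta$, which can be made arbitrarily large. Thus the proof requires geometric input specific to surfaces that goes beyond \eqref{eq:continuity} at face value.

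My plan is to establish the following uniform local estimate, which directly implies both continuity on $\Ham(\Sigma,\omega)$ and the extension to $\overline{\Ham}(\Sigma,\omega)$: for every $\epsilon > 0$ there exists $\delta > 0$ such that for all $\phi,\psi \in \Ham(\Sigma,\omega)$ with $d_{C^0}(\phi,\psi) < \delta$ one has $\dbot(\B_j(\phi), \B_j(\psi)) < \epsilon$. Once such a modulus of continuity is in hand, the extension to $\overline{\Ham}(\Sigma, \omega)$ follows from Proposition \ref{prop:completion_barcodes}: a $d_{C^0}$-Cauchy sequence in $\Ham(\Sigma,\omega)$ yields a $\dbot$-Cauchy sequence of barcodes, which converges in the complete space $(\widehat{\mathcal B}, \dbot)$, with limit independent of the approximating sequence.

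Setting $\theta := \phi^{-1}\psi$, bi-invariance up to constants gives $d_{C^0}(\theta,\Id) \lesssim d_{C^0}(\phi,\psi)$. If $H$ generates $\phi$ and $F$ generates $\theta$, then $H \# F$ generates $\psi$, and \eqref{eq:continuity} gives $\dbot(\B_j(\phi), \B_j(\psi)) \leq \|H\#F - H\| = \|F\|$, so it suffices to show that for $\theta$ near $\Id$ in $C^0$ one can control the bottleneck contribution coming from $\theta$. The naive route of bounding the Hofer norm of some generator $F$ of $\theta$ by $d_{C^0}(\theta,\Id)$ fails, so I would use fragmentation: a fragmentation lemma for $\Ham(\Sigma,\omega)$ produces, for $\theta$ sufficiently $C^0$-close to the identity, a factorization $\theta = \theta_1 \cdots \theta_N$ with $N$ bounded in terms of $\Sigma$ and each $\theta_i$ supported in a topological disk $D_i$ of area $O(\delta)$. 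Applying the bound iteratively along this factorization, and using the symplectic conjugation invariance \eqref{eq:conj_inv2} to normalize each factor, reduces the task to a single local estimate: $\dbot(\B_j(\phi'\theta_i), \B_j(\phi'))$ is small whenever $\theta_i$ is supported in a disk of small area.

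The main obstacle lies in this last local estimate, since $\theta_i$ can have large Hofer norm even when $\mathrm{area}(D_i)$ is small. The key is to work directly at the chain level: for $\theta_i$ supported in a displaceable disk of area $\eta$, the new 1-periodic orbits of $\phi'\theta_i$ (those not already present for $\phi'$) are confined to a narrow action window of width $O(\eta)$, because the action differences across such orbits are computed by integrals over cappings contained in $D_i$. These new orbits therefore contribute to the barcode only through bars of length $O(\eta)$, which in turn means that $\dbot(\B_j(\phi'\theta_i), \B_j(\phi'))$ is itself $O(\eta)$. Summing the $N$ local contributions gives a modulus of continuity $\epsilon(\delta) = O(N\delta) \to 0$ as $\delta \to 0$, completing the argument. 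The delicate point will be to make the "narrow action window" claim precise by a filtered Floer-chain computation that compares the Floer complex of $\phi'\theta_i$ to that of $\phi'$ in a neighborhood of $D_i$, exploiting in an essential way the low-dimensional, aspherical structure of $\Sigma$.
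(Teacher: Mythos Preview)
Your overall architecture---reduce to $\theta=\phi^{-1}\psi$ close to the identity, fragment $\theta$ into pieces supported in small disks, and control the barcode change piece by piece---matches the paper's strategy. The gap is in the last step. The claim that the new $1$--periodic orbits of $\phi'\theta_i$ lie in an action window of width $O(\eta)$ ``because the action differences are computed by integrals over cappings contained in $D_i$'' is false: the action functional has the term $\int_0^1 H(t,z(t))\,dt$, and for a high twist supported in $D_i$ this term alone produces orbits whose actions are spread over an interval of order the Hofer norm of the generating Hamiltonian, not the area of $D_i$. So the ``only short bars are created'' heuristic does not follow from a capping--area argument, and the local estimate as you state it is unsupported.

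The paper's mechanism for the local estimate is different and does not attempt to bound action windows. It rests on two exact-equality results: if $\varphi^1_K$ $\delta$--shifts the support of $H$ and $d_{C^0}(\varphi^t_H,\Id)<\delta$ then $\B_j(K\# H)=\B_j(K)$ (Proposition~\ref{prop:epsilon_shift}); and if $\varphi^1_K$ displaces the support of $H$ then $\B_j(K\# H)=\B_j(K)$ (Proposition~\ref{prop:displacement}). Both are proved by a spectrum--rigidity argument (the action spectrum is constant along a suitable one--parameter family, so the barcode cannot move). These force a case split you do not make: near the identity one inserts a $C^2$--small Morse Hamiltonian to create the needed $\delta$--shift (Theorem~\ref{theo:at_identity}); away from the identity one uses a ball $B$ with $\eta(B)\cap B=\emptyset$, and the small \emph{area} of each fragment enters only to bound the Hofer cost of conjugating it into $B$, after which Proposition~\ref{prop:displacement} gives exact equality (Theorem~\ref{theo:not_identity}, Claim~\ref{cl:Step1}). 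In particular the $\delta$ depends on the target point $\eta$, so the paper establishes continuity and a continuous extension, but not the uniform modulus of continuity you announce.
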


\begin{remark}\label{rem:cont_total_barcode}
The total barcode map $\B: (\Ham(\Sigma, \omega), d_{C^0}) \rightarrow (\widehat{\mathcal B}, \dbot)$ is also continuous and extends continuously to $\overline{\Ham}(\Sigma, \omega)$.  Note that this is not an immediate consequence of the previous theorem.  The proof of this fact is identical to the proof of the above theorem and so we will omit it. 
\end{remark}

\begin{remark}\label{rem:cont_shift_necessary}
In our approach, the barcodes $\B_j(\varphi), \B(\varphi)$ are well-defined up to a shift by a constant.  As mentioned in Remark  \ref{rem:not_shifted}, one can remove this ambiguity by working with mean-normalized Hamiltonians.  This would yield a map $\B_j: \Ham(\Sigma, \omega) \rightarrow\overline{\mathcal B}$.

However, for the above theorem, it is absolutely crucial to consider barcodes up to shift, \emph{i.e.}\ the space $\widehat{\mathcal B}$.  Indeed, the maps $\B_j, \B: (\Ham(\Sigma, \omega), d_{C^0}) \rightarrow (\overline{\mathcal B}, \dbot)$ is not continuous.  This remark also applies to Theorem \ref{theo:cont_barcodes_sphere}.
\end{remark}

\subsection*{The sphere}
In the case of the sphere $\Ham(\S^2, \omega)$ and $\overline{\Ham}(\S^2, \omega)$ are the groups of area-preserving diffeomorphisms and homeomorphisms of $\S^2$, respectively.  The fundamental group of each of these groups is $\Z/2\Z$ and it is generated by the full rotation around the North-South axis of the sphere.   We will denote by $\UHam(\S^2, \omega)$ and $\overline{\UHam}(\S^2, \omega)$  the universal covers of these groups.  Being covering spaces, they naturally inherit the $C^0$ topology from $\Ham(\S^2, \omega)$ and $\overline{\Ham}(\S^2, \omega)$.  As we will explain now, the $C^0$ topology on $\UHam(\S^2, \omega)$ and $\overline{\UHam}(\S^2, \omega)$ may equivalently be defined via a natural lift of $d_{C^0}$:  Take $\tilde \varphi_1, \tilde \varphi_2 \in \UHam(\S^2, \omega)$ and define $$\tilde{d}_{C^0}(\tilde \varphi_1, \tilde \varphi_2) : = \inf\{ { \max_{0\leq t \leq 1}} d_{C^0}(\varphi_1^t, \varphi_2^t)\},$$
where the infimum is taken over all paths $\{\varphi_1^t\}_{0\leq t \leq 1},  \{\varphi_2^t\}_{0\leq t \leq 1}$ which represent $\tilde \varphi_1, \tilde \varphi_2$ in $\UHam(\S^2, \omega)$. Of course, this definition extends $\overline{\UHam}(\S^2, \omega)$.   Here is our main result concerning continuity of barcodes on the sphere.
 
\begin{theo}\label{theo:cont_barcodes_sphere}
  For each integer $j$, the mapping 
 $$\B_j: (\UHam(\S^2, \omega), \tilde{d}_{C^0}) \rightarrow (\widehat{\mathcal B}, \dbot)$$
 is continuous. Furthermore, $\B_j$ extends continuously to $\overline{\UHam}(\S^2, \omega)$.
\end{theo}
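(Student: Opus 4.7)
The plan is to adapt the proof of Theorem \ref{theo:cont_barcodes_genus} (positive genus case) to the sphere, with modifications to accommodate the non-trivial $\pi_1(\Ham(\S^2, \omega)) = \Z/2\Z$. Throughout, the freedom to consider barcodes up to shift (working in $\widehat{\mathcal B}$ rather than $\overline{\mathcal B}$) is essential, as stressed in Remark~\ref{rem:cont_shift_necessary}. The strategy has two parts: first, prove continuity on $\UHam(\S^2, \omega)$ with respect to $\tilde d_{C^0}$; second, extend continuously to $\overline{\UHam}(\S^2, \omega)$ using completeness of the target.

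First, I would establish $\tilde d_{C^0}$-continuity on $\UHam(\S^2, \omega)$. The starting point is the Hofer-Lipschitz estimate \eqref{eq:continuity}, which lifts to $\UHam$: if $\tilde\varphi$, $\tilde\psi$ are represented by flows of Hamiltonians $H$, $G$, then $\dbot(\B_j(\tilde\varphi), \B_j(\tilde\psi)) \leq \|H - G\|$ in $\widehat{\mathcal B}$. Given a sequence $\tilde\varphi_n \to \tilde\varphi$ in $\tilde d_{C^0}$, set $\tilde\psi_n := \tilde\varphi \tilde\varphi_n^{-1}$, which converges to $\widetilde{\Id}$ in $\tilde d_{C^0}$. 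Using the composition-invariance of barcodes under right-multiplication (a lifted version of \eqref{eq:conj_inv2}, since shifting by an element does not change the barcode up to shift), it suffices to show $\B_j(\tilde\psi_n) \to \B_j(\widetilde{\Id})$ in $\widehat{\mathcal B}$.

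The core step is to bound $\dbot(\B_j(\tilde\psi), \B_j(\widetilde{\Id}))$ by a quantity that tends to $0$ with $\tilde d_{C^0}(\tilde\psi, \widetilde{\Id})$. Following the positive-genus argument, I would approximate $\tilde\psi$ by Hamiltonian diffeomorphisms that are compositions of maps supported in small displaceable discs, then use displacement-energy control of the spectral-type data carried by bars to obtain a bound of the form $\dbot(\B_j(\tilde\psi), \B_j(\widetilde{\Id})) \leq C \cdot \tilde d_{C^0}(\tilde\psi, \widetilde{\Id})^{1/2}$, or any similar modulus of continuity. An equivalent route, as noted in the Acknowledgements, is to invoke the $C^0$-continuity of the spectral norm $\gamma$ on $\UHam(\S^2, \omega)$ from \cite{Sey12} together with the standard Floer-theoretic fact $\dbot(\B_j(\tilde\varphi), \B_j(\tilde\psi)) \leq \gamma(\tilde\varphi \tilde\psi^{-1})$ in $\widehat{\mathcal B}$.

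Once uniform continuity on $\UHam(\S^2, \omega)$ is in hand, the extension to $\overline{\UHam}(\S^2, \omega)$ is a formal completion argument: a Cauchy sequence in $(\overline{\UHam}(\S^2, \omega), \tilde d_{C^0})$ is mapped to a Cauchy sequence in $(\widehat{\mathcal B}, \dbot)$, and the latter is complete, by Proposition~\ref{prop:completion_barcodes} together with the fact that the isometries of Section~\ref{sec:isometry} descend cleanly modulo shift. The limiting barcode is independent of the approximating sequence by uniform continuity.

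The principal obstacle is the core step above: converting the Hofer-Lipschitz estimate into a $C^0$-estimate, since $C^0$-small perturbations are typically not Hofer-small. The sphere poses two extra complications beyond the positive-genus case: one must work on the two-sheeted cover $\UHam(\S^2, \omega)$ and verify that $\tilde d_{C^0}$-small perturbations stay on the same sheet, and one must account for the periodicity \eqref{eq:periodicity_sphere}, which forces barcodes to be infinite but causes no difficulty once one works in $\widehat{\mathcal B}$ and uses the shift-invariance built into the target.
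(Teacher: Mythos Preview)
Your reduction to continuity at the identity is incorrect. You claim that conjugacy invariance \eqref{eq:conj_inv2} gives ``composition-invariance of barcodes under right-multiplication,'' so that $\B_j(\tilde\psi_n)\to\B_j(\widetilde\Id)$ would imply $\B_j(\tilde\varphi_n)\to\B_j(\tilde\varphi)$. But \eqref{eq:conj_inv2} says $\B_j(\psi^{-1}\phi\psi)=\B_j(\phi)$, not $\B_j(\phi\psi)=\B_j(\phi)$; barcodes are certainly not invariant under one-sided multiplication (e.g.\ $\B_j(\phi^2)\neq\B_j(\phi)$ in general), and working up to shift in $\widehat{\mathcal B}$ does not help. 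So knowing only that $\B_j$ is continuous at $\widetilde\Id$ does not yield continuity at an arbitrary $\tilde\varphi$ by this route. This is precisely why the paper proves a separate statement, Theorem~\ref{theo:not_identity_sphere}: for $\tilde\eta\neq\Id,\mathrm{Rot}$ one uses that $\eta$ displaces a small disc $B$, fragments a $C^0$-small $\tilde\psi$ into pieces supported in thin rectangles (Lemma~\ref{lem:frag_rect}), conjugates each piece into $B$ at small Hofer cost, and then applies Proposition~\ref{prop:displacement}. Nothing in your sketch supplies this mechanism, and no explicit modulus like $\sqrt{d_{C^0}}$ emerges from the fragmentation argument; the paper obtains only pointwise continuity, with $\delta$ depending on $\eta$, which is nonetheless enough for the Cauchy/extension step.

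Your alternative route via the spectral norm is, however, a genuine and correct shortcut that bypasses the gap above. The inequality $\dbot(\B_j(\tilde\varphi),\B_j(\tilde\psi))\leq\gamma(\tilde\varphi\tilde\psi^{-1})$ in $\widehat{\mathcal B}$ (from \cite{shel}) together with the $C^0$-continuity of $\gamma$ on $\UHam(\S^2,\omega)$ from \cite{Sey12} immediately gives continuity at every point, and even a uniform modulus; the paper explicitly acknowledges this as an alternative proof. The paper's own argument is more self-contained but less quantitative: it proves continuity at $\Id$ and $\mathrm{Rot}$ (Theorem~\ref{theo:at_identity_sphere}) and at all other $\tilde\eta$ (Theorem~\ref{theo:not_identity_sphere}) by direct fragmentation/displacement arguments, then deduces the extension to $\overline{\UHam}(\S^2,\omega)$ from the latter exactly as you describe.
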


%\begin{theo}\label{theo:cont_barcodes_sphere_2}
%  For each integer $j$, the mapping 
% $$\B_j: (\Ham(\S^2, \omega), d_{C^0}) \rightarrow (\widehat{\mathcal B}, \dbot)$$
% is continuous. Furthermore, $\B_j$ extends continuously to $\overline{\Ham}(\S^2, \omega)$.
%\end{theo}
%\begin{proof}
%{\sobhan Hmmm, not sure if I can prove this.}
%\end{proof}

%\begin{remark}If in the above theorem we replace $\S^2$ with $\Sigma$, where $\Sigma$ is a closed surface other than the sphere, then we obtain a statement which is equivaent to Theorem \ref{theo:cont_barcodes_genus}.  This follows from the fact that $\Ham(\Sigma, \omega)$ and $\overline{\Ham}(\Sigma, \omega)$ are both simply connected and locally path connected.  Thus, both of these groups coincide with their universal covers and furthermore the topology induced by $\tilde{d}_{C^0}$ is the usual $C^0$ topology.\end{remark}

\begin{remark}  Once again, the total barcode $\B$ is also continuous and extends continuously to $\overline{\UHam}(\S^2, \omega)$.   \end{remark}

\medskip

The rest of this section is dedicated to the proofs of Theorems \ref{theo:cont_barcodes_genus} \& \ref{theo:cont_barcodes_sphere}.  Since the proofs of the two theorems are quite similar, we will  give a detailed proof of Theorem \ref{theo:cont_barcodes_genus} and will only indicate what must be modified to obtain the proof of Theorem \ref{theo:cont_barcodes_sphere}.

\subsection{Proofs of Theorems \ref{theo:cont_barcodes_genus} \& \ref{theo:cont_barcodes_sphere}} \label{sec:proof_cont_genus}
We begin with the proof of Theorem \ref{theo:cont_barcodes_genus}.  It is a consequence of the following two theorems.

%In what follows we will use the symbol $ \Id$ to denote both the identity map in  $ \Ham(M, \omega)$ as well as its canonical lift to $\UHam(M, \omega)$.
%
%\begin{theo}\label{theo:at_identity}
%  For any $\varepsilon > 0$, there exists $\delta > 0$ such that  if  $\tilde d_{C^0}(\tilde \psi, Id) < \delta$, then $$\dbot(\B_j(\tilde \psi), \B(Id)) < \varepsilon,$$
% for any $ \tilde \psi \in \UHam(M, \omega)$ and any $j \in \Z$.
%\end{theo}

\begin{theo}\label{theo:at_identity}
   For any $\varepsilon > 0$, there exists $\delta > 0$ with the following property: If $\psi \in \Ham(\Sigma, \omega)$ and $d_{C^0}(\psi, \Id) < \delta$, then $$\dbot(\B_j(\psi), \B_j(\Id)) < \varepsilon,$$
  for any $j \in \Z$.
\end{theo}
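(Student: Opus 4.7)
The plan is to exhibit, for every $\psi \in \Ham(\Sigma,\omega)$ sufficiently $C^0$-close to $\Id$, a generating Hamiltonian $H$ whose Hofer norm is controlled \emph{linearly} by $d_{C^0}(\psi,\Id)$, and then to invoke the Hofer-Lipschitz estimate for barcodes (inequality \eqref{eq:continuity}). Since the theorem sits in the proof of Theorem \ref{theo:cont_barcodes_genus}, we work on a surface of positive genus, where $\pi_1(\Ham(\Sigma,\omega))=1$ and hence the passage between $\Ham$ and $\UHam$ is automatic.

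To produce such an $H$, I would use the Weinstein neighborhood theorem applied to the diagonal $\Delta \subset (\Sigma \times \Sigma,\, \omega \oplus (-\omega))$: fix once and for all a symplectomorphism $\Phi : U \to \mathcal{N}$, where $U$ is a neighborhood of the zero section $Z \subset T^*\Sigma$ and $\mathcal{N}$ is a tubular neighborhood of $\Delta$, with $\Phi(Z)=\Delta$. Choose $\delta_0 > 0$ so that $d_{C^0}(\psi,\Id) < \delta_0$ forces $\mathrm{graph}(\psi) \subset \mathcal{N}$. Then $\Phi^{-1}(\mathrm{graph}(\psi))$ is a Lagrangian section of $T^*\Sigma$, hence the graph of a closed $1$-form $\alpha$ on $\Sigma$. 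Since $\psi \in \Ham(\Sigma,\omega)$ and the flux of a Hamiltonian diffeomorphism on a positive-genus surface vanishes, $\alpha = df$ for some smooth $f : \Sigma \to \R$. A routine compactness/smoothness estimate on the fixed chart $\Phi$ yields $\|df\|_{C^0} \leq C_1\, d_{C^0}(\psi,\Id)$, and therefore $\mathrm{osc}(f) := \max f - \min f \leq C_2\, d_{C^0}(\psi,\Id)$.

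Next, define a Hamiltonian isotopy $\{\psi_s\}_{s\in[0,1]}$ from $\Id$ to $\psi$ by declaring $\Phi^{-1}(\mathrm{graph}(\psi_s))$ to be the graph of the exact $1$-form $s \cdot df$. A direct computation in canonical Weinstein coordinates (where the vertical vector field generating the family $s \mapsto s \cdot df$ is Hamiltonian for the fiberwise Hamiltonian $(q,p) \mapsto -f(q)$) pulls back, via $\Phi$, to a time-dependent Hamiltonian $H : [0,1] \times \Sigma \to \R$ generating the isotopy $\psi_s$ and satisfying $\|H\| \leq C_3 \cdot \mathrm{osc}(f)$ for a constant $C_3$ depending only on $\Phi$. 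Applying inequality \eqref{eq:continuity} with $G \equiv 0$ (which generates $\Id$) then gives
\[
\dbot\bigl(\B_j(\psi),\,\B_j(\Id)\bigr) \;=\; \dbot\bigl(\B_j(H),\,\B_j(0)\bigr) \;\leq\; \|H\| \;\leq\; C\, d_{C^0}(\psi,\Id),
\]
and taking $\delta = \min\{\delta_0,\,\varepsilon/C\}$ completes the argument.

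The main obstacle in this plan is the quantitative control in the second step: one must verify that the path $\psi_s$ is indeed a Hamiltonian isotopy on $\Sigma$ (rather than merely a symplectic one) and bound its Hofer length by $\mathrm{osc}(f)$ up to a universal multiplicative constant. Both issues are local computations in the fixed Weinstein chart, so the distortion constants $C_1, C_2, C_3$ are absolute; nevertheless, one must track them with some care. Everything else---continuity at the identity of the sphere case, and the corresponding statement for the total barcode---follows by identical reasoning, with only cosmetic modifications to accommodate the non-trivial $\pi_1(\Ham(\S^2,\omega))$.
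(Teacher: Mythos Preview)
Your argument has a genuine gap at the very first step. From $d_{C^0}(\psi,\Id) < \delta_0$ you deduce that $\Phi^{-1}(\mathrm{graph}(\psi))$ is a Lagrangian \emph{section} of $T^*\Sigma$. This does not follow. The hypothesis guarantees only that $\mathrm{graph}(\psi) \subset \mathcal{N}$, i.e.\ that every point of $\Phi^{-1}(\mathrm{graph}(\psi))$ lies close to the zero section; it says nothing about this Lagrangian projecting diffeomorphically onto the base. For that you would need $d\psi$ to be close to $\Id$, which is a $C^1$ condition. Concretely, take $\psi$ supported in a disk of diameter $\delta$ and equal, near the center, to a half-turn rotation: then $d_{C^0}(\psi,\Id) \leq \delta$ but $d\psi = -\Id$ at the center, and the image under $\Phi^{-1}$ is a Lagrangian that folds over itself and admits no generating function. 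Consequently there is no $f$ with $\|df\|_{C^0} \leq C_1\, d_{C^0}(\psi,\Id)$, and your chain of inequalities collapses before it begins. The obstacle you flag in the second step is secondary; the scheme already fails at the generating-function step.

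This is not a repairable technicality: the gap between the $C^0$ and $C^1$ (or Hofer) topologies is precisely what makes the theorem nontrivial, and your argument, if it worked, would prove the much stronger Lipschitz bound $d_{\mathrm{Hofer}}(\psi,\Id) \leq C\, d_{C^0}(\psi,\Id)$. The paper never attempts to compare Hofer and $C^0$ distances. Instead it combines a quantitative fragmentation result (Claim~\ref{claim:fragmentation}), writing $\psi = \psi_1\psi_2$ with each $\psi_k$ $C^0$-small and supported in a fixed proper open set $V_k$, with a barcode rigidity statement (Proposition~\ref{prop:epsilon_shift}): if the support of $H$ is $\delta$-shifted by $\varphi^1_K$ and $d_{C^0}(\varphi^t_H,\Id) < \delta$ for all $t$, then $\B_j(K\#H) = \B_j(K)$, because the two flows share the same fixed points and action spectrum. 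Inserting auxiliary $C^2$-small Morse functions $f_k$ whose time-one maps shift the $V_k$'s, and alternating applications of this rigidity with Hofer continuity, yields $\dbot(\B_j(\psi),\B_j(\Id)) \leq 6\|f_1\| + 2\|f_2\| < \varepsilon$.
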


\begin{theo}\label{theo:not_identity}
Suppose that $\Id \neq \eta \in \overline{Ham}(\Sigma, \omega)$.  For any $\varepsilon > 0$, there exists $\delta > 0$, depending on $\eta$, with the following property: if $\phi, \psi \in \Ham(\Sigma, \omega)$,  $d_{C^0}(\phi, \eta) < \delta$, and $d_{C^0}(\psi, Id) < \delta$, then $$\dbot(\B_j(\phi \psi), \B_j(\phi)) < \varepsilon,$$
for any $j \in \Z$.
\end{theo}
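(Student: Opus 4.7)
The strategy exploits the fact that $\eta \neq \Id$ provides a small topological disk $B$ displaced by $\eta$: by $C^0$-continuity of the displacement condition, the same disk is then displaced by $\phi$ whenever $d_{C^0}(\phi,\eta)<\delta_1$ for $\delta_1>0$ small enough. This displacement property is the source of Hofer-type control that compensates for the well-known failure of $C^0$-continuity of the Hofer norm, and is absent in the identity case handled separately by Theorem~\ref{theo:at_identity}.

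Concretely, I would begin by picking a point $x_0 \in \Sigma$ with $\eta(x_0) \neq x_0$ and a closed topological disk $\bar B$ around $x_0$ with $\eta(\bar B)\cap\bar B=\emptyset$. Next, I would apply a $C^0$-fragmentation lemma to $\psi$: for $d_{C^0}(\psi,\Id)<\delta_2$ with $\delta_2$ small relative to a fixed finite open cover $\{U_i\}_{i=1}^N$ of $\Sigma$ by small topological disks, one obtains $\psi=\psi_1\cdots\psi_N$ with $\mathrm{supp}(\psi_i)\subset U_i$. For each $i$, choose $g_i \in \Ham(\Sigma,\omega)$ with $g_i(U_i)\subset B$; the conjugation invariance \eqref{eq:conj_inv2} applied to the telescoping sum yields
$$\dbot\bigl(\B_j(\phi\psi),\B_j(\phi)\bigr) \leq \sum_{i=1}^N \dbot\bigl(\B_j(F_i\tilde\psi_i),\B_j(F_i)\bigr),$$
where $F_i$ is a Hamiltonian conjugate of $\phi\psi_1\cdots\psi_{i-1}$ and $\tilde\psi_i:=g_i\psi_i g_i^{-1}$ is supported in $B$. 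Since $F_i$ remains $C^0$-close (up to conjugation) to $\eta$, it displaces $B$, and the classical energy-capacity inequality combined with the bound $\dbot\leq\gamma$ gives $\dbot(\B_j(F_i\tilde\psi_i),\B_j(F_i)) \leq \gamma(\tilde\psi_i) \leq 2\,e(B)$, where $e(B)$ denotes the displacement energy of $B$.

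The main obstacle is quantitative: summing $N$ factors, each bounded by $2\,e(B) \leq 2\,\mathrm{Area}(B)$, gives a total of $2N\,\mathrm{Area}(B)$, which by a volume count is bounded below by roughly $2\,\mathrm{Area}(\Sigma)$ and therefore cannot be made smaller than $\varepsilon$ merely by shrinking $B$. The resolution is to sharpen the per-factor bound using the extra fact that each $\tilde\psi_i$ is not only supported in $B$ but also $C^0$-close to $\Id$ (inherited from $\psi$): a refined version of the energy-capacity estimate, specific to $C^0$-small maps as produced by the Oh--M\"uller fragmentation, yields a per-factor bound that vanishes with $\delta:=\max(\delta_1,\delta_2)$ rather than with $\mathrm{Area}(B)$. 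With this refinement the telescopic sum falls below $\varepsilon$ once $\delta$ is small enough, completing the proof; this refined estimate is the technical heart of the argument.
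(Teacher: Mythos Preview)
There are two genuine gaps. First, after conjugating by $g_i$ you set $F_i = g_i(\phi\psi_1\cdots\psi_{i-1})g_i^{-1}$ and claim it displaces $B$; but $F_i(B)\cap B=\emptyset$ is equivalent to $\phi\psi_1\cdots\psi_{i-1}$ displacing $g_i^{-1}(B)$, and there is no reason this holds: you only know that $\phi\psi_1\cdots\psi_{i-1}$ displaces $B$ itself, while $g_i^{-1}(B)$ can lie anywhere on $\Sigma$. Being ``$C^0$-close to $\eta$ up to conjugation'' tells you nothing about which specific disk is displaced. (For the same reason, $\tilde\psi_i=g_i\psi_i g_i^{-1}$ need not be $C^0$-small: conjugation by an uncontrolled $g_i$ destroys $C^0$-smallness.) Second, and more seriously, your ``refined energy-capacity estimate'' is a placeholder for the very statement you are trying to prove: a bound on $\dbot$ or on $\gamma(\tilde\psi_i)$ that vanishes with $d_{C^0}(\tilde\psi_i,\Id)$ is precisely a $C^0$-continuity result for barcodes or the spectral norm, and no such black box is available at this point.

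The paper's mechanism is different and avoids both issues. It never conjugates the ambient map $\phi$; instead it replaces $\psi$ by a Hofer-nearby $\psi'$ supported in $B$, so that Proposition~\ref{prop:displacement} applies to $\phi$ directly and gives $\B_j(\phi\psi')=\B_j(\phi)$. The smallness of $d_{\mathrm{Hofer}}(\psi,\psi')$ comes from making the \emph{conjugating maps} Hofer-small, and this is achieved by a second, finer fragmentation (Lemma~\ref{lem:frag_rect}): once $\psi$ is reduced (via Claim~\ref{cl:fragmentation_original}) to a map supported in a single disk $D$, it is cut into pieces supported in thin rectangles of area $<\varepsilon$, and these are regrouped into a bounded number $N\approx 2\,\mathrm{Area}(D)/\mathrm{Area}(B)$ of packets $\Psi_k$, each supported in a set of total area $<\mathrm{Area}(B)$ whose connected components all have area $<\varepsilon$. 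The small component-area is exactly what allows the choice of $f_k$ with $f_k(\mathrm{supp}\,\Psi_k)\subset B$ and $d_{\mathrm{Hofer}}(f_k,\Id)<\varepsilon$, whence $d_{\mathrm{Hofer}}(\psi,\psi')\leq 2(N+1)\varepsilon$. Thus the crucial input is Hofer-smallness of the transport maps, obtained from a small-\emph{area} fragmentation; the $C^0$-smallness of $\psi$ is used only to trigger the fragmentation lemmas, not to bound any energy directly.
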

The above two theorems are proven below in Sections  \ref{sec:proof_at_identity} and \ref{sec:proof_not_deintity}.  Let us explain why Theorem \ref{theo:cont_barcodes_genus} follows from the above two results.  Indeed, it is not difficult to see that Theorem \ref{theo:at_identity} proves the continuity of $\B_j: (\Ham(\Sigma, \omega), d_{C^0}) \rightarrow (\widehat{\mathcal{B}}, \dbot)$ at the identity and Theorem \ref{theo:not_identity} proves its continuity at every other Hamiltonian diffeomorphism. Furthermore, Theorem \ref{theo:not_identity} implies that the map $\B_j$ extends continuously to $\overline{\Ham}(\Sigma, \omega)$.  To see this take $\eta \in \overline{\Ham}(\Sigma, \omega)$ and let $\phi_i$ be a sequence in $\Ham(\Sigma, \omega)$ converging uniformly to $\eta$.  For any $\varepsilon >0$, there exists a positive integer $N$ such that if $N \leq i, k$, then $d_{C^0}(\phi_i, \eta) < \delta$ and $d_{C^0}( \phi_i^{-1}\phi_k, Id) < \delta$.  It then follows from Theorem \ref{theo:not_identity} that $\dbot(\B_j(\phi_i), \B_j(\phi_k)) < \varepsilon$.  This, of course, implies that the sequence of barcodes $\B_j(\phi_i)$ is convergent in $\widehat{\mathcal B}$, and hence, we can define $\B_j(\eta)$ to be the limit of this sequence.  We leave it to the reader to check that $\B_j(\eta)$ does not depend on the choice of the approximating sequence and that the extension of $\B_j$ to $\overline{\Ham}(\Sigma, \omega)$ is continuous.

\bigskip
The proof of Theorem \ref{theo:cont_barcodes_sphere} is very  similar to that of Theorem \ref{theo:cont_barcodes_genus}.   We first need to introduce a bit of  notation.  Recall that the identity in $\Ham(\S^2, \omega)$ has two lifts to $\UHam(\S^2, \omega)$:  One represented by the homotopy class of the constant path based at the identity, which we will continue to denote by $\Id$, and a second represented by the homotopy class of the non-trivial loop in $\pi_1(\Ham(\S^2, \omega))$, which we will denote by $\mathrm{Rot}$.   One must begin by  proving the following two results which are analogous to Theorems \ref{theo:at_identity} \& \ref{theo:not_identity}.  

\begin{theo}\label{theo:at_identity_sphere}
 Let $\tilde \psi$ denote an element of $\UHam(\S^2, \omega)$.  For any $\varepsilon > 0$, there exists $\delta > 0$ such that 
  \begin{enumerate}

 \item If  $\tilde d_{C^0}(\tilde \psi, \Id) < \delta$, then $\dbot(\B_j(\tilde \psi), \B_j(\Id)) < \varepsilon,$
  for any $j \in \Z$.
  \item If $\tilde d_{C^0}(\tilde \psi, \mathrm{Rot}) < \delta$, then $\dbot(\B_j(\tilde \psi), \B_j(\mathrm{Rot})) < \varepsilon,$
  for any $j \in \Z$.
  
  \end{enumerate}
\end{theo}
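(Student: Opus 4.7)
The theorem has two parts: Part (1), which is the direct sphere analog of Theorem \ref{theo:at_identity}, and Part (2), which reduces to Part (1) via translation by $\mathrm{Rot}$.

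For Part (1), I would proceed in the same way as the proof of Theorem \ref{theo:at_identity}. Given $\tilde{\psi}$ with $\tilde{d}_{C^0}(\tilde{\psi}, \Id) < \delta$, pick a path $\{\psi^t\}_{0 \leq t \leq 1}$ representing $\tilde \psi$ with $\max_t d_{C^0}(\psi^t, \Id) < \delta$, then approximate $\tilde \psi$ by Hamiltonian diffeomorphisms generated by Hamiltonians $H$ of small Hofer norm. The standard tool here is a fragmentation argument, writing the time-$1$ map as a product of Hamiltonian diffeomorphisms each supported in a small embedded disk, with each factor of controllable Hofer norm. Inequality \eqref{eq:continuity}, lifted to $\UHam$ as in Section \ref{sec:barcodes_for_ham_diffeos}, then controls $\dbot(\B_j(\tilde \psi), \B_j(\Id))$ by $\|H\|$, which can be made less than $\varepsilon$ by taking $\delta$ small. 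The sphere's non-trivial $\pi_2$ affects only the indexing and periodicity of bars, not the fragmentation itself.

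For Part (2), I would reduce to Part (1). Choose paths $\{\psi^t\}$ and $\{r^t\}$ representing $\tilde \psi$ and $\mathrm{Rot}$, respectively, where each $r^t$ is a rotation about the North-South axis---hence an isometry of the round sphere---with $\max_t d_{C^0}(\psi^t, r^t) < \delta$. The path $\{(r^t)^{-1} \psi^t\}$ represents $\mathrm{Rot}^{-1} \cdot \tilde \psi$ in $\UHam(\S^2, \omega)$, which equals $\mathrm{Rot} \cdot \tilde \psi$ since $\mathrm{Rot}^2 = \Id$. The isometry property of $r^t$ gives
\[
d_{C^0}\bigl((r^t)^{-1} \psi^t, \Id\bigr) = d_{C^0}(\psi^t, r^t) < \delta,
\]
so $\tilde{d}_{C^0}(\mathrm{Rot} \cdot \tilde \psi, \Id) < \delta$. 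Part (1) then yields $\dbot(\B_j(\mathrm{Rot} \cdot \tilde \psi), \B_j(\Id)) < \varepsilon$. To convert this to the desired estimate, I invoke the action of $\pi_2(\S^2)$ on cappings described at the end of Section \ref{sec:floer_review}: the canonical cappings for $\mathrm{Rot} \cdot \tilde \psi$ differ from those for $\tilde \psi$ by the generator $A$ of $\pi_2(\S^2)$, which shifts Conley-Zehnder indices by $-2 c_1(A) = -4$ and actions by $-\omega(A) = -1$. Combined with the periodicity $\B_j = \B_{j-4}$ in $\widehat{\mathcal B}$ from Equation \eqref{eq:periodicity_sphere}, this gives $\B_j(\mathrm{Rot} \cdot \tilde \psi) = \B_j(\tilde \psi)$ in $\widehat{\mathcal B}$ and, in particular, $\B_j(\mathrm{Rot}) = \B_j(\Id)$ in $\widehat{\mathcal B}$. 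Substitution then yields $\dbot(\B_j(\tilde \psi), \B_j(\mathrm{Rot})) < \varepsilon$.

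The main obstacle is Part (1), the technical fragmentation-and-Hofer-norm estimate---the sphere analog of Theorem \ref{theo:at_identity}, requiring a careful $C^0$-to-Hofer conversion. Part (2) is then essentially formal, depending only on the effect of $\mathrm{Rot}$ on capping classes and on the periodicity property, both of which are direct consequences of definitions in Section \ref{sec:floer_review}.
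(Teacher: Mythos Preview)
Your sketch has two genuine gaps, one in each part.

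\textbf{Part (1).} You say that fragmentation lets you write $\psi$ as a product of factors ``of controllable Hofer norm'', and then invoke Hofer continuity. This is not how the proof of Theorem~\ref{theo:at_identity} works, and in fact it cannot work this way: a $C^0$--small Hamiltonian diffeomorphism supported in a disk can have arbitrarily large Hofer norm. The actual mechanism in the proof of Theorem~\ref{theo:at_identity} is the $\delta$--shift trick of Proposition~\ref{prop:epsilon_shift}: one fragments $\psi = \psi_1\psi_2$ with $\psi_k$ supported in $V_k$, picks small Morse functions $f_k$ whose time--one maps $\delta$--shift $V_k$, and uses Proposition~\ref{prop:epsilon_shift} to obtain \emph{equalities} such as $\B_j(\varphi^1_{f_k}\psi_k) = \B_j(\varphi^1_{f_k})$. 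Hofer continuity is then used only to absorb the $f_k$'s, whose Hofer norms are genuinely small. The sphere version is obtained by running this same argument with paths (so that the homotopy class in $\UHam$ is tracked); the fragments and the Morse flows give well-defined elements of $\UHam$, and Proposition~\ref{prop:epsilon_shift} applies at the level of Hamiltonians.

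\textbf{Part (2).} Your reduction rests on the claim that composing with $\mathrm{Rot}$ changes cappings by the generator $A \in \pi_2(\S^2)$, hence shifts the Conley--Zehnder grading by $2c_1(A)=4$, and then periodicity (Equation~\eqref{eq:periodicity_sphere}) gives $\B_j(\mathrm{Rot}\cdot\tilde\psi) = \B_j(\tilde\psi)$ in $\widehat{\mathcal B}$. This is incorrect: the grading shift induced by $\mathrm{Rot}$ is $2$, not $4$. Indeed, if your claim held then $\B_j$ would descend to $\Ham(\S^2,\omega)$, directly contradicting what the paper states in Section~\ref{sec:barcodes_for_ham_diffeos}. (Concretely, $\mathrm{Rot}^2=\Id$ in $\pi_1$, and the associated recapping sweeps out one copy of $A$; so $\mathrm{Rot}$ itself accounts for only half of that, a degree shift of $2$.)

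A corrected reduction does work: with the correct relation $\B_j(\mathrm{Rot}\cdot\tilde\psi) = \B_{j\pm 2}(\tilde\psi)$ in $\widehat{\mathcal B}$, Part~(1) applied to $\mathrm{Rot}\cdot\tilde\psi$ gives $\dbot(\B_{j\pm 2}(\tilde\psi),\B_j(\Id))<\varepsilon$ for all $j$, and reindexing together with $\B_{j\mp 2}(\Id)=\B_j(\mathrm{Rot})$ yields the conclusion. The paper's own route is more direct and avoids computing this shift: one simply reruns the chain of inequalities from the proof of Theorem~\ref{theo:at_identity} with a Hamiltonian $R$ generating $\mathrm{Rot}$ composed on the left throughout. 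Since $\varphi^1_R=\Id$, the time--one maps appearing in the hypotheses of Proposition~\ref{prop:epsilon_shift} are unchanged, and every step goes through verbatim to give $\dbot(\B_j(\tilde\psi),\B_j(\mathrm{Rot}))<\varepsilon$.
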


\begin{theo}\label{theo:not_identity_sphere}
Let  $\tilde \eta \in \overline{\UHam}(\S^2, \omega)\setminus \{\Id, \mathrm{Rot}\}$.  For any $\varepsilon > 0$, there exists $\delta > 0$, depending on $\eta$, with the following property: if $\tilde \phi, \tilde \psi \in \UHam(\S^2, \omega)$,  $\tilde d_{C^0}(\tilde \phi, \tilde \eta) < \delta$ and $\tilde d_{C^0}(\tilde \psi, Id) < \delta$, then $$\dbot(\B_j(\tilde \phi  \tilde \psi), \B_j(\tilde \phi)) < \varepsilon,$$
for any $j \in \Z$.
\end{theo}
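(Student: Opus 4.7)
The proof will closely mirror that of Theorem~\ref{theo:not_identity} for positive-genus surfaces, with appropriate modifications to account for working in the universal cover $\UHam(\S^2,\omega)$. The plan proceeds in three stages.

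First, I observe that since $\tilde\eta \notin \{\Id, \mathrm{Rot}\}$ and these are the only two lifts of the identity in $\UHam(\S^2,\omega)$, the underlying area-preserving homeomorphism $\eta \in \overline{\Ham}(\S^2,\omega)$ cannot be the identity. Hence there exists a non-empty open topological disk $D \subset \S^2$ displaced by $\eta$, i.e.\ $\eta(\overline{D}) \cap \overline{D} = \emptyset$. By continuity of evaluation in the $C^0$ topology, for $\delta$ sufficiently small any $\tilde\phi$ with $\tilde d_{C^0}(\tilde\phi, \tilde\eta) < \delta$ projects to some $\phi \in \Ham(\S^2,\omega)$ which still displaces $\overline{D}$.

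Second, I invoke a $C^0$-fragmentation lemma to decompose any $\tilde\psi$ with $\tilde d_{C^0}(\tilde\psi, \Id) < \delta$ as a product $\tilde\psi = \tilde\psi_1 \cdots \tilde\psi_N$ in $\UHam(\S^2,\omega)$, where each $\tilde\psi_i$ is generated by a Hamiltonian supported in a topological ball $U_i$ whose diameter is comparable to $\delta$, and $N$ is bounded only in terms of the geometry of $\S^2$. For each $i$, I choose an element $\tilde\theta_i \in \UHam(\S^2,\omega)$ carrying $U_i$ into $D$, and set $\tilde\alpha_i := \tilde\theta_i \tilde\psi_i \tilde\theta_i^{-1}$, which is supported in $D$. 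Since barcodes on the universal cover are invariant under conjugation (the lift of Equation~\eqref{eq:conj_inv2}), the relevant comparisons of barcodes for $\tilde\phi \tilde\psi_i$ reduce to comparisons for a displaced-support perturbation.

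Third, I telescope the difference:
\[
\dbot\bigl(\B_j(\tilde\phi\tilde\psi), \B_j(\tilde\phi)\bigr) \;\leq\; \sum_{i=1}^N \dbot\bigl(\B_j(\tilde\phi\tilde\psi_1\cdots\tilde\psi_i),\, \B_j(\tilde\phi\tilde\psi_1\cdots\tilde\psi_{i-1})\bigr).
\]
After conjugating by the appropriate $\tilde\theta_i$, each summand compares $\B_j(\tilde\phi')$ with $\B_j(\tilde\phi'\tilde\alpha_i)$, where $\tilde\phi'$ still displaces $\overline{D}$ and $\tilde\alpha_i$ is supported in $D$. The Floer-theoretic displacement principle for barcodes, in the form used to prove the analogue of Theorem~\ref{theo:not_identity} on positive-genus surfaces, bounds each such term by a quantity controlled by the displacement energy of $\overline{D}$ times the size of the fragment. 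Choosing $\delta$ small enough so that $N$ times this per-piece bound is less than $\varepsilon$ completes the argument.

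The main obstacle I anticipate is the third step: I must verify that the displacement-energy estimate for barcodes transfers correctly to the $\UHam(\S^2,\omega)$ setting. On the sphere this is more delicate than in the aspherical case because the Novikov covering is non-trivial and the barcode is periodic up to the shift $\tau$ (see Equation~\eqref{eq:shift_inv}); one must confirm that working modulo shift in $\widehat{\mathcal B}$ absorbs the ambiguity coming from the deck transformation generated by $\mathrm{Rot}$, and that the uniform estimate furnished by displaceability holds simultaneously for all indices $j$. Once this Floer-theoretic input is in place, the combinatorial assembly of stages one and two runs in parallel to the positive-genus proof.
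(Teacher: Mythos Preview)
Your overall architecture is the right one, and the paper itself says the proof is the positive-genus argument verbatim, carried out with paths instead of diffeomorphisms. But your second and third steps contain a genuine gap.

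The problem is how you use conjugation invariance. You propose to conjugate the whole product $\tilde\phi\tilde\psi_1\cdots\tilde\psi_i$ by $\tilde\theta_i$, obtaining $\tilde\phi'\tilde\alpha_i$ with $\tilde\phi' = \tilde\theta_i(\tilde\phi\tilde\psi_1\cdots\tilde\psi_{i-1})\tilde\theta_i^{-1}$ and $\tilde\alpha_i$ supported in $D$, and then apply displacement. But there is no reason $\tilde\phi'$ displaces $D$: the unconjugated map displaces $D$, so its conjugate displaces $\tilde\theta_i(D)$, which is typically not $D$. In fact, if your scheme worked, Proposition~\ref{prop:displacement} would give $\B_j(\tilde\phi'\tilde\alpha_i)=\B_j(\tilde\phi')$ \emph{exactly} at every step, hence $\B_j(\tilde\phi\tilde\psi)=\B_j(\tilde\phi)$ for all $\tilde\psi$ near $\Id$, which is false. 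Relatedly, your fragmentation is inconsistent: a bounded number $N$ of balls of diameter $\sim\delta$ cannot carry the support of a general $\tilde\psi$.

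The paper's mechanism (in the proof of Theorem~\ref{theo:not_identity}) never conjugates $\tilde\phi$. One first fragments $\tilde\psi$ over a \emph{fixed} cover by disks (two suffice on $\S^2$), and then, inside each disk, uses Lemma~\ref{lem:frag_rect} to cut the piece into maps supported in rectangles of \emph{small area}. Each small-area piece is conjugated into the displaced disk $B$ by a map of Hofer norm $<\varepsilon$; the product $\tilde\psi'$ is now supported in $B$ and $d_{\mathrm{Hofer}}(\tilde\psi,\tilde\psi')$ is controlled. Since $\tilde\phi$ itself still displaces $B$, Proposition~\ref{prop:displacement} gives $\B_j(\tilde\phi\tilde\psi')=\B_j(\tilde\phi)$ exactly, and Hofer continuity \eqref{eq:continuity} bounds $\dbot(\B_j(\tilde\phi\tilde\psi),\B_j(\tilde\phi\tilde\psi'))$. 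So the error is not ``displacement energy times size of fragment''; the displacement step is exact, and all the error lives in the Hofer distance $d_{\mathrm{Hofer}}(\tilde\psi,\tilde\psi')$, which is precisely why one fragments by area rather than by diameter. The sphere-specific issues you flag (Novikov periodicity, working modulo shift) are not the obstacle; the missing idea is this Hofer-small replacement of $\tilde\psi$.
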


The reasoning as to why Theorem \ref{theo:cont_barcodes_sphere} follows from the above two theorems is almost identical to the one given above for why Theorem \ref{theo:cont_barcodes_genus} follows from Theorems \ref{theo:at_identity} \& \ref{theo:not_identity}.   Indeed, Theorem \ref{theo:at_identity_sphere} proves continuity of $\B_j$ at $\Id, \mathrm{Rot}$ while Theorem \ref{theo:not_identity_sphere} proves continuity of $\B_j$ elsewhere and furthermore allows us to extend $\B_j$ continuously to $\overline{\UHam}(\S^2, \omega)$.

Proofs of the above theorems are, once again, very similar to the proofs of Theorems \ref{theo:at_identity} \& \ref{theo:not_identity}.   The only difference here is that instead of working with diffeomorphisms (or homeomorphisms) we must work with paths of diffeomorphisms (or homeomorphisms) which are based at the identity.  This is, of course, necessary as we have to deal with elements $\UHam(S^2, \omega)$.   We will not provide proofs for Theorems \ref{theo:at_identity_sphere} \& \ref{theo:not_identity_sphere} as  one could indeed prove them by making simple modifications to the proofs of Theorems \ref{theo:at_identity} \& \ref{theo:not_identity}. 

\subsection{Preliminary results on continuity of barcodes}
  The results of this subsection, which will be needed for the proof of the theorems of the previous section, hold on any symplectic manifold which is monotone, \emph{i.e.}\ not necessarily surfaces.  We begin with the following definition.
  
  \begin{definition}\label{def:epsilon-shift}
  Let $\phi$ be a homeomorphism of $M$ and $\delta$ a positive real number.  We will say that a set $U$ is \emph{$\delta$--shifted} by $\phi$ if $d(x, \phi(x)) > \delta$ for each $x \in U$.
\end{definition}

In the following proposition $V$ denotes a proper closed subset of $M$.  We pick a Hamiltonian $K: \S^1 \times M \rightarrow \R$ such that $D$ is $\delta$--shifted, for some $\delta> 0$, by $\varphi^1_K$.  Recall that $K \# H(t,x) := K(t,x)+H(t,(\varphi^t_K)^{-1}(x))$ is the Hamiltonian whose flow is the composition $\varphi^t_K  \varphi^t_H$.

\begin{prop}\label{prop:epsilon_shift}
Suppose that  $ H : \S^1 \times M \rightarrow \R$ is supported in $V$ and that  $d_{C^0}(Id, \varphi^t_H) < \delta$ for each $t \in [0,1]$.  Then,  for each integer $j$, $\B_j(K \# H) = \B_j(K)$.
\end{prop}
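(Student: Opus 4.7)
The plan is to consider the linear homotopy $K \# sH$, $s \in [0,1]$, interpolating between $K$ and $K \# H$, and to show that the barcode is constant along this path. After a $C^2$-small perturbation of $K$ I may assume $K$ is non-degenerate; the $\delta$-shift hypothesis persists under such a perturbation since $V$ is compact, and since $\varphi^t_{sH}$ will turn out to be the identity on a neighborhood of each fixed point of $\varphi^1_K$, each $K \# sH$ is then also non-degenerate.

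The first key step is to show that $\mathrm{Fix}(\varphi^1_{K \# sH})$ does not depend on $s$. Suppose $x$ is a fixed point of $\varphi^1_{K \# sH} = \varphi^1_K \varphi^1_{sH}$. Because $H$ is supported in $V$, the flow $\varphi^t_{sH}$ preserves $V$, is the identity on $M \setminus V$, and satisfies $d(y, \varphi^t_{sH}(y)) < \delta$ for every $y$. If $x \in V$, then $y := \varphi^1_{sH}(x) \in V$ and $\varphi^1_K(y) = x$, so $d(y, \varphi^1_K(y)) = d(y, x) < \delta$, contradicting the $\delta$-shift of $V$ by $\varphi^1_K$. Thus every fixed point lies in $M \setminus V$, where $\varphi^t_{sH}$ is the identity, and so the fixed point set coincides with $\mathrm{Fix}(\varphi^1_K)$, independently of $s$.

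Next I would verify that the action and Conley-Zehnder index of each capped periodic orbit are $s$-independent. For $x \in \mathrm{Fix}(\varphi^1_K)$ the orbit $z_s(t) := \varphi^t_{K \# sH}(x)$ equals $\varphi^t_K(x)$ (since $\varphi^t_{sH}$ fixes $x$), so any capping disc $u$ is available uniformly in $s$, and using $K \# sH(t,y) = K(t,y) + sH(t, (\varphi^t_K)^{-1}(y))$ one computes
$$\A_{K \# sH}([z_s, u]) = \int_0^1 \bigl( K(t, \varphi^t_K(x)) + sH(t, x) \bigr)\, dt - \int_{D^2} u^*\omega = \A_K([z_K, u]),$$
because $H(t, x) = 0$ for $x \notin V$. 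Since $\varphi^1_{sH}$ is the identity on an open neighborhood of each such $x$, the linearization of $\varphi^1_{K \# sH}$ at $x$ equals that of $\varphi^1_K$, so the Conley-Zehnder index is preserved as well; in particular $\Spec(K \# sH) = \Spec(K)$ for all $s$.

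The final step, which I expect to be the main obstacle, is to upgrade this invariance of generators, actions, and indices to constancy of the barcode. The Hofer-Lipschitz property~\eqref{eq:continuity} gives
$$\dbot\bigl( \B_j(K \# s_1 H), \B_j(K \# s_2 H) \bigr) \leq \| K \# s_1 H - K \# s_2 H \| = |s_1 - s_2| \cdot \|H\|,$$
so $s \mapsto \B_j(K \# sH)$ is continuous in the bottleneck metric, while every endpoint of every bar lies in the $s$-independent finite set $\Spec(K)$ by the spectrality property. To turn continuity plus discreteness of the spectrum into strict constancy of the barcode, I would invoke Proposition~\ref{prop:endpoints_in_interval}: for any interval $(a,b)$ with $a,b \notin \Spec(K)$, no capped orbits enter or leave $(a,b)$ along the homotopy, so $\dim HF_*^{(a,b)}(K \# sH)$ is integer-valued and locally constant in $s$; this rank data, combined with the spectrality of endpoints, determines the barcode uniquely and forces $\B_j(K \# sH) = \B_j(K)$ for every $s$, and in particular at $s = 1$.
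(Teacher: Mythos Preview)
Your overall strategy---interpolate by a one-parameter family, show the spectrum is frozen along it, and then combine Hofer continuity with spectrality to pin down the barcode---is exactly the paper's approach. Two points deserve attention.

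\textbf{The homotopy parametrization.} You take the family $K\#sH$, whose time-one map is $\varphi^1_K\varphi^1_{sH}$, and then assert that $d(y,\varphi^t_{sH}(y))<\delta$. For time-dependent $H$ this does not follow from the hypothesis $d_{C^0}(\Id,\varphi^t_H)<\delta$: the flow of $sH$ is not a reparametrization of the flow of $H$, so there is no direct comparison. The paper avoids this by using the family $F_s(t,x)=K(t,x)+sH(st,(\varphi^t_K)^{-1}(x))$, engineered so that $\varphi^1_{F_s}=\varphi^1_K\circ\varphi^s_H$. Then the relevant displacement is $d(x,\varphi^s_H(x))$, which is $<\delta$ directly by hypothesis. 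With this choice of homotopy your fixed-point and action arguments go through verbatim.

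\textbf{The final step.} Your route through the ranks of $HF_*^{(a,b)}$ works but is heavier than needed; the claim that these ranks are locally constant requires a word about continuation maps, not just ``no orbits enter or leave''. The paper's argument is shorter: the endpoints of the bars of $\B_j(F_s)$ lie in the fixed measure-zero set $\Spec(K)$ for every $s$, and they vary continuously in $s$ by Hofer continuity; a continuous function into a measure-zero (in fact totally disconnected) subset of $\R$ is constant, so the barcode is constant. No appeal to Proposition~\ref{prop:endpoints_in_interval} or to a preliminary non-degeneracy perturbation is required.
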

\begin{proof}
  We will only give a sketch of the proof of the above statement as the the proof is similar to the proof of the results contained in \cite{Sey12}.
  
  For each $s \in [0,1]$ consider the Hamiltonian $F_s(t,x) = K(t,x) + sH(st, \\ (\varphi^t_K)^{-1}(x))$ and note that $ \varphi^1_{F_s} = \varphi^1_K \circ \varphi^s_H$.   We leave it to the reader to check that the assumption that $\varphi^1_K$  $\delta$--shifts the support of $H$ and that $d_{C^0}(Id, \varphi^s_H) < \delta$  implies that the Hamiltonian diffeomorphisms $\varphi^1_K$ and $\varphi^1_{F_s} = \varphi^1_K \circ \varphi^s_H$ have the exact same fixed points; see for example the proof of Theorem 4 of \cite{Sey12}. Furthermore, the corresponding Hamiltonians $K$ and $F_s$ have the exact same action spectrum , \emph{i.e.}\ $\Spec(K) = \Spec(F_s)$.  Although  this last claim is not directly stated in \cite{Sey12}, it is contained therein within the proof of Proposition 2.2 and hence we will not give a proof of this fact either.
  
  We will now prove that $\B_j(F_s) = \B_j(F_0)= \B_j(K)$.  Since $\Spec(F_s) = \Spec(K)$, by the spectrality property of barcodes we conclude that  the endpoints of the bars of $\B_j(F_s)$ are all contained in $\Spec(K)$   which is a measure-zero subset of $\R$.  Now, the barcode $\B_j(F_s)$ varies continuously with $s$, by the continuity property of barcodes, and so the end points of $\B_j(F_s)$ vary continuously in the measure-zero set $\Spec(K)$.   Of course, a continuous function taking values in a measure-zero set is necessarily constant.  Hence, $\B_j(F_s) = \B_j(F_0) = \B_j(K)$.   Noting that $F_1 = K \#H$ completes the proof. 
\end{proof}

We will be needing the following proposition as well.  Results similar to the following proposition are ubiquitous within the literature on the theory of spectral invariants; see for example \cite{viterbo, ostrover, Oh05b, usher}.

\begin{prop}\label{prop:displacement}
Suppose that $K, H$ are two Hamiltonians such that $\varphi^1_K$ displaces the support of $H$.  Then, $\B_j (K \#H) = \B_j(K)$, for any integer $j$.   \end{prop}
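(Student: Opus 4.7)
The plan is to mimic the proof of Proposition \ref{prop:epsilon_shift}, replacing the $\delta$--shifting hypothesis with the stronger displacement hypothesis, which will allow us to drop the smallness assumption on $\varphi^t_H$.

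First, I would introduce the interpolating family
$$F_s(t,x) = K(t,x) + sH(st, (\varphi^t_K)^{-1}(x)), \qquad s \in [0,1],$$
whose time-one map is $\varphi^1_{F_s} = \varphi^1_K \circ \varphi^s_H$, so that $F_0 = K$ and $F_1 = K \# H$. The flow satisfies $\varphi^t_{F_s}(x) = \varphi^t_K(\varphi^{st}_H(x))$.

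Next I would verify that $\mathrm{Fix}(\varphi^1_{F_s}) = \mathrm{Fix}(\varphi^1_K)$ for every $s \in [0,1]$. The flow $\varphi^t_H$ preserves $\mathrm{supp}(H)$ (and fixes its complement), while $\varphi^1_K$ displaces $\mathrm{supp}(H)$ by hypothesis. Hence, if $x \in \mathrm{supp}(H)$, then $\varphi^s_H(x) \in \mathrm{supp}(H)$, so $\varphi^1_K(\varphi^s_H(x))$ lies in $\varphi^1_K(\mathrm{supp}(H))$, which is disjoint from $\mathrm{supp}(H)$ and hence cannot equal $x$; and if $x \notin \mathrm{supp}(H)$, then $\varphi^s_H(x) = x$, so the fixed-point equation reduces to $\varphi^1_K(x) = x$.

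The key computation is that the action spectra coincide: $\mathrm{Spec}(F_s) = \mathrm{Spec}(K)$ for all $s$. Indeed, a fixed point $x$ of $\varphi^1_{F_s}$ lies outside $\mathrm{supp}(H)$, so the orbit $z(t) = \varphi^t_{F_s}(x) = \varphi^t_K(x)$ satisfies $(\varphi^t_K)^{-1}(z(t)) = x \notin \mathrm{supp}(H)$, whence $H(st, (\varphi^t_K)^{-1}(z(t))) \equiv 0$. With any capping $u$ of $z$ one gets
$$\mathcal{A}_{F_s}([z,u]) = \int_0^1 K(t,z(t))\, dt - \int u^*\omega = \mathcal{A}_K([z,u]).$$
(On the sphere this identification is compatible with the $\Gamma$-action on cappings, and otherwise $\pi_2 = 0$ makes the point moot.)

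Finally, I would conclude as in Proposition \ref{prop:epsilon_shift}: by the spectrality property, the endpoints of bars in $\B_j(F_s)$ lie in the measure-zero set $\mathrm{Spec}(K)$; by the $C^0$--continuity of barcodes in the Hamiltonian (Equation \eqref{eq:continuity}), these endpoints vary continuously with $s$. A continuous path in a measure-zero subset of $\R$ is constant, so $\B_j(F_s)$ is independent of $s$, giving $\B_j(K \# H) = \B_j(F_1) = \B_j(F_0) = \B_j(K)$. The only genuine point requiring care is the action computation; everything else is formal.
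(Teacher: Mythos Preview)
Your proof is correct and follows exactly the paper's approach: the same interpolating family $F_s$, the same spectrality-plus-continuity argument. You have simply written out the details that the paper leaves to the reader (the verification that fixed points of $\varphi^1_{F_s}$ coincide with those of $\varphi^1_K$ and lie outside $\mathrm{supp}(H)$, and the action computation). One small terminological quibble: Equation~\eqref{eq:continuity} is Lipschitz continuity with respect to the Hofer norm $\|\cdot\|$, not $C^0$-continuity; what you actually use is that $\|F_s - F_{s'}\| \to 0$ as $s' \to s$, which is immediate.
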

\begin{proof}
The proof of this proposition is very similar to that of Proposition \ref{prop:epsilon_shift}.    For each $s \in [0,1]$ consider the Hamiltonian $F_s(t,x) = K(t,x) + sH(st, (\varphi^t_K)^{-1}(x))$.

 As in Proposition \ref{prop:epsilon_shift}, using the fact that $\varphi^1_{F_s} = \varphi^1_K \varphi^s_H$ for each $s \in [0,1]$, one can easily show that $\Spec(K) = \Spec(F_s)$ for each $s \in [0,1]$.  Repeating the argument from the last paragraph of the proof of Proposition \ref{prop:epsilon_shift} would lead to the conclusion that $\B_j(F_s) = \B(F_0) = \B(K)$ for each $s \in [0,1]$.  Since  $F_1 = K \# H$, we conclude that $ \B_j (K\# H)= \B_j(K)$ which completes the proof.
\end{proof}

\subsection{Proof of Theorem \ref{theo:at_identity}}\label{sec:proof_at_identity}
Our proof of Theorem \ref{theo:at_identity} will rely on the following fragmentation result. 
\begin{claim}\label{claim:fragmentation}
There exists a covering of $\Sigma$ by two proper open subsets $V_1, V_2$ with the following property:
 For every $r >0$, there exists a neighborhood $\nu$ of the identity in $\Ham(\Sigma, \omega)$ such that if $\psi \in \nu$, then  $\psi$ can be written as a composition $\psi = \psi_1 \psi_2$, where  $\psi_1$ is supported in $V_1$,  $\psi_2$ is supported in $V_2$, and $d_{C^0}(\psi_k, \Id) < r$.  
 
 Furthermore, one can find Hamiltonians $H_1, H_2$ such that $H_k$ is supported in $V_k$,   $\psi_k = \varphi^1_{H_k}$, and $d_{C^0}(\varphi^t_{H_k}, \Id) < r$ for all $t \in [0,1]$.
\end{claim}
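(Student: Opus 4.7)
The plan is to perform a classical fragmentation by cutting the generating Hamiltonian of $\psi$ by a bump function subordinate to a two-element cover of the surface.

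First I would choose proper open subsets $V_1, V_2 \subset \Sigma$ with $V_1 \cup V_2 = \Sigma$ whose complements $K_1 := \Sigma \setminus V_1$ and $K_2 := \Sigma \setminus V_2$ are disjoint compact sets; this is easily arranged on any closed surface (for instance, on a surface of positive genus, take neighborhoods of two disjoint homologically nontrivial curves, while on $\S^2$ take two overlapping open disks). I then fix an intermediate neighborhood $W_1 \supset K_1$ with $\overline{W_1} \subset V_2$ and a smooth function $\chi : \Sigma \to [0,1]$ equal to $1$ on $\overline{W_1}$ and with $\supp(\chi) \subset V_2$.

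Next, for $\psi$ sufficiently close to $\Id$ I would write $\psi = \varphi^1_F$ for a smooth Hamiltonian $F : \S^1 \times \Sigma \to \R$ whose entire isotopy $\{\varphi^t_F\}_{t \in [0,1]}$ stays $C^0$-close to $\Id$. Setting $H_2(t,x) := \chi(x) F(t,x)$ and $\psi_2 := \varphi^1_{H_2}$, the Hamiltonian $H_2$ is automatically supported in $V_2$. Moreover, whenever $\sup_{t} d_{C^0}(\varphi^t_F, \Id)$ is smaller than the distance from $K_1$ to $\Sigma \setminus W_1$, every $\varphi^t_F$-orbit starting in $K_1$ stays in the region where $\chi \equiv 1$; along such an orbit $X_{H_2} = X_F$, so $\varphi^t_{H_2} \equiv \varphi^t_F$ on $K_1$. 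In particular $\psi_2 = \psi$ on $K_1$, and therefore $\psi_1 := \psi \circ \psi_2^{-1}$ is the identity on $K_1$ and hence supported in $V_1$. The map $\psi_1$ is the time-one map of the concatenated isotopy $\varphi^t_F \circ (\varphi^t_{H_2})^{-1}$, generated by the standard composition Hamiltonian
\[
H_1(t,x) = F(t,x) - H_2\bigl(t,\,\varphi^t_F \circ (\varphi^t_{H_2})^{-1}(x)\bigr),
\]
which is supported in $V_1$ because $\varphi^t_F$ and $\varphi^t_{H_2}$ agree on $K_1$ for every $t$.

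The $C^0$ smallness of the fragments then comes for free: by the triangle inequality $d_{C^0}(\varphi^t_{H_2}, \Id) \leq \sup_t d_{C^0}(\varphi^t_F, \Id)$ and $d_{C^0}(\varphi^t_{H_1}, \Id) \leq 2 \sup_t d_{C^0}(\varphi^t_F, \Id)$, so it suffices to shrink $\nu$ until the isotopy of $F$ is $r/2$-close to $\Id$ throughout. The main obstacle I expect is precisely the step of producing such an $F$ --- namely a generating Hamiltonian whose \emph{full} isotopy (not merely its endpoint $\psi$) is $C^0$-close to $\Id$. For $\psi$ in a $C^\infty$-small neighborhood of $\Id$ this is routine, e.g.\ via the Weinstein tubular neighborhood of the diagonal, yielding $F$ of small $C^1$-norm. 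Upgrading the conclusion so that $\nu$ is a genuine $C^0$-neighborhood of $\Id$ (which is what the application to Theorem~\ref{theo:at_identity} demands) requires a genuine $C^0$-approximation argument for area-preserving diffeomorphisms of the surface, of the kind developed in \cite{Sey12}; this is where the specifically two-dimensional nature of $\Sigma$, together with the vanishing-flux characterization of $\overline{\Ham}$, plays the essential role.
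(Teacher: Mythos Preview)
Your approach via cutting a global Hamiltonian with a bump function is \emph{not} how the paper proceeds, and in its present form it has a genuine gap beyond the one you flag.

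The paper's route is different and more direct. It invokes Theorem~3.1 of \cite{Sey12} (restated as Claim~\ref{cl:fragmentation_original}): there is a cover of $\Sigma$ by disks $D_1,\dots,D_{2g+2}$ such that every $\psi$ in a $C^0$--neighborhood of $\Id$ factors as $\psi_1\cdots\psi_{2g+2}$ with each $\psi_i$ supported in some $D_j$ and $C^0$--small. One then simply sets $V_1=D_1$ and $V_2=D_2\cup\cdots\cup D_{2g+2}$. The ``furthermore'' clause is handled disk by disk using Lemma~3.2 of \cite{Sey12}, which says that a compactly supported Hamiltonian diffeomorphism of a ball admits a generating Hamiltonian whose entire flow has $C^0$--displacement bounded by that of the time-one map. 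No global small isotopy and no bump function are needed.

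Your argument has two problems. First, the inequality $d_{C^0}(\varphi^t_{H_2},\Id)\le\sup_t d_{C^0}(\varphi^t_F,\Id)$ does \emph{not} follow from the triangle inequality and is false in general: since $X_{\chi F}=\chi X_F+F\,X_\chi$, the cut-off flow picks up a term governed by the pointwise size of $F$, which is not controlled by the displacement of $\varphi^t_F$ (adding a constant to $F$ already breaks your bound). This would be fine if you had $F$ with small $C^1$--norm, but that is precisely what you do not have in the $C^0$ setting. Second, the obstacle you correctly identify---producing $F$ with the whole isotopy $C^0$--small from a merely $C^0$--small $\psi$---is essentially equivalent to the fragmentation result you are trying to prove. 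The relevant input from \cite{Sey12} is the disk fragmentation itself (plus the ball lemma), not a global small isotopy; once you have those, the bump-function step is both unnecessary and unjustified.
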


We will postpone the proof of the above claim and present a proof of Theorem \ref{theo:at_identity}.  Let $\varepsilon>0$.
For $k=1, 2$, we pick a $C^2$-small Morse function $f_k$  satisfying the following properties: $f_k$ has no critical point which is in the closure of $V_k$,  the critical points of $f_k$ are the only fixed points of  $\varphi^1_{f_k}$, and $  \|f_k \|  <\frac{\varepsilon}{8}$.  Observe that since $\varphi^1_{f_k}$ has no fixed points in the closure of $V_k$, there exists $R_k>0$ such that $\varphi^1_{f_k}$ $R_k$--shifts $V_k$.  We let $R= \min\{R_1, R_2\}$.

We pick $r >0$ to be small in comparison to  $R$. We will pick $\delta> 0$  small enough such that if $d_{C^0}(\Id, \psi ) < \delta$, then $\psi$ belongs to the neighborhood  $\nu$ of the identity given by Claim \ref{claim:fragmentation}.  Let $V_1, V_2, \psi_1, \psi_2$,  and $H_1, H_2$ be as in the conclusion of Claim \ref{claim:fragmentation}.  Clearly, for $k=1,2$ the support of $H_k$ is $R$--shifted by $\varphi^1_{f_k}$ and $d_{C^0}(\varphi^t_{H_k}, \Id) < r < R$.  Thus, applying Proposition \ref{prop:epsilon_shift}, we see that 
\begin{equation}\label{eq:shift1}
\B_j(\varphi^1_{f_k} \psi_k) = \B_j(\varphi^1_{f_k}).
\end{equation}

Furthermore,  the map $\psi_2 \varphi^1_{f_1}$ shifts the support of $H_1$ by more than $R-r$.  Since $r$ is small compared to $R$ and $d_{C^0}(\varphi^t_{H_1}, \Id) < r$, we may again apply Proposition \ref{prop:epsilon_shift} and conclude \begin{equation}\label{eq:shift2}
\B_j(\psi_2 \varphi^1_{f_1} \psi_1) = \B_j(\psi_2\varphi^1_{f_1}).
\end{equation}

Combining the above with the continuity property of barcodes (with respect to the Hofer distance), we obtain the following chain of inequalities% for $k=1,2$: 
$$\dbot(\B_j(\psi_2 \psi_1), \B_j(\Id)) \leq \dbot(\B_j(\psi_2 \psi_1) ,  \B_j(\psi_1)) + \dbot(\B_j( \psi_1), \B_j(\Id)) $$
$$ \leq \dbot(\B_j(\psi_2 \varphi^1_{f_1}\psi_1), \B_j(\varphi^1_{f_1}\psi_1)) + \dbot(\B_j(\varphi^1_{f_1}\psi_1) ,\B_j(\Id)) + 3 \|f_1\|$$ 
$$ \leq \dbot(\B_j(\psi_2 \varphi^1_{f_1}), \B_j(\varphi^1_{f_1})) + \dbot(\B_j(\varphi^1_{f_1}) ,\B_j(\Id) ) + 3 \|f_1\| $$ 
$$ \leq \dbot(\B_j(\psi_2), \B_j(\Id)) + 6 \|f_1\|  $$ 
$$ \leq \dbot(\B_j(\varphi^1_{f_2} \psi_2), \B_j(\Id)) + \|f_2\| +6  \|f_1\|  $$
$$ = \dbot(\B_j(\varphi^1_{f_2}), \B_j(\Id)) + \|f_2\| +6  \|f_1\|  $$
$$ \leq 2 \|f_2\| +6  \|f_1\|  \leq \varepsilon. $$

In the above chain of inequalities, we have applied Hofer continuity in passing from the first line to the second, from the third to the fourth, from the fourth to the fifth, and from the sixth to the seventh, and we have used Equations \eqref{eq:shift1} \& \eqref{eq:shift2} in passing from the second line to the third and from the fifth to the sixth.

\medskip 

It remains to explain why Claim \ref{claim:fragmentation} is true.  The first half of Claim \ref{claim:fragmentation} follows immediately from the following statement  by setting $V_1 = D_1, V_2 = D_2 \cup \ldots \cup D_{2g+2}.$

\begin{claim} \label{cl:fragmentation_original} Denote by $(M, \omega)$  a closed symplectic surface of genus $g$.  There exists a cover of $M$ by disks $D_1, \ldots,D_{2g+2}$ with the following property: For every $r >0$, there exists a neighborhood $\nu$ of the identity in $\Ham(M, \omega)$, such that if $\psi \in \nu$, then  $\psi$ can be written as a composition $\psi = \psi_1 \psi_2 \ldots \psi_{2g+2}$, where each $\psi_i$ is supported in one of the disks $D_j$, and $d_{C^0}(\psi_i, Id) < r$.
\end{claim}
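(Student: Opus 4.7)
The plan proceeds in three stages: constructing the cover, setting up an inductive fragmentation, and implementing each inductive step via a symplectic extension lemma on a disk.

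For the cover I would exploit the minimal handle decomposition of a genus $g$ surface: one $0$-handle, $2g$ one-handles, and one $2$-handle. Slightly thickened inside $M$, each handle is an embedded open topological disk, and the $2g+2$ thickenings together cover $M$. I would then also fix two shrinkings $D_i'' \Subset D_i' \Subset D_i$ with $\{D_i''\}$ still covering $M$, arranged so that each annular region $D_i \setminus D_i'$ provides ``room'' for the Moser correction described below, and so that later disks in the peeling order do not undo earlier choices.

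For the inductive fragmentation, given $\psi \in \Ham(M, \omega)$ with $d_{C^0}(\psi, \mathrm{Id}) < \delta$ ($\delta$ small, depending on $r$), I would inductively produce $\psi_i$ supported in $D_i$ with $d_{C^0}(\psi_i, \mathrm{Id}) < r$ as follows. Set $\phi_1 := \psi$. At stage $i$, having constructed $\phi_i$, apply the extension lemma (next paragraph) to produce $\psi_i$ supported in $D_i$ that agrees with $\phi_i$ on an open neighborhood of the compact set $K_i := \overline{D_i''} \setminus (D_{i+1}' \cup \cdots \cup D_{2g+2}')$. Then $\phi_{i+1} := \psi_i^{-1} \phi_i$ is the identity on a neighborhood of $K_i$, and by carefully tracking supports with the cascading shrinkings, $\phi_{i+1}$ vanishes outside $D_{i+1}' \cup \cdots \cup D_{2g+2}'$. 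After $2g+2$ steps, $\phi_{2g+3} = \mathrm{Id}$, giving $\psi = \psi_1 \cdots \psi_{2g+2}$ as required.

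The main obstacle is the extension lemma: given a Hamiltonian diffeomorphism $\phi$ with $d_{C^0}(\phi, \mathrm{Id}) < \delta$ and a compact $K \Subset D_i$, produce a Hamiltonian $\psi$ supported in $D_i$ that agrees with $\phi$ on an open neighborhood of $K$, and whose generating Hamiltonian isotopy stays $C^0$-close to the identity. I would proceed in two steps. First, apply the classical smooth fragmentation lemma for diffeomorphisms to produce a smooth orientation-preserving (not yet symplectic) extension $\tilde\psi$ of $\phi|_K$ supported in $D_i$. Second, since $\tilde\psi^*\omega$ agrees with $\omega$ on $K$ and outside $D_i$, apply Moser's trick inside $D_i \setminus K$ to produce a correction $\chi$ supported there with $\chi^*\tilde\psi^*\omega = \omega$; then $\psi := \tilde\psi \circ \chi$ is symplectic, supported in $D_i$, and automatically Hamiltonian since $D_i$ is simply connected. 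The delicate part is controlling the $C^0$-size of $\psi$ and of its isotopy quantitatively in terms of $\delta$; the room between $K$ and $\partial D_i$, together with the cascading shrinkings fixed at the outset, is what makes this quantitative control possible, so that shrinking the initial neighborhood $\nu$ of the identity forces every $\psi_i$ into the prescribed $r$-ball.
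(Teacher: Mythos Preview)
The paper does not prove this claim; it cites it as Theorem~3.1 of \cite{Sey12}, which in turn modifies an argument from \cite{EPP}. Your overall architecture --- $2g+2$ disks from a minimal handle decomposition, then inductive peeling via a local extension lemma --- matches the standard approach.

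The gap is in your extension step. You propose to produce a non-symplectic smooth extension $\tilde\psi$ of $\phi|_K$ with $d_{C^0}(\tilde\psi,\Id)$ controlled by $\delta$, and then Moser-correct on the collar $D_i\setminus K$. But the Moser vector field is driven by a primitive of $\tilde\psi^*\omega - \omega$, and this $2$-form depends on the \emph{Jacobian} of $\tilde\psi$, not on its $C^0$-displacement. A diffeomorphism can be arbitrarily $C^0$-close to the identity while having Jacobian wildly far from $1$ on a fixed collar, so neither the ``room'' between $K$ and $\partial D_i$ nor the cascading shrinkings force the correction $\chi$ to tend to $\Id$ in $C^0$ as $\delta\to 0$. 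The proofs in \cite{EPP} and \cite{Sey12} sidestep this entirely: the key input (recorded in the paper as Lemma~\ref{lem:extension_lemma}) extends an \emph{area-preserving} embedding of a rectangle directly to a compactly supported Hamiltonian diffeomorphism, using the $2$-dimensional structure and never invoking Moser. If you want to rescue your route, you need a genuinely quantitative Moser argument, of the kind the paper itself deploys in the proof of Claim~\ref{cl:density_diffeos}: triangulate the collar at mesh $d$ with $\delta\ll d\ll r$, use $C^0$-closeness of $\tilde\psi$ to the area-preserving identity to get $\omega(\tilde\psi(T))/\omega(T)\approx 1$ on each triangle $T$, and then correct triangle-by-triangle so that the $C^0$-displacement is bounded by the mesh. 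Either way, the missing ingredient is control of area distortion, which $C^0$-smallness alone does not supply.
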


The above is the statement of  Theorem 3.1 from \cite{Sey12}.  The proof given in \cite{Sey12} was obtained by modifying  a similar result from \cite{EPP}.  The result in \cite{EPP} was in turn inspired by earlier works of Fathi \cite{fathi80} and Le Roux \cite{leroux}.  

It remains to prove the latter statement of Claim \ref{claim:fragmentation}.  We leave it to the reader to check that it follows from Claim \ref{cl:fragmentation_original} and Lemma 3.2 of \cite{Sey12} whose statement we will recall for the reader's convenience:

\emph{  Let $B$ denote the unit ball in $\R^{2n}$, $\omega_0$ the standard symplectic form on $\R^{2n}$, and $\psi \in \Ham_c(B, \omega_0)$.  There exists a Hamiltonian $H$ supported in $B$ such that $\varphi^1_H = \psi$ and $d_{C^0}(Id, \varphi^t_H) < d_{C^0}(Id, \psi) $ for all $t\in[0,1]$.}

This completes the proof of Theorem \ref{theo:at_identity}.

\subsection{Proof of Theorem \ref{theo:not_identity}}\label{sec:proof_not_deintity}
 Let $\eta \in \overline{Ham}(\Sigma, \omega)$ as in the statement, and $\varepsilon>0$.
Let $B$ be a disk in $M$ such that $\eta(B) \cap B = \emptyset$.  Such a disk exists because $\eta \neq Id$.  We may assume that $\varepsilon$ is small in comparison to the area of $B$.  We pick a small $\delta>0$ and $\phi, \psi \in \Ham(\Sigma, \omega)$ such that  $d_{C^0}(\phi, \eta) < \delta$, and $d_{C^0}(\psi, Id) < \delta$.
 
 The following claim proves the theorem in the special case where there exists a disc $D$ which contains the support of $\psi$.

\begin{claim}\label{cl:Step1}
Suppose that 
 $\phi(B) \cap B = \emptyset$.   Let $D \subset \Sigma$ be a disc.  There exists a constant $\delta_D$, depending only on $D$,  such that if $\psi$ is supported in $D$ and $d_{C^0}(\psi, Id) \leq \delta_D$, then for any $j \in \Z$ we have $$\dbot(\B_j(\phi \psi), \B_j (\phi)) <  \varepsilon.$$ 
\end{claim}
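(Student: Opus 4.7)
The idea is to fragment $\psi$ into pieces supported in very small sub-discs of $D$, conjugate each piece so that its support lies in $B$ (which is displaced by $\phi$), and apply Proposition~\ref{prop:displacement}. Set $\delta_0 = d(\bar B, \phi(\bar B)) > 0$, so that $\phi$ shifts $\bar B$ by at least $\delta_0$.

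First, I would apply Claim~\ref{cl:fragmentation_original} iteratively inside $D$ (viewed as a symplectic disc) to write $\psi = \psi_1 \cdots \psi_N$ with each $\psi_k$ supported in a small sub-disc $D_k \subset D$ of area less than $\mathrm{area}(B)/N$, and satisfying $d_{C^0}(\psi_k, \Id) < r$ for some small $r > 0$ to be fixed later. Invoking Lemma~3.2 of \cite{Sey12}, each $\psi_k$ admits a Hamiltonian $H_k$ with support in $D_k$ and $d_{C^0}(\varphi^t_{H_k}, \Id) < r$ for all $t \in [0,1]$.

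Next, I would pick $\sigma_k \in \Ham(\Sigma, \omega)$ with $\sigma_k(D_k) \subset B$ and with the images $\sigma_k(D_k)$ pairwise disjoint in $B$; the area bound on the $D_k$'s makes this possible. The conjugate $\tilde \psi_k := \sigma_k \psi_k \sigma_k^{-1}$ is then supported in $\sigma_k(D_k) \subset B$, and Proposition~\ref{prop:displacement} applied iteratively (using the disjointness of the supports so that the composed product is still displaced at each stage) yields
\[
\B_j(\phi \cdot \tilde \psi_1 \cdots \tilde \psi_N) \;=\; \B_j(\phi).
\]

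The remaining, and most delicate, task is to bridge from $\B_j(\phi \psi)= \B_j(\phi\psi_1\cdots\psi_N)$ to the identity above. These differ by conjugation-commutators of the form $[\psi_k, \sigma_k]$; such commutators are $C^0$-small (going to $0$ with $r$), but their Hofer norm need not be small, so naive Hofer continuity of barcodes (Equation~\eqref{eq:hofer_cont}) is insufficient. To close this gap I would telescope through the intermediate maps $\phi \psi_1 \cdots \psi_{k-1} \cdot \tilde\psi_k \cdots \tilde\psi_N$ and bound each transition via a further appeal to Proposition~\ref{prop:epsilon_shift}: the conjugate $\sigma_k^{-1}\phi\sigma_k$, being a conjugate of a map which $\delta_0$-shifts $\bar B \supset \sigma_k^{-1}(\sigma_k(D_k))$, itself shifts $D_k$ by at least $\delta_0/L_{\sigma_k}$, where $L_{\sigma_k}$ is the Lipschitz constant of $\sigma_k^{-1}$. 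Choosing $\delta_D$ smaller than $\min_k \delta_0/L_{\sigma_k}$ (a quantity depending only on $D$, since the number $N$ and the $\sigma_k$ depend only on $D$) and combining Proposition~\ref{prop:epsilon_shift} with conjugation invariance~\eqref{eq:conj_inv2} should allow replacing each $\psi_k$ by $\tilde\psi_k$ at the cost of bottleneck error $\varepsilon/N$. Summing the $N$ pieces then gives $\dbot(\B_j(\phi\psi), \B_j(\phi)) < \varepsilon$. The main obstacle is precisely the careful sequencing of these conjugation/shift steps so that at each stage the relevant sub-disc is genuinely shifted by the appropriate Hamiltonian diffeomorphism, without the commutators contaminating the estimate.
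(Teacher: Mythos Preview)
Your overall strategy---fragment $\psi$, conjugate the pieces into $B$, then invoke Proposition~\ref{prop:displacement}---matches the paper's. But you are missing the key observation that makes the bridging step work, and the alternative you sketch has a real gap.

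The paper's insight is that the conjugating maps can be chosen with small \emph{Hofer} norm, not merely small $C^0$ norm. Concretely, the paper fragments $\psi$ (via Lemma~\ref{lem:frag_rect}) so that each piece $\Psi_k$ is supported in a set $W_k$ whose connected components have area less than $\varepsilon$; one can then move $W_k$ into $B$ by some $f_k$ with $d_{\mathrm{Hofer}}(\Id,f_k)<\varepsilon$. Setting $\psi'=\prod_k f_k\Psi_k f_k^{-1}$ (supported in $B$), bi-invariance of the Hofer metric gives $d_{\mathrm{Hofer}}(\psi,\psi')<2(N+1)\varepsilon$, and a single application of Proposition~\ref{prop:displacement} plus Hofer continuity~\eqref{eq:hofer_cont} finishes the proof. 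No telescoping through mixed products is needed.

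By contrast, your proposed telescope through the intermediate maps $\phi\,\psi_1\cdots\psi_{k-1}\,\tilde\psi_k\cdots\tilde\psi_N$ does not fit the hypotheses of Proposition~\ref{prop:epsilon_shift}. At step $k$ you would need the ``base'' map $\phi\,\psi_1\cdots\psi_{k-1}\,\tilde\psi_{k+1}\cdots\tilde\psi_N$ to $\delta$-shift the support of (a Hamiltonian generating) both $\psi_k$ and $\tilde\psi_k$; nothing guarantees this, since the $\psi_i$'s and $\tilde\psi_j$'s mix the geometry of $D$, $B$, and the conjugators $\sigma_k$. The Lipschitz-constant bound you propose controls how $\sigma_k^{-1}\phi\sigma_k$ shifts $D_k$, but that is not the map appearing in your telescope. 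Moreover, Proposition~\ref{prop:epsilon_shift} yields equality of barcodes, not an $\varepsilon/N$ error, so the appearance of $\varepsilon$ in your final estimate is unexplained. The gap you flag as ``the main obstacle'' is genuine, and is exactly what the Hofer-smallness of the conjugators circumvents.
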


Before proving the claim we will show how Theorem \ref{theo:not_identity} follows from it.  According to Claim \ref{cl:fragmentation_original}, we can cover the surface $M$ by disks $D_1$, \ldots, $D_{2g+2}$, where $g$ denotes the genus of the surface, such that if $\psi$ is sufficiently $C^0$--small, then it can be written as $\psi = \psi_1 \psi_2  \ldots \psi_{2g+2}$, where each $\psi_k$ is still $C^0$--close to the identity and is supported in one of the disks $D_j$.  
We may assume that $\delta$ has been picked small enough so that 
$d_{C^0}(\psi_{k}, Id) \leq  \delta_{D_j}$, and so that 
$\phi\psi_1 \psi_2  \ldots \psi_k (B) \cap B = \emptyset$ for each $k$.
Then for each $k$ we can apply Claim \ref{cl:Step1} to  $\phi \psi_1 \ldots \psi_k$ and $\psi_{k+1}$ and conclude that  
$
\dbot(\B_j(\phi \psi_1 \ldots \psi_{k+1}), \B_j(\phi \psi_1 \ldots \psi_k)) <  \varepsilon.
$
Finally, we apply the triangular inequality to get
$$
\dbot(\B_j(\phi \psi), \B_j(\phi)) < (2g+2) \varepsilon,
$$
which implies the theorem.

\bigskip

The proof of Claim \ref{cl:Step1} relies on the following fragmentation-type lemma whose proof will be postponed to the end of this section.
 \begin{lemma}\label{lem:frag_rect}
Consider the unit square $[0,1] \times [0,1]$ in $\R^2$ equipped with the standard symplectic structure.  Let $m$ be a positive integer and $\rho$ a positive real number.

Define a partition of $[0,1] \times [0,1]$ into rectangles $U_1, \ldots, U_m$ where $U_i = [0,1] \times [\frac{i-1}{m}, \frac{i}{m}]$. There exists a constant $\delta_0 >0$, depending on $m$, with the following property: If $\psi$ is a Hamiltonian  diffeomorphism, whose support is compactly contained in the interior of $[0,1]\times[0,1]$ and  satisfies $d_{C^0}(\psi, Id) < \delta_0$, then there exist Hamiltonian diffeomorphisms $\theta, \psi_1, \ldots, \psi_m$ such that the support of $\theta$ is of area less than $\rho$,  each $\psi_i$ is supported in the interior of $U_i$, and  $\psi   =  \psi_1 \ldots \psi_m \theta$.
 \end{lemma}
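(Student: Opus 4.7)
The plan is to construct $\theta$ so that it absorbs the behavior of $\psi$ in thin horizontal bands around the lines $L_i := [0,1] \times \{i/m\}$, $i=1,\ldots,m-1$, leaving $\tilde\psi := \psi \theta^{-1}$ equal to the identity in a neighborhood of $\bigcup_i L_i$. Choose $\eta > 0$ small enough (depending only on $m$ and $\rho$) that the $3\eta$-neighborhood $V$ of $\bigcup_i L_i$ in $[0,1]^2$ has area less than $\rho$, and set $\delta_0 := \eta$. Pick a smooth cutoff $\chi : [0,1]^2 \to [0,1]$ supported in $V$ and equal to $1$ on the $2\eta$-neighborhood of $\bigcup_i L_i$. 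Given $\psi$ with $d_{C^0}(\psi,\Id) < \delta_0$ and compact support in $(0,1)^2$, apply Lemma 3.2 of \cite{Sey12} to find a compactly supported Hamiltonian $H_t$ whose flow $\psi^t$ satisfies $\psi^1 = \psi$ and $d_{C^0}(\psi^t, \Id) < \delta_0$ for every $t \in [0,1]$. Define $\theta$ to be the time-one map of $\chi H_t$; then $\theta$ is Hamiltonian and supported in $V$, which has area less than $\rho$.

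Next, verify that $\theta = \psi$ on the $\eta$-neighborhood $W$ of $\bigcup_i L_i$. For $x \in W$, every intermediate point $\psi^t(x)$ lies within $\eta$ of $x$, hence within the $2\eta$-neighborhood where $\chi \equiv 1$; so the vector fields of $H_t$ and $\chi H_t$ coincide along the entire trajectory $t \mapsto \psi^t(x)$, forcing the two flows to produce the same endpoint. Consequently, for $y \in \psi(W) = \theta(W)$ one has $\tilde\psi(y) = \psi(\theta^{-1}(y)) = y$, so $\tilde\psi$ is the identity on the open set $\psi(W)$, which contains each $L_i$ (because for $p \in L_i$ the preimage $\psi^{-1}(p)$ lies in $W$). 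Combined with the fact that $\tilde\psi$ is compactly supported in $(0,1)^2$, this shows $\tilde\psi$ preserves each rectangle $U_i$ and is the identity in a neighborhood of $\partial U_i$. Defining $\psi_i$ to be $\tilde\psi|_{U_i}$ extended by the identity, each $\psi_i$ is a compactly supported symplectomorphism of the disk $U_i$ and hence Hamiltonian (flux vanishes on a disk). The $\psi_i$ have pairwise disjoint supports, so they commute, and $\tilde\psi = \psi_1 \cdots \psi_m$, yielding $\psi = \psi_1 \cdots \psi_m \theta$.

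The main obstacle is verifying that truncating $H_t$ by $\chi$ produces a flow that still agrees with $\psi^t$ on a neighborhood of $\bigcup_i L_i$: this would fail if some trajectory $\psi^t(x)$ briefly left the plateau $\{\chi = 1\}$, at which point the cutoff flow would veer off. Overcoming this requires $\delta_0$ to be small relative to the thickness of the plateau, which forces $\delta_0$ to depend on $m$ through the constraint that the surrounding band $V$ fit within area $\rho$. It is also crucial here that we can realize the $C^0$-small diffeomorphism $\psi$ by a $C^0$-small \emph{isotopy} rather than merely by a $C^0$-small time-one map, which is precisely the content of Lemma 3.2 of \cite{Sey12}.
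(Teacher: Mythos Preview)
Your proof is correct and takes a genuinely different route from the paper's. The paper constructs $\theta$ via the extension lemma of \cite{EPP} (quoted in the paper as Lemma~\ref{lem:extension_lemma}): for each line $L_i$ it restricts $\psi$ to a thin strip $V_i'$, observes that $\psi(V_i') \subset V_i''$, and invokes the extension lemma to produce a Hamiltonian $\theta_i$ supported in $V_i''$ with $\theta_i = \psi$ on the smaller strip $V_i$; then $\theta = \theta_1 \cdots \theta_{m-1}$. You instead realize $\psi$ by a $C^0$-small Hamiltonian isotopy (Lemma~3.2 of \cite{Sey12}) and multiply the generating Hamiltonian by a cutoff $\chi$; the key observation---that trajectories starting in $W$ never leave the plateau $\{\chi=1\}$, so the truncated flow coincides with the original one on $W$---is a clean ODE argument that replaces the extension lemma entirely. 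Your approach is arguably more elementary in that it uses only a cutoff rather than an extension-type result, though it trades this for the dependence on Lemma~3.2 of \cite{Sey12} to obtain a $C^0$-small \emph{path} (the paper's route needs only the $C^0$-smallness of $\psi$ itself). Both approaches yield $\delta_0$ depending on $m$ and $\rho$, consistent with how the lemma is actually used.
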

 
 \begin{proof}[Proof of Claim \ref{cl:Step1}]  
Let $\phi,\psi,D$ be as in the statement.
 We will make the assumption that $\mathrm{Area}(D) = 1$.  Using an area-preserving identification\footnote{Note that, although the identification map will have to be non-smooth (at four points) on the boundary of $D$, we can ensure that it is  smooth in the interior of $D$ and so this will allow us to identify $\psi$ with a smooth area-preserving map of $[0,1] \times [0,1]$.} of the disk $D$ with the unit square $[0,1] \times [0,1]$, we will assume that $D$ is an embedding of the unit square $[0,1] \times [0,1]$ in $M$.  

%We will  identify the disk $D$ with the unit square $[0,1] \times [0,1]$ and the map $\psi$ with an area-preserving diffeomorphism\footnote{Note that, although the identification map will have to be non-smooth (at four points) on the boundary of $D$, we can ensure that it is  smooth in the interior of $D$ and so this will allow us to identify $\psi$ a smooth area-preserving map of $[0,1] \times [0,1]$.} of $[0,1] \times [0,1]$ which we will continue to denote by $\psi$. 

Let $U_1, \ldots, U_m$ be a partition of $D$ into rectangles as described in the statement of Lemma \ref{lem:frag_rect}. Note that the rectangles $U_i$  have the same area.  We will pick $m$ to be large enough so that the area of each  $U_i$ is smaller than $\varepsilon$ and will pick $\rho$ to be small in comparison to $\varepsilon$.

Now, according to Lemma \ref{lem:frag_rect}, there exists a constant $\delta_D$, which depends on the choice of the map identifying $D$ with $[0,1]\times [0,1]$, such that if $d_{C^0}(Id, \psi) \leq \delta_D$, then we obtain  Hamiltonian diffeomorphisms $\theta, \psi_1, \ldots, \psi_m$ satisfying the properties listed in that lemma. 

 Let $N =  \ceil[\bigg]{ \frac{2\mathrm{Area}(D)}{\mathrm{Area}(B)}}$.  For each $ k \in \{1, \ldots, N \}$, let $W_k = \sqcup \, U_j$, where the (disjoint) union is taken over the set $\{j : j=k \; \mathrm{mod} \; N\}$. Define $\Psi_k$ to be the composition of those $\psi_j$'s such that $j = k \; \mathrm{mod} \; N$.  Clearly, $\Psi_k$ is supported in $W_k$.

 We leave it to the reader to check that the total area of each $W_k$ is less than the area of $B$.  Hence, we can find $f_k \in \Ham(M, \omega)$ such that $f_k(W_k) \subset B$.  Furthermore, since each connected component of $W_k$ has area less than $\varepsilon$, the Hamiltonian diffeomorphism $f_k$ can be picked such that $d_{Hofer}(Id, f_k) < \varepsilon$. Note also that since the support of $\theta$ has total area less than $\rho$, we can find $g \in \Ham(M, \omega)$ such that it maps the support of $\theta$ into $B$ and $d_{Hofer}(Id, g) < \rho < \varepsilon $.  Now, consider the Hamiltonian diffeomorphism $$\psi' =   f_1 \Psi_1 f_1^{-1}  f_2 \Psi_2 f_2^{-1} \ldots f_N \Psi_N f_N^{-1}  g \theta g^{-1}.$$ 
It is easy to see that $\psi'$  is supported in $B$.  Furthermore, because $\psi = \Psi_1 \ldots \Psi_N \theta$, using bi-invariance of the Hofer metric, we obtain the following inequalities 

$$d_{\mathrm{Hofer}}(\psi,\psi') \leq 
\sum_{i=1}^N d_{Hofer}(\Psi_i, f_i \Psi_i f_i^{-1})  \; + \; d_{Hofer}(\theta, g\theta g^{-1})$$
$$\leq  \sum_{i=1}^N   2 \; d_{Hofer}(Id, f_i)   + 2 \; d_{Hofer}(Id, g)  <  2(N+1) \varepsilon.
$$  

Since $\phi(B) \cap B = \emptyset$, Proposition \ref{prop:displacement} tells us that $\B_j(\phi \psi') = \B_j(\phi)$.  Furthermore, the fact that $d_{\mathrm{Hofer}}(\psi,\psi') < 2(N+1) \varepsilon$ implies that $\dbot(\B_j(\phi \psi), \B_j(\phi \psi') ) <2(N+1) \varepsilon$.  We conclude from the above that $$\dbot(\B_j(\phi \psi), \B_j (\phi)) < 2(N+1) \varepsilon.$$   
Lastly,  replacing $\varepsilon$ by $\frac{\varepsilon}{2N+1}$ throughout the above argument, completes the proof of the claim.
\end{proof}

We will finish this section by presenting a proof of Lemma \ref{lem:frag_rect}.
\begin{proof}[Proof of Lemma \ref{lem:frag_rect}]
Our proof of this lemma relies on the following extension lemma a proof of which can be found 
in \cite{EPP}; see Lemma 6.3 therein.  
\begin{lemma} \label{lem:extension_lemma}
Let $V''= [0,R] \times [-c'',c'']$ be a rectangle, equipped with some area-form, and $V \subset V' \subset V''$ be two smaller rectangles of the form $V = [0,R] \times [-c,c], \, V' = [0,R] \times [-c',c'] \, ,  0 < c < c' < c''$.  Let $\psi : V' \rightarrow V''$ be an area-preserving embedding such that
\begin{itemize}
\item $\psi$ is the identity near $\{0\} \times [-c''c'']$ and $\{R\}  \times [-c'',c'']$.
\item The area in $\Pi$ bounded by $[0,R] \times \{y\}$ and its image is zero for some, and hence all, $y \in [-c_2, c_2]$.
\end{itemize}

Then, there exists a Hamiltonian diffeomorphism $\theta$ compactly supported in $V''$ such that
$\theta|_{V} = \psi|_{V}$. 
\end{lemma}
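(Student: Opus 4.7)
The plan is to build $\theta$ in two stages. First, I would modify $\psi$ on the two horizontal strips of $V'\setminus V$ so as to obtain an area-preserving diffeomorphism $\tilde\psi$ of $V'$ that agrees with $\psi$ on $V$ and is the identity near $\partial V'$; this is where the zero-flux hypothesis enters. Second, I would invoke the classical fact that every compactly supported area-preserving diffeomorphism of an open disk is Hamiltonian to realize $\tilde\psi$ as the time-one map of a compactly supported Hamiltonian on $\operatorname{int}V'$, and extend this Hamiltonian by zero to $V''$ to obtain $\theta$.

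For the first stage, consider the bottom strip $S=[0,R]\times[-c',-c]$; the top strip is handled symmetrically. On a neighborhood of $S$ the map $\psi$ is an area-preserving embedding into $V''$ that is the identity near the vertical sides, and the hypothesis asserts that the signed area enclosed by each horizontal segment $[0,R]\times\{y\}\subset S$ and its image under $\psi$ vanishes. I would realize $\psi|_S$ as the time-one map of a symplectic isotopy $\{\psi_s\}$ from the identity whose generator is identity near the vertical sides, and whose flux class in $H^1(V',\partial_{\mathrm{vert}}V';\R)$ vanishes, precisely by virtue of the zero-flux condition. The isotopy can therefore be chosen to be Hamiltonian on $V'$ with generator $H_s$, and a generating-function argument allows one to smoothly interpolate $\psi$ to the identity across $S$, i.e.\ to build an area-preserving $\tilde\psi$ on $S$ which agrees with $\psi$ above $y=-c$ and with the identity below $y=-c'+\eta$. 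Repeating on the top strip and gluing yields a compactly supported $\tilde\psi\in\Symp_c(\operatorname{int}V')$ with $\tilde\psi|_V=\psi|_V$.

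For the second stage, since $\operatorname{int}V'$ is topologically an open disk, $H^1_c(\operatorname{int}V';\R)=0$ and $\Symp_c(\operatorname{int}V')$ is connected, so the flux homomorphism vanishes and there exists a smooth family $K_t$ of compactly supported Hamiltonians on $\operatorname{int}V'$ with $\varphi^1_K=\tilde\psi$. Extending $K_t$ by zero to $V''$, the time-one map $\theta:=\varphi^1_K$ is then compactly supported in $V''$ and satisfies $\theta|_V=\psi|_V$, as required. The main obstacle is the first stage, namely the area-preserving interpolation of $\psi$ across the strip $S$: without the zero-flux hypothesis, the displacement of horizontal segments in $S$ would carry a nontrivial flux class obstructing any such interpolation, and the hypothesis is precisely the vanishing of this class. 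Making this interpolation explicit is the technical heart of the proof, typically carried out either through the generating-function formalism sketched above or via a more direct construction of the isotopy $\{\psi_s\}$ on $V'$ using Moser's trick.
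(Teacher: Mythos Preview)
The paper does not supply its own proof of this lemma; it simply quotes the statement and refers to Lemma~6.3 of \cite{EPP}. So there is no in-paper argument to compare yours against.

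Your two-stage outline is the right shape, and stage two (any compactly supported symplectomorphism of an open disk is Hamiltonian) is routine. The difficulty is that stage one \emph{is} the lemma, and you have not actually carried it out. Sentences like ``a generating-function argument allows one to smoothly interpolate $\psi$ to the identity across $S$'' and ``making this interpolation explicit is the technical heart of the proof, typically carried out either through the generating-function formalism \dots\ or via a more direct construction \dots\ using Moser's trick'' amount to naming the problem rather than solving it. Two more specific issues: first, you write that you would ``realize $\psi|_S$ as the time-one map of a symplectic isotopy $\{\psi_s\}$ from the identity'' and then speak of this isotopy as ``Hamiltonian on $V'$ with generator $H_s$''; but $\psi|_S$ is an embedding $S\to V''$, not a self-map, so one has to be careful whether one means an isotopy of embeddings or a Hamiltonian flow on the ambient $V''$, and how one passes from the former to the latter. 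Second, a generating-function cutoff typically requires $\psi$ to be $C^1$-close to the identity, which is not part of the hypotheses here; the zero-flux condition alone is what makes the interpolation possible, and the argument that turns zero flux into an actual area-preserving interpolation across the strip (as in \cite{EPP}) is exactly the missing step.
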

 
Pick $r>0$ to be so small such that $r < \frac{\rho}{3(m-1)}.$  For each $1 \leq i \leq m-1$, consider the rectangles $V_i \subset V_i' \subset V_i''$ defined by $V_i = [0,1] \times [\frac{i}{m} - r, \frac{i}{m} + r]$, $V_i' = [0,1] \times [\frac{i}{m} - 2r, \frac{i}{m} + 2r]$, $V_i'' = [0,1] \times [\frac{i}{m} - 3r, \frac{i}{m} + 3r]$.  (We will assume that $r$ is small enough such that the $V_i'' \cap V_j'' = \emptyset$ if $j \neq i$.)  For each $1\leq i \leq m-1$, consider the restriction $\psi|_{V_i'}$.  Pick $\delta_0$ small enough such that if $d_{C^0}(Id, \psi) < \delta_0$, then $\psi(V_i') \subset V_i''$. In other words, $\psi|_{V_i'}$ is an area-preserving embedding of $V_i'$ into $V_i''$.  We will leave it to the reader to check that the hypotheses of Lemma \ref{lem:extension_lemma} are satisfied and hence, applying the lemma, we obtain Hamiltonian diffeomorphisms $\theta_i, \, 1 \leq i \leq m-1$ such that $\theta_i$ is supported in $V_i''$ and $\theta_i = \psi$ on $V_i$.  

Let $\theta = \theta_1 \ldots \theta_{m-1}$ and note that, because $r < \frac{\rho}{3(m-1)}$, the total area of the support of $\theta$ is less than $\rho$.  Next, observe that $\psi \theta^{-1}$ coincides with the identity on each of the rectangles $V_i$.  This implies that the support of $\psi \theta^{-1}$ is contained in the disjoint union $[0,1] \times [0, \frac{1}{m} -r] \, \cup \, [0,1] \times [\frac{1}{m} + r, \frac{2}{m} -r] \, \cup \dots \, \cup \, [0,1] \times [\frac{i}{m} + r, \frac{i+1}{m} -r] \, \cup \, \ldots \, \cup \, 
[0,1] \times [\frac{m-1}{m} + r, 1].$

Define $\psi_1$ to be the restriction of $\psi\theta^{-1}$ to $[0,1] \times [0, \frac{1}{m} -r]$, $\psi_i$ to be the restriction of $\psi \theta^{-1}$ to $[0,1] \times [\frac{i}{m} + r, \frac{i+1}{m} -r]$, where $2 \leq i \leq m-1$, and $\psi_m$ to be the restriction of $\psi\theta^{-1}$ to $[0,1] \times [\frac{m-1}{m} + r, 1].$ Clearly, each $\psi_i$ is compactly supported in the interior of $U_i$ and $\psi = \psi_1 \ldots \psi_m \theta$.  
\end{proof}

\section{Barcodes as invariants of weak conjugacy classes and the proof of Theorem \ref{theo:lef_index_invariance}} \label{sec:proof_invariance_Lef}
In this section, we will prove Theorem \ref{theo:lef_index_invariance} of the introduction: that is, in the case of diffeomorphisms,  the absolute Lefschetz number is invariant under weak conjugacy.   

\subsection{The weak conjugacy relation}\label{sec:almost_conj}

Let us begin by giving a precise definition of the weak conjugacy relation.  Recall that the graph of an equivalence relation $\sim$ on a set $Z$ is the set of pairs $(z,w) \in Z\times Z$ such that $z \sim w$. Given two equivalence relations $\sim_1, \sim_2$, we  say that $\sim_1$ is smaller than  $\sim_2$ if the graph of $\sim_1$ is a subset of the graph of $\sim_2$.\footnote{Note that if  $\sim_1$ is smaller than $\sim_2$, then $z\sim_1 w$ implies  $z \sim_2 w$, \emph{i.e.\ }$\sim_1$ is stronger than $\sim_2$.}    An equivalence relation $\sim$ on a topological space $Z$ is said to be Hausdorff if the quotient $Z/\sim$ is Hausdorff.

\begin{definition}\label{def:almost_conj}  Let $G$ be a topological group.  The weak conjugacy relation is the smallest equivalence relation on $G$ which is both Hausdorff and larger than the conjugacy relation. That is, its graph is the intersection of graphs of all Hausdorff equivalence relations which are larger than the conjugacy relation.
\end{definition}

The weak conjugacy relation may be characterized by the following universal property: $z$ is weakly conjugate to $w$ if and only if $\sigma(z) = \sigma(w)$ for any continuous function $\sigma : G \to Y$, where $Y$ is a Hausdorff topological space  and $\sigma$ is invariant under conjugation.

\medskip
For the rest of this section we will be primarily concerned with the weak conjugacy relation on the group  $G = \overline{\Ham}(\Sigma, \omega)$.   We will be needing the following lemma.

\begin{lemma}\label{lem:almost_conj_lifts}
Suppose that  $\Sigma = \S^2$ and that $f,g \in \overline{\Ham}(
\S^2, \omega)$ are weakly conjugate.   Then, the lifts of $f,g$ to $\overline{\UHam}(\S^2, \omega)$ are pairwise weakly conjugate, i.e.\ denoting the lifts of $f$ by $\tilde f_1, \tilde f_2$ and the lifts of $g$ by $ \tilde g_1, \tilde g_2$, up to relabeling the lifts, we have that $\tilde f_i$ is weakly conjugate to $ \tilde{g_i}$, for $i =1,2$. 
\end{lemma}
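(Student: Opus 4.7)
The plan is to define a natural equivalence relation $\bar{\sim}$ on $\overline{\Ham}(\S^2, \omega)$ built out of the weak conjugacy relation $\approx_{\UHam}$ on $\overline{\UHam}(\S^2, \omega)$, and then use the universal property characterizing weak conjugacy on $\overline{\Ham}(\S^2, \omega)$ to show that $f \approx g$ forces $f \bar{\sim} g$, which yields the desired pairing of lifts.

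The crucial preliminary observation is that $\mathrm{Rot}$ is \emph{central} in $\overline{\UHam}(\S^2, \omega)$: as the generator of the kernel of the covering homomorphism $\overline{\UHam}(\S^2, \omega) \to \overline{\Ham}(\S^2, \omega)$ between topological groups, it lies in the center. From centrality one deduces that left multiplication $L_{\mathrm{Rot}}$ preserves conjugacy: if $\tilde g = \tilde h \tilde f \tilde h^{-1}$, then $\mathrm{Rot}\cdot \tilde g = \tilde h (\mathrm{Rot}\cdot \tilde f) \tilde h^{-1}$. Since $L_{\mathrm{Rot}}$ is a self-homeomorphism, for any continuous conjugation-invariant $\sigma \colon \overline{\UHam}(\S^2, \omega) \to Y$ with $Y$ Hausdorff, the composition $\sigma \circ L_{\mathrm{Rot}}$ is again continuous and conjugation-invariant. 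Applying the universal property of weak conjugacy, this gives the key fact: $\tilde f \approx_{\UHam} \tilde g$ implies $\mathrm{Rot}\cdot \tilde f \approx_{\UHam} \mathrm{Rot}\cdot \tilde g$.

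Now define $\bar{\sim}$ on $\overline{\Ham}(\S^2, \omega)$ by declaring $f \bar{\sim} g$ iff there exist lifts $\tilde f$ of $f$ and $\tilde g$ of $g$ with $\tilde f \approx_{\UHam} \tilde g$. Reflexivity and symmetry are immediate. For transitivity, chains of the form $\tilde f \approx_{\UHam} \tilde g_1$ and $\tilde g_2 \approx_{\UHam} \tilde h$ either have $\tilde g_2 = \tilde g_1$ (and chain directly) or $\tilde g_2 = \mathrm{Rot}\cdot \tilde g_1$, in which case the key fact gives $\mathrm{Rot}\cdot \tilde f \approx_{\UHam} \mathrm{Rot}\cdot \tilde g_1 = \tilde g_2 \approx_{\UHam} \tilde h$. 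The relation $\bar{\sim}$ is clearly larger than conjugacy on $\overline{\Ham}(\S^2, \omega)$ (lift the conjugating element). Finally, $\bar{\sim}$ is Hausdorff: the quotient $\overline{\Ham}(\S^2, \omega)/\bar{\sim}$ coincides with the quotient of the Hausdorff space $\overline{\UHam}(\S^2, \omega)/\approx_{\UHam}$ by the $\Z/2\Z$-action induced by $L_{\mathrm{Rot}}$ (well-defined by the key fact), and quotients of Hausdorff spaces by finite groups of homeomorphisms are Hausdorff (one constructs invariant separating neighbourhoods by intersecting over the finite orbit).

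By the minimality in the definition of weak conjugacy, $\approx_{\Ham}$ is contained in $\bar{\sim}$. Hence $f \approx_{\Ham} g$ yields lifts $\tilde f_1, \tilde g_1$ with $\tilde f_1 \approx_{\UHam} \tilde g_1$; setting $\tilde f_2 = \mathrm{Rot}\cdot \tilde f_1$ and $\tilde g_2 = \mathrm{Rot}\cdot \tilde g_1$ and invoking the key fact once more gives $\tilde f_2 \approx_{\UHam} \tilde g_2$, completing the proof. The main obstacle in this plan is transitivity of $\bar{\sim}$ and Hausdorffness of the quotient; both rest entirely on the centrality of $\mathrm{Rot}$, without which the $\Z/2\Z$-action on $\overline{\UHam}(\S^2, \omega)/\approx_{\UHam}$ might not even descend.
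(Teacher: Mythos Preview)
Your proof is correct and takes a genuinely different route from the paper's. The paper argues by contradiction: fixing a lift $\tilde f$ and assuming it is weakly conjugate to neither lift of $g$, it uses the universal property to produce a continuous conjugation-invariant map $\tau: \overline{\UHam}(\S^2,\omega) \to X$ separating $\tilde f$ from both lifts of $g$, then symmetrizes by hand, setting $\sigma(\tilde h) = (\tau(\tilde h), \tau(\mathrm{Rot} \circ \tilde h))$ with values in $(X \times X)/\{(z,w) \sim (w,z)\}$, so that $\sigma$ descends to $\overline{\Ham}$ and separates $f$ from $g$. You instead work directly with the minimality definition of weak conjugacy: you build the relation $\bar{\sim}$, verify it is Hausdorff and larger than conjugacy, and conclude $\approx_{\Ham} \subset \bar{\sim}$. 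Both arguments rest on the same essential ingredient, namely the centrality of $\mathrm{Rot}$, which is what makes the $\Z/2\Z$-action descend to the weak-conjugacy quotient; you isolate this point explicitly, whereas the paper uses it implicitly (both in asserting that its $\sigma$ is conjugation-invariant and in reducing the lemma to matching just one pair of lifts). Your packaging via the finite-group quotient $(\overline{\UHam}/\approx_{\UHam})/(\Z/2\Z)$ is more conceptual and generalizes at once to any finite cover of topological groups; the paper's symmetric-product target is essentially an ad hoc instance of the same idea.
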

\begin{proof}
%Let $\tilde f, \mathrm{Rot} \circ \tilde f$ be the lifts of $f$ and $\tilde g, \mathrm{Rot} \circ \tilde g$ be the lifts of $g$. 

Let $\tilde f $ be a lift of $f$.   Recall that the other lift of $f$ is given by $\mathrm{Rot} \circ f$, where $\mathrm{Rot}$ denotes the full rotation of the sphere around the North-South axis.

 Denote the lifts of $g$ by $\tilde g, \mathrm{Rot} \circ \tilde g$.    Observe that to  prove the lemma it is sufficient to show that one of $\tilde g, \mathrm{Rot} \circ \tilde g$ is weakly conjugate to $\tilde f$.  In order to obtain a contradiction, we will suppose that this is not the case.  This assumption has the following consequence. 

\begin{claim}\label{cl:separating_lifts}
There exists a continuous mapping $\tau : \overline{\UHam}(\S^2, \omega) \rightarrow X$ such that $X$ is a Hausdorff topological space,  $\tau$ is invariant under conjugation, and  $\tau(\tilde{f}) \neq \tau(\tilde g)$ and $\tau(\tilde{f}) \neq \tau(\mathrm{Rot} \circ \tilde g)$.
\end{claim}
\begin{proof}
Since $\tilde f$ is not weakly conjugate to any of $\tilde g, \mathrm{Rot} \circ \tilde g$, we can find continuous mappings $\tau_1: \overline{\UHam}(\S^2, \omega) \rightarrow X_1$ and $\tau_2: \overline{\UHam}(\S^2, \omega) \rightarrow X_2$ such that $X_1, X_2$ are Hausdorff, $\tau_1, \tau_2$ are invariant under conjugation, and $\tau_1(\tilde f) \neq \tau_1(\tilde g), \tau_2(\tilde f) \neq \tau_2(\mathrm{Rot} \circ \tilde g)$.

Let $X = X_1 \times X_2$ and define $\tau: \overline{\UHam}(\S^2, \omega) \rightarrow X$ by
$$ \tilde h \mapsto (\tau_1(\tilde h), \tau_2(\tilde h)).$$
The topological space $X$ is clearly Hausdorff and the mapping $\tau$ is continuous and invariant under conjugation.  Finally, one can easily check that $\tau(\tilde f)\neq \tau (\tilde g)$ and $\tau(\tilde f)\neq \tau (\mathrm{Rot} \circ \tilde g)$.
\end{proof}

Using the above claim, we will construct a continuous mapping $\sigma : \Ham(\S^2, \\ \omega) \rightarrow Y$, where $Y$ is a Hausdorff topological space  and $\sigma$ is invariant under conjugation, such that $\sigma(f) \neq \sigma(g)$.  This is, of course, a contradiction and we conclude that the lemma must be true.

Let us describe the construction of $\sigma : \Ham(\S^2, \omega) \rightarrow Y$.  Set $Y$ to be the the quotient of $X \times X$ by the equivalence relation $(z,w) \sim (w,z)$.  We leave it to the reader to check that $Y$ is a Hausdorff topological space.  Define
$\sigma : \overline{\UHam}(\S^2, \omega) \rightarrow Y$
by  $$\sigma(\tilde f) = \left( \tau (\tilde f), \tau(\mathrm{Rot}\circ \tilde f)\right).$$ 
Note that $\sigma$ is constructed such that $\sigma(\tilde f) = \sigma (\mathrm{Rot} \circ \tilde f)$.  Thus, it yields a well-defined mapping  $\sigma : \overline{\Ham}(\S^2, \omega) \rightarrow Y$.  Furthermore, $\sigma$ is invariant under conjugation because $\tau$ is invariant under conjugation.  Finally, to complete the proof, we must check that $\sigma(f) \neq \sigma(g)$.  Now, $\sigma(f) = ( \tau (\tilde f), \tau(\mathrm{Rot}\circ \tilde f))$ and $\sigma(g) = \left( \tau (\tilde g), \tau(\mathrm{Rot}\circ \tilde g)\right)$.  By Claim \ref{cl:separating_lifts}, $\tau(\tilde{f}) \neq \tau(\tilde g)$ and $\tau(\tilde{f}) \neq \tau(\mathrm{Rot} \circ \tilde g)$.  This immediately implies that $( \tau (\tilde f), \tau(\mathrm{Rot}\circ \tilde f) \neq ( \tau (\tilde g), \tau(\mathrm{Rot}\circ \tilde g)$.
\end{proof}

\subsection{Proof of Theorem \ref{theo:lef_index_invariance}}

Theorem \ref{theo:lef_index_invariance} is  an immediate consequence of Theorems \ref{theo:barcode_inv} \& \ref{theo:lef_endpoints}, stated below.  Indeed, Theorem \ref{theo:barcode_inv} tells us that the barcodes  we constructed in Section \ref{sec:floer} are invariants of weak conjugacy classes and Theorem \ref{theo:lef_endpoints} states that the absolute Lefschetz number of a Hamiltonian  diffeomorphism is an invariant of its barcode. %This will be done in two steps:  First, we will show that the barcodes $\B_j$ which we constructed in Section \ref{sec:floer} are invariants of weak conjugacy classes; see Theorem \ref{theo:barcode_inv}.  We will then prove, in Theorem \ref{theo:lef_endpoints}, that the absolute Lefscetz number of a diffeomorphism is an invariant of its barcode.
  
  \medskip
  
  Throughout this section, $\Sigma$ will denote a closed surface which we assume is equipped with a symplectic form $\omega$. 
\begin{theo}\label{theo:barcode_inv}
Suppose that  $f,g \in \overline{\Ham}(\Sigma, \omega)$ are weakly conjugate.  
\begin{enumerate}
\item  If $\Sigma\neq \S^2$, then $\B_j(f) = \B_j(g)$, for every index $j$.  Furthermore, the same is true for the total barcodes, \emph{i.e.}\ $\B(f) = \B(g)$.
\item  Suppose that $\Sigma = \S^2$.   Let $\tilde f_1, \tilde f_2$ and $ \tilde g_1, \tilde g_2$ denote the lifts of $f, g$ to $\overline{\UHam}(\S^2, \omega)$.  Then, up to relabeling the lifts, we have that $\B_j (\tilde f_1) =  \B_j( \tilde g_1)$ and  $\B_j (\tilde f_2) =  \B_j( \tilde g_2)$, for every index $j$.  Furthermore, the same is true for the total barcodes, \emph{i.e.}\ $\B(\tilde f_1) = \B(\tilde g_1)$ and $\B(\tilde f_2) = \B(\tilde g_2)$.
\end{enumerate}
\end{theo}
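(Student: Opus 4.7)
The plan is to exhibit $\B_j$ and $\B$ as continuous, conjugation-invariant maps into the Hausdorff metric space $(\widehat{\mathcal{B}}, \dbot)$, and then invoke the universal property of weak conjugacy (Definition \ref{def:almost_conj}), which says precisely that any such map is automatically constant on weak conjugacy classes.

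For the case $\Sigma \neq \S^2$, the first thing to observe is that $(\widehat{\mathcal{B}}, \dbot)$ is Hausdorff, since $\dbot$ is a genuine metric by Proposition \ref{prop:isometry} and the quotient construction. Continuity of $\B_j: \overline{\Ham}(\Sigma,\omega) \to \widehat{\mathcal{B}}$ is given by Theorem \ref{theo:cont_barcodes_genus}, and the same for the total barcode $\B$ by Remark \ref{rem:cont_total_barcode}. The next step is conjugation invariance on $\overline{\Ham}(\Sigma,\omega)$: Equation \eqref{eq:conj_inv2} provides this for $\phi \in \Ham$ and $\psi \in \Symp$, and in particular for $\phi,\psi \in \Ham(\Sigma,\omega)$. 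To extend to the closure, given $\phi,\psi \in \overline{\Ham}(\Sigma,\omega)$ I would pick $C^0$-approximating sequences $\phi_n, \psi_n \in \Ham(\Sigma,\omega)$; the conjugation map $(\phi,\psi) \mapsto \psi^{-1}\phi\psi$ is $C^0$-continuous on a compact surface, so $\psi_n^{-1}\phi_n\psi_n \to \psi^{-1}\phi\psi$ uniformly, and continuity of $\B_j$ gives
\[ \B_j(\psi^{-1}\phi\psi) \;=\; \lim_{n\to\infty} \B_j(\psi_n^{-1}\phi_n\psi_n) \;=\; \lim_{n\to\infty} \B_j(\phi_n) \;=\; \B_j(\phi).\]
Applying the universal property of weak conjugacy to the resulting continuous, conjugation-invariant, Hausdorff-valued map $\B_j$ proves that $\B_j(f) = \B_j(g)$ whenever $f,g$ are weakly conjugate, and the identical argument gives $\B(f) = \B(g)$.

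For the sphere case the strategy is to lift everything to $\overline{\UHam}(\S^2,\omega)$, on which $\B_j$ and $\B$ are actually well defined. Lemma \ref{lem:almost_conj_lifts} already tells us that weak conjugacy of $f$ and $g$ in $\overline{\Ham}(\S^2,\omega)$ implies weak conjugacy of their lifts, pairwise and after a relabeling, in $\overline{\UHam}(\S^2,\omega)$. It then suffices to repeat the three-step argument above for the map $\B_j : \overline{\UHam}(\S^2,\omega) \to \widehat{\mathcal{B}}$, using Theorem \ref{theo:cont_barcodes_sphere} in place of Theorem \ref{theo:cont_barcodes_genus}. Conjugation invariance in $\overline{\UHam}$ is inherited from the corresponding statement downstairs, since by construction $\B_j(\tilde h)$ depends only on a Hamiltonian whose flow represents $\tilde h$, and any such Hamiltonian for a conjugate $\tilde\psi^{-1}\tilde\phi\tilde\psi$ is obtained from one for $\tilde\phi$ by precomposition with a symplectomorphism representing $\psi$.

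Most of the work is already done by the continuity theorems and the universal property. The only place that requires mild care is the promotion of conjugation invariance from $\Ham$ to $\overline{\Ham}$, which amounts to the simultaneous $C^0$-continuity of composition and inversion on the group of homeomorphisms of a compact manifold, together with continuity of $\B_j$. In the sphere case, one must also keep track of the two lifts, but this is purely bookkeeping once Lemma \ref{lem:almost_conj_lifts} is in hand.
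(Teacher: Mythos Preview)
Your proof is correct and follows essentially the same approach as the paper's own proof: both derive the result from conjugation invariance of barcodes together with their $C^0$-continuity (Theorems \ref{theo:cont_barcodes_genus} and \ref{theo:cont_barcodes_sphere}), invoking Lemma \ref{lem:almost_conj_lifts} for the sphere case. You have in fact spelled out two points the paper leaves implicit, namely the extension of conjugation invariance from $\Ham$ to $\overline{\Ham}$ by approximation, and the explicit appeal to the universal property of weak conjugacy with the Hausdorff target $(\widehat{\mathcal B},\dbot)$.
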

\begin{proof}
The result in the case where $\Sigma \neq \S^2$  follows immediately from conjugacy invariance of barcodes (Equation \eqref{eq:conj_inv2}) and their continuity (Theorem \ref{theo:cont_barcodes_genus}).
 
% {\claude{I would add that the sum of Lefschetz indices with given action and CZ-index is also an invariant. It is an immediate corollary of the above, and maybe should fit in the introductions. \sobhan Ok, done.  See the intro.}}
 
 Now, suppose that $\Sigma = \S^2$.  Let $\tilde f_1, \tilde f_2$ and $ \tilde g_1, \tilde g_2$ denote the lifts of $f, g$ to $\overline{\UHam}(\S^2, \omega)$.  By Lemma \ref{lem:almost_conj_lifts},  up to relabeling the lifts, we have that $\tilde f_i$ is weakly conjugate to $ \tilde{g_i}$, for $i =1,2$.  Once again, the result follows from conjugacy invariance of barcodes and their continuity (Theorem \eqref{theo:cont_barcodes_sphere}).
\end{proof}

   For the next theorem, it might be helpful for the reader to recall the definition of $\# \mathrm{Endpoints}(\B) \cap [0,1)$  from  Section \ref{sec:barcodes_for_ham_diffeos}.   We suppose, for the statement of this theorem and the rest of this section, that $\int_{\Sigma} \omega =1$ when $\Sigma = \S^2$.
  
  \begin{theo}\label{theo:lef_endpoints}
Consider  $f \in Ham(\Sigma, \omega)$ with finitely many fixed points. 

\begin{enumerate}
\item  Suppose that $\Sigma\neq \S^2$.  Then, $$\sum_{x\in Fix_c(f)} |\ind(f,x)| = \#\mathrm{Endpoints}(\B(f)).$$
\item   Suppose that $\Sigma = \S^2$.   Let $\tilde{f}$ denote any of the two lifts of $f$ to $\UHam(\S^2, \omega)$.  Then, 
$$\sum_{x\in Fix_c(f)} |\ind(f,x)| =  \#\mathrm{Endpoints}(\B(\tilde f)) \cap [0,1).$$
%$$\sum_{x\in Fix_c(f)} |\ind(f,x)| = \sum_{j=0,1} \#\mathrm{Endpoints}(\B_j(\tilde f)).$$
\end{enumerate}
\end{theo}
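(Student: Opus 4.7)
My plan is to reduce Theorem \ref{theo:lef_endpoints} to the standard Floer-theoretic decomposition of filtered homology into local Floer homology, and then invoke Proposition \ref{prop:lefindex=rank} (the surface-specific identification of $|\ind(f,x)|$ with the rank of local Floer homology) to conclude. The starting point is Proposition \ref{prop:endpoints_in_interval}, which identifies, for each isolated $c \in \Spec(f)$, the number of bars of $\B(f)$ having an endpoint at $c$ with $\mathrm{rank}\, HF^{(c-\varepsilon, c+\varepsilon)}_*(f)$ for sufficiently small $\varepsilon > 0$. Summing over $c \in \Spec(f)$ counts each finite endpoint of $\B(f)$ exactly once (a finite bar $(a,b]$ is hit once when $c = a$ and once when $c = b$; a semi-infinite bar $(a, +\infty]$ only when $c = a$), giving
\[
\#\mathrm{Endpoints}(\B(f)) \;=\; \sum_{c \in \Spec(f)} \mathrm{rank}\, HF^{(c-\varepsilon_c, c+\varepsilon_c)}_*(f).
\]

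The second ingredient is a standard decomposition: for $\varepsilon_c$ small enough, the filtered Floer homology in the window $(c-\varepsilon_c, c+\varepsilon_c)$ splits as a direct sum of local Floer homology groups,
\[
HF^{(c-\varepsilon_c, c+\varepsilon_c)}_*(f) \;\simeq\; \bigoplus_{[z,u]\,:\,\A_f([z,u]) = c} HF^{loc}_*(f,z).
\]
This is obtained by taking a small $C^2$ non-degenerate perturbation $\tilde{f}$ of $f$: each isolated capped $1$-periodic orbit $[z,u]$ at action level $c$ breaks into finitely many non-degenerate capped orbits with actions confined to a tiny window around $c$, and by definition the filtered Floer homology of $\tilde{f}$ in this window computes $HF^{loc}_*(f,z)$; for distinct $[z,u]$ at the same level $c$ these windows can be arranged to be disjoint, so the filtered complexes split into the asserted direct summands. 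Combining these two steps with Proposition \ref{prop:lefindex=rank} gives
\[
\#\mathrm{Endpoints}(\B(f)) \;=\; \sum_{[z,u] \in \Crit(\A_f)} \mathrm{rank}\, HF^{loc}_*(f,z) \;=\; \sum_{x \in Fix_c(f)} |\ind(f,x)|,
\]
where in the aspherical case each contractible fixed point has a unique capping so the sum over capped orbits $[z,u]$ matches the sum over $Fix_c(f)$.

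For the sphere, the total barcode $\B(\tilde{f})$ is $\tau$-periodic with $\tau = 1$, and the capped orbits over a single contractible fixed point $x$ of $f$ are in bijection with $\Z$ via the action of $\Gamma$, their actions forming an arithmetic progression of common difference $\tau$. Hence intersecting $\mathrm{Endpoints}(\B(\tilde f))$ with the fundamental domain $[0,1)$ picks out exactly one capped representative per contractible fixed point, and the same chain of equalities above (now with the sum over $c$ restricted to $\Spec(\tilde f) \cap [0,1)$) yields the desired identity. The surface-specific input, Proposition \ref{prop:lefindex=rank}, is available as a separate result, so the main obstacle here is really the bookkeeping: verifying the decomposition of filtered Floer homology near each action value for possibly degenerate $f$ (which is handled by the very definition of $HF^{loc}$ via small non-degenerate perturbation) and, on $\S^2$, checking that $\tau$-periodicity identifies endpoints in $[0,1)$ with contractible fixed points.
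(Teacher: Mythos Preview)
Your proof is correct and follows essentially the same route as the paper: the paper factors the argument through Proposition \ref{prop:rank_endpoints} (which combines Proposition \ref{prop:endpoints_in_interval} with the local Floer decomposition, Equation \eqref{eq:filtered=local}, and the unique-capping-per-fixed-point argument on $\S^2$) and Proposition \ref{prop:lefindex=rank}, exactly as you do. One small wording issue: your justification that ``these windows can be arranged to be disjoint'' for distinct capped orbits at the \emph{same} action level $c$ is not quite right (they all have action $c$); the splitting in \eqref{eq:filtered=local} comes instead from the spatial localization of Floer trajectories after a small perturbation, not from separation in action.
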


\begin{remark}
  It follows from our proof of the this theorem that the set of endpoints  of bars of $\B(f)$ coincides with the set of actions of contractible periodic orbits for which the corresponding fixed points have non-zero Lefschetz index.  Fixed points with zero Lefschetz index make no contribution to the barcodes of a Hamiltonian diffeomorphism.  Another way to see this is to combine the continuity of barcodes with the fact that zero index fixed points can be removed by a small perturbation.
\end{remark}

\subsection{Local Floer homology and Theorem \ref{theo:lef_endpoints}} \label{sec:local_floer}
Theorem \ref{theo:lef_endpoints} is an immediate consequence of Propositions \ref{prop:rank_endpoints} \& \ref{prop:lefindex=rank}, which are stated below. Both of these propositions rely on the notion of local Floer homology groups $HF(\varphi,x)$ associated to an isolated fixed point $x$ of a Hamiltonian diffeomorphism $\varphi$. The definition and certain properties of local Floer homology will be recalled in Section \ref{sec:local_floer}.   It is important to notice that many statements in this section hold in any dimension. The only statement exclusively valid in dimension $2$ is Proposition \ref{prop:lefindex=rank}. 

\medskip
Denote by $r(\varphi,x)$ the rank of $HF(\varphi,x)$.  The next two propositions relate local Floer homology to barcodes and the Lefschetz index.\label{def-r(f,x)}

\begin{prop}\label{prop:rank_endpoints}
Consider  $f \in Ham(\Sigma, \omega)$ with finitely many fixed points. 
\begin{enumerate}
\item  Suppose that $\Sigma\neq \S^2$.  Then, $$\sum_{x\in Fix_c(f)} r(f,x)= \#\mathrm{Endpoints}(\B(f)).$$
\item   Suppose that $\Sigma = \S^2$.   Let $\tilde{f}$ denote any of the two lifts of $f$ to $\UHam(\S^2, \omega)$.  Then, 
$$\sum_{x\in Fix_c(f)} r(f,x) =  \#\mathrm{Endpoints}(\B(\tilde f)) \cap [0,1).$$
%$$\sum_{x\in Fix_c(f)} r(f,x) = \sum_{j=0,1} \#\mathrm{Endpoints}(\B_j(\tilde f)).$$
\end{enumerate}
\end{prop}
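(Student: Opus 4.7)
The plan is to reduce Proposition \ref{prop:rank_endpoints} to Proposition \ref{prop:endpoints_in_interval} by combining it with the standard localization property of filtered Floer homology around isolated action values. I would fix a Hamiltonian $H$ whose time--$1$ flow represents $f$ (or the chosen lift $\tilde f$, when $\Sigma=\S^2$), so that the generators of $\B(H)$ are actions of capped contractible $1$--periodic orbits $[z,u]$; under the projection $[z,u]\mapsto z(0)$, these are in bijection with $Fix_c(f)$ in the aspherical case and $\Gamma$--to--$1$ onto $Fix_c(f)$ when $\Sigma=\S^2$.

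The key ingredient is the localization isomorphism: for each value $a\in\Spec(H)$ that is isolated in the spectrum, there is a canonical isomorphism
$$HF_*^{(a-\varepsilon,a+\varepsilon)}(H)\;\cong\;\bigoplus_{[z,u]\,:\,\mathcal{A}_H([z,u])=a}HF_*(H;z,u)$$
for all sufficiently small $\varepsilon>0$, where the right-hand side is the direct sum of local Floer homologies at the capped orbits of action $a$. This follows from the fact that, for $\varepsilon$ small enough, Floer cylinders contributing to the differential of $CF_*^{(a-\varepsilon,a+\varepsilon)}(H)$ are forced by energy and $C^0$ estimates to remain inside disjoint isolating neighborhoods of the orbits, so the filtered complex splits as a direct sum of local complexes. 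Taking ranks, and noting that the rank of $HF_*(H;z,u)$ coincides with $r(f,z(0))$ independently of the choice of capping, I obtain
$$\mathrm{rank}\,HF_*^{(a-\varepsilon,a+\varepsilon)}(H)\;=\;\sum_{[z,u]\,:\,\mathcal{A}_H([z,u])=a} r(f,z(0)).$$

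Combining this identity with Proposition \ref{prop:endpoints_in_interval}, which identifies the left-hand side with the total number of bars of $\B(H)$ having exactly one endpoint at $a$ (counted with multiplicity), and then summing over $a\in\Spec(H)$, settles both cases. When $\Sigma\neq\S^2$ the spectrum is finite and each contractible fixed point contributes to exactly one action value, so the total endpoint count equals $\sum_{x\in Fix_c(f)}r(f,x)$. When $\Sigma=\S^2$, the actions above any single $x\in Fix_c(f)$ form a $\Z$--orbit under shift by $\tau=1$; after an arbitrarily small Hofer perturbation of $H$ (which does not affect the counts thanks to the continuity of $\B$) I may assume no action is an integer, so exactly one capping per fixed point has action in $[0,1)$. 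Corollary \ref{corol:endpoints_shift_inv} then guarantees that $\#\mathrm{Endpoints}(\B(\tilde f))\cap[0,1)$ is independent of the choice of representative and equals $\sum_{x\in Fix_c(f)}r(f,x)$.

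The principal obstacle is the localization isomorphism itself: it requires the non-trivial fact that local Floer homology is well defined for isolated, possibly degenerate, fixed points and that it computes the filtered Floer homology in a narrow action window via the direct sum above. I would invoke this as a standard result from the literature on local Floer homology (notably Ginzburg's work and its use in the Conley conjecture), rather than reproduce the underlying compactness and gluing arguments. Beyond this, the only genuine subtlety is the bookkeeping in the $\S^2$ case, namely the verification that restricting to a half-open fundamental domain $[0,\tau)$ counts each contractible fixed point exactly once.
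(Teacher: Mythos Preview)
Your proposal is correct and follows essentially the same route as the paper: pick a Hamiltonian $H$ representing $f$ (or $\tilde f$), combine Proposition~\ref{prop:endpoints_in_interval} with the localization isomorphism (which the paper records as Equation~\eqref{eq:filtered=local}) to compute the number of endpoints at each spectral value, and then sum. The paper's treatment of the $\S^2$ case is slightly cleaner than yours: since recappings shift the action by exactly $\tau=1$, for \emph{every} orbit $z$ there is a unique capping with action in the half-open interval $[0,1)$, regardless of whether any action is an integer---so your Hofer perturbation step is unnecessary (and, as stated, a genuine perturbation of $H$ could alter the fixed points and their local Floer ranks, not merely shift actions; continuity of $\B$ alone does not guarantee that endpoint counts are stable under such perturbations).
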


Although the above result is stated for surfaces only, an appropriately restated version holds in higher dimensions.  See Remark \ref{rem:rank_endpoints_general}.

\begin{prop}\label{prop:lefindex=rank}
Suppose that $x$ is an isolated fixed point of $f \in \Ham(\Sigma, \omega)$.  Then, $|L(f,x)| = r(f,x)$.
\end{prop}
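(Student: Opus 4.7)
The plan is to establish $|L(f,x)| = r(f,x)$ via two inequalities: the easy one, $r(f,x) \geq |L(f,x)|$, from the identification of the Euler characteristic of local Floer homology with the Lefschetz index, and the harder one, $r(f,x) \leq |L(f,x)|$, from a concentration result specific to dimension two.

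First I would verify the identity $\chi(HF(f,x)) = L(f,x)$. Choose a small perturbation $H_\eps$ of a Hamiltonian generating $f$ such that the fixed points of $\varphi^1_{H_\eps}$ in an isolating neighborhood of $x$ are all non-degenerate, say $x_1,\ldots,x_k$. Additivity of the Lefschetz index gives $L(f,x) = \sum_i L(\varphi^1_{H_\eps},x_i)$, and in dimension two a direct inspection of the elliptic and positive/negative hyperbolic cases shows that each non-degenerate fixed point contributes $L(\varphi^1_{H_\eps},x_i) = (-1)^{\mu_{cz}(x_i)}$. The right-hand side is precisely the Euler characteristic of the local Floer complex, so $\chi(HF(f,x)) = L(f,x)$, and since the rank of a chain complex's homology is always at least the absolute value of its Euler characteristic, we obtain $r(f,x) \geq |L(f,x)|$.

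For the reverse inequality, it suffices to show that $HF(f,x)$ is concentrated in a single $\Z/2$-grading, since then $r(f,x) = |\chi(HF(f,x))|$. Using invariance of local Floer homology under compactly supported deformations inside an isolating neighborhood, I would reduce to the case where $f$ is $C^1$-small near $x$, so that its graph in $T^*\Sigma$ is the graph of $dS$ for some local generating function $S\colon D \to \R$ defined on a small disk $D$ around $x$. Fixed points of $f$ in $D$ correspond to critical points of $S$, and the local Floer complex of $f$ at $x$ identifies with the local Morse complex of $S$ at the corresponding critical point $p$. The required concentration then reduces to the classical statement that the local Morse homology at an isolated critical point of a smooth function on a surface is concentrated in a single degree — a direct consequence of the decomposition of the level set structure at $p$ into finitely many elliptic and hyperbolic sectors.

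The main obstacle is making the reduction to a generating function both rigorous and compatible with the local Floer-homological invariants: one must isotope $f$ within the isolating neighborhood to a $C^1$-small area-preserving map without introducing or losing fixed points on the boundary, and verify that the identification of the local Floer complex with the local Morse complex of $S$ respects the $\Z/2$-grading (so that the Morse-theoretic concentration gives Floer-theoretic concentration). Once this is set up, the elliptic/hyperbolic sector analysis of isolated critical points on a surface is standard and yields the concentration, completing the proof.
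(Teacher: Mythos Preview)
Your strategy is essentially the paper's: reduce to a local generating function $F$, identify $r(f,x)$ with the rank of local Morse homology $HM^{loc}_*(F,x)$, and compute the latter via the hyperbolic-sector decomposition of the level sets of $F$. Your two-inequality packaging (Euler characteristic for $r \geq |L|$, concentration in a single degree for $r \leq |L|$) is a valid reorganisation of the same ingredients; the paper instead computes both sides directly and shows they agree.

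The one genuine gap is exactly the obstacle you flag: the reduction to the $C^1$-small regime. You propose to isotope $f$ inside an isolating neighbourhood to a $C^1$-small map, invoking invariance of local Floer homology; but you do not explain how to perform this isotopy without creating fixed points in the interpolation region, and in fact no such deformation is needed. The paper's resolution is the following simple dichotomy, specific to dimension two: if $x$ is non-degenerate then $r(f,x)=1=|L(f,x)|$ immediately; if $x$ is degenerate then, since $\dim\Sigma=2$, it is \emph{totally} degenerate (all eigenvalues of $D_xf$ equal $1$), and after a linear symplectic conjugation $f$ is already $C^1$-close to the identity near $x$. The generating function $F(x,Y)$ therefore exists outright, with no isotopy required, and one has directly $r(f,x)=\mathrm{rank}\,HM^{loc}_*(F,x)$. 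The paper then proves two short lemmas: $L(f,x)=L(\chi_F,x)$ (comparing the fixed-point index with the index of the Hamiltonian vector field of $F$, via the degree-one change of coordinates $(x,y)\mapsto(x,Y)$), and $\mathrm{rank}\,HM^{loc}_*(F,x)=|L(\chi_F,x)|$ (your sector analysis: either $x$ is a local extremum and both sides are $1$, or there are $h$ hyperbolic sectors, the index is $1-h/2$, and $H_*(F^\varepsilon,F^{-\varepsilon})$ has rank $h/2-1$ concentrated in degree $1$).

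So: drop the deformation and replace it by the non-degenerate/totally-degenerate dichotomy; the rest of your outline then goes through and coincides with the paper's proof.
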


\subsubsection{A review of local Floer homology}\label{localFloer}
 We will provide a brief review of the definition and some properties of local Floer homology.  For further details we refer the reader to  \cite{ginzburg, GinGu09, GinGu10}; see also the original works of Floer which gave rise to this notion \cite{floer89b, floer89}.

\medskip

We begin by reviewing local Morse homology.  Let $h: M \rightarrow \R$ be a smooth function with an isolated critical point $x$.  Take $U$ to be an open neighborhood of $x$ containing no other critical points of $h$ and let $\tilde h$ be a smooth function which, on the open set $U$, is both  Morse and  $C^2$--close to $h$.  The local Morse complex $CM_*^{loc}(\tilde h,x)$ is  generated, over the field $\F$, by those critical points of $\tilde h$ which are contained in $U$.  It can be checked that every Morse trajectory of $\tilde h$ connecting two such critical points is contained entirely in $U$.  The same is also true for broken Morse trajectories.  It follows that the usual Morse boundary map induces a boundary map $\partial$ on $CM_*^{loc}(\tilde h,x)$ and that $\partial^2 = 0$.  The homology of this complex, denoted by $HM_*^{loc}(h,x)$, is called the local Morse homology of $h$ at $x$.  It does not depend on the choice of $\tilde{h}$.

Having introduced local Morse homology, we will move on to local Floer homology.  Let $\bar{z}= [z, u]$ denote a capped $1$--periodic orbit of a Hamiltonian  $H: \S^1 \times M \rightarrow \R$. Let $x$ be the fixed point of $\varphi^1_{H}$ corresponding to $z$, assume $x$ is isolated
and take some neighborhood $U$ of $x$ containing no other fixed point.

 Assume that $z$ is isolated and take $U$ to be an open neighborhood of $z$ in $\S^1 \times M $ which meets no other 1--periodic orbit of $H$ and let $\tilde{H}$ be a Hamiltonian which coincides with $H$ outside $U$, is $C^2$--close to $H$ on $U$.  The orbit $z$ breaks into several orbits $z_i$ of $\tilde{H}$ which are all $C^1$--close to $z$; we pick $\tilde{H}$ such that all these orbits are non-degenerate.  Using the capping $u$ of $z$ we can produce cappings $u_i$ of $z_i$ such that $\A_H([z,u])$ is close to $\A_{\tilde H}([z_i,u_i])$.

By definition, the local Floer complex $CF_*^{loc}( H, \bar z)$ is  generated, over the field $\F$, by the capped 1--periodic orbits $[z_i,u_i]$ of $\tilde H$.  As explained in Section 3.2 of \cite{ginzburg}, every Floer trajectory connecting two such 1--periodic orbits is contained entirely in $U$.  The same is also true for broken Floer trajectories. It follows that the usual Floer boundary map induces a boundary map $\partial$ on $CF_*^{loc}(H, \bar z)$ and that $\partial^2 = 0$.  The homology of this complex, denoted by $HF_*^{loc}(H, \bar z)$, is called the local Floer homology of $H$ at $z$.  It does not depend on the choice of the $C^2$  nearby Hamiltonian $\tilde{H}$. % It is clear from the definition that $HF_*^{loc}(H, \bar z)$ depends, upto a shift in degree, only on the germ of $H$ near the orbit $z$.

\begin{remark} \label{rem:local_floer_non_deg}
If $z$ is a non-degenerate 1--periodic orbit of $H$, then   $HF_*^{loc}(H,\bar z)$ has rank one in degree $\mu_{CZ}(\bar z)$ and is zero in all other degrees. 
\end{remark}

\begin{remark} \label{rem:local_floer_morse}
Local Morse and Floer homologies are related in the following sense:
Let $z$ be an isolated critical point of a $C^2$--small autonomous Hamiltonian $H$.  We will also denote by $z$ the corresponding 1--periodic orbit with its trivial capping.  Then, $HF_*^{loc}(H,z) = HM_*^{loc}(H,z)$.
\end{remark}
 %Furthermore, the capping $u$ of the orbit $z$ only  affects the grading of the complex in the following sense: consider another capping $u'$ of $z$ and denote $\bar{z}'= [z, u']$.  Note that $u'$ is of the form $ u' = A \# u$.  Then,  $HF_*^{loc}(H, \bar{z}') = HF_{*+ k}^{loc}(H, \bar z)$, where $k= -2 c_1(A)$.  In particular, the rank of $HF_*^{loc}(H, \bar z)$ does not depend on the choice of the capping $u$.

Local Floer homology groups serve as building blocks for filtered Floer homology in the following sense: Suppose that all capped 1--periodic orbits $\bar z_i = [z_i, u_i]$ of $H$ with action $c$ are isolated.  Then, for sufficiently small $\varepsilon$, we have
\begin{equation}\label{eq:filtered=local}
HF_*^{(c- \varepsilon, c + \varepsilon)}(H) = \bigoplus_{\A_H(\bar z_i) = c} HF_*^{loc}(H, \bar z_i).
\end{equation}

It turns out that, up to a shift in degree, $HF_*^{loc}(H, \bar z)$ depends only on the Hamiltonian diffeomorphism $\varphi^1_H$ and the fixed point corresponding to $z$.  Here is a more precise statement: Let $H'$ be a Hamiltonian such that $\varphi^1_{H'} = \varphi^1_H$.  Let  $z'$ be a 1--periodic orbit of the flow of $H'$ such that $z'(0) = z(0)$, and consider the capped orbits $\bar z = [z, u]$ and $\bar z' = [z', u']$, for any choice of cappings $u, u'$. Then, there exists an integer $k$\footnote{If $(M, \omega)$ is symplectically aspherical, like surfaces of positive genus, then $k=0$.} such that    

\begin{equation}\label{eq:local_floer_inv}
HF_*^{loc}(H', \bar z') = HF_{*+k}^{loc}(H,z).
\end{equation} 

Hence, given $\varphi \in \Ham(M, \omega)$ with an isolated fixed point $x$ we can define $HF^{loc}(\varphi,x)$ the (ungraded) local Floer homology of $\varphi$ at $x$,  and denote by $r(\varphi,x)$ its rank.  It only depends on a germ of $\varphi$ near the point $x$.

\begin{remark} \label{Remk56}
Notice that one can similarly define local Lagrangian Floer theory: if $L_1,L_2$ are exact Lagrangians submanifolds in $(M,\omega)$, and $p$ an isolated  point of $L_1\cap L_2$, we may consider $HF^{loc}_*(L_1,L_2,p)$. In the next section, we will need the following two facts concerning local Lagrangian Floer homology.

If $\varphi$ is a Hamiltonian diffeomorphism of $M$,  and we set $\Gamma_{\varphi}$ to be the graph of $\varphi$ in $\overline M\times  M$ (\emph{i.e.}\ $(M\times M, -\omega \oplus \omega)$), then $HF^{loc}_*(\Gamma_\varphi, \Delta_M;(p,p))=HF^{loc}_*(\varphi, p)$. This follows from the identification of the two Floer homologies (\emph{i.e.}\ non-local version) as in \cite{viterbo-fcfh2} and the fact that after a perturbation of $H$ away from $p$, formula (\ref{eq:filtered=local}) reduces to a single term identifying  $HF^{loc}_*$  and $HF_*^{(c- \varepsilon, c + \varepsilon)}$.

Consider the case of a cotangent bundle $T^*N$ equipped with its canonical symplectic structure.  Let $p$ be an isolated point of $L \cap O_N$ where $L$ is an exact Lagrangian in $T^*N$ and $O_N$ is the zero section.  Suppose that near the point $p$, the Lagrangian $L$ can be written as the graph of the differential of a $C^2$--small function $F: N \rightarrow \R$.  Then, similarly to Remark \ref{rem:local_floer_morse}, we have $HF_*^{loc}(L, O_N, p) = HM_*^{loc}(F, p)$.  
\end{remark} 
 
\subsubsection*{Local Floer homology and local generating functions}
Consider $\R^{2n}$ equipped with its standard symplectic structure $\omega_0$.
Let $\varphi$ be a germ of a symplectic diffeomorphism at $0 \in {\mathbb R}^{2n}$ ($\varphi(0)=0)$).  We write $\varphi(x,y)=(X,Y)$ for $x,y, X, Y \in {\mathbb R}^n$. 

Consider $(\R^{2n} \times \R^{2n}, - \omega_0 \oplus \omega_0)$ and let $\Delta \subset \R^{2n} \times \R^{2n}$ denote the diagonal.   
Consider the symplectomorphism $\Psi : (\R^{2n} \times \R^{2n}, - \omega_0 \oplus \omega_0) \rightarrow (T^* \Delta, \omega_{can})$ given by $$(x,y, X, Y) \mapsto (x, Y, y-Y, X-x),$$ which maps the diagonal in $\R^{2n} \times \R^{2n}$ to the zero section in $T^*\Delta$. The symplectomorphism $\Psi$ identifies the graph of $\varphi$ with a Lagrangian, which we will denote by $\Gamma_{\varphi}$, in $T^*\Delta$.

Suppose that  $\Gamma_{\varphi}$ is a graph over $\Delta$ near the point $(0,0,0,0) \in T^*\Delta$.  Then, we can find a function  $F(x,Y)$ which is defined near $(0,0)$ and such that $(X,Y)=\varphi(x,y)$ if and only if

\begin{equation}\label{eq:GF}
 y-Y= \frac{\partial F} {\partial x}(x,Y), X-x= \frac{\partial F}{\partial Y}(x,Y).
\end{equation}

The function $F(x, Y)$ is called a local generating function for $\varphi$ near the fixed point $p$.
In the following lemma $\varphi$ denotes a Hamiltonian diffeomorphism of any closed monotone symplectic manifold with an isolated fixed point $p$.  Recall that we say $p$ is totally degenerate if all of the eigenvalues of the derivative of $\varphi$ at $p$ are one.
\begin{lemma}\label{lem:locla_floer=local_gf}
Suppose that $p$ is a totally degenerate fixed point of $\varphi$.  Then, $\varphi$ admits a local generating function of the form  $F(x,Y)$ near $p$.   Furthermore, $r(\varphi, p)$ coincides with the rank of $HM^{loc}_*(F,p)$, the local  Morse homology of $F$ at $p$. 
\end{lemma}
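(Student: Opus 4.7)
For the existence of $F(x,Y)$, I work in a Darboux chart centered at $p$. The generating function exists precisely when $\Psi(\Gamma_\varphi) \subset T^*\Delta$ is transverse at $0$ to the cotangent fiber, equivalently when $D\varphi(0)$ sends the vertical Lagrangian $\{0\}\oplus\R^n$ to a Lagrangian transverse to the horizontal one $\R^n\oplus\{0\}$, equivalently when the $(2,2)$-block $\partial Y/\partial y$ of $D\varphi(0)$ is invertible. Total degeneracy gives $D\varphi(0) = I+N$ with $N$ nilpotent; in particular $D\varphi(0)$ is invertible and symplectic. By the classical fact that any two Lagrangians in a symplectic vector space admit a common Lagrangian complement, there exist transverse Lagrangians $L, L' \subset \R^{2n}$ such that $(I+N)L$ is also transverse to $L'$. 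Since $\mathrm{Sp}(2n)$ acts transitively on pairs of transverse Lagrangians, I can find $A \in \mathrm{Sp}(2n)$ with $A(\{0\}\oplus\R^n) = L$ and $A(\R^n\oplus\{0\}) = L'$; precomposing the Darboux chart with $A$ replaces $D\varphi(0)$ by the symplectic conjugate $A^{-1}D\varphi(0)A$, whose lower-right block is now invertible. The implicit function theorem then expresses $(y, X)$ as functions of $(x, Y)$ along $\Gamma_\varphi$, and the Lagrangian property of $\Psi(\Gamma_\varphi)$ promotes the resulting graph to $\mathrm{graph}(dF)$ for a function $F(x, Y)$ satisfying \eqref{eq:GF} with $dF(0)=0$.

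For the rank equality I would chain the two identities of Remark~\ref{Remk56}. The first gives $HF^{loc}_*(\varphi, p) \cong HF^{loc}_*(\Gamma_\varphi, \Delta_M; (p,p))$ in $(\overline{M}\times M, -\omega\oplus\omega)$. The symplectomorphism $\Psi$ of this ambient space sends $\Delta_M$ to the zero section $O_\Delta \subset T^*\Delta$, $(p,p)$ to the origin, and (by the first step) $\Gamma_\varphi$ to $\mathrm{graph}(dF)$ near $0$. Symplectomorphism invariance of local Lagrangian Floer homology then yields
\[
HF^{loc}_*(\varphi, p) \;\cong\; HF^{loc}_*(\mathrm{graph}(dF), O_\Delta; 0).
\]
To invoke the second identity of Remark~\ref{Remk56}, which requires $F$ to be $C^2$-small, I replace $F$ by $\chi F$ with $\chi$ a cutoff supported near $0$, scaled by a small constant; this modification preserves the germ of $F$ at the origin and hence leaves both local invariants unchanged. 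The second identity now gives $HF^{loc}_*(\mathrm{graph}(dF), O_\Delta; 0) \cong HM^{loc}_*(F, 0)$, and equating ranks yields $r(\varphi, p) = \mathrm{rank}\, HM^{loc}_*(F,p)$.

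The main obstacle is the first step: producing the symplectic coordinate change that makes $\partial Y/\partial y$ invertible. Total degeneracy of $p$ is what makes this flexible — the nilpotency of $N = D\varphi(0) - I$ keeps $D\varphi(0)$ invertible and allows the common-Lagrangian-complement argument to apply — whereas without this hypothesis a fixed point may require a generating function in different variables, such as $F(X, Y)$, so that the specific form $F(x,Y)$ would not be available.
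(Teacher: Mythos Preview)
Your argument for the existence of $F(x,Y)$ is fine, but note that it does not actually use total degeneracy: the common-Lagrangian-complement trick works for any symplectic linear map, and $D\varphi(0)$ is always invertible and symplectic. So your final paragraph misidentifies where the hypothesis enters.

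The real gap is in the $C^2$-smallness step. Your modification ``$\chi F$ scaled by a small constant'' cannot do both jobs at once. If you only apply a cutoff $\chi$ equal to $1$ near $0$, the germ is preserved, but the Hessian $d^2F(0)$ is unchanged; this Hessian is determined by $D\varphi(0)$ via the generating-function equations (for instance $F_{xY}(0)=D^{-1}-I$ where $D=\partial Y/\partial y$), so it is nonzero whenever $D\varphi(0)\neq I$, and no cutoff will make $\chi F$ $C^2$-small. If instead you genuinely rescale to $\epsilon F$, the germ changes and $\mathrm{graph}(d(\epsilon F))\neq\mathrm{graph}(dF)$; you would then need to prove that local Lagrangian Floer homology is invariant along the family $s\mapsto\mathrm{graph}(s\,dF)$, which is plausible but is an extra argument you have not supplied. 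The paper's route is different and uses total degeneracy precisely here: since all eigenvalues of $D\varphi(0)$ are $1$, one can symplectically conjugate so that $\varphi$ becomes $C^1$-close to the identity near $p$ (equivalently $D\varphi(0)$ becomes close to $I$); then $\Gamma_\varphi$ is $C^1$-close to $\Delta$ and the generating function $F$ is automatically $C^2$-small, so Remark~\ref{Remk56} applies directly with no rescaling needed.
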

\begin{proof}
We will only provide a sketch of the proof as this is a rather standard result; see \cite{hingston, ginzburg, GinGu09}.  The argument we give is borrowed from the first paragraph of the proof of Lemma 3.10 of  \cite{GinGu15}.

  Since $p$ is a totally degenerate fixed point, up to conjugation by a symplectomorphism, we may assume that $\varphi$ is $C^1$ close to the identity near $p$ which in turn implies that $\Gamma_{\varphi}$ is $C^1$--close to $\Delta$ near the point $(0,0,0,0)$.  This clearly implies that we can find a generating function of the form $F(x,Y)$.  Moreover, $F$ will be $C^2$--small.  The equality of $r(\varphi, p)$ and the rank of $HM^{loc}_*(F,p)$ then follows from Remark \ref{Remk56}.
\end{proof}

\subsubsection{Proof of Proposition \ref{prop:rank_endpoints}}\label{sec:rank_endpoints}
    Let $H$ be a Hamiltonian such that $\varphi^1_H = f$.  In the case of $\Sigma = \S^2$, we pick $H$ such that $\{\varphi^t_H\}_{0\leq t \leq 1}$ is a representative of $\tilde f$.  Clearly, it is sufficient to prove the proposition for the barcode $\B(H)$ instead of $\B(f)$.

   Suppose that $c \in \Spec(H)$ and denote by $\# \mathrm{Endpoints}(\B(H)) \cap \{c\}$ the total number of bars of $\B(H)$ which have $c$ as an endpoint.  By Proposition  \ref{prop:endpoints_in_interval}, we have 
   $$\# \mathrm{Endpoints}(\B(H)) \cap \{c\} =  \mathrm{rank} (HF_*^{(c-\varepsilon, c+ \varepsilon)}(H)),$$
for sufficiently small $\varepsilon$.  Using Equation \eqref{eq:filtered=local}, we obtain 
$$\# \mathrm{Endpoints}(\B(H)) \cap \{c\} =  \sum_{ \mathcal{A}_H(\bar z)=c} \mathrm{rank}(HF_*^{loc}(H,\bar z)).$$ 

First, suppose that $\Sigma \neq \S^2$.  Then,  summing both sides of the above equation over all $c \in \Spec(H)$ yields the result.  Next, suppose that  $\Sigma = \S^2$.  In this case, summing both sides of the above equation over all $c \in \Spec(H) \cap [0,1)$ yields 

 $$\# \mathrm{Endpoints}(\B(H)) \cap [0,1) =  \sum_{ \mathcal{A}_H(\bar z) \in [0,1) } \mathrm{rank}(HF_*^{loc}(H,\bar z)).$$
 
 We must now show that $$\sum \limits_{ \mathcal{A}_H(\bar z) \in [0,1) } \mathrm{rank}(HF_*^{loc}(H,\bar z)) = \sum \limits_{x \in Fix_c(f)} r(f,x).$$
We leave it to the reader to check that this is an immediate consequence of  the following claim.
\begin{claim}
For every periodic orbit $z$ of $H$, there exists a unique capping $v$ such that $$\A_H([z,v]) \in [0,1).$$
\end{claim}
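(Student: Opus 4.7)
The plan is to reduce the claim to the elementary fact that each real number has a unique integer translate lying in $[0,1)$, using how the deck group $\Gamma$ acts on cappings and shifts the action functional.

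First I would fix an arbitrary capping $u$ of $z$ and set $a = \mathcal{A}_H([z,u])$. Recall from Section \ref{sec:floer_review} that any other capping $u'$ of $z$ is of the form $u' = u \# A$ for some $A \in \Gamma = \pi_2(S^2)/\ker([\omega])$, and that the action transforms by $\mathcal{A}_H([z, u \# A]) = \mathcal{A}_H([z,u]) - \omega(A) = a - \omega(A)$. Thus the set of possible action values ranges over $\{a - \omega(A) : A \in \Gamma\}$.

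Next I would use the normalization $\int_{S^2} \omega = 1$, together with the fact that $\pi_2(S^2) \simeq \mathbb{Z}$ is generated by a class of $\omega$-area $1$. This identifies $\omega(\pi_2(S^2)) = \mathbb{Z}$, and moreover, since $[\omega]$ is non-zero on the generator, the induced homomorphism $\omega : \Gamma \to \mathbb{Z}$ is an isomorphism. Consequently the orbit of $[z,u]$ under the deck-transformation action on $\tilde{\Omega}_0(S^2)$ parametrizes cappings of $z$, and the corresponding actions form the set $a + \mathbb{Z}$.

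Finally, there is a unique integer $n$ with $a - n \in [0,1)$, namely $n = \lfloor a \rfloor$; by the isomorphism above this singles out a unique $A \in \Gamma$ with $\omega(A) = n$, hence a unique capping $v := u \# A$ with $\mathcal{A}_H([z,v]) \in [0,1)$. No step in this argument poses a genuine obstacle; the only subtlety is to keep track of the conventions for the $\Gamma$-action and the sign in the action-shift formula, so that the existence and uniqueness statements both correspond to the bijection $\Gamma \xrightarrow{\sim} \mathbb{Z}$ rather than to a quotient of it.
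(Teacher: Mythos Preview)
Your proposal is correct and follows essentially the same route as the paper: fix a reference capping, use that all other cappings are obtained by attaching multiples of the generator $A\in\pi_2(\S^2)$ with $\omega(A)=1$, note that the action shifts by integers, and invoke the existence and uniqueness of an integer translate into $[0,1)$. The paper's version is slightly terser (it works directly with the generator $A$ and integer multiples $kA$ rather than passing through $\Gamma$), but the content is identical.
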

\begin{proof}
  Let $A$ denote the generator of $\pi_2(\S^2)$ such that $\omega(A) =1$. (Recall our assumption that $\int_{\S^2} \omega = 1$.)  Now, fix a capping $u$ of $z$. Any other capping of $z$ is of the form $u \# kA$ for some $k\in \Z$.
Recall that $\A_H([z, u\# kA]) = \A_H([z, u]) - k\omega(A) = \A_H([z, u]) - k$.  Clearly, there exists a unique $k \in \Z$ such that  $\A_H([z, u]) - k \in [0,1)$.  The capping $v$ is given by $u \#kA$.
\end{proof}
This completes the proof of Proposition \ref{prop:rank_endpoints}.

\begin{remark}\label{rem:rank_endpoints_general} The proof  presented above may easily be generalized to prove the following statement in higher dimensions:
 \begin{enumerate}
\item  Suppose that $(M, \omega)$ is aspherical. Then, $$\sum_{x\in Fix_c(f)} r(f,x)= \#\mathrm{Endpoints}(\B(f)).$$
\item   Suppose that $(M, \omega)$ is monotone.   Let $\tilde{f}$ denote any lift of $f$ to $\UHam(M, \omega)$ and let $\tau$ be the positive generator of $\omega(\pi_2(M))$.  Then, $$\sum_{x\in Fix_c(f)} r(f,x) =  \#\mathrm{Endpoints}(\B(\tilde f)) \cap [0, \tau) .$$
%$N$ be the minimal Chern number of $(M, \omega)$.  Then, $$\sum_{x\in Fix_c(f)} r(f,x) = \sum_{j=0}^{2N-1} \#\mathrm{Endpoints}(\B_j(\tilde f)).$$
\end{enumerate}
\end{remark}
   
\subsubsection{Proof of Proposition \ref{prop:lefindex=rank}} \label{sec:lefindex=rank}

Let $p$ be an isolated fixed point of $f$. We want to prove that   $r(f,p) = \vert \ind (f,p) \vert $. 

When $p$ is non-degenerate,  $\ind (f,p)= \pm 1$ and so the result follows from Remark \ref{rem:local_floer_non_deg}.  Thus, we will assume, for the rest of the proof, that $p$ is a degenerate fixed point.  Since we are working in dimension $2$, this implies that $p$ is totally degenerate.

To simplify notations, we will consider a small chart centered at $p$ and work with a germ of $f$ at $p$. Thus, we will think of $f$ as a Hamiltonian diffeomorphism defined near $p=(0,0) \in \R^2$ with an isolated fixed point at $p$. By Lemma  \ref{lem:locla_floer=local_gf}, $f$ admits a local generating function of the form $F(x,Y)$ near $p$; see Equation \eqref{eq:GF}. 
Moreover,  according to the same lemma, $r(f,p)$ coincides with the rank of  the local Morse homology group $HM^{loc}_*(F, p)$.
Proposition \ref{prop:lefindex=rank} follows immediately from the following two lemmas.

\begin{lemma}\label{lem:indices}
$\ind(f,p)  = \ind(\chi_F, p)$, where $\chi_F$ denotes the Hamiltonian vector field of $F$ and $\ind(\chi_F, p)$ denotes its Lefschetz index at $p$.
\end{lemma}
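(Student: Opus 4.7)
The plan is to use a continuation argument along the one-parameter family of local symplectic maps $\psi_t$ generated by $tF$ for $t \in [0,1]$, interpolating between the identity and $f$.

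First, I would note that since $p$ is totally degenerate, $F$ is $C^2$-small near $p = (0,0)$ (as in the proof of Lemma \ref{lem:locla_floer=local_gf}); consequently the generating function equations
\begin{equation*}
y - Y = t\, F_x(x, Y), \qquad X - x = t\, F_Y(x, Y)
\end{equation*}
determine $(X, Y)$ uniquely as a function of $(x, y)$, continuously in $t \in [0, 1]$, on a fixed small neighborhood $U$ of $p$. This yields a continuous family of local symplectic maps $\psi_t : U \to \R^2$ with $\psi_0 = \mathrm{Id}$ and $\psi_1 = f$. The fixed-point equation $(X,Y) = (x,y)$ for $\psi_t$ with $t > 0$ reduces to $F_x(x,y) = F_Y(x,y) = 0$, \emph{i.e.}\ to the condition that $(x,y)$ is a critical point of $F$. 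Since $p$ is an isolated fixed point of $f = \psi_1$, it is an isolated critical point of $F$, and hence an isolated fixed point of every $\psi_t$ with $t \in (0,1]$.

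Next, I would invoke the homotopy invariance of the Lefschetz index: since $\{\psi_t\}_{t \in (0,1]}$ is a continuous family of local maps with $p$ as a common isolated fixed point, the map $(x,y) \mapsto (\psi_t(x,y) - (x,y))/\|\psi_t(x,y) - (x,y)\|$ defined on a sufficiently small sphere $S$ around $p$ depends continuously on $t \in (0,1]$, so its degree, which is $\ind(\psi_t, p)$, is constant in $t \in (0, 1]$.

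Finally, I would compute the small-$t$ behavior. Implicit differentiation at $t = 0$ of the generating function equations gives
\begin{equation*}
\psi_t(x,y) - (x,y) \;=\; t \cdot \bigl(F_Y(x,y),\, -F_x(x,y)\bigr) + O(t^2),
\end{equation*}
so for $t > 0$ small, the direction of $\psi_t(x,y) - (x,y)$ on $S$ is a small perturbation of the unit vector field determined by $V(x,y) := (F_Y(x,y), -F_x(x,y))$. Because $V$ vanishes only at $p$ in $U$ (as $p$ is an isolated critical point of $F$), a straight-line homotopy between the normalized vector fields shows $\ind(\psi_t, p) = \ind(V, p)$ for small $t > 0$. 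Since $V = \pm \chi_F$ depending on sign conventions and the antipodal map of $S^1$ has degree $+1$, we conclude $\ind(V, p) = \ind(\chi_F, p)$. Combining this with the constancy of $\ind(\psi_t, p)$ on $(0,1]$ yields $\ind(f, p) = \ind(\chi_F, p)$.

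The only delicate point is ensuring that the $O(t^2)$ remainder in the expansion of $\psi_t - \mathrm{Id}$ is uniformly dominated by the leading term $t V$ on the sphere $S$; this is controlled by the $C^2$-smallness of $F$ together with the fact that $V$ has a uniform positive lower bound on $S$, both of which hold on a small enough neighborhood of $p$.
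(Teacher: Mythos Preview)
Your proof is correct but takes a genuinely different route from the paper's.

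The paper argues directly at $t=1$: writing both indices as degrees of Gauss maps, it observes that the generating-function relations give $\chi_F(x,Y) \parallel (X-x,\,Y-y)$, so the Gauss map for $\chi_F$ (on the curve $T'$ obtained as the image of a small circle $T$ under $(x,y)\mapsto(x,Y)$) has exactly the same formula as the Gauss map for $f-\Id$ on $T$. The two degrees then coincide because the reparametrization $(x,y)\mapsto(x,Y)$ has degree one, which follows from $\partial Y/\partial y>0$ (a consequence of $f$ being $C^1$-close to the identity).

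Your argument instead interpolates: you build the family $\psi_t$ generated by $tF$, note that its fixed locus is independent of $t\in(0,1]$ (equal to $\Crit(F)$), use homotopy invariance of the Lefschetz index along this family, and then read off the index at small $t$ from the first-order expansion $\psi_t-\Id = tV + O(t^2)$. This is slightly longer but conceptually clean: it makes transparent why the generating function controls the index, and it avoids the change-of-variables step $(x,y)\mapsto(x,Y)$ entirely. Both arguments ultimately rest on the same hypothesis (the $C^2$-smallness of $F$, equivalently the $C^1$-closeness of $f$ to $\Id$): the paper uses it to get $\partial Y/\partial y>0$, while you use it to ensure $\psi_t$ is well defined for all $t\in[0,1]$ on a fixed neighborhood and to control the $O(t^2)$ remainder uniformly on $S$.
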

\begin{proof}
 Let $T =\{(x,y): x^2 + y^2 = r^2\}$ for a very small value of $r$.  Observe that $L(f,p)$ is the degree of the map  $\Phi : T \rightarrow \S^1$ given by $(x,y) \mapsto (\frac{X-x}{\|X-x\|},\frac{Y-y}{\|Y-y\|})$.
 
 The Lefschetz index $\ind(\chi_F, p)$ is, by definition, the degree of the map $\Psi : T' \rightarrow \S^1$ given by the formula $(x,Y) \mapsto  \frac{\chi_F(x,Y)}{\|\chi_F(x,Y)\|},$ where $T'$ can be taken to be the boundary of any disk whose interior contains $p$ and no other zeros of $\chi_F$.  We will take $T'$ to be the image of the circle $T$ from the previous paragraph under the mapping $(x,y) \mapsto (x,Y)$.  It follows from the definition of $F$ (see Equation \eqref{eq:GF}), that  $\Psi : T' \rightarrow \S^1$ is given by the formula $(x,Y) \mapsto ( \frac{X-x}{\|X-x\|},\frac{Y-y}{\|Y-y\|})$.

Therefore, to prove the lemma we must show that the map from $T$ to $T'$ given by $(x,y) \mapsto (x,Y)$ has degree one.  It is easy to see that the degree of this map is given by the sign of $\frac{\partial Y}{\partial y}$.  We claim that $\frac{\partial Y}{\partial y}$ is positive: indeed, since $p$ is totally degenerate, up to conjugation by a symplectomorphism, we may assume that $f$ is $C^1$ close to the identity near $p$.  Of course, this would imply that $\frac{\partial Y}{\partial y}$ is close to $1$ near $p$. 
\end{proof}

\begin{lemma}\label{lem:rank=lef_gf}
The rank of $HM_*^{loc}(F, p)$ coincides with $| \ind(\chi_F, p)|.$
\end{lemma}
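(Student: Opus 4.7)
The plan is to reduce both sides to a common combinatorial invariant in dimension $2$. Since $\chi_F = J\nabla F$ with $J$ the rotation by $\pi/2$ (a constant element of $GL^+(2,\R)$, hence path-connected to the identity), the Hopf indices coincide: $\ind(\chi_F,p) = \ind(\nabla F,p)$. So it suffices to show
\[
\mathrm{rank}\bigl(HM^{loc}_*(F,p)\bigr) = |\ind(\nabla F,p)|.
\]
I will express both quantities in terms of the integer $n$ defined as the number of connected components of $\{F<c\}\cap (B\setminus\{p\})$, where $c = F(p)$ and $B$ is a small open disc around $p$ containing no other critical point of $F$.

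For the index, a classical planar formula gives $\ind(\nabla F,p)=1-n$ when $n\geq 1$, and $\ind(\nabla F,p)=1$ when $n=0$ (i.e.\ when $p$ is a local extremum). I would prove this by deforming the Gauss map $\theta\mapsto \nabla F(p+\varepsilon e^{i\theta})/|\nabla F(p+\varepsilon e^{i\theta})|$ to the outward unit normal of $\partial B$ (rotation number $1$), then correcting by the contribution of the $n$ maximum/minimum pairs of $F|_{\partial B}$. For the rank, I use the standard model
\[
HM^{loc}_*(F,p) \cong H_*\bigl(\{F\leq c+\delta\}\cap B,\; \{F\leq c-\delta\}\cap B\bigr)
\]
for small $\delta>0$. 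Choosing $B$ so that $F|_{\partial B}$ is Morse on $S^1$, and using that $F$ has no critical points in $B\setminus\{p\}$, the gradient flow of $F$ produces a deformation retract of $\{F\leq c-\delta\}\cap B$ onto $n$ disjoint closed discs and of $\{F\leq c+\delta\}\cap B$ onto a single disc. The long exact sequence of the pair $(D^2, \bigsqcup_{i=1}^n D_i)$ then collapses to
\[
0 \to H_1(X,A) \to \F^n \to \F \to 0,
\]
giving $HM^{loc}_1(F,p)\cong \F^{n-1}$ and all other local Morse groups zero, so rank $= n-1$ when $n\geq 1$. For $n=0$, direct computation gives rank $1$ concentrated in degree $2$ (local max, pair $\simeq (D^2, S^1)$) or in degree $0$ (local min, pair $\simeq (D^2,\emptyset)$). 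In every case, rank $= |1-n| = |\ind(\chi_F,p)|$, completing the proof.

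The main obstacle is verifying the deformation-retract claim at a possibly wildly degenerate isolated critical point: that each component of $\{F\leq c-\delta\}\cap B$ is contractible. This relies crucially on being in dimension $2$ and on $p$ being the only critical point of $F$ in $\bar B$; together they force each component to be a surface with corners whose boundary alternates between an arc of $\partial B$ and an arc of the regular level $\{F=c-\delta\}$, and which contains no interior critical points of $F$, hence is a topological disc. In higher dimensions the local Morse homology would generically spread over several degrees, and only the Euler-characteristic identity $\chi(HM^{loc}_*(F,p)) = \ind(\nabla F,p)$ would survive.
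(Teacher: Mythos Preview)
Your approach is essentially the paper's: both identify the local Morse homology with the relative homology $H_*(D,A)$ of a small disc modulo a subset $A$ consisting of $n$ contractible pieces, and both use the planar index formula $\ind = 1-n$. The paper organises this via the phase portrait of $\chi_F$: since $\chi_F$ is area-preserving, near $p$ either every orbit is periodic (the extremum case) or a neighbourhood decomposes into $h=2n$ hyperbolic sectors, and then $(F^{\varepsilon}\cap D,\,F^{-\varepsilon}\cap D)\simeq (D,\partial D^-)$ with $\partial D^-$ a union of $n=h/2$ arcs. You work directly with sublevel sets and retract to $(D^2,\bigsqcup_{i=1}^{n} D_i)$. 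Since the orbits of $\chi_F$ are exactly the level curves of $F$, these are the same count and the same pair up to homotopy.

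One glitch to fix: by your own definition a local \emph{maximum} has $n=1$, not $n=0$, because $\{F<c\}\cap(B\setminus\{p\})$ is then the entire punctured disc, which is connected. Hence your case split ``$n\geq 1$ versus $n=0$'' does not separate non-extrema from extrema, and at a local maximum both the formula $\ind=1-n$ and the claim that each component of $\{F\leq c-\delta\}\cap B$ is a disc break down (that set is an annulus). You still handle the maximum correctly via the pair $(D^2,S^1)$, so the repair is purely organisational: dispose of the two extremum cases first (rank $=1=|\ind|$), and only then introduce $n$ for the non-extremum case---which is exactly how the paper structures the argument.
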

\begin{proof}
Using the fact that $\chi_F$ is an area-preserving vector field, one can show that it can be described as follows near the point $p$:  Either, every orbit near $p$ is periodic, or one can find a neighborhood of $p$ which may be divided into a finite number, say $h$, of hyperbolic sectors of $\chi_F$; see Figure \ref{Fig:figure1}.  Hyperbolic sectors are defined in Section \ref{sec:local-homeo-area-pres}.

\begin{figure}[h!]
\centering
\includegraphics[width=5cm]{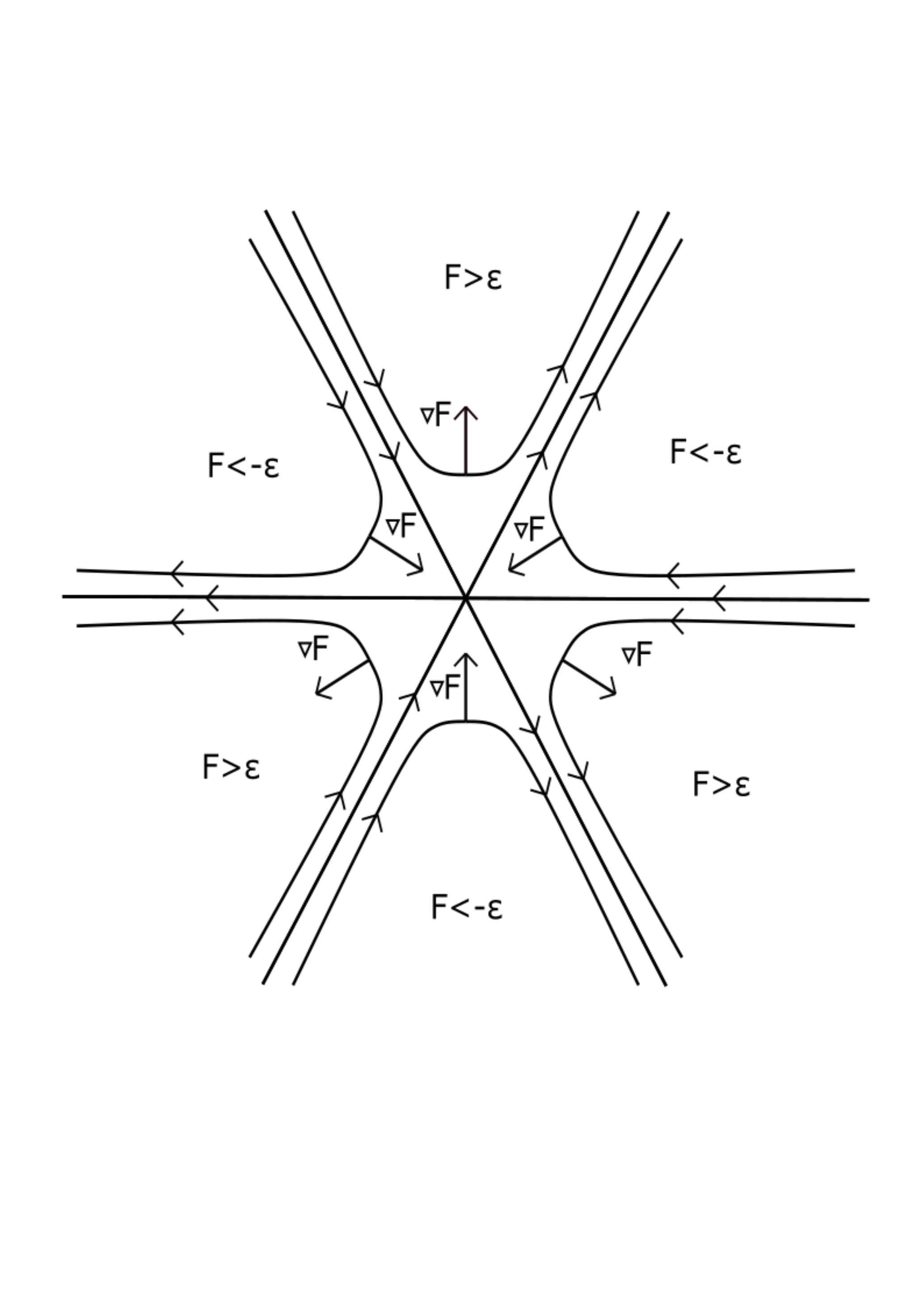}
\caption{\label{Fig:figure1}Phase portrait of $\chi_F$ near a singular point with $6$ hyperbolic sectors.  The Lefschetz index of the singularity is -2.}
\end{figure}

In the first case, where all the orbits of $\chi_F$ in a neighborhood of $p$ are periodic, $p$ is either a local maximum/minimum of $F$.  In this case, it is easy to see the $L(\chi_F, p)$ and the rank of $HM_*^{loc}(F,p)$ are both $1$.

Now suppose that $\chi_F$ has $h$ hyperbolic sectors.   An easy computation would show that $\ind(\chi_F, p) = 1 - \frac{h}{2}$. Suppose that $F(p) = 0$.   
The local Morse homology $HM_*^{loc}(F,p)$ coincides with the singular homology of the pair $(F ^{\varepsilon}, F^{-\varepsilon}) $.  Hence, it is sufficient to show that the singular homology $H_*(F ^{\varepsilon}, F^{-\varepsilon})$ has rank $\frac{h}{2} - 1$.  Now, $H_*(F ^{\varepsilon}, F^{-\varepsilon}) = H_*(F ^{\varepsilon} \cap D, F^{-\varepsilon} \cap D)$ where $D$ is a small disk centered at $p$.  Recall that the trajectories of $\chi_F$ are levels of $F$ and $F$ increases in the direction of $\nabla F$ (see Figure \ref{Fig:figure1}). Thus, it is clear that $(F^\varepsilon \cap D, F^{-\varepsilon}\cap D)$ has the homotopy type of the pair  $(D, \partial D^-)$, where $\partial D^-$ is the set of points on the unit circle where 
$F$ is strictly negative. Note that trajectories of $\chi_F$ are contained in a level of $F$, and on a trajectory in a hyperbolic sector, $F$ has positive value if $\nabla F$ points away from the origin (so that $F$ increases as we move away from $0$) and negative when $\nabla F$ points towards $0$. Thus, $\partial D^-$ is a union of intervals on the circle, one for each hyperbolic sector such that $\nabla F$ points towards the origin. We have $h/2$ such sectors, so $\partial D^-$ is a union of $h/2$ disjoint intervals and $H_k(D,\partial D^-)$ has rank $h/2-1$ for  $k=1$ and $0$ otherwise.  This completes the proof.
\end{proof}

\subsection{Absolute Lefschetz number for smoothable homeomorphisms}

In this section we will prove Theorem~\ref{theo:lef_index_invariance_homeos} of the introduction:  Suppose that $f,g$ are Hamiltonian homeomorphisms which are weakly conjugate and  have a finite number of fixed points.  If $f,g$ are smoothable, then they have the same absolute Lefschetz numbers.

%Recall that being smoothable means that we can find Hamiltonian diffeomorphisms $f_n$ such that $f_n$ converges uniformy to $f$ and  and $Fix_c(f_n) = Fix_c(f)$ for every $n$.

The proof of Theorem \ref{theo:lef_index_invariance_homeos} follows the same general outline as the proof of Theorem \ref{theo:lef_index_invariance}.  Indeed, as in the case of  Theorem \ref{theo:lef_index_invariance}, we use  Theorem \ref{theo:barcode_inv} to conclude that the barcodes  we have constructed are invariants of weak conjugacy classes.  We then use the theorem below to conclude that the absolute Lefschetz number of a smoothable Hamiltonian  homeomorphism is an invariant of its barcode.

 \begin{theo}\label{theo:lef_endpoints_homeos}
Suppose that  $f \in \overline{Ham}(\Sigma, \omega)$ has finitely many fixed points and that $f$ is smoothable.   

\begin{enumerate}
\item  Suppose that $\Sigma\neq \S^2$.  Then, $$\sum_{x\in Fix_c(f)} |\ind(f,x)| = \#\mathrm{Endpoints}(\B(f)).$$
\item   Suppose that $\Sigma = \S^2$.   Let $\tilde{f}$ denote any of the two lifts of $f$ to $\overline{\UHam}(\S^2, \omega)$.  Then, 
$$\sum_{x\in Fix_c(f)} |\ind(f,x)| =  \#\mathrm{Endpoints}(\B(\tilde f)) \cap [0,1).$$

%$$\sum_{x\in Fix_c(f)} |\ind(f,x)| = \sum_{j=0,1} \#\mathrm{Endpoints}(\B_j(\tilde f))).$$
\end{enumerate}
\end{theo}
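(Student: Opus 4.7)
The strategy is to reduce Theorem~\ref{theo:lef_endpoints_homeos} to its smooth counterpart, Theorem~\ref{theo:lef_endpoints}, by exploiting smoothability. By the smoothability hypothesis, fix a sequence $f_n \in \mathrm{Ham}(\Sigma,\omega)$ with $f_n \to f$ uniformly and $\mathrm{Fix}_c(f_n) = \mathrm{Fix}_c(f)$. For the Lefschetz side, the index $\ind(f_n,x)$ is the degree of the normalized displacement map on a small sphere $S$ around $x$ that contains no other fixed point of $f_n$; such an $S$ exists since $\mathrm{Fix}_c(f_n) = \mathrm{Fix}_c(f)$ is finite and independent of $n$. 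On $S$ the displacement is bounded away from zero, and $f_n \to f$ uniformly on $S$, so the corresponding integer-valued degrees must stabilize, giving $\ind(f_n,x) = \ind(f,x)$ for every $x \in \mathrm{Fix}_c(f)$ and $n$ large. Applying Theorem~\ref{theo:lef_endpoints} to each smooth $f_n$ then yields, for $n$ large,
\begin{equation*}
\sum_{x \in \mathrm{Fix}_c(f)} |\ind(f,x)| \;=\; \sum_{x \in \mathrm{Fix}_c(f_n)} |\ind(f_n,x)| \;=\; \#\mathrm{Endpoints}(\B(f_n)),
\end{equation*}
with the analogous shifted statement on $[0,1)$ in the $\S^2$ case (applied to a fixed choice of lift $\tilde f_n$ converging to $\tilde f$ in $\overline{\UHam}(\S^2,\omega)$).

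The remaining task is to prove that $\#\mathrm{Endpoints}(\B(f_n)) = \#\mathrm{Endpoints}(\B(f))$ for $n$ large. Following the strategy announced in the introduction, the target is the stronger identity $\B(f_n) = \B(f)$ eventually, possibly after refining the smoothing sequence. By Theorem~\ref{theo:cont_barcodes_genus} (resp.\ Theorem~\ref{theo:cont_barcodes_sphere}) we already have $\B(f_n) \to \B(f)$ in the bottleneck distance; the total number of endpoints of $\B(f_n)$ is also uniformly bounded by $\sum_x r(f_n,x) = \sum_x |\ind(f,x)|$ via Propositions~\ref{prop:rank_endpoints} and~\ref{prop:lefindex=rank}. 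So the only way the limit count could drop is via a \emph{bar collapse}: the two endpoints $(a_n,b_n]$ of some bar of $\B(f_n)$ could converge to a common value, producing a trivial bar in $\B(f)$.

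The main obstacle is ruling out such a collapse. One chooses generating Hamiltonians $H_n$ for $f_n$ that converge uniformly, modulo a common shift, to a Hamiltonian path generating $f$; then for each $x \in \mathrm{Fix}_c(f)$ the action $\A_{H_n}(x)$ converges to a well-defined limit, the endpoints of bars of $\B(f_n)$ being drawn from this finite stabilizing set of action values. Combined with the stability of the local Floer ranks $r(f_n,x) = |\ind(f,x)|$, the Barannikov complex $\gamma(\B(f_n))$ has eventually constant preferred basis and graded dimensions; any bar collapse in the limit would therefore force a \emph{coincidence} between the limiting action values of two distinct fixed points of $f$. Such nongeneric coincidences can be removed by a further small perturbation of the $f_n$ that keeps $\mathrm{Fix}_c$ unchanged and preserves uniform convergence to $f$; organizing this refinement so that the perturbed sequence still qualifies as a smoothing is the technical core of the proof. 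Once $\B(f_n) = \B(f)$ is secured for $n$ large, the theorem follows by substituting into the display above.
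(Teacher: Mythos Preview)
Your overall strategy matches the paper's: reduce to the smooth case via Theorem~\ref{theo:lef_endpoints}, then argue that, after refining the smoothing sequence, $\B(f_n) = \B(f)$ for large $n$ (this is the paper's Lemma~\ref{lem:barcodes_stabilize}). The Lefschetz-index stability and the identification of ``bar collapse'' as the only obstruction are both correct.

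The genuine gap is in your justification for why actions converge. You assert that one may choose generating Hamiltonians $H_n$ for $f_n$ that ``converge uniformly, modulo a common shift, to a Hamiltonian path generating $f$'', and deduce that each $\A_{H_n}(x)$ has a limit. This is not available: the smoothability hypothesis only gives $f_n \to f$ in $C^0$, and $C^0$-closeness of Hamiltonian diffeomorphisms implies nothing about closeness of generating Hamiltonians, Hofer norms, or action spectra. Moreover $f$ is merely a homeomorphism and need not be the time-$1$ map of any smooth Hamiltonian flow, so there is no candidate limit for the $H_n$ in the first place.

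The paper circumvents this by a different route. Starting from \emph{any} choice of $H_n$, it first proves (Claim~\ref{cl:spectrum_bdd}) that the finite set $E_j(H_n)$ of endpoint values of $\B_j(H_n)$ is, after a normalizing shift, uniformly bounded in $n$. The argument is indirect: using the action-perturbation Claim~\ref{cl:modify_action} one separates the top two values by at least $\varepsilon$, producing a bar of length $\geq \varepsilon$; if the spectrum were unbounded this bar would drift to infinity, contradicting convergence of $\B_j(f_n)$ in $\widehat{\mathcal B}$. Only after boundedness does one pass to a subsequence to make $E_j(H_n)$ converge, and then invoke Claim~\ref{cl:modify_action} once more to perturb $H_n$ so that $E_j(H_n)$ equals its limiting set \emph{exactly} for large $n$. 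Since the barcodes converge and their endpoints now lie in a fixed finite set, they stabilize. Your perturbation idea (``remove nongeneric coincidences'') is close in spirit to Claim~\ref{cl:modify_action}, but the missing boundedness step is where the $C^0$-continuity of barcodes does essential work, substituting for the unavailable convergence of Hamiltonians.
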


Although the statement of the above theorem is   similar to that of Theorem \ref{theo:lef_endpoints}, the proof is different as local Floer theory is not well-defined for homeomorphisms.  One may interpret this theorem as a definition for local Floer homology of homeomorphisms.

\begin{proof}[Proof of Theorem \ref{theo:lef_endpoints_homeos}]
 Since $f$ is smoothable there exists a sequence $f_n \in \Ham(M, \omega)$ such that $f_n$ converges uniformly to $f$ and $Fix_c(f_n) = Fix_c(f)$ for every $n$.  Because the sets $Fix_c(f_n), Fix_c(f)$ all coincide we will simplify denote them by $Fix_c$.

In the case of $\Sigma = \S^2$, we pick lifts $\tilde f_n$ of $f_n$ to $\UHam(\S^2, \omega)$ such that $\tilde f_n$ converges to $\tilde f$ with respect to the $C^0$ topology on $\overline{\UHam}(\S^2, \omega)$. By Theorem \ref{theo:cont_barcodes_genus},  $\B_j(f_n) \to \B_j(f)$ and $\B(f_n) \to \B(f)$ when $\Sigma \neq \S^2$.  By Theorem \ref{theo:cont_barcodes_sphere}, $\B_j(\tilde f_n) \to \B_j(\tilde f)$ when $\Sigma = \S^2$.

As the reader might anticipate, the strategy for the proof is to show that  the number of endpoints for each barcode of $f_n$ coincides with the number of endpoints for the corresponding barcode of $f$; for example, we will show that $\#\mathrm{Endpoints}(\B_j( f_n)) = \#\mathrm{Endpoints}(\B_j(( f))$.   What makes this non-trivial is that a priori $\B_j( f_n)$ might possess small bars which disappear as $n \to \infty$.   This difficulty is tackled by the following lemma. 
 
\begin{lemma}\label{lem:barcodes_stabilize}
The sequences $f_n, \tilde f_n$ may be picked such that
\begin{enumerate}
\item When $\Sigma \neq \S^2$, we have $\B_j(f_n) = \B_j(f)$ and $\B(f_n) = \B(f)$,

\item When $\Sigma = \S^2$, we have $\B_j(\tilde f_n) = \B_j(\tilde f)$ and $\B(\tilde f_n) = \B(\tilde f)$.
\end{enumerate}  
\end{lemma}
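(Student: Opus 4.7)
\emph{Plan.} The strategy is to start with any smoothing sequence $f_n^{(0)} \to f$ with $Fix_c(f_n^{(0)}) = Fix_c(f)$ (and compatible lifts $\tilde f_n^{(0)} \to \tilde f$ in the sphere case), and to modify it by local Hamiltonian perturbations into a sequence $f_n$ satisfying $\B(f_n) = \B(f)$ exactly. First, using Propositions~\ref{prop:rank_endpoints} and~\ref{prop:lefindex=rank}, every smoothing $g$ with $Fix_c(g) = Fix_c(f)$ satisfies $\#\mathrm{Endpoints}(\B(g)) = N := \sum_x |\ind(f,x)|$ (and the analogous count on $[0,1)$ in the sphere case). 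Combined with $\B(f_n^{(0)}) \to \B(f)$ from Theorems~\ref{theo:cont_barcodes_genus} and~\ref{theo:cont_barcodes_sphere}, the number of bars of $\B(f_n^{(0)})$ is uniformly bounded by $N$. Passing to a subsequence along which this number is constant, each bar of $\B(f_n^{(0)})$ converges individually under the bottleneck matching, either to a non-trivial bar of $\B(f)$ or to a trivial (collapsed) interval. A collapse corresponds to a Barannikov pair $(x,y)$ of fixed points whose actions $A_n(x), A_n(y)$ merge in the limit.

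Next, the modification. For each $x \in Fix_c(f)$ fix a Darboux ball $D_x \ni x$, the $D_x$ pairwise disjoint and each disjoint from the other fixed points, together with a bump function $\chi_x$ supported in $D_x$ and equal to $1$ on a smaller ball $D'_x \ni x$. Given constants $c_n(x) \in \R$, set $K_n := \sum_x c_n(x) \chi_x$ and define $f_n := \varphi^1_{K_n} \circ f_n^{(0)}$. Since $K_n$ is locally constant on $D'_x$, the flow $\varphi^t_{K_n}$ is the identity near each fixed point, so $Fix_c(f_n) = Fix_c(f)$, the germ of $f_n$ at each $x$ agrees with that of $f_n^{(0)}$, and the Conley--Zehnder indices and local Floer homologies are preserved; meanwhile the action of $x$ for $f_n$ equals its action for $f_n^{(0)}$ shifted by exactly $c_n(x)$. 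For $|c_n(x)|$ small enough, $\varphi^1_{K_n}$ creates no new fixed points in the annular region $\{0 < \chi_x < 1\}$ and $f_n \to f$ uniformly. Choose the constants $c_n(x)$ so that $A_n(x) + c_n(x) = A(x)$ for a fixed target value $A(x)$ (the limiting action of $x$), independent of $n$.

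The main obstacle is upgrading bottleneck convergence to the exact equality $\B(f_n) = \B(f)$. The one-parameter family $s \mapsto \varphi^s_{K_n} \circ f_n^{(0)}$ has no fixed-point bifurcations since the perturbation is trivial near every fixed point; by continuation, the Barannikov pairing of $f_n$ coincides with that of $f_n^{(0)}$, and each bar of $\B(f_n)$ is obtained from the corresponding bar of $\B(f_n^{(0)})$ by replacing each endpoint $A_n(x)$ with $A(x)$. In particular, collapsing bars of $\B(f_n^{(0)})$ become trivial bars of $\B(f_n)$ (hence disappear), while non-collapsing bars are sent precisely to the matched bars of $\B(f)$ under the bottleneck matching. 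Thus $\B(f_n) = \B(f)$ for all sufficiently large $n$ in the subsequence. The argument applies verbatim to each $\B_j$ individually since the modification preserves the Conley--Zehnder grading, and to the sphere case by working with the chosen lifts.
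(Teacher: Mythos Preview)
Your modification device (locally constant Hamiltonian bumps $K_n = \sum_x c_n(x)\chi_x$ to shift the action at each fixed point without disturbing the germ) is exactly the mechanism behind the paper's Claim~\ref{cl:modify_action}, so the overall strategy is the same. However, your endgame has a genuine gap.

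The problematic step is the assertion that along the family $s \mapsto \varphi^s_{K_n}\circ f_n^{(0)}$ ``by continuation, the Barannikov pairing of $f_n$ coincides with that of $f_n^{(0)}$''. Absence of fixed-point bifurcations does \emph{not} guarantee that the Barannikov pairing is preserved: the pairing is determined by the Floer differential, a global object, and it can jump precisely when two action values cross during the deformation. Such crossings are unavoidable in the very situation you single out, namely when two fixed points $x,y$ satisfy $A(x)=A(y)$ (your ``collapsing bar'' case): the actions $(1-s)A_n(x)+sA(x)$ and $(1-s)A_n(y)+sA(y)$ both move toward the same target and may well pass through each other. After such a crossing you cannot conclude that each bar of $\B(f_n)$ is the bar of $\B(f_n^{(0)})$ with $A_n(x)$ replaced by $A(x)$; the pairing may have been rearranged. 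Consequently your conclusion that collapsing bars become trivial while the others land exactly on the matched bars of $\B(f)$ is unjustified.

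The paper sidesteps this entirely. It never tries to control the pairing. Instead, it uses the bump modification only to make the \emph{set} of endpoint values $E_j(H_n)$ independent of $n$ for large $n$. Now observe: the number of endpoints is uniformly bounded (by the fixed-point count, via Propositions~\ref{prop:rank_endpoints} and~\ref{prop:lefindex=rank}), so there are only finitely many barcodes whose endpoints lie in a fixed finite set and which have a bounded number of bars. A convergent sequence taking values in a finite set is eventually constant. Since $\B_j(H_n)\to\B_j(f)$ by the continuity theorems, this forces $\B_j(H_n)=\B_j(f)$ for large $n$, with no reference to how the pairing behaves along the deformation. You can repair your argument by replacing the continuation claim with this discreteness-plus-convergence observation.
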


Before proving this lemma, let us show that it implies Theorem \ref{theo:lef_endpoints_homeos}.  Indeed, by continuity of the Lefschetz index, for each $x \in Fix_c$, we have $L(f,x) = L(f_n,x)$ for $n$ sufficiently large.  Hence,  for $n$ large enough, $$\sum_{x\in Fix_c} |\ind(f,x)| = \sum_{x\in Fix_c} |\ind(f_n,x)| .$$  On the other hand, Theorem \ref{theo:lef_endpoints} tells us that if $\Sigma \neq \S^2$, then $$\sum_{x\in Fix_c} |\ind(f_n,x)|  = \# \mathrm{Endpoints}(\B(f_n)),$$ 
and if $\Sigma = \S^2$, then
$$\sum_{x\in Fix_c} |\ind(f_n,x)| =  \#\mathrm{Endpoints}(\B(\tilde f_n))\cap [0,1).$$
The theorem then follows from  Lemma \ref{lem:barcodes_stabilize}.

\medskip

It remains to prove the lemma.   Before getting into the proof, recall that the barcodes $\B_j(f_n), \B_j(f)$ belong to $\widehat{\mathcal{B}}$ which is the space of barcodes considered up to shift.  It will be more convenient to work with barcodes as opposed to barcodes considered up to shift.  To that end, we pick Hamiltonians $H_n$ such that $\varphi^1_{H_n} = f_n$ and work with $\B_j(H_n), \B(H_n)$.  In the case of the sphere, we pick the $H_n$'s such that $\{\varphi^t_{H_n}\}_{0\leq t \leq 1}$ represents $\tilde f_n$ in $\UHam(\S^2, \omega)$. 
We start with any sequence $H_{n}$ and we will show that for each fixed $j$ we can modify the $H_{n}$'s
%We will show that for each fixed $j$ 
so that the sequence of barcodes $\B_j(H_n)$ eventually stabilizes, \emph{i.e.}\ we have $\B_j(H_n) = \B_j(H_{n+1})$ for sufficiently large $n$.  Likewise we will also get  $\B(H_n) = \B(H_{n+1})$ for sufficiently large $n$.  It is clear that Lemma \ref{lem:barcodes_stabilize} follows from the above.  

\medskip

We will begin by proving that, for each fixed $j$, the sequence $\B_j(H_n)$ eventually stabilizes. Denote by $E_j(H_n)$ the set of values in the spectrum of $H_n$ which appear as endpoints of bars in $\B_j(H_n)$.  Note that this is the set of endpoints of bars in $\B_j(H_n)$ counted without their multiplicities.\footnote{Recall that, on the other hand, $\mathrm{Endpoints}(\B_j(H_n))$ denotes the set of endpoints of the bars of $\B_j(H_n)$ taken with their multiplicities.}

%\begin{lemma}
%The barcode $\B_j(H_n)$ is finite, for all $ n$.
%\end{lemma}
%\begin{proof}
%Observe that the set $\mathrm{Endpoints}(\B_j(H_n))$ (counting the endpoints of the bars without their multiplicities) is bounded by the total number of fixed points: This is obvious, in the case where $\Sigma \neq \S^2$.  We leave it to the reader to check that it continues to hold when $\Sigma = \S^2$.
%
%It follows from the previous paragraph that there exists $\varepsilon_n$ such that each bar in $\B_j(H_n)$ is of length greater than $\varepsilon_n$.  As mentioned in Section \ref{sec:barcodes_for_hamiltonians}, $\B_j(H_n)$ belongs to $\bar{\mathcal{B}}$, the completion of the space of barcodes.  By Proposition \ref{prop:completion_barcodes}, $\bar{\mathcal{B}}$ consists of barcodes which have finitely many bars of length greater than $\varepsilon$ for any given $\varepsilon >0$.  It follows that the number of bars in  $\B_j(H_n)$ must be finite.
%\end{proof}

We claim that  $E_j(H_n)$ is finite, for all $ n$. Indeed, the cardinality of this set is bounded by the total number of fixed points: This is obvious, in the case where $\Sigma \neq \S^2$.  We leave it to the reader to check that it continues to hold when $\Sigma = \S^2$.

Write $E_j(H_n)= \{ \mathcal{A}^n_1, \mathcal{A}^n_2, \ldots, \mathcal{A}^n_{k}\}$, where  we suppose that $\mathcal{A}^n_1 < \mathcal{A}^n_2 < \ldots <  \mathcal{A}^n_{k}$. Note that we're assuming here that the sets $E_j(H_n)$ have the same cardinality for different values of $n$.  This is justified, up to passing to a subsequence of $\B_j(H_n)$, because the cardinality of $E_j(H_n)$ is bounded by the total number of fixed points. 

\begin{claim}\label{cl:modify_action}
For every $\delta>0$ there exists $\varepsilon$ such that  for every $a_1, a_2, \ldots, a_{k} \in [-\varepsilon, \varepsilon]$ we can find a hamiltonian $K$ with $C^2$ norm less than $\delta$, such that for every $n$,
\begin{itemize}
\item   $K$ is constant on a neighborhood of $Fix_c$ and $\varphi^1_{K} f_{n}$ has the same set of contractible fixed points as $f_{n}$,
\item   $E_j(\tilde H_n) =  \{ \mathcal{A}^n_1 + a_1, \mathcal{A}^n_2 + a_2, \ldots, \mathcal{A}^n_k + a_k\}$, where
$\tilde H_{n} = K \# H_{n}$ is a hamiltonian with $\varphi^1_{\tilde H_{n}} = \varphi^1_{K} f_{n}$.
\end{itemize}
\end{claim}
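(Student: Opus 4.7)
The plan is to construct, for each $(a_1,\dots,a_k) \in [-\varepsilon, \varepsilon]^k$, a single time-independent Hamiltonian $K$ which is locally constant near $Fix_c$, with carefully chosen values, so that $\tilde H_n := K \# H_n$ preserves the contractible fixed point set of $f_n$ while shifting each action $\mathcal{A}_i^n$ to $\mathcal{A}_i^n + a_i$ simultaneously for every $n$. As a preliminary reduction, since $Fix_c$ is finite there are only finitely many partitions of its elements by action value, so after extracting a further subsequence of $(H_n)$ I may assume that the partition $Fix_c = S_1 \sqcup \cdots \sqcup S_k$ given by $x \in S_i$ iff the action of $x$ with respect to $H_n$ (for a fixed choice of cappings) equals $\mathcal{A}_i^n$, restricted to those fixed points contributing to $E_j$, is independent of $n$.

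To construct $K$ I choose pairwise disjoint closed disks $D^x \subset \tilde D^x$ centered at each $x \in Fix_c$, with the $\tilde D^x$ still pairwise disjoint, and take $K$ smooth, time-independent, equal to $a_i$ on $D^x$ for $x \in S_i$, vanishing outside $\bigcup_x \tilde D^x$, and smoothly interpolated on the annuli $\tilde D^x \setminus D^x$. A standard extension then yields $\|K\|_{C^2} \leq C \max_i |a_i| \leq C\varepsilon$, with $C$ depending only on the fixed disks, so choosing $\varepsilon < \delta/C$ ensures $\|K\|_{C^2} < \delta$. Since $dK$ vanishes on each $D^x$ and outside $\bigcup_x \tilde D^x$, the flow $\varphi_K^t$ is the identity on those sets, hence $\varphi_K^1 f_n = f_n$ there and $Fix_c$ remains pointwise fixed by $\varphi_K^1 f_n$. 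The only place where new contractible fixed points of $\varphi_K^1 f_n$ could appear is the annular region $\bigcup_x (\tilde D^x \setminus D^x)$, and excluding them uniformly in $n$ is the main technical obstacle: the strategy is to first shrink the $\tilde D^x$ using the isolation of the fixed points of $f$, then use the smallness of $\|K\|_{C^1}$ together with $f_n \to f$ in $C^0$ to control the displacement of $\varphi_K^1 f_n$ on the annuli, and finally invoke local constancy of the contractibility class of a putative fixed point to conclude that any potential new fixed point there must be non-contractible.

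For the action shift, the heart of the matter is a direct computation. For $x \in S_i$, let $z(t) = \varphi_{H_n}^t(x)$ be the orbit of $H_n$ through $x$ with capping $u$; then $\tilde z(t) = \varphi_K^t(z(t))$ is the corresponding orbit of $\tilde H_n$, based at $x$ since $\varphi_K^t(x) = x$, and it admits the natural capping $\tilde u$ obtained by gluing $u$ to the homotopy $(s,t) \mapsto \varphi_K^{st}(z(t))$. Expanding $\mathcal{A}_{\tilde H_n}([\tilde z,\tilde u])$ via $\tilde H_n(t,y) = K(y) + H_n(t, (\varphi_K^t)^{-1}(y))$, using the invariance $K \circ \varphi_K^t = K$ and an integration by parts in $t$, one obtains
$$\mathcal{A}_{\tilde H_n}([\tilde z, \tilde u]) = \mathcal{A}_{H_n}([z,u]) + a_i,$$
uniformly in $n$. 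A standard continuation argument gives $HF^{\mathrm{loc}}(\tilde H_n, \tilde z) \cong HF^{\mathrm{loc}}(H_n, z)$ for $K$ sufficiently $C^2$-small, so by Proposition~\ref{prop:endpoints_in_interval} the multiplicities of bar endpoints of $\B_j(\tilde H_n)$ at $\mathcal{A}_i^n + a_i$ match those of $\B_j(H_n)$ at $\mathcal{A}_i^n$; choosing $\varepsilon$ small enough that the shifted values remain pairwise distinct for every $n$ then yields $E_j(\tilde H_n) = \{\mathcal{A}_1^n + a_1, \ldots, \mathcal{A}_k^n + a_k\}$, as required.
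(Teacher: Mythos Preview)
The paper states this claim without proof, so there is no argument to compare against; your approach is the natural one and is essentially what the authors must have had in mind.

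Your action computation is the heart of the matter and it is correct. With $\tilde z(t)=\varphi_K^t(z(t))$ and $\tilde u = u \# C$, $C(s,t)=\varphi_K^{st}(z(t))$, one finds $\omega(\partial_s C,\partial_t C)=-t\,\tfrac{d}{dt}K(z(t))$, and the integration by parts you allude to gives $\int C^*\omega=\int_0^1 K(z(t))\,dt - K(x)$. The two integrals $\int_0^1 K(z(t))\,dt$ then cancel in $\mathcal{A}_{\tilde H_n}$, leaving exactly $\mathcal{A}_{H_n}([z,u])+K(x)=\mathcal{A}_{H_n}([z,u])+a_i$, regardless of whether the orbit $z(t)$ stays in $D^x$. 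Your preliminary subsequence reduction (fixing once and for all which fixed points contribute to $E_j$ and how they are grouped by action value) is genuinely needed: without it the constraints $K(x)=a_i$ for different $n$ can be inconsistent. Since the ambient proof of Lemma~\ref{lem:barcodes_stabilize} already passes to subsequences freely, this is harmless.

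Two remarks. First, your appeal to a ``continuation argument'' for local Floer homology is heavier than necessary: because $K$ is constant on $D^x$, the germ of $\varphi_K^1 f_n$ at $x$ equals the germ of $f_n$, so the (graded) local Floer homology is literally unchanged, and the endpoint structure of $\B_j$ is preserved. Second, the exclusion of new contractible fixed points on the annuli is the one place where your write-up is only a sketch. The clean argument is the displacement bound: since $f$ has finitely many fixed points one may choose the $\tilde D^x$ so that $d(f(y),y)>\eta$ on each annulus; then $d(f_n(y),y)>\eta/2$ for $n$ large by uniform convergence, while $d((\varphi_K^1)^{-1}(y),y)\leq C\varepsilon<\eta/2$, so $\varphi_K^1 f_n$ has no fixed point at all there. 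This yields the claim for all sufficiently large $n$, which is all that the applications (Claim~\ref{cl:spectrum_bdd} and the stabilization step) require. Your alternative suggestion of invoking ``local constancy of the contractibility class'' to handle small $n$ is vague and does not obviously work, since a putative new fixed point need not be close to any fixed point of $f_n$; I would drop that clause and simply note that a further tail of the sequence suffices.
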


Next, shift each  $H_n$ by a constant, if necessary, to ensure that  that the minimum value in $E_j(H_n)$ is zero.

\begin{claim}\label{cl:spectrum_bdd}
The set $\cup_n E_j(H_n)$ is bounded.
\end{claim}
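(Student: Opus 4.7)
Plan: By the continuity theorem for barcodes (Theorem \ref{theo:cont_barcodes_genus} when $\Sigma \neq \S^2$, respectively Theorem \ref{theo:cont_barcodes_sphere} when $\Sigma = \S^2$), the uniform convergence $f_n \to f$ gives $\B_j(H_n) \to \B_j(f)$ in $(\widehat{\mathcal B}, \dbot)$. I would unfold this in $\overline{\mathcal B}$: choose shifts $c_n \in \R$ and a representative $\B^* \in \overline{\mathcal B}$ of $\B_j(f)$ such that $\B_j(H_n) + c_n \to \B^*$ in bottleneck distance. Since each $\B_j(H_n)$ has at most $k := |Fix_c(f)|$ bars (its endpoints form the set $E_j(H_n)$ of cardinality bounded by the number of fixed points), any bar of $\B^*$ unmatched in the bottleneck comparison must shrink to a trivial interval in the limit. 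Hence $\B^*$ is a finite barcode with at most $k$ non-trivial bars and a well-defined finite diameter $D^*$ (maximum finite endpoint minus minimum finite endpoint).

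After passing to a subsequence I would assume each $\B_j(H_n)+c_n$ has a constant number $m \leq k$ of bars, enumerated compatibly as $I_1^n, \dots, I_m^n$, so that each sequence $(I_i^n)_n$ either converges to a specific bar of $\B^*$ (a \emph{persistent} bar, whose endpoints are uniformly bounded since they lie in a fixed neighborhood of the endpoints of a bar of $\B^*$) or has length tending to zero (a \emph{collapsing} bar). For persistent bars the boundedness is automatic. For collapsing bars, the centers could a priori escape to infinity, and this is where smoothability must be used: since $Fix_c(f_n)=Fix_c(f)$ is a fixed finite set shared by all $f_n$, the difference $\A_{H_n}(p)-\A_{H_n}(q)$ for $p,q \in Fix_c(f)$ is intrinsic to $f_n$ (independent of the choice of generating Hamiltonian modulo constants) and may be expressed geometrically as the signed symplectic area of a $2$-chain swept out by a reference path from $p$ to $q$ under the Hamiltonian flow. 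Uniform $C^0$ convergence of $f_n$ together with the fact that the boundary data (the fixed points $p,q$) does not move forces these $2$-chains to vary continuously, so the intrinsic quantity $\max E_j(H_n)-\min E_j(H_n)$ remains uniformly bounded.

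Since this diameter is shift-invariant, and under the normalization $\min E_j(H_n) = 0$, one concludes $E_j(H_n) \subseteq [0,D]$ for a uniform $D > 0$, which is exactly the claim.

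The main obstacle is the geometric continuity statement used for the collapsing bars: that the intrinsic action differences $\A_{H_n}(p) - \A_{H_n}(q)$ vary continuously with $f_n$ under $C^0$ perturbations preserving the fixed-point set. This is essentially a $C^0$-symplectic fact that requires a careful analysis of how trajectories $t\mapsto \varphi^t_{H_n}(p)$ behave as the time-one map is perturbed while its fixed points stay fixed; the key simplification provided by smoothability is precisely that the endpoints of the relevant $2$-chains are nailed down, so their areas cannot drift.
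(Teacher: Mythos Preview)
Your decomposition into persistent and collapsing bars is correct, and persistent bars are indeed controlled by convergence in $\widehat{\mathcal B}$. The gap is entirely in the treatment of collapsing bars. You propose to bound their endpoints via the continuity of action differences $\A_{H_n}(p)-\A_{H_n}(q)$ under $C^0$ convergence of $f_n$ --- a statement you yourself flag as ``the main obstacle'' and do not prove. This is a genuinely non-trivial $C^0$-symplectic fact: the swept-area expression depends on the Hamiltonian isotopy, and $C^0$-closeness of time-1 maps gives no direct control over isotopies. Even granting that on aspherical surfaces the action difference depends only on the time-1 map (since $\Ham$ is simply connected), its $C^0$-continuity lies in the same circle of ideas as the $C^0$-continuity of spectral invariants and barcodes that the paper is establishing. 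Invoking it unproven is at best a large detour and at worst circular.

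The paper avoids this difficulty by a device you do not use: Claim~\ref{cl:modify_action} allows one to shift each value $\mathcal{A}^n_i$ independently by any amount in a fixed interval $[-\varepsilon,\varepsilon]$, uniformly in $n$. One applies it to separate the largest action value from the rest by at least $\varepsilon$ (one could equally well separate all consecutive values). This \emph{manufactures} a bar of length $\geq \varepsilon$ in every $\B_j(H_n)$. If $\cup_n E_j(H_n)$ were unbounded then, after normalizing $\min E_j(H_n)=0$, an endpoint of this long bar would tend to $+\infty$; no sequence of shifts can make such barcodes converge in $\widehat{\mathcal B}$, contradicting the continuity theorems already proven. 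In short, rather than confronting collapsing bars head-on, the paper uses Claim~\ref{cl:modify_action} to create long bars and eliminate the collapsing-bar issue altogether.
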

\begin{proof}[Proof of Claim]
  Applying the previous claim, we may assume that, for each $n$, the difference between the largest value in $E_j(H_n)$ and the second largest value is at least $\varepsilon$.  Therefore, each barcode $\B_j(H_n)$ has a bar $I_n=[c_n, d_n]$ of length at least $\varepsilon$.  If $\cup_n E_j(H_n)$ was not bounded, we would conclude  that $d_n \to \infty$.  It is easy to see that these conditions force the sequence of barcodes $\B_j(H_n)$ not to have a limit in $\widehat{\mathcal{B}}$, the space of barcodes considered up to shift. (Of course, it does not have a limit in the space of barcodes either).  This contradicts the fact $\B_j(H_n) \to \B_j(f)$, when $\Sigma \neq \S^2 $, and $\B_j(H_n) \to \B_j(\tilde f)$, when $\Sigma = \S^2 $. 
\end{proof}

We continue the proof of Lemma \ref{lem:barcodes_stabilize}.  Because the sequence of barcodes  $\B_j(H_n)$ is convergent, to prove the lemma it is sufficient to pick the Hamiltonians $H_n$  such  that the sets $E_j(H_n) $ stabilize, \emph{i.e.}\ $E_j(H_n) =  E_j(H_{n+1})$ for $n$ large. 

Let us show that we can modify the $H_n$'s so that the sets $E_j(H_n)$  stabilize.      By Claim \ref{cl:spectrum_bdd}, after passing to a subsequence, we may assume that $\mathcal{A}^n_i$ converges to a value which we will denote by $\mathcal{A}_i$.    Since $\mathcal{A}^n_i - \mathcal{A}_i$ converges to $0$, by Claim \ref{cl:modify_action},  we can perturb $H_n$ to guarantee that  $\mathcal{A}^n_i = \mathcal{A}_i $, for $n$ sufficiently large. Thus, we have $E_j(H_n) = \{\mathcal{A}_1, \ldots, \mathcal{A}_k \}$.  This completes the proof of the fact that the sequence $\B_j(H_n)$ stabilizes for $n$ large enough.

\medskip

It remains to show that the sequence of total barcodes $\B(H_n)$ also stabilizes\footnote{Note that when $f$ is non smooth, it is possible that $B_{j}(f)$ is non-empty for infinitely many $j$'s, even in the case $\Sigma \neq \S^2$.}.  If $\Sigma \neq \S^2$, then we can repeat all of the above with $\B_j(H_n)$ replaced with $\B(H_n)$ and $E_j(H_n)$ replaced with $E(H_n)$, where $E(H_n)$ denotes the set of values in the spectrum of $H_n$ which appear as endpoints of bars in $\B(H_n)$. 

The above reasoning does not apply to the case of $\S^2$ because, on $\S^2$, the sets $E(H_n)$ are neither finite nor bounded. However, $\B(H_n)$ is the disjoint union of the sets $\B_j(H_n)$ and, by Equation \eqref{eq:periodicity_sphere}, we have $\B_j(H_n) = \B_{j-4}(H_n) -1$.  Therefore, the fact that for each $j$ the sequence $\B_j(H_n)$ stabilizes implies that $\B(H_n)$ stabilizes as well.  This completes the proof of Lemma \ref{lem:barcodes_stabilize}.

\end{proof}

%%%%%%%
\section{Proof of Theorem \ref{theo:almost_conj2}:   Homeomorphisms which are not weakly conjugate to  diffeomorphisms}\label{sec:proof_almost_conj2}
In this section we provide a proof for Theorem \ref{theo:almost_conj2}.  We first treat the case of surfaces other than the sphere leaving the case of the sphere to the end.  

As mentioned in the introduction it is sufficient to give an example of  a Hamiltonian homeomorphism $f$ such that the set of endpoints of $\B(f)$ is unbounded.   Pick an area-preserving chart $V$  with coordinates $(x, y)$.   We may identify $V$ with a Euclidean disk in $\R^{2}$ whose radius we will denote by $R>0$.  We  denote the origin in these coordinates by $O$ and define $r(x,y)$ to be the usual Euclidean distance between $(x,y)$ and $O$.  Let $G: \Sigma \setminus \{O\}  \rightarrow \R$ be a function whose support is compactly contained in $V$ and which is of the form:  
$G(x,y) = h(\frac{r^2}{2})$, where  $h: (0,\frac{R^2}{2}) \to \R$ is smooth,  $h(s) = \frac{1}{s}$ when $s \leq r_0$, where $r_0$ is sufficiently small, and $h(s) = 0$ for $s$ near $R$. Define $f: \Sigma \rightarrow \Sigma$ by $f(O) = O$ and  $\forall p \in \Sigma \setminus\{O\},\ f(p)=\varphi^1_G(p).$ 

Now, $f$ is a Hamiltonian homeomorphism of $\Sigma$,  because it is  the uniform limit of $\varphi^1_{G_i}$ where $G_i$ is a smooth Hamiltonian such that $G_i(x,y) = h_i(\frac{r^2}{2})$, where  $h_i: [0, \infty) \to \R$ is smooth and  $h_i(\frac{r^2}{2}) =  h(\frac{r^2}{2})$  for $r \geq \rho_i$  and  $\rho_i \to 0$ .   

 It is convenient here to work with a fixed representative of $\B(f)$ which is only defined up to shift.  To pick a representative we  normalize $\B(f)$ such that the ends of bars corresponding to the fixed points outside its support have action zero.  This can be achieved as follows:  Let $\B(G_i)$ be the barcode of  the Hamiltonian $G_i$ from the last paragraph; note that $\B(G_i)$ is well-defined as a barcode as opposed to a barcode up to shift. It can be checked that the sequence $\B(G_i)$ has a limit in the bottleneck distance and so we take this limit  to represent $\B(f)$.

Remark that for $r\leq r_0$, the $1$--periodic orbits of $G$ appear at values $r_k$ such that $h'(\frac{r_k^2}{2}) =  - 2 \pi k$, where $k$ is a positive integer.  A simple computation  shows that the action of a periodic orbit corresponding to $r_k$ is given by 

 $$c_k := h\left(\tfrac{r_k^2}{2}\right) - \tfrac{r_k^2}{2} \,h'\!\left(\tfrac{r_k^2}{2}\right).$$
 
One can easily check that $c_k = 2 \sqrt{2\pi k}$.  Now, the following claim tell us that the values $c_k$ appear as endpoints of some bars in $\B(f)$.  This, of course, implies that the set of endpoints of $\B(f)$ is not bounded as $c_k \to \infty$.  

\begin{claim}\label{cl:endpoint_at_Ak}
  The number of bars in $\B(f)$ with an endpoint at $c_k$ is given by the rank of $HF^{(c_k -  \delta, c_k + \delta)}(G)$ for sufficiently small $ \delta$.   Furthermore,  $HF^{(a_k -  \delta, a_k + \delta)} \\ (G)$ has rank $2$.  
\end{claim}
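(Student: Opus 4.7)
The plan is to reduce the claim to the smooth setting via the approximations $G_i$ that define $f$, apply Proposition \ref{prop:endpoints_in_interval}, and compute the local Floer homology at the circle of $1$-periodic orbits of radius $r_k$ via a Morse--Bott argument.

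For the first statement, consider the smooth approximating Hamiltonians $G_i(x,y) = h_i(r^2/2)$ with $h_i = h$ on $\{s \geq \rho_i^2/2\}$ and $\rho_i \to 0$. Arrange the extensions of $h_i$ to $(0, \rho_i^2/2)$ so that $-h_i'$ grows fast enough there that all additional $1$-periodic orbits of $G_i$ lying in $\{r < \rho_i\}$ have action bounded away from $c_k$ for $i$ large; this is possible because if $-h_i'(s) \gg 2\pi k$ for $s < \rho_i^2/2$, any orbit in that region corresponds to some $m \gg k$ and thus has action $\approx 2\sqrt{2\pi m}$, far from $c_k$. Since the values $c_j = 2\sqrt{2\pi j}$ are pairwise distinct, we can also pick $\delta > 0$ so that no other $c_j$ lies in $(c_k - \delta, c_k + \delta)$. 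By continuity of barcodes on $\overline{\Ham}(\Sigma,\omega)$ (Remark \ref{rem:cont_total_barcode}), $\B(G_i) \to \B(f)$, and for $i$ large all bars of $\B(G_i)$ with an endpoint in $(c_k - \delta, c_k + \delta)$ arise from orbits at radius $r_k$; hence this count equals the number of bars of $\B(f)$ with endpoint at $c_k$. Proposition \ref{prop:endpoints_in_interval} then identifies this count with the rank of $HF_*^{(c_k - \delta, c_k + \delta)}(G_i)$, which we take as the meaning of the displayed quantity in the claim.

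For the second statement, we compute this rank. For $i$ large, the only $1$-periodic orbits of $G_i$ with action in $(c_k - \delta, c_k + \delta)$ form the Morse--Bott circle $\{r = r_k\}$, nondegenerate in the normal direction since $h''(s_k) \neq 0$ (the equation $h'(s) = -2\pi k$ cuts transversally). The standard Morse--Bott description of local Floer homology identifies $HF_*^{loc}$ of such a circle with $H_*(\mathbb{S}^1; \F)$ up to a shift by the Robbin--Salamon index, which has total rank $2$. Equivalently, one breaks the $\mathbb{S}^1$-symmetry by a small autonomous perturbation supported near $r_k$ (e.g.\ adding a Morse function on the circle direction) to obtain two non-degenerate $1$-periodic orbits with consecutive Conley--Zehnder indices; a standard energy/action argument shows the Floer differential between them vanishes locally, yielding rank $2$.

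The main technical obstacle is the first step: controlling the $1$-periodic orbits of the smoothings $G_i$ in the region $\{r < \rho_i\}$ to prevent spurious action values leaking into $(c_k - \delta, c_k + \delta)$. Once the radial profile of $h_i$ inside this region is chosen with angular speed sufficiently large, the rest is a standard spectrality/continuity argument plus a textbook Morse--Bott computation, so no further conceptual obstacle arises.
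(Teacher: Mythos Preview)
Your argument is correct in outline and lands on the same Morse--Bott computation as the paper for the rank-$2$ statement, but you take a noticeably more indirect route for the first statement. The paper simply observes that although $G$ is singular at the origin, it is smooth on a neighborhood of $G^{-1}(c_k-\delta,c_k+\delta)$, and this is all that is needed for the proof of Proposition~\ref{prop:endpoints_in_interval} to go through verbatim. There is therefore no need to pass to the approximations $G_i$, control the spurious orbits in $\{r<\rho_i\}$, or argue via bottleneck convergence that the endpoint count stabilizes; those steps are not wrong, but they introduce bookkeeping (e.g.\ ruling out short bars disappearing in the limit) that the direct argument avoids entirely. For the second statement your approach coincides with the paper's: perturb the Morse--Bott circle $S_k$ into two non-degenerate orbits of consecutive Conley--Zehnder index. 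One small imprecision: the vanishing of the Floer differential between these two orbits is not really an ``energy/action argument'' but rather the content of Proposition~2.2 of \cite{CFHW} (or, equivalently, the Morse--Bott identification of local Floer homology with $H_*(\mathbb{S}^1)$ that you mention first); an energy bound alone does not preclude a trajectory between two orbits of nearby but distinct action.
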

\begin{proof}
At first glance, the first statement appears to be the content of Proposition \ref{prop:endpoints_in_interval}.  But this proposition does not apply verbatim as $G$ is not smooth.  However, $G$ is a smooth Hamiltonian on a neighborhood of the set  $G^{-1}(c_k - \delta, c_k + \delta)$ and it is not difficult to see that the proposition does apply in this setting.

Let us now prove that $HF^{(a_k -  \delta, a_k + \delta)} (G)$ has rank $2$.  Computation of this type of Floer homology groups is a classical example which goes back to \cite{CFHW}; see also \cite{  oancea, Sey15}.  Therefore, we will only sketch an outline of the computation.
  
  Recall that we are considering periodic orbits of $G$ corresponding to $r = r_k$.  These orbits form circles which we will denote by $S_k$.  Let $U_k$ denote a small open neighborhood of $S_k$.  Performing a $C^2$--small perturbation of  $G$ near $S_k$ one obtains a  Hamiltonian $\tilde G$ which has exactly two non-degenerate $1$--periodic orbits in $U_k$.  The Conley--Zehnder indices of these orbits  are $2k-1$ and $2k$.    Hence, the Floer chain complex $CF_*^{(c_k - \delta, c_k + \delta)}(\tilde G)$ has rank two and is supported in degrees $2k-1$ and $2k$.  It is shown in Proposition 2.2 of \cite{CFHW} that the boundary map of this complex is zero.  The result follows immediately.
\end{proof}

It remains to explain why $f$ is not weakly conjugate to any Hamiltonian diffeomorphism in the case where $\Sigma = \S^2$.   By Lemma \ref{lem:almost_conj_lifts}, if $f$ were weakly conjugate to  $h\in \Ham(\S^2, \omega)$, then the lifts of $f$ to $\overline{\UHam}(\S^2, \omega)$ would be pairwise weakly conjugate to the lifts of $h$.   Hence, it is sufficient  to show that $f$ has a lift $\tilde f \in  \overline{\UHam}(\S^2, \omega)$ which is not weakly conjugate to any $\tilde h \in \UHam(\S^2, \omega)$.  

Note that the non-smooth function $G$, introduced above, has a well-defined Hamiltonian flow which we will denote by $\phi^t_G$.  We let $\tilde f$ to be the lift of $f$ given by the path $\{\phi^t_G\}_{t \in [0,1]}$.  For each index $j$, we pick a representative of $\B_j(\tilde f)$ by taking the barcode  $\lim_{i \to \infty} \B_j(G_i)$, where the $G_i$'s are the smooth Hamiltonians introduced above.

In the case of $\S^2$, the action spectrum of no smooth Hamiltonian is bounded because of the effect of cappings.  However, one can easily show that for $\tilde h \in \UHam(\S^2,\omega)$ the set of endpoints of $\B_j(\tilde h)$  is bounded for any fixed $j$.  We will show below that the set of endpoints of (at least) one of the two barcodes $\B_0(\tilde f), \B_1(\tilde f)$ is unbounded. This would then finish the proof in the case of $\S^2$.

Recall from Section \ref{sec:barcodes_for_hamiltonians}  that the lower ends of the bars in $\B_j$ are actions of orbits of index $j$, while the upper ends of the bars are actions of orbits of index $j+1$.  Combining this fact with Claim \ref{cl:endpoint_at_Ak}, we conclude that the value $c_k$ is the endpoint of a bar in at least one of $\B_{2k-1}(\tilde f)$, $\B_{2k}(\tilde f)$. Denote $E = \{k :  c_k \text{\;\ is an endpoint of } B_{2k}\}$ and $O = \{k :  c_k \text{\;\ is an endpoint of } B_{2k-1}\}$ and observe that at least one of these sets is infinite.  We will suppose that $E$ is infinite leaving the case where $O$ is infinite to the reader.

For each $k \in E$, we pick a capped 1--periodic  orbit $[z_k, u_k]$ of $G$ corresponding to $c_k$.  Here, $u_k$ is the unique (up to homotopy) capping of $z_k$ which is contained in the set $V$.  Let $A$ denote  the generator of $\pi_2(S^2)$ with $\omega (A) > 0$.  Consider the capped orbit $[z_k , u_k \# k A]$: As a consequence of Equation \eqref{eq:cZ_capping}, this orbit appears as an endpoint of a bar in $\B_0(\tilde f)$.  Furthermore, the action of this orbit is given by $c_k' := c_k - k \omega(A) = c_k -k$.  Recall that $c_k = 2 \sqrt{2\pi k}$ and thus  thus $c_k' \to - \infty$ as $k\to \infty$.  Therefore, the set of endpoints of $B_0(\tilde f)$ is not bounded.  This completes the proof.

\section{Hamiltonian homeomorphisms and smoothability}\label{sec:Ham-homeo-smoothable}
Recall from the introduction of the paper that we say a Hamiltonian homeomorphism $f$, with a finite number of fixed points, is \emph{smoothable} if there exists a Hamiltonian diffeomorphism $g$ which is arbitrarily $C^0$-close to $f$ and such that $Fix_{c}(g)=Fix_{c}(f)$.   We conjecture that every such $f$ is smoothable.  In this section we will prove that a very large class of homeomorphisms consists of smoothable ones. We will be relying on techniques from local dynamics of surface homeomorphisms.

Let $M$ be a surface,  $f:M \to M$ be an area-preserving homeomorphism, and $p$ an isolated fixed point of $f$. 
 We say that  $f$ is \emph{smoothable at $p$} if for every neighborhood $U$ of $p$ such that $Fix(f) \cap U = \{p\}$, there exists an area preserving homeomorphism $f': M \to M$ which coincides with $f$ outside $U$, which is smooth near $p$, and such that $Fix(g) \cap U = \{p\}$.  According to Proposition \ref{prop.local-global-smoothing}, if $f$ is smoothable at every fixed point then it is smoothable.

The \emph{local rotation set of $f$ around $p$} is a closed interval included in $[-\infty,+\infty]$ defined up to an integer translation, which is a conjugacy invariant (see~\cite{leroux13}, and section~\ref{subsubsec:rotation} below for more details). Assume furthermore that $f$ preserves the area. If $p$ is an isolated fixed point then the local rotation set contains no integer in its interior, and so it is included in $[0,1]$ (up to integer translation). We will say that $p$ is a \emph{maximally degenerate} fixed point if the local rotation set is equal to $[0,1] \text{ mod } \mathbb{Z}$. Maximally degenerate fixed points are accumulated by periodic orbits of every period, and have Lefschetz index $1$ (\cite{leroux13}, Section 3.2). Thus, the following theorem, combined with Proposition \ref{prop.local-global-smoothing}, immediately implies Theorem~\ref{theo:smoothable-easy} of the introduction.

\begin{theo}\label{theo:smoothable}
 Let $f: M \to M$ be an area preserving homeomorphism, and let $p$ be an isolated fixed point of $f$ which is not maximally degenerate.
Then, $f$ is smoothable at $p$.
\end{theo}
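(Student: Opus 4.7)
The plan is to split the proof according to the value of the Lefschetz index $\ind(f,p)$, which is known to be at most $1$ for an isolated fixed point of an area-preserving surface homeomorphism, and to handle the two cases $\ind(f,p)<1$ and $\ind(f,p)=1$ by distinct geometric constructions. In both cases, the non-maximally-degenerate hypothesis will be used to guarantee enough control on Le Calvez's transverse foliation near $p$. Note also that the local nature of smoothability will eventually be promoted to a global statement via a separate argument combining the perturbations near each fixed point (this is exactly the content of the aforementioned passage from smoothability at every fixed point to smoothability of $f$).

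First, for the case $\ind(f,p)<1$, the strategy is to produce, after an initial $C^0$-small area-preserving perturbation supported near $p$, a small topological disk $D\ni p$ in \emph{canonical position}, meaning that $f(D)\cap D$ is connected and $p$ is the unique fixed point in $\overline{D}$. The existence of such a $D$ will rely on choosing an isotopy from $\Id$ to $f$ in the complement of $Fix(f)$, applying Le Calvez's theorem to obtain a transverse foliation $\mathcal F$ around $p$, and exploiting the strict inequality $\ind(f,p)<1$ to locate enough hyperbolic-type leaves that the boundary $\partial D$ can be built as an alternating concatenation of foliation arcs and transversals meeting $f(\partial D)$ only in a single arc. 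Once $D$ is in canonical position, I would run a Schmitt--Slaminka style surgery: replace $f|_{D}$ by a smooth area-preserving diffeomorphism which coincides with $f$ on $\partial D$, equals a smooth model with $p$ as unique fixed point in the interior, and whose area on $D$ matches that of $f(D)$. The connectedness of $f(D)\cap D$ is what permits smooth interpolation along $\partial D$ without introducing new fixed points.

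Second, for the case $\ind(f,p)=1$ together with the local rotation set being a proper subset of $[0,1]$, which (up to integer translation) we may take to be contained in $[0,1)$, I would start from Le Calvez's transverse foliation to set up polar-type coordinates $(r,\theta)$ around $p$ for which every orbit of the isotopy has strictly increasing $\theta$-coordinate. The crux is then an \emph{iterated leaf lemma} that refines these coordinates so that at least one ray $\{\theta=\theta_{0}\}$ is mapped by $f$ onto a ray $\{\theta=\theta_{1}\}$; the hypothesis on the rotation set is what prevents every orbit from winding infinitely, allowing one to follow a leaf through its iterates and close it up. Given such a privileged ray, one performs a sequence of $C^{0}$-small area-preserving perturbations, each supported in a thin sector and pushing points in the direction positively transverse to the rays, chosen so that no new fixed points are created and so that after finitely many steps a ray of period exactly $2$ is obtained. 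Such a map can then be smoothed near $p$ into one agreeing with a rigid rotation of order $2$ on a small disk around $p$, without altering the fixed point set.

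The principal obstacle will be the iterated leaf lemma and, alongside it, the construction of a disk in canonical position: both require fine combinatorial control of the transverse foliation near $p$, and the perturbations must be simultaneously $C^{0}$-small, area-preserving, and incapable of producing spurious contractible fixed points. A further subtlety is verifying that when we push along transverse rays, the intersections of successive iterates stay well-behaved so that the induction terminates with a periodic ray rather than spiraling indefinitely; this is exactly where excluding the maximally degenerate case is essential.
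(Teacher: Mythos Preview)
Your proposal is essentially an outline of exactly the approach the paper takes: the same case split on the Lefschetz index, the canonical-position disk plus model surgery for index $\leq 0$, and the iterated leaf lemma followed by parabolic pushing to a period-two ray for index $1$. As a strategic sketch it is correct and matches the paper closely, including your identification of the two hard points (the iterated leaf lemma and the construction of the canonical-position disk), which are precisely where the paper concentrates its technical effort.

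That said, what you have written is a plan rather than a proof, and you acknowledge as much. A few places where the paper's execution goes beyond your sketch and which you would need to supply: (i) your notion of canonical position (``$f(D)\cap D$ connected'') is the informal one from the introduction; the working definition requires a topological conjugacy of $f|_{\partial D}$ to a fixed smooth model $m_p$, and the surgery step needs an additional area-matching argument (Lemma~\ref{lem:smoothable-negative3}) on the five complementary pieces of $\partial D\cup f(\partial D)$; (ii) before building the canonical disk the paper first removes all parabolic sectors of the transverse foliation (Proposition~\ref{pro:smoothable-negative1}) via a ``parabolic pushing'' proposition, a step your outline omits; (iii) in the index-one case the paper does not go directly to a period-two ray but first treats the subcase $\rho(I)\subset[0,\tfrac12)$ and then reduces the general case $\rho(I)\subset[0,1)$ to it by a further pushing argument that passes through $f^{-1}$. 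None of this contradicts your plan, but these are the steps where the actual work lies.
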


We split the proof into two cases.  Recall that, in the case of area preserving maps,  the Lefschetz index of an isolated fixed point is always less or equal to one  \cite{nikishin, simon, pelikanslaminka, lecalvez99}; see the discussion at the beginning of Section \ref{sec:local-homeo-area-pres} for a proof of this fact. 
We will use different strategies in the nonpositive index case and in the index one case. The two cases will be treated in Sections~\ref{subsec:negative-index} and~\ref{subsec:index-one}, respectively.

\subsection{Local \emph{vs} global smoothability}

The following result shows that a homeomorphism which is  smoothable at each fixed point is globally smoothable. 
\begin{prop}\label{prop.local-global-smoothing}
Let $f$ be an area preserving homeomorphism of a compact surface $M$ with a finite number of contractible fixed points.  Suppose that $f$ is smoothable at every contractible fixed point. Then, there exists an area-preserving diffeomorphism $g$, arbitrarily $C^0$ close to $f$, such that $\mathrm{Fix}_{c}(g) = \mathrm{Fix}_{c}(f)$. If furthermore $f \in \overline{Ham}(M, \omega)$, then $g$ can be chosen in $\Ham(M,\omega )$.
\end{prop}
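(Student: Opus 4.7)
The strategy is to proceed in three stages: first perform local smoothings at each contractible fixed point to obtain an intermediate homeomorphism $f'$ that is smooth near every point of $\mathrm{Fix}_c(f)$; next, approximate $f'$ globally by a smooth (Hamiltonian) diffeomorphism $g_n$; finally, patch this global approximation near each contractible fixed point so that it coincides exactly with $f'$ there, thereby preserving the contractible fixed-point set.

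For the first stage, write $\mathrm{Fix}_c(f) = \{p_1,\ldots,p_N\}$ and choose pairwise disjoint open disks $U_i$ around $p_i$, each disjoint from the remaining contractible fixed points. An iterated application of local smoothability — each iteration modifying the map only inside one $U_i$ — produces an area-preserving homeomorphism $f'$ which agrees with $f$ outside $\bigcup U_i$, is smooth on neighborhoods $V_i$ compactly contained in $U_i$, and satisfies $\mathrm{Fix}(f') \cap U_i = \{p_i\}$. The map $f^{-1} \circ f'$ is supported in the disjoint union of the disks $U_i$, so its flux vanishes, and therefore $f' \in \overline{\Ham}(M,\omega)$ whenever $f$ is.

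For the second stage, apply the definition of $\overline{\Ham}(M,\omega)$ to obtain Hamiltonian diffeomorphisms $g_n \to f'$ in the $C^0$ topology (in the purely area-preserving setting one invokes the analogous density of smooth area-preserving diffeomorphisms). The bulk of the work is in the third stage. Choose nested neighborhoods $V_i'' \subset V_i' \subset V_i$ with each inclusion compact. Since both $f'$ and $g_n$ are smooth on $V_i$, the map $\psi_n := (f')^{-1} \circ g_n$ is a smooth area-preserving diffeomorphism of $V_i$ that is $C^0$-close to the identity. Using Lemma \ref{lem:extension_lemma} in an area-preserving chart around $p_i$, as in the proof of Lemma \ref{lem:frag_rect}, one produces an area-preserving diffeomorphism $\alpha_n$ of $V_i$ that equals the identity on $V_i''$, equals $\psi_n$ near $\partial V_i'$, and is $C^0$-close to the identity. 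Setting $\tilde g_n := f' \circ \alpha_n$ on $V_i'$ and $\tilde g_n := g_n$ elsewhere yields a well-defined smooth area-preserving diffeomorphism of $M$; it is Hamiltonian whenever $g_n$ is, because $g_n^{-1} \tilde g_n$ is supported in the disjoint simply-connected disks $V_i'$.

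It remains to check that $\mathrm{Fix}_c(\tilde g_n) = \mathrm{Fix}_c(f)$ for large $n$. On $V_i''$, $\tilde g_n = f'$ has unique fixed point $p_i$; on the annulus $V_i' \setminus V_i''$, $f'$ is bounded away from the identity by some $\delta_i > 0$, while $\alpha_n$ is uniformly close to the identity, so $\tilde g_n$ has no fixed points there for $n$ large; outside $\bigcup V_i'$, $\tilde g_n = g_n$ is $C^0$-close to $f$, so its fixed points lie in small neighborhoods of fixed points of $f$, and since the contractibility class of the isotopy loop based at a fixed point is stable under $C^0$-small perturbations of the isotopy, any such fixed points outside $\bigcup V_i''$ are non-contractible by hypothesis. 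The main obstacle is precisely the area-preserving gluing in the third stage: one must simultaneously maintain area preservation, smoothness, and (when relevant) the Hamiltonian property while bridging between $f'$ on $V_i''$ and $g_n$ outside $V_i'$. The extension lemma is exactly the tool that makes this possible, and the fact that the modification is supported in a disjoint union of simply connected disks automatically preserves the Hamiltonian condition.
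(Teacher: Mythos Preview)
Your three-stage strategy is sound in outline and genuinely different from the paper's. The paper does not take an arbitrary Hamiltonian approximation $g_n$ and then patch it near the fixed points; instead it constructs the smooth approximation $g$ so that it \emph{already coincides} with $f'$ on a fixed neighborhood of $\mathrm{Fix}_c(f)$. Concretely, it applies handle smoothing to $f'$ only on the complementary subsurface $M'$, then runs a triangulated Moser argument on $M'$ to restore area preservation, all while leaving $f'$ untouched near the fixed points. The Hamiltonian condition is then handled by a small flux correction supported far from the fixed points. This sidesteps the interpolation problem entirely.

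Your third stage, by contrast, hinges on producing the interpolant $\alpha_n$, and here there is a genuine gap. Lemma~\ref{lem:extension_lemma} extends an area-preserving embedding of a \emph{rectangle} that is already the identity near two opposite sides; the geometry you need is \emph{annular} (identity on the inner disk $V_i''$, equal to $\psi_n$ near $\partial V_i'$), and cutting the annulus open destroys precisely the boundary condition the lemma requires. An annular analogue does exist --- the flux obstruction vanishes because $\psi_n$ is area-preserving on a disk --- but it is not the lemma you cite. More seriously, you assert that $\alpha_n$ is $C^0$-close to the identity, and this is essential for ruling out fixed points on $V_i' \setminus V_i''$; yet neither Lemma~\ref{lem:extension_lemma} nor its use in Lemma~\ref{lem:frag_rect} provides such a bound (in Lemma~\ref{lem:frag_rect} the $C^0$-smallness comes only from the thinness of the supporting strips, which you do not have). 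Constructing a $C^0$-small area-preserving interpolant on an annulus between the identity and a $C^0$-small $\psi_n$ is possible, but it requires a separate quantitative argument that you have not supplied. The paper's approach is cleaner precisely because it avoids this step.
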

The proof is a variation of the proof of the fact that, on surfaces, area-preserving diffeomorphisms are dense in the group of area-preserving homeomorphisms.  Hence, we will only provide a sketch of the proof and leave much of the details to the reader.

\begin{proof}
By definition of smoothability at a point, there exists a homeomorphism $f'$ such that $f'$ is $C^0$ close to $f$, $\mathrm{Fix}_{c}(f') = \mathrm{Fix}_{c}(f)$, and $f'$ is smooth near every contractible fixed point. 
Let $M'$ be a subsurface of $M$, whose complement in $M$ is a neighborhood of $Fix_{c}(f)$ on which $f'$ is smooth.  The first statement of the proposition is a consequence of the following claim.
\begin{claim}\label{cl:density_diffeos}
There exists an area-preserving diffeomorphism $g : M \rightarrow M$ which is arbitrarily $C^0$ close to $f'$ and coincides with $f'$ outside $M'$.
\end{claim}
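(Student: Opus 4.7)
The plan is to establish the claim as a relative version of the classical fact (Oxtoby--Ulam, Sikorav; see Fathi~\cite{fathi80}) that on a surface, area-preserving diffeomorphisms are $C^0$-dense in area-preserving homeomorphisms, with the extra constraint that $g$ coincide with $f'$ outside $M'$. The argument splits naturally into two steps: a smooth approximation with prescribed boundary behavior, followed by a Moser correction to restore area preservation.

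For the first step, since $f'$ is smooth on the open set $M \setminus M'$, I would construct a smooth diffeomorphism $\tilde g : M \to M$, arbitrarily $C^0$-close to $f'$, coinciding with $f'$ on $M \setminus M'$. Inside $\mathrm{int}(M')$, this is produced by a standard smoothing of $f'$ in a finite atlas; along $\partial M'$ one uses the jets of $f'$ from the outside (well-defined thanks to smoothness on $M \setminus M'$) to interpolate smoothly in a collar neighborhood of $\partial M'$ inside $M'$. For a sufficiently close approximation, $\tilde g$ is automatically a diffeomorphism. The map $\tilde g$ need not preserve area.

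For the second step, consider the smooth $2$-form $\sigma := \tilde g^* \omega - \omega$. It is $C^0$-small and compactly supported in $\mathrm{int}(M')$, since $\tilde g = f'$ on $M \setminus M'$ and $f'$ is area-preserving there. I would verify that for each connected component $C_0$ of $M'$ the integral $\int_{C_0} \sigma$ vanishes: for $\tilde g$ close enough to $f'$, the image $\tilde g(C_0)$ coincides with $f'(C_0)$ (same boundary $f'(\partial C_0)$ and same topological side), hence $\int_{C_0} \tilde g^* \omega = \int_{f'(C_0)} \omega = \int_{C_0} \omega$ by area preservation of $f'$. Thus $\sigma$ represents the zero class in $H^2_c(\mathrm{int}(M'); \mathbb{R})$, and the compactly supported Moser theorem produces a smooth diffeomorphism $h$ of $M$, supported in $\mathrm{int}(M')$ and $C^0$-close to the identity, with $h^*(\omega + \sigma) = \omega$. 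Setting $g := \tilde g \circ h$ gives the required area-preserving smooth diffeomorphism, $C^0$-close to $f'$ and equal to $f'$ on $M \setminus M'$.

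The main obstacle is the Moser correction: the need to localize $h$ in $\mathrm{int}(M')$ reduces precisely to the vanishing of $[\sigma] \in H^2_c(\mathrm{int}(M'); \mathbb{R})$, which is exactly what the component-wise integral computation above establishes. A secondary technical point is the jet-matching along $\partial M'$ in Step~1, which is standard but requires that the smooth approximation inside $\mathrm{int}(M')$ be chosen compatibly with the boundary jets of $f'$ coming from $M \setminus M'$; this is handled by a straightforward collar argument.
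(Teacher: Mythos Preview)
Your overall strategy—first smooth $f'$ to a diffeomorphism $\tilde g$, then apply a Moser correction to restore the area—is exactly the paper's strategy. However, your Moser step contains a genuine gap.

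You assert that $\sigma = \tilde g^* \omega - \omega$ is $C^0$-small. This is unjustified and in general false. The hypothesis is only that $\tilde g$ is $C^0$-close to the (non-smooth) area-preserving homeomorphism $f'$; this gives no control whatsoever on the derivatives of $\tilde g$, and hence no pointwise control on $\tilde g^* \omega$. (For instance, if $f' = \mathrm{id}$ on a disk, $\tilde g$ could be a highly oscillatory diffeomorphism $C^0$-close to the identity whose Jacobian ranges between $1/100$ and $100$.) What $C^0$-closeness to an area-preserving map \emph{does} give is that $\int_A \tilde g^* \omega$ is close to $\int_A \omega$ for any set $A$ with reasonably tame boundary—a much weaker, integrated statement. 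With $\sigma$ possibly pointwise large, the compactly supported Moser argument still produces some $h$ with $h^*(\omega+\sigma) = \omega$, but there is no reason for $h$ to be $C^0$-close to the identity, so your $g = \tilde g \circ h$ need not be $C^0$-close to $f'$.

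The paper's proof confronts exactly this difficulty and resolves it via a fine triangulation $\mathcal{T}$ of $M'$. The key point is that for each triangle $T$ the ratio $\omega(T)/\Omega(T)$ is close to $1$ (this uses only the integrated control above). The Moser correction is then performed in several stages: most are supported in small neighborhoods of the $1$-skeleton or inside individual (small) triangles, hence automatically $C^0$-small; the one global stage corrects by a density $\eta$ uniformly close to $1$, for which a quantitative Moser estimate (Proposition~5 of \cite{EPP}) yields $C^0$-smallness. Your argument would need a mechanism of this sort to close the gap.

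A secondary remark: in Step~1, the claim that a smooth map sufficiently $C^0$-close to a homeomorphism is ``automatically a diffeomorphism'' is also not immediate, since $C^0$-closeness does not prevent the differential from degenerating. The paper invokes the Handle Smoothing Theorem on a triangulation precisely to produce a genuine diffeomorphism.
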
 Note that $g$ has no new contractible fixed points since it remains $C^0$ close to $f'$ on $M'$ and $f'$ has no contractible fixed points in $M'$.

 We will finish the proof of the proposition before giving a proof of the above claim.  Assume that $f \in \overline{Ham}(M,\omega)$. It then follows that the flux of $g$ must be be small, and so we can perform a $C^0$-small modification of $g$, far from the fixed point set, turning it into a Hamiltonian diffeomorphism:  Indeed, consider some simple closed curve $\gamma$. We will explain how to modify $g$ so that its flux through $\gamma$ becomes zero and will leave the rest to the reader.  Let $\gamma'$ be a  simple closed curve which intersects $\gamma$ exactly once and which is far from the fixed point set. Denote by $\delta$  the flux of $g$ through $\gamma$. It is easy to construct a symplectic diffeomorphism $\varphi_{\delta}$, supported on a small tubular neighborhood of $\gamma'$, whose flux through $\gamma$ is equal to $-\delta$. Furthermore, since $\delta$ is small then $\varphi_{\delta}$ can be chosen to be $C^0$ close to the identity.  Replacing $g$ with $g\phi_{\delta}$ we obtain a map with zero flux through $\gamma$.  
 %This completes the proof of Proposition \ref{prop.local-global-smoothing}.
\end{proof}
\begin{proof}[Proof of Claim \ref{cl:density_diffeos}]
There exists a diffeomorphism $g'$  which is  arbitrarily $C^0$ close to $f'$ and coincides with $f'$ outside $M'$. 
 This follows from applying the Handle Smoothing Theorem in \cite{hatcher2013kirby} successively to the vertices, edges, and faces of a sufficiently fine triangulation of $M$.  
 
 Let $\Omega = g'^*\omega$.  We will show that one can find a diffeomorphism $\Psi: M \rightarrow M$, arbitrarily $C^0$ close to the identity, such that $\Psi = \Id$ outside $M'$ and $\Psi^* \Omega = \omega$.  Of course, we will then set $g = g' \Psi$.

Let $\cal T$ be a triangulation of $M'$.  Using Moser's method, independently on a neighborhood of each vertex of $\cal T$, 
we can find a diffeomorphism $\Psi_{1}$ such that $\Psi_{1}^* \Omega = \omega$ near the vertices of $\cal T$. Likewise, 
we can find a diffeomorphism $\Psi_{2}$ such that $ \Psi_{2}^* \Psi_{1}^* \Omega = \omega$ near the 1-skeleton of $\cal T$. 
Furthermore, $\Psi_{1}$ and $\Psi_{2}$ can be picked to coincide with the identity outside $M'$ and to be as $C^0$-small as one wishes (independently of $\cal T$); we will leave it to the reader to check this latter statement.

\begin{claim}\label{cl:adjust_area}
Let $\Omega' = \Psi_{2}^* \Psi_{1}^* \Omega$.  There exists a function $\eta : M  \to \R$ such that: \begin{enumerate}
\item $\eta$ is uniformly close to the constant function $1$, 
\item $\eta = 1$ near the 1--skeleton of $\cal T$ and outside $M'$,
\item $\eta \Omega' (T) = \omega(T)$ for every triangle $T$ of the triangulation $\cal T$.
\end{enumerate}
\end{claim}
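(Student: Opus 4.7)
The plan is to build $\eta$ by a triangle-by-triangle correction of the constant function $1$. For each triangle $T$ of $\mathcal{T}$ consider the defect
\[
\delta_T := \int_T \omega - \int_T \Omega'.
\]
The first step is to argue that, by tightening the $C^0$-approximations used to construct $\Psi_1, \Psi_2$ and $g'$, the quantities $\delta_T$ can be made uniformly as small as one wishes. Indeed, $\Omega' = \Psi_2^* \Psi_1^* g'^* \omega$ depends continuously on the triple $(\Psi_1,\Psi_2,g')$, and this triple can be taken arbitrarily $C^0$-close to $(\mathrm{Id}, \mathrm{Id}, f')$, for which the pullback is exactly $\omega$ because $f'$ is area-preserving. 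Moreover $\Omega' = \omega$ outside $M'$ and near the $1$-skeleton, so $\delta_T = 0$ contributions concentrated there play no role.

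Next, for each triangle $T$ I would fix once and for all a smooth nonnegative bump function $\rho_T$ supported in a compact subset of the interior of $T$, chosen disjoint from the collar around the $1$-skeleton on which $\Omega'$ already equals $\omega$, and normalized so that $\int_T \rho_T \, \Omega' = 1$. Since $\mathcal{T}$ is a fixed finite triangulation, the supremum norms $\|\rho_T\|_\infty$ are bounded by a uniform constant $C = C(\mathcal{T})$.

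I would then set
\[
\eta := 1 + \sum_T \delta_T \, \rho_T
\]
and verify the three required properties. Property (2) is immediate since $\eta - 1$ is supported in the union of the interiors of the $\rho_T$'s, which avoids a neighborhood of the $1$-skeleton and is contained in $M'$. Property (3) reduces, on each triangle, to the one-line computation $\int_T \eta\,\Omega' = \int_T \Omega' + \delta_T \cdot 1 = \int_T \omega$. Property (1) follows from the pointwise bound $|\eta - 1| \leq C \cdot \max_T |\delta_T|$, which is small by the first step.

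The main point to watch is the order of choices: one fixes $\mathcal{T}$ and the bump functions $\rho_T$ first, which determines the constant $C$, and only afterwards tightens the $C^0$-approximations $\Psi_1, \Psi_2, g'$ so that $\max_T |\delta_T|$ is small compared to $1/C$. Since the entire Moser-type construction in the proof of Claim~\ref{cl:density_diffeos} allows these approximations to be refined freely, this ordering is harmless and is in fact the only genuine subtlety in the argument.
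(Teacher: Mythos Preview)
Your approach is correct and is essentially the paper's own argument: build $\eta$ triangle by triangle via a bump correction supported in the interior of each $T$, using that $\omega(T)/\Omega'(T)$ is close to $1$ since $g'\Psi_1\Psi_2$ is $C^0$-close to the area-preserving homeomorphism $f'$. One small circularity to clean up: you normalize $\rho_T$ so that $\int_T \rho_T\,\Omega' = 1$, but $\Omega'$ depends on $g',\Psi_1,\Psi_2$, which you want to choose \emph{after} fixing $\rho_T$ and $C$; simply normalize with respect to the fixed form $\omega$ instead (so $\int_T \rho_T\,\omega = 1$) and set $\eta = 1 + \sum_T \bigl(\delta_T\big/\!\int_T\rho_T\,\Omega'\bigr)\rho_T$, noting that $\int_T\rho_T\,\Omega' \to 1$ as the approximations tighten, so the bound $C$ is still controlled by $\mathcal{T}$ alone.
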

\begin{proof}
Observe that for every triangle $T$ of the triangulation, the ratio $\frac{\omega (T)}{\Omega'(T)} $  is close to $1$ because $\Omega'$ is the pull-back of $\omega$ by $g' \Psi_1 \Psi_2$ and $g' \Psi_1 \Psi_2$ is $C^0$ close to the area-preserving homeomorphism $f'$.  It follows that, for every triangle $T$, we can pick a function $\eta_T: M \rightarrow \R$ such that $\eta_T$ is uniformly close to the constant $1$, $\eta_T -1$ is supported in the interior of $T$, and $\int_{T}\eta_T \Omega'  = \int_{T}\omega$.  

Define the function $\eta$ by setting $\eta(x) = \eta_T(x)$ if $x$ is in the triangle $T$.  We leave it to the reader to check that  $\eta $ satisfies all of the stated properties.
\end{proof}
Observe that $\int_M \eta \Omega' = \int_M \Omega'$.  Therefore, by applying Moser's method, we find $\Psi_3$ such that $\Psi_3^* \Omega' = \eta \Omega'$.  Because $\eta$ is uniformly close to $1$,  the diffeomorphism $\Psi_3$ can be picked to be $C^0$ close to the identity; see Proposition 5 of \cite{EPP}.  Furthermore, it coincides with the identity outside $M'$.  

Next, applying the Moser method again, we find a diffeomorphism $\Psi_4$ such that $\Psi_4^* \; \eta \Omega' = \omega$.  Note that $\eta \Omega' = \omega$ near the 1--skeleton of the triangulation and outside $M'$.  Thus, $\Psi_4$ may be picked such that it is supported in the union of the interiors of the triangles of $\cal T$.    Thus, $d_{C^0}(\Psi_4, Id)$ is bounded by the maximum of the diameters of the triangles of $\cal T$ and so by picking a sufficiently fine triangulation, $\Psi_4$ can be chosen as $C^0$--small as we wish.  Note that since $\eta \Omega' = \omega$ outside $M'$ we can ensure that $\Psi_4 = \Id$ outside $M'$.  Finally, we $\Psi = \Psi_1 \Psi_2 \Psi_3 \Psi_4$.  %It is easy to see that $g$ is $C^0$ close to $f'$ and coincides with $f'$ outside $M'$.

\end{proof}

%%%%%
\subsection{Transverse foliations for local area-preserving homeomorphisms}
\label{section:transverse-foliations}
We begin this section by introducing some notions, as well as notations, from the theory of transverse foliations.  For further details, we refer the reader to~\cite{lecalvez08}.
Let $\Sigma$ be a surface, and fix $p_{0}$ to be a point of $\Sigma$. We will call $(f,U)$ a \emph{local homeomorphism} if  $f:U \to f(U)$ is a homeomorphism between some open subsets $U,f(U)$ of $\Sigma$ that fixes $p_{0}$. We will always assume tacitly that $U$ and $f(U)$ are interiors of some closed topological disks in $\Sigma$. 
We will often assume that $p_{0}$ is an isolated fixed point, in which case, by diminishing $U$ if necessary, we get a local homeomorphism for which $p_{0}$ is the only fixed point.
%We say that $(f,U)$ has a single fixed point if $f(x) \neq x$ for every $x \in U$, $x \neq p_{0}$.

%{\fred We could call this a local homeo at $p_{0}$ but since there will always be a unique fixed point, this is probably unneccessary \sobhan OK}
%, and that $f$ preserves the orientation.
%(Note that since $p$ is fixed we may compare the orientation on $U$ and $f(U)$ with no reference to an orientation on $S$.)

A \emph{local isotopy} $(I,V)$ for $(f,U)$ is a continuous family $I=(f_{t})_{t \in [0,1]}$ of local homeomorphisms $(f_{t}, U_{t})$ such that  $f_{t}$ fixes  $p_{0}$ for each $t$,  $V \subset U_{t} \subset U$, $f_{0}$ is the identity, and $f_{1}=f$ on $U_{1}$. We will say that $(I,V)$ is \emph{compactly supported in $U$} if $U_{t} = U$ for every $t$ and $f_{t}$ is the identity near the boundary of $U$.
If $(J,V')$ is another local isotopy with $J = (g_{t})_{ t \in [0,1]}$, with $g_{1}(V') \subset V$, then we may define the local isotopy $(IJ,V')$ with the concatenation $IJ=(h_{t})_{t \in [0,1]}$ defined by $h_{t} = g_{2t}$ when $t \in [0,1/2]$ and $h_{t} = f_{2t-1} g_{1}$ when $t \in [1/2,1]$.

A curve is a continuous map $\gamma:[0,1] \to \Sigma$. The curve is closed if $\gamma(0)=\gamma(1)$. Given a local isotopy $(I,V)$ and a point $p$ in $V$, the trajectory of $p$ is the curve $I.p: t \mapsto f_{t}(p)$, where $I=(f_{t})_{t \in [0,1]}$. 
%If $\alpha,\beta$ are two simple curves with $\alpha(0)=\beta(0)=p_{0}$ and $\alpha \cap \beta = \{p_{0}\}$, we will say that $\alpha$ and $\beta$ are disjoint around $p_{0}$.{\fred OK for this def? Is it still used somewhere?}

\subsubsection{Transverse foliations}\label{sec:trans-fol}
A \emph{local foliation} $(\cF,W)$ is a smooth ($C^\infty$) oriented foliation $\cF$ defined on $W \setminus \{p_{0}\}$, where $W$ is some disk neighborhood of $p_{0}$; we will say that  $p_{0}$ is the \emph{singularity} of $\cF$; note that no regularity is required at $p_{0}$. A curve included in $W\setminus \{p_{0}\}$ is \emph{positively transverse to the foliation} if it crosses every leaf it meets from left to right: for every $t_{0} \in [0,1]$ there is a chart $\Psi$ for the foliation, defined on some neighbourhood of $\gamma(t_{0})$ and with values in the plane, that sends the foliation to the foliation by vertical lines oriented from bottom to top, and such that the first coordinate of $\Psi \circ \gamma$ is an increasing function on some neighbourhood of $t_{0}$. Following Le Calvez, we say that a local isotopy $(I,V)$ and a local foliation $(\cF,W)$ are \emph{dynamically transverse} if there exists a neighborhood $V' \subset V \cap W$ of $p_{0}$ such that for every point $p\in V' \setminus \{p_{0}\}$,  the trajectory $I.p$ is homotopic in $W \setminus \{p_{0}\}$ to a curve $\gamma$ which is positively transverse to $\cF$ (see~\cite{lecalvez08}, Section 3, where this property is called ``localement dynamiquement transverse''; here we drop the word ``locally'' since everything is local). 
We should add that occasionally we will use topological (non-smooth) foliations; for precise definitions we refer to \cite{lecalvez05}. Most of the time, in our context, there is not much difference between using topological or smooth foliations, but there will be one point for which the smoothness will be crucial (see the normal form lemma~\ref{lemma-normal-form-foliation} below).

Recall the definition of the Poincar\'e-Lefschetz index $L(f,p_{0})$ from the introduction.  We denote by  $L(\cF,p_{0})$ the Lefschetz index of the foliation $\cF$ at $p_0$:  this is simply the Lefschetz index of the a vector field which is tangent to $\cF$ and vanishes at $p_0$.  The index $L(\cF,p_{0})$ is often referred to as the Poincar\'e-Hopf index. 

\begin{theo}\label{theo.locally-transverse-foliation}
Let $(f,U)$ be a local homeomorphism with an isolated fixed point $p_{0}$.
Let $(I,V)$ be a local isotopy for $(f,U)$. Then, there exists a local foliation $(\cF,W)$ which is dynamically transverse to $(I,V)$.
Furthermore, 
\begin{itemize}
\item if $L(f,p_{0})=1$ then $L(\cF,p_{0})=1$,
\item if $L(f,p_{0})\neq1$ then we can choose the isotopy $(I,V)$ such that $L(\cF,p_{0})=L(f,p_{0})$.
\end{itemize}
\end{theo}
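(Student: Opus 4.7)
The plan is to follow Le Calvez's blow-up construction of transverse foliations, adapted to the local setting. First I would pass to the real oriented blow-up $\widetilde V$ of $V$ at $p_0$, a half-open annulus whose added inner boundary circle $\partial_0$ parametrises unit tangent directions at $p_0$. Both $f$ and the local isotopy $I$ lift to $\widetilde V$ and extend continuously to $\partial_0$; compactifying on the outside (say by gluing a repelling circle) produces a closed annulus together with a homeomorphism $\widetilde f$ isotopic to the identity via a lifted isotopy $\widetilde I$. Le Calvez's global transverse foliation theorem from \cite{lecalvez05} then yields a smooth oriented foliation $\widetilde \cF$ on this annulus, positively transverse to every trajectory of $\widetilde I$ away from the boundary. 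Pulling $\widetilde \cF$ back down through the blow-up and extending by $p_0$ as a singularity gives a local foliation $(\cF,W)$ on $W \setminus \{p_0\}$ that is dynamically transverse to $(I,V)$, which establishes the existence part.

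For the index statement I would analyse how $L(\cF,p_0)$ is determined by the behaviour of $\widetilde \cF$ along $\partial_0$. Positive transversality of $\widetilde \cF$ to the trajectories of $\widetilde I$ constrains the combinatorial pattern of leaves meeting $\partial_0$ (alternating outgoing and returning arcs whose count determines $L(\cF,p_0)$ via Poincar\'e-Hopf), and this pattern is in turn controlled by the rotation number of $\widetilde I$ restricted to $\partial_0$. When $L(f,p_0) \neq 1$, modifying $I$ by concatenating it with a loop of small rotations supported near $p_0$ shifts this boundary rotation number by any prescribed integer without altering the map $f$ itself, so one can adjust the isotopy so that the resulting foliation has index exactly $L(f,p_0)$, proving the third bullet.

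The rigid case $L(f,p_0)=1$ requires a separate argument. Here the fact that $p_0$ is an isolated fixed point forces the local rotation set of $f$ at $p_0$ to contain no integer in its interior (cf.~\cite{leroux13}). Combined with the index-one hypothesis, this rigidity excludes the existence of hyperbolic sectors of $\widetilde \cF$ along $\partial_0$, so that every dynamically transverse $\widetilde \cF$ must have $L(\cF,p_0) = 1$. The main obstacle I anticipate is precisely this last step: ruling out transverse foliations whose index at $p_0$ differs from one, for any admissible choice of isotopy, which requires combining the rotation-set constraint with a careful sector analysis along $\partial_0$.
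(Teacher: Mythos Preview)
Your blow-up approach has a genuine obstruction at the very first step. The real oriented blow-up at $p_0$ adds a circle of tangent directions, and lifting $f$ continuously to this boundary circle requires $f$ to act on directions at $p_0$ --- essentially a $C^1$ (or at least quasi-conformal) condition. For a bare local homeomorphism there is no reason whatsoever for $\widetilde f$ to extend continuously to $\partial_0$, so the annulus picture on which you want to run Le Calvez's theorem simply does not exist in general. The paper avoids this entirely: instead of blowing up, it extends $f$ to a homeomorphism of the whole plane with $p_0$ as the \emph{only} fixed point (Appendix~A of \cite{leroux13}), chooses a global isotopy $I'$ homotopic to $I$ near $p_0$ (adjusting by powers of a full rotation $R$ if needed), and applies Le Calvez's global transverse-foliation theorem on $\R^2\setminus\{p_0\}$. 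Local transversality then follows from global transversality via Proposition~3.4 of \cite{lecalvez03}.

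Your index argument is also off target. The paper does not analyze sectors or boundary rotation at all; it passes through the \emph{index of the isotopy} $L(I,p_0)$, citing two known facts: $L(\cF,p_0)=L(I,p_0)$ (Proposition~3.2 of \cite{lecalvez08}) and the dichotomy for $L(I,p_0)$ in terms of $L(f,p_0)$ (Proposition~4.7 of \cite{leroux13}). Your proposed treatment of the $L(f,p_0)=1$ case is particularly shaky: the claim that the local rotation set has no integer in its interior is, in this paper, \emph{derived from} the theorem you are trying to prove (see Section~\ref{subsubsec:rotation}), so invoking it here is circular; and even granting it, ``no integer in the interior of the rotation set'' does not by itself exclude hyperbolic sectors of a transverse foliation. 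Finally, you do not address smoothness at all: Le Calvez's foliation is only topological, and upgrading it to a $C^\infty$ foliation (as required by the definition of local foliation here) takes a separate argument, which the paper carries out only in the area-preserving case via the gradient-like structure and a Whitney-density argument.
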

\begin{proof}
We will first  prove the statement concerning the indices $L(f,p_{0})$  and  $L(\cF,p_{0})$, assuming the existence of the dynamically transverse local foliation. The proof requires passing through the notion of the index of an isotopy $L(I, p_{0}).$  This index is defined in \cite{leroux04}.  Proposition 3.2 of  \cite{lecalvez08}  proves that the index of the transverse foliation is the same that the index of the isotopy, \emph{i.e.}\ $L(\cF,p_{0})= L(I, p_{0}). $   Furthermore, according to Proposition 4.7 of \cite{leroux13} if $L(f,p_{0})=1$, then  $L(I, p_{0})=1 $ and if $L(f,p_{0})\neq1$, then we can choose the isotopy $I$ such that $L(I,p_{0})=L(f,p_{0})$.  

It remains to prove the existence of the location foliation  $(\cF,W)$ which is dynamically transverse to $(I,V)$.    Without loss of generality, we may suppose that  $U \subset \R^2$.  By Appendix A of \cite{leroux13}, we may also assume that the homeomorphism $f$ is defined on the entire plane $\R^2$ and that $p_0$ is the only fixed point of $f$ in $\R^2$. We pick an isotopy $I' =(f_t)_{t \in [0,1]}$ of $\R^2$ such that $f_0 = Id$, $f_1 =f$, and $f_t(p_0)= p_0$ for all $t \in [0,1]$.  Furthermore, the isotopy $I'$ may be picked such that for every point $p\in V$ the trajectories $I'.p$ and $I.p$ are homotopic relative to endpoints in $V \setminus \{p_0\}$:  Indeed, this can be achieved by replacing $I'$ with $I' R^q$, where $q$ is an integer and  $R$ is a full rotation of the plane around the point $p_0$. % Hence, by replacing $I$ with $I'$ we may also assume that the localy isotopy $I$ is well-defined on the entire plane as well.

According to Le Calvez \cite{lecalvez05}, there exists a \emph{topological}, \emph{i.e.}\ not necessarily smooth, foliation $\cF$ of $\R^2 \setminus \{p_0\}$ which is dynamically transverse to the isotopy $I'$.  A priori, this foliation is only globally transverse to the isotopy  $I'$: for every point $p \in \R^2 \setminus \{p_0\}$  the trajectory $I'.p$ is a homotopic to a curve $\gamma$ which is transverse to the foliation  $\cF$.   Now, pick $W$ to be any sufficiently small neighborhood of $p_0$ and consider the local foliation $(\cF, W)$.  By Proposition 3.4 of \cite{lecalvez03}, global transversality of $I'$ and $\cF$ implies that $(I',V)$ and $(\cF, W)$ are locally transverse.   Since we picked $I'$ such that for every point $p\in V$ the trajectories $I'.p$ and $I.p$ are homotopic relative to endpoints in $V \setminus \{p_0\}$, we see that $(I,V)$ and $(\cF, W)$ are dynamically transverse as well.    

We are not completely done yet because, as mentioned in the previous paragraph, the article  \cite{lecalvez05} of Le Calvez provides us with a foliation $\cF$ of $\R^2 \setminus \{p_0\}$ which is a priori non-smooth.  We will now outline an  argument which will allow us to perturb $\cF$ to a smooth foliation.  According to Proposition 3.3 of \cite{leroux13}, on any surface,  the set of foliations which are transverse to a given isotopy forms an open subset of the set of all foliations, where the set of all foliations is equipped with the Whitney topology.   For the precise definition of the Whitney topology please see Section 3 of \cite{leroux13}.  Now, the natural argument would be to prove  that the set of smooth foliations forms a dense subset of the set of all foliations and then it would follow that we may pick  our foliation to be smooth.  However, we have not been able to find a proof of the density of smooth foliations in the literature and so we will prove that in the very specific settings of our article the foliation $\cF$ may be perturbed to a smooth foliation.

In this article we will only rely on Theorem \ref{theo.locally-transverse-foliation} when $(f,U)$ is area-preserving.  As a consequence, and as explained  at the beginning of Section \ref{sec:local-homeo-area-pres}, the transverse foliation $(\cF, W)$ must be gradient-like.   This implies, according to Appendix B of \cite{leroux13},  that $(\cF, W)$  is locally homeomorphic to a smooth foliation:  Up to possibly shrinking the set $W$, we can a find smooth local foliation, say $(\cF', W')$, and a homeomorphism $\phi: W'\setminus \{p_0\}  \rightarrow W\setminus \{p_0\}$ which maps $\cF'$ to $\cF$.   As we will explain in the next paragraph, diffeomorphisms form a dense subset of homeomorphism, with respect to the Whitney topology, and so we may find a diffeomorphism $\psi: W'\setminus \{p_0\}  \rightarrow W\setminus \{p_0\}$ which is arbitrarily close to $\phi$ in the Whitney topology.  As a consequence,   the smooth foliation $\psi(\cF')$ will be Whitney close to $\cF$ and so (up to shrinking $W$) we may replace $(\cF, W)$ with $\psi(\cF', W')$.

  It remains to explain why diffeomorphisms form a dense subset of homeomorphisms with respect to the Whitney topology.  We will only provide a brief sketch of the argument as it is very similar to the usual argument for proving that, in the case of surfaces, diffeomorphisms are dense in homeomorphisms with respect to the uniform topology.
 
 Let $\Sigma$ be any surface.   Let us recall the definition of a Whitney neighborhood of a homeomorphism $\phi$:  Consider an open cover $(U_i)$ of $\Sigma$ which is locally finite, \emph{i.e.}\ no point is contained in infinitely many of the $U_i$'s.  To each $U_i$ we associate $\varepsilon_i > 0$.  A basic open neighborhood of $\phi$ is given by the set of homeomorphisms  $\theta: \Sigma \to \Sigma$ such that for each $i$ we have $d(\phi(p), \theta(p)) < \varepsilon_i$ for all $p \in U_i$.  Now, we will show that every basic open neighborhood of $\phi$ contains a diffeomorphism.  Given a basic open neighborhood $\phi$ as above, we pick a smooth triangulation of $W$ satisfying the following two criteria:   First, each triangle $T$ is contained in at least one of the sets $U_i$ and second, if $T$ is contained in $U_i$ then the diameter of $\phi(T)$ is very small compared to $\varepsilon_i$. One can then apply the Handle Smoothing Theorem of \cite{hatcher2013kirby}, to obtain a diffeomorphism $\psi$ by smoothing out $\phi$, successively, near the vertices, edges, and finally faces of the triangulation.  This can be done such that for each point $p$ the distance between $\phi(p)$ and $\psi(p)$ is controlled by the diameter of the triangle(s) of the triangulation which contain the point $p$. 
\end{proof} 

\subsubsection{$(\cF, W)$ is gradient-like when $(f, U)$ is area-preserving}\label{sec:local-homeo-area-pres}  We equip the surface $\Sigma$ with a symplectic form $\omega$ and denote by $\mu_0$ the measure induced on $\Sigma$.  For the rest of the article, unless otherwise stated, it will be our standing assumption that $(f, U)$ is area-preserving, meaning that the push-forward measure $f_* \mu_0$ coincides with $\mu_0$.  As we will explain below, an important consequence of this assumption is that the foliation given by Theorem \ref{theo.locally-transverse-foliation} will be \emph{gradient-like}.

 Let $(I,V)$ a local isotopy for $(f,U)$, and $(\cF,W)$ a local (smooth or topological) foliation dynamically transverse to $(I,V)$.
Let $\mathbf{\ell}$ be a closed leaf of $\cF$ (that is, a leaf homeomorphic to a topological circle) and $D$ be the topological open disk bounded by $\mathbf{\ell}$. A curve topologically transverse to $\cF$ meets $\mathbf{\ell}$ at most once, thus by transversality we have either $f(\overline{D}) \subset D$ or $\overline{D} \subset f(D)$. This contradicts the fact that $f$ preserves the area. Thus, $\cF$ has no closed leaf. By Poincar\'e-Bendixson theory, for every oriented leaf $\mathbf{\ell}$, the $\omega$-limit set of $\gamma$ is either empty or equal to the singularity $\{p_{0}\}$ (that is, $\gamma$ either hits the boundary of $U$ or the singularity). The $\alpha$-limit set shares the same properties. Furthermore, transversality implies that the $\alpha$ and $\omega$-limit sets cannot be both equal to $\{p_{0}\}$ because this would (as in the case of a closed leaf)  lead to the existence of a disk $D$ such that$f(\overline{D}) \subset D$ or $\overline{D} \subset f(D)$. Hence we have three kinds of oriented leaves, namely those which go from the boundary (of $W$) to the boundary, from the boundary to the singularity, or from the singularity to the boundary. A local foliation with only these kinds of leaf is called \emph{gradient-like} (see~\cite{lecalvez05}). 

\subsection*{Classification} Local gradient-like foliations may be classified up to homeomorphisms: $(\cF, W)$ consist of \emph{hyperbolic sectors} and \emph{parabolic sectors} (see the appendix of~\cite{leroux13} for more details).     
{
A \emph{sector} $S$ is a subset of $W$ which is homeomorphic to the closed unit disc and whose boundary contains $p_{0}$.
If $\alpha,\beta$ are two curves of which $p_{0}$ is an end-point, we will say that \emph{$S$ is between $\alpha$ and $\beta$}, and we write $S=S(\alpha,\beta)$,
if there exists a curve $\gamma$ such that the concatenation of $\alpha,\gamma,\beta$ in that order is a simple closed curve that parametrizes the boundary of $S$, in such a way that $S$ is on the left-hand side of this curve.
Consider a sector between two pieces of leaves of the foliation $\cF$.
}
  The sector is called \emph{parabolic} if the restriction of the foliation is homeomorphic to the foliation of an angular sector by radial lines. In a parabolic sector either every leaf has its $\omega$-limit set equal to $\{0\}$, or every leaf has its $\alpha$-limit set equal to $\{0\}$; we call the parabolic sector \emph{positive} in the first case and \emph{negative} in the latter case.
A sector is called \emph{hyperbolic} if it is homeomorphic to the foliation of $\{x \geq 0, y \geq 0\}$ in the plane by the hyperbolae $xy= \text{constant}$. The topological classification of local foliations states that any gradient-like local foliation is homeomorphic to a foliation obtained by gluing together a finite number of hyperbolic and parabolic sectors. Furthermore the number of hyperbolic sectors is given by $N=2(1-L(\cF,p_{0}))$.  Thus, taking into account the orientation, there are exactly two topological types of gradient-like local foliation of index one (called sinks and sources). For indices different than one, there  are $2^N$ foliations of a given index $L(\cF,p_{0})$: this is because between any two adjacent hyperbolic sectors one can choose to add a parabolic sector or not.   Observe that  the index of a gradient-like foliation is at most one.  This, combined with Theorem \ref{theo.locally-transverse-foliation}, implies that the index of a fixed point of an area-preserving homeomorphism is at most one.  

Note that the existence of  a gradient-like foliation transverse to a local isotopy has interesting dynamical consequences on the local homeomorphism $(f,U)$. For instance, (1) each hyperbolic sector is either locally attractive ($f(S \cap W') \subset S$ for some smaller neighbourhood $W'$ of the fixed point) or repulsive; (2) if the foliation is a sink, then the local rotation set is included in $[0, \infty]$. Both remarks will be crucial in what follows (see the proof of Lemma~\ref{lem:smoothable-negative2} and the beginning of section~\ref{subsec:index-one}).

\bigskip

We will be needing the following lemmas.  The first of these is not difficult and so we leave the proof to the reader.
\begin{lemma}
\label{lemma-locally-transverse}
Let $(\cF,W)$ be a local foliation and assume that  there exists a closed curve  $\gamma$  included in $W \setminus \{p_{0}\}$ which is positively transverse to $\cF$. Then,  $\gamma$ is not contractible in $W \setminus \{p_0\}$.  Furthermore, if $\cF$ is gradient-like, then it must be  a sink or a source.
\end{lemma}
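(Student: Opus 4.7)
The proof splits naturally into the two claims, and the plan is to do them in order.

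For the first claim, I would argue by contradiction using Poincaré--Hopf. Suppose $\gamma$ bounds a disk $D \subset W \setminus \{p_{0}\}$. Pick a smooth non-vanishing vector field $X$ on $W \setminus \{p_{0}\}$ tangent to $\cF$ and pointing in the orientation direction of the leaves. Since $X$ has no zeros in $\overline D$, the winding number of $X|_{\partial D}$ as a map $S^{1}\to S^{1}$ is $0$. On the other hand, positive transversality means the angle from $\gamma'$ to $X$ lies in $(0,\pi)$ at every point; orienting $\gamma$ as the positively oriented boundary of $D$, this forces $X$ to point inward along $\partial D$, so that $X|_{\partial D}$ is homotopic through non-vanishing vector fields to the inward normal, whose winding number is $1$. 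This contradicts $0=1$.

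For the second claim, the plan is to show that the existence of a hyperbolic sector would contradict the first claim. So, suppose by contradiction that $\cF$ has a hyperbolic sector $S$ with separatrices $\ell_{1}, \ell_{2}$. By the topological classification of gradient-like local foliations recalled in Section~\ref{sec:trans-fol} (and detailed in the appendix of \cite{leroux13}), there exists an orientation-preserving homeomorphism between $\overline S$ and the closed first quadrant, carrying $\cF|_{\overline S}$ to the foliation by level curves of $xy$ and sending $\ell_{1}, \ell_{2}$ to the two half-axes. A direct computation in this model shows that an arc is positively transverse to the foliation if and only if the function $xy$ is strictly increasing along it; pulling back, every positively transverse arc in $\overline S$ with both endpoints on $\ell_{1} \cup \ell_{2}$ must be trivial, since $xy$ vanishes on the separatrices.

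It then follows that $\gamma$ cannot cross $\ell_{1}$ or $\ell_{2}$: any such transverse crossing would, by following $\gamma$ into $\overline S$ up to the next crossing, yield a non-trivial positively transverse arc of $\gamma \cap \overline S$ with endpoints on $\ell_{1} \cup \ell_{2}$, which has just been ruled out. Therefore $\gamma$ is contained in a single connected component of $W \setminus (\ell_{1} \cup \ell_{2} \cup \{p_{0}\})$, namely either the open sector $S$ or the complement $W \setminus \overline S$. Both are simply connected---$S$ is an open topological disk, and $W \setminus \overline S$ is an open disk from which a closed radial wedge has been removed, which is again homeomorphic to an open disk. Hence $\gamma$ would be null-homotopic in $W \setminus \{p_{0}\}$, contradicting the first claim. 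Since $\cF$ is gradient-like and has no hyperbolic sectors, the sector decomposition collapses to a single parabolic sector, so $\cF$ is a sink or a source. The main technical point to handle carefully is the transfer of the smooth condition of positive transversality through the merely topological classification homeomorphism of hyperbolic sectors; this is manageable because orientation-preserving homeomorphisms between oriented foliated surfaces preserve the sign of oriented transverse intersections with the foliation.
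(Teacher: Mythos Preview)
The paper leaves this proof to the reader, so I will simply assess your argument.

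Your proof of the second claim is correct. One point worth making explicit: take $S$ to be the \emph{full} hyperbolic sector, so that its separatrices $\ell_1,\ell_2$ run all the way from $p_0$ to $\partial W$; then, since $\gamma$ is compact in the open set $W\setminus\{p_0\}$, any arc of $\gamma$ entering $\overline S$ can only exit through $\ell_1\cup\ell_2$, and your $xy$-monotonicity argument applies cleanly. (Otherwise one has to worry about $\gamma$ leaving through the ``outer'' edge of a small sector.)

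Your proof of the first claim has a genuine gap: you assume $\gamma$ bounds an \emph{embedded} disk $D$, but ``contractible'' does not imply simple---the paper's closed curves are just continuous maps $S^1\to W\setminus\{p_0\}$. For non-simple $\gamma$ it is still true that $X\circ\gamma$ has winding number $0$ (it extends over a singular disk), but the other half of your argument identifies this winding with the turning number of $\gamma$, which need not equal $1$ (a figure-eight has turning number $0$, for instance). There is also the minor issue that $\gamma$ need not be differentiable, so $\gamma'$ may not exist; this is easily handled by smoothing, but the simplicity issue is not.

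A clean fix for the first claim: lift to the universal cover of the annulus $W\setminus\{p_0\}$, which is homeomorphic to the plane. The lifted foliation $\tilde\cF$ has no singularities, hence no closed leaves (a closed leaf would bound a disk on which Poincar\'e--Hopf forces a singularity), hence by Poincar\'e--Bendixson every leaf is a properly embedded line. If $\gamma$ were contractible its lift $\tilde\gamma$ would be a closed curve; the leaf $\ell$ through $\tilde\gamma(0)$ separates the plane into a left and a right half, and positive transversality forces every crossing of $\ell$ by $\tilde\gamma$ to go from left to right. But a closed curve must cross a properly embedded separating line equally often in each direction, and there is at least one crossing at $\tilde\gamma(0)$---contradiction.
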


The next lemma tells us that we can find well-adapted area-preserving charts on a neighborhood of any parabolic sector.   We should point out that the smoothness of the foliation is crucial here.
\begin{lemma}[Normal Form for Parabolic Sectors]
\label{lemma-normal-form-foliation}
Let $(\cF, W)$ be a local foliation with a parabolic sector $S$, and $O$ an open subset of $W$ containing $S \setminus \{p_{0}\}$.
 Then, there exists  some open set $O'$ with $S \setminus \{p_{0}\} \subset O' \subset O$, and an area-preserving diffeomorphism $\Phi: O' \rightarrow \Phi(O') \subset \R^2 \setminus \{(0,0)\}$  with the following properties:
\begin{enumerate}
\item  $\Phi(O')\cup \{(0,0)\}$ contains $\{0\} \times [-\varepsilon,\varepsilon]$ for some positive $\varepsilon$,

\item $\Phi$ sends the edges of $S$ to Euclidean rays respectively included in 
$$\{(x,y) \in \R^2: y=0, x>0\} \text{ and } \{(x,y): y=x, x>0\},$$

\item $\Phi_* \mu_0 = \Leb$, where $\mu_0$ is the measure induced by the symplectic form and $\Leb$ is the Lebesgue measure on $\R^2$,

\item on a neighborhood of the origin in the half plane $\{(x, y) \in \R^2: x \geq 0 \}$, the foliation $\Phi(\cF)$ is transverse to the foliation of the half  plane by vertical lines.
\end{enumerate}
\end{lemma}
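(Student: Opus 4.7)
The plan is to proceed in two steps: first construct a smooth (not necessarily area-preserving) diffeomorphism $\Psi$ realizing the topological normal form, then apply Moser's trick to correct the area while respecting the normal-form structure.

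\textbf{Step 1 (smooth normal form).} Since $\cF$ is a smooth foliation on $W \setminus \{p_0\}$ and $S$ is a parabolic sector, I would pick a smooth transversal $\tau$ crossing $S$ from one edge to the other and flow along the leaves of $\cF$ to obtain smooth coordinates $(u,v) \in [0,1] \times (0,\delta)$ on $S \setminus \{p_0\}$, in which the leaves are $\{u = \text{const}\}$ and $v \to 0$ corresponds to approaching $p_0$. Smoothness of $\cF$ away from $p_0$ lets these coordinates extend to an open neighborhood $O_1 \subset O$ of $S \setminus \{p_0\}$; by letting $O_1$ wrap slightly past each edge of $S$ and around $p_0$ into the neighboring sectors, one can arrange the image to contain a small vertical segment through the origin. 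Composing with a target-side diffeomorphism $(u,v) \mapsto (x(u,v), y(u,v))$ — roughly of polar form $(v\cos\theta(u), v\sin\theta(u))$ inside the image of $S$ (with $\theta(0) = 0$ and $\theta(1) = \pi/4$), smoothly continued outside so as to keep leaves non-vertical near the $y$-axis — one obtains $\Psi : O_1 \to \R^2 \setminus \{(0,0)\}$ satisfying (1), (2), and (4): the two edges of $S$ map to the prescribed rays; leaves of $\Psi(\cF)$ are transverse to vertical lines in a neighborhood of the origin in $\{x \geq 0\}$; and $\Psi(O_1) \cup \{(0,0)\}$ contains $\{0\} \times [-\varepsilon_0, \varepsilon_0]$ for some $\varepsilon_0 > 0$.

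\textbf{Step 2 (Moser correction).} The map $\Psi$ need not be area-preserving. Write $\Psi_* \mu_0 = \rho\,\Leb$ with $\rho > 0$ smooth on $\Psi(O_1)$. Applying Moser's trick to the interpolating family $\mu_s = (1-s)\,\rho\,\Leb + s\,\Leb$, one solves $d\alpha = (1-\rho)\,dx \wedge dy$, then $\iota_{X_s}\mu_s = \alpha$ for a time-dependent vector field $X_s$, and takes $\Theta$ to be its time-$1$ flow. If $\alpha$ is chosen tangent to the edge rays (so that $X_s$ preserves them setwise) and $C^1$-small in a neighborhood of the origin (so that $\Theta$ is $C^1$-close to $\Id$ there, preserving the transversality (4) by openness and the vertical segment (1)), then $\Theta_*(\rho\,\Leb) = \Leb$, and setting $\Phi := \Theta \circ \Psi$ and $O' := O_1$ gives $\Phi_* \mu_0 = \Leb$, with properties (1), (2), (4) inherited from $\Psi$.

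\textbf{Main obstacle.} The delicate point is producing the primitive $\alpha$ with the prescribed boundary data: tangent to the two edge rays and small near the origin. To enforce tangency, I would cut $\Psi(O_1)$ along the two edges into simply connected pieces, apply a relative Poincar\'e lemma on each, and patch across the edges so that $\alpha$ has no normal component there. To enforce smallness near the origin, one first pre-composes $\Psi$ by an area-normalizing diffeomorphism near $p_0$ (using the fact that in dimension two any smooth positive density is locally equivalent to $\Leb$ via a diffeomorphism, by a local Moser argument) so that $\rho = 1$ in a disk $B_\eta$ around the origin; then $\alpha$ can be taken identically zero on $B_\eta$, giving $\Theta = \Id$ there. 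A conceptually cleaner alternative avoiding Moser is to construct $\Phi$ as an explicit pair of smooth functions $(x,y)$ with $dx \wedge dy = \omega$: pick $x$ smooth and strictly increasing along each leaf of $\cF$ with $x \to 0$ at $p_0$ and with the right boundary values on the edges, then recover $y$ along each leaf by integrating $\omega/dx$; tuning the leaf parametrization of $x$ to enforce (2) and (4) simultaneously is essentially the same patching problem.
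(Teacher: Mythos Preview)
Your overall plan---straighten the foliation smoothly, then correct the area by Moser---is natural, but Step~2 has a genuine gap that your ``main obstacle'' paragraph does not close. The density $\rho$ in $\Psi_*\mu_0=\rho\,\Leb$ is smooth only on $\Psi(O_1)\subset\R^2\setminus\{0\}$. Since the foliation is assumed smooth only on $W\setminus\{p_0\}$, your chart $\Psi$ has no reason to extend to a diffeomorphism at $p_0$, and $\rho$ may blow up or degenerate as one approaches the origin. The Moser vector field $X_s$ is then uncontrolled there, and its flow need not exist up to time~$1$ nor stay inside the domain. Your proposed fix---pre-compose $\Psi$ by a local area-normalizing diffeomorphism near $p_0$ so that $\rho=1$ on a small ball---is circular: for that you would need $\Psi^*\Leb$ to be a smooth area form near $p_0$, which is exactly what fails when $\Psi$ is singular at $p_0$.

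The paper sidesteps Moser entirely, replacing it by two explicit area-correcting maps that are insensitive to the behaviour of the density at the origin. One first sends $S$ (diffeomorphically on $S\setminus\{p_0\}$) to a triangle so that $\cF$ becomes the \emph{radial} foliation. Then one applies a radial rescaling $(x,\theta)\mapsto(H(x),\theta)$ with $H(X)=\bigl(2\mu(T_X)\bigr)^{1/2}$, which forces the vertical projection of the pushed measure to coincide with that of $\Leb$; this uses only the cumulative area $\mu(T_X)$ and needs no pointwise bound on the density. Finally a vertical shear $(x,y)\mapsto\bigl(x,\int_0^y\rho(x,t)\,dt\bigr)$ turns the measure into $\Leb$ exactly; it preserves vertical lines, so the radial foliation is sent to a foliation transverse to the vertical one, which is precisely property~(4). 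The extension past the two edges (needed for property~(1)) is then a separate local construction along each boundary leaf, carried out by the same recipe. Your ``cleaner alternative'' at the end points in the right direction, but the workable version integrates along \emph{vertical} lines rather than along leaves, and the correct choice of the first coordinate is the cumulative-area function $H$ rather than an arbitrary leafwise parameter; with those choices the ``patching problem'' you anticipate does not arise.
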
  
%{\sobhan We remark that we will construct $\Phi$ such that it will be smooth everywhere except at the preimage of the origin, \fred so that $\Phi(\cF)$ is still a smooth foliation. On the other hand, a topological foliation would be enough for our needs.}

\begin{proof}
Our proof consists of two steps: In the first step, we will construct $\Phi$ on the parabolic sector $S$ such that it satisfies all the requirements of the lemma on $\Phi(S)$.  In the second step,  we will extend $\Phi$ beyond the edges of $S$.

\medskip

\noindent \textbf{Step 1: Constructing $\Phi$ on $S$.}  Let $T_1$ be the triangle $\{(x,y) \in \R^2:0 \leq x \leq 1, 0 \leq y \leq x\}$.  We begin with the following claim.
\begin{claim}\label{claim.radial-foliation}
There exists a homeomorphism $\Phi':S \rightarrow T_1$ such that $\Phi'(\cF)$ is the radial foliation of $T_1$ whose leaves are the lines $y=kx$, for $k \in [0,1]$.   Furthermore, $\Phi':S \setminus \{p_0\} \rightarrow T_1 \setminus \{(0,0)\}$ is a diffeomorphism.
\end{claim}
\begin{proof}
 Denote the edges of the parabolic sector by $\partial^-S$ and $\partial^+S$.  Let $\gamma:[0,1] \rightarrow W$ be a smooth curve which is transverse to the parabolic sector $S$ such that $\gamma(0) \in \partial^-S$ and $\gamma(1) \in \partial^+S$.  Such $\gamma$ exists for the following reason:  The fact that $S$ is topologically equivalent to a standard sector with its radial foliation implies that one may pick such $\gamma$ which is continuous.  One can then smooth out $\gamma$ by an isotopy which preserves the leaves of the foliation  (recall that the foliation is smooth on $W \setminus \{p_{0}\}$). 

Consider the segment of the leaf of the foliation which starts at $\gamma(s)$ and ends at $p_0$; denote its length (which could be $+\infty$) by  $\ell(s)$.  Let us show that $S \setminus \{p_0\}$ may be parametrized by the set $U =  \{(t,s) \in \R^2 : 0 \leq t < \ell(s), 0 \leq s \leq 1\}$:  pick $X$ to be the unit-length vector field defined on $S \setminus \{p_0\}$ which is tangent to the leaves of $\cF$ such that $X$ points into the sector $S$ at every point on $\gamma$.    Denote the flow of $X$ by $\varphi^t_X$ and define $\Psi_1 : U \rightarrow S \setminus \{p_0\}$ by $\Psi_1 (t,s) = \varphi^t_X(\gamma(s))$.  This is a diffeomorphism from $U$ to $S \setminus \{p_0\}$ which sends the foliation of $U$ by horizontal lines to the restriction of $\cF$ to the sector $S$.

We leave it to the reader to check that one can find a  diffeomorphism  $\Psi_2:  T_1 \setminus \{(0,0)\} \rightarrow U$ which sends the the radial foliation of $T_1$ to the horizontal foliation of $U$.  Define  $\Phi' = (\Psi_1 \Psi_2) ^{-1}: S \setminus \{p_0\} \rightarrow T_1 \setminus \{(0,0)\}$ and set $\Phi'(p_0) = (0, 0)$.  It is easy to check this is our desired map.
\end{proof}

 Observe that, because  $\Phi'$ is smooth away from $p_0$, we can apply the change of variables formula to conclude that  the measure $\mu = \Phi'_{*} \mu_{0}$ is of the form $\eta \, \mathrm{Leb}$, where $\eta$ is a smooth function on $T_1 \setminus \{(0,0)\}$.  We will  adjust the coordinates \emph{radially} such that the measure $\eta \, \Leb$ is sent to a new measure  $\rho\mathrm{Leb}$ with  the property  that the vertical projections of $\rho\mathrm{Leb}$ and $\Leb$ onto the $x$--axis coincide.  To achieve this, for each $X \in (0,1]$ consider the triangles $T_{X}=\{ 0 \leq x \leq X, 0 \leq y \leq x\}$ and  define 
$
H(X) = (2 \mu(T_{X}))^\frac{1}{2}.
$
This is a homeomorphism between $(0,1]$ and some interval $(0, a]$. Now, note that $x$ and the angular coordinate $\theta$ together define a smooth system of coordinates on $x>0$.
Consider the map 
 $$
 \Phi''(x,\theta)= (H(x),\theta).
 $$
This is a homeomorphism between the triangles $T_{1}$ and $T_{a}$ which preserves the radial foliation.  Moreover, it is smooth away from the origin and so it sends the measure $\eta \Leb$ to a measure of the form $\rho \Leb$ where $\rho$ is a function on $T_{a}$ which is defined everywhere except possibly at the origin.  Furthermore,
$$
 \Phi''(T_{X}) = T_{H(X)} \text{ and }  \Leb(T_{H(X)}) = \mu(T_{X})
$$
for every $X \in (0,1]$, and thus
%it can easily be checked that 
the projections of $\rho \Leb$ and $\Leb$ onto the first coordinate are the measure $xdx$. In other words,
$
\int_{0}^x \rho(x,y) dy = x
$
 for every $x \in (0, a]$.
Now, define $\Phi'''$ by the formula 
$$
\Phi'''(x,y) = \left(x,\int_{0}^y \rho(x,t)dt\right).
$$
This map leaves the triangle $T_{a}$ invariant and sends the measure
$\rho\mathrm{Leb}$ to the Lesbegue measure. Furthermore, it preserves the vertical foliation and thus it maps the radial foliation to a foliation which is transverse to the vertical one.   

The homeomorphism $\Phi=\Phi'''\Phi''\Phi' : S \mapsto T_{a}$ is an area preserving chart which sends $\cF$ to a foliation transverse to the vertical foliation.  This completes the first step of our construction.

\medskip

\noindent \textbf{Step 2: Extension of $\Phi$ beyond the edges of $S$.} %Recall that in a parabolic sector, either the $\omega$--limit set of of every leaf is $p_0$ or the $\alpha$--limit set of every leaf is $p_0$.  Let us assume that in our case the $\omega$--limit set of every leaf is $p_0$.  

Let $\partial^+ S, \partial^- S$ denote the two leaves of the foliation which are at the boundary of the parabolic sector $S$.  The map $\Phi$ constructed above sends $\partial^+ S$ and $\partial^-S$, diffeomorphically, to line segments which have an  endpoint at the origin.  By the following lemma $\Phi$ may be extended to a neighborhood $O'$ of $S$ in $W \setminus \{p_0\}$ such that all the requirements of Lemma~\ref{lemma-normal-form-foliation} are satisfied.  

\begin{lemma}\label{lem:normal_form_near_leaf}
 Suppose that $(\cF, W)$ has a leaf $F$ whose $\omega$-limit, or $\alpha$-limit, set is $p_0$. Let $R$ be a line segment in $\R^2$ of the form  $R= \{(x, kx): x \in (0,1)\}$, for some $k \in \R$. Let  $\Phi:F \to R$ be a smooth diffeomorphism. There exists  an open set $U$ containing $F$ and included in $W\setminus \{p_0\}$, and an extension of $\Phi$ to an area-preserving  diffeomorphism $\Phi: U \to \Phi(U) \subset \R^2$  such that 
 \begin{enumerate}
\item  $\Phi(U)\cup \{(0,0)\}$ contains the segment $\{0\} \times [-\varepsilon,\varepsilon]$ for some positive $\varepsilon$,
 \item On the half-plane  $\{(x,y) \in \R^2: x \geq 0\}$, the leaves of $\Phi(\cF)$ are transverse to the foliation given by vertical lines.
 \end{enumerate} 
\end{lemma}

\begin{proof}
{The proof is very similar to the proof of Step 1, we will just sketch it and leave the details to the reader.
We preliminary note that the linear map $(x,y) \mapsto (x, y - k x)$ preserves the Lebesgue measure and the vertical foliation and sends the ray $R$  into the $x$ axis; using this map we can assume that $k=0$.   %We will suppose that the  $\omega$-limit set of $F$ is $p_0$. 
Using some curve which passes through the leaf $F$ and which is transverse to the foliation, we first extend $\Phi$ into a diffeomorphism $\Phi'$ that  sends $\cF$ to the foliation by horizontal lines, and whose image contains a segment $\{0\} \times [-\varepsilon,\varepsilon]$ for some positive $\varepsilon$; the construction is similar to the proof of Claim~\ref{claim.radial-foliation}.
Then we post-compose $\Phi'$ by a diffeomorphism $\Phi''$ which is the identity on $\Phi'(F)$, and sends the measure $\Phi'_{*} \mu_{0}$ to the Lebesgue measure; this construction is similar to the end of Step 1.}

\end{proof}
This completes the  second step of our construction.
\end{proof}

%%%%
\subsection{Smoothability of fixed points of nonpositive index }\label{subsec:negative-index}
In this section, we will prove that a fixed point with nonpositive index is smoothable.
Before going into the proof, let us reduce the problem to a purely local one. Let $(f,U)$ be a local homeomorphism for which $p_{0}$ is the only fixed point. We will say $(f,U)$ is \emph{smoothable} if there exists a local area-preserving homeomorphism $(g,U)$ for which $p_{0}$ is the only fixed point, which is smooth near $p_{0}$ and which coincides with $f$ near the boundary of $U$. Paradoxically enough, we note that this notion is invariant under topological conjugacy. Indeed, assume $(f,U)$ is smoothable and let $(g,U)$ be as above.  Suppose that $\Psi: U \to U'$ is a homeomorphism, where $U'$ is an open subset of some surface, and consider $(\Psi f \Psi^{-1},U')$.  By the handle smoothing lemma there is a homeomorphism $\hat \Psi$ that coincides with $\Psi$ near the boundary of $U$ and that is smooth near $p_{0}$ (\cite{hatcher2013kirby}). Then,  $(\hat \Psi g \hat \Psi^{-1},U)$ is smooth near $p_{0}$ and coincides with $\Psi f \Psi^{-1}$ near the boundary of $U'$, showing that $(\Psi f \Psi^{-1},U')$ is also smoothable.
We conclude  that Theorem~\ref{theo:smoothable} is equivalent to the following purely local statement: \emph{every area preserving local homeomorphism $(f,U)$ with a single fixed point $p_{0}$ which is not maximally degenerate is smoothable.}
We will now prove that statement in the case when the index $L(f,p_{0})$ is nonpositive.    We will first give an outline of the proof relying on several technical lemmas which will be proved later.

\begin{proof}[Proof of Theorem~\ref{theo:smoothable} in the nonpositive index case]
Let $(f,U)$ be an area preserving local homeomorphism with a single fixed point $p_{0}$, with $L(f,p_{0}) \leq 0$.
According to theorem~\ref{theo.locally-transverse-foliation} there is a local isotopy $(I,V)$ for $(f,U)$ and a local foliation $(\cF,W)$ dynamically transverse to $(I,V)$, such that  $L(\cF, p_{0}) = L(f,p_{0})$. By applying Proposition~\ref{pro:smoothable-negative1} below we only need to consider the case when the foliation $\cF$ has no parabolic sectors. 

\begin{prop}\label{pro:smoothable-negative1}
Let $(f,U)$ be an area preserving local homeomorphism with a single fixed point $p_{0}$ of index $L(f,p_{0}) \leq 0$. 
%There exists an area preserving local homeomorphism $(g,U)$ fixing $p_{0}$ and supported in $U$, 
%such that $(fg,U)$ has no new fixed points and admits an isotopy $(I',V)$  with a dynamically transverse local foliation having the same index $L(f,p_{0})$ and having no parabolic sectors.
There exists an area preserving local homeomorphism $(f',U)$ with a single fixed point $p_{0}$, that coincides with $f$ near the boundary of $U$,
and an isotopy $(I',V)$ for $(f',U)$ admitting a dynamically transverse local foliation that has the same index $L(f,p_{0})$ and has no parabolic sectors.
\end{prop}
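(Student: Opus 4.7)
The plan is to modify $f$ into $f'$ by small area-preserving local surgeries, one inside each parabolic sector of a suitably chosen transverse foliation, in such a way that the new transverse foliation has the same hyperbolic sectors but no parabolic ones.

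First, apply Theorem~\ref{theo.locally-transverse-foliation} to obtain a local isotopy $(I,V)$ for $(f,U)$ together with a dynamically transverse local foliation $(\cF,W)$ satisfying $L(\cF,p_0)=L(f,p_0)\leq 0$. By the discussion at the beginning of Section~\ref{sec:local-homeo-area-pres}, $\cF$ is gradient-like; by the topological classification recalled there, it is obtained by gluing $N=2(1-L(f,p_0))\geq 2$ hyperbolic sectors in cyclic order, with a parabolic sector (positive or negative) inserted in some of the gaps. The parabolic sectors are what we must destroy. Because the number $N$ of hyperbolic sectors and the global index $L(\cF,p_0)=1-N/2$ are determined by the hyperbolic sectors alone, any perturbation that keeps these $N$ sectors intact automatically preserves the index.

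Next, carry out the local surgery inside each parabolic sector $S=S(\alpha,\beta)$. Apply Lemma~\ref{lemma-normal-form-foliation} to obtain an area-preserving chart $\Phi_S : O_S \to \R^2$ sending $S$ to a standard triangular sector, $\alpha,\beta$ to standard rays, and $\cF$ to a foliation of the right half-plane transverse to the vertical foliation. In these coordinates, choose a compactly supported autonomous Hamiltonian $H_S$, supported in a slight thickening of $\Phi_S(S)$, bounded away from the origin and from $\partial \Phi_S(O_S)$, whose flow produces a horizontal push of points of $\Phi_S(S)$ across $\alpha$ (or $\beta$) into the neighboring hyperbolic sector. Replace $f$ by $f':=\phi\circ f$, where $\phi$ is the composition of the time-one maps of all these Hamiltonians; the corresponding isotopy $I'$ is the concatenation of the Hamiltonian isotopies with $I$. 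Because each $H_S$ is compactly supported away from $p_0$ and from $\partial U$, the map $f'$ is area-preserving, agrees with $f$ near $\partial U$, and — provided $H_S$ is chosen small enough in $C^0$ and its support contains no orbit returning to itself — keeps $p_0$ as the only fixed point.

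Construct the new transverse foliation $\cF'$ by keeping $\cF$ unchanged outside the supports of the $H_S$'s and redrawing it inside each perturbed region. Concretely, inside $\Phi_S$-coordinates, replace the family of leaves of $\Phi_S(\cF)$ emanating from the origin into $\Phi_S(S)$ by leaves that connect to the outer boundary of $\Phi_S(O_S)$, taking advantage of the horizontal push of $H_S$ to remain positively transverse to the trajectories of $I'$. The effect is to reincorporate the former parabolic region into the adjacent hyperbolic sector, yielding a gradient-like foliation $\cF'$ with the same $N$ hyperbolic sectors and no parabolic sectors. Dynamic transversality in the interior is guaranteed by the transversality of $\Phi_S(\cF)$ to the vertical foliation (property (4) of Lemma~\ref{lemma-normal-form-foliation}) combined with the fact that the new trajectories of $I'$ in $\Phi_S(S)$ are $C^0$-close to vertical lines followed by a controlled horizontal push.

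The main obstacle is the compatible choice of the surgery data $(H_S,\cF')$: $H_S$ must be strong enough to make the perturbed isotopy globally transverse to the new foliation $\cF'$, while remaining weak enough that no new fixed points appear and $\cF'$ matches $\cF$ smoothly at the boundary of $O_S$. In particular, one must verify that the modified isotopy and the redrawn foliation satisfy the precise definition of dynamic transversality from Section~\ref{sec:trans-fol}, which is a homotopy condition for each trajectory in $W\setminus\{p_0\}$; this is where the normal form coordinates of Lemma~\ref{lemma-normal-form-foliation} are essential, as they reduce the transversality condition to an elementary verification in $\R^2$.
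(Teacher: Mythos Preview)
Your outline follows the same strategy as the paper: push through each parabolic sector with a compactly supported area-preserving isotopy, then redraw the foliation so that the sector becomes part of the adjacent hyperbolic one. The paper organizes this into an induction that removes one parabolic sector at a time, isolating the two technical steps as Proposition~\ref{prop:parabolic-pushing} (Parabolic Pushing) and Lemma~\ref{lemma-modification-foliation} (Modification of the foliation).

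There is, however, a genuine error in your surgery. You require the Hamiltonian $H_S$ to be supported ``bounded away from the origin''. With such support the map $f'$ coincides with $f$ on a full neighbourhood of $p_0$, and hence the parabolic sector of the transverse foliation persists near $p_0$: you have not removed it, only modified the picture further out. The paper's push is \emph{not} compactly supported away from $p_0$; in normal-form coordinates it is $g_t(r,\theta)=(r,\theta+t\Delta(r))$ with $\Delta(r)=r$ near $r=0$, so the support accumulates on $p_0$ while the displacement tends to zero there. This is exactly what is needed so that, for some neighbourhood $U'$ of $p_0$, one has $g(U'\cap\partial^-S)\subset\partial^+S$ (point~3 of Proposition~\ref{prop:parabolic-pushing}), i.e.\ the isotopy genuinely sweeps out the sector all the way to the fixed point.

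Your control of new fixed points is also not the right mechanism. Smallness of $H_S$ in $C^0$ is neither sufficient nor what is used. The paper's argument is structural: if $p\neq p_0$ were fixed by $fg$, then the closed trajectory $IJ.p$ would be homotopic to a curve positively transverse to $\cF$, and Lemma~\ref{lemma-locally-transverse} then forces $\cF$ to be a sink or a source, contradicting $L(\cF,p_0)\leq 0$. This is precisely where the nonpositive-index hypothesis enters, and it cannot be replaced by a smallness condition on the perturbation. Likewise, your redrawing of $\cF'$ needs the specific ``good curve'' construction of Lemma~\ref{lemma-modification-foliation}, which glues the new leaves to actual leaves of the adjacent hyperbolic sector so that transversality is inherited; describing the new leaves as ``connecting to the outer boundary'' does not ensure dynamic transversality with the modified isotopy.
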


To fix ideas, from now on we assume $L(f,p_{0})=-1$ (see the comments at the end of the proof for the other cases). 
We define the \emph{model dynamics} to be the linear map $m_{-1} : (x,y) \mapsto (2x, y/2)$ on the plane $\mathbb{R}^2$.
Let $D$ be a closed topological disk included in $U$ and containing the fixed point $p_{0}$ in its interior. Assume $f(D)$ is included in $U$. We say that $D$ is \emph{in canonical position for $f$} if there is a continuous injective mapping $\Phi: U \to \mathbb{R}^2$ taking $D$ to the unit disk and such that
$\Phi f \Phi^{-1} = m_{-1}$ on the boundary of the unit disk (see~\cite{pelikanslaminka,bonino01}). Note that this is a purely topological definition (no measure involved). The next lemma is purely topological, while the following one will take care of the area.

\begin{lemma}\label{lem:smoothable-negative2}
There exists a disk $D$ in canonical position for $f$.
\end{lemma}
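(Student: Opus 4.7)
The plan is to build $D$ by concatenating short arcs tailored to each of the four hyperbolic sectors of $\cF$. Recall that for $L(f,p_{0}) = -1$, Proposition~\ref{pro:smoothable-negative1} allows us to assume the transverse foliation $(\cF,W)$ consists of exactly four hyperbolic sectors $S_{1},\ldots,S_{4}$ separated by four leaves $\ell_{1},\ldots,\ell_{4}$, each having $p_{0}$ as $\alpha$- or $\omega$-limit point. Since $f$ preserves area and each hyperbolic sector is either locally attractive or locally repulsive, the types of the sectors must alternate cyclically around $p_{0}$: two attractive sectors $A_{1}, A_{2}$ interleaved with two repulsive sectors $R_{1}, R_{2}$ (otherwise one could build an invariant topological disk on which $f$ would be area-contracting).

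First I would fix a small neighborhood $W'\subset W$ of $p_{0}$ and, on each separating leaf $\ell_{i}$, select a compact subarc $\alpha_{i}\subset \ell_{i}\cap W'$ with one endpoint at $p_{0}$, chosen so that $f$ acts hyperbolically on $\alpha_{i}$ in the following sense: along the two leaves bordering an attractive sector, $f(\alpha_{i})\subset \alpha_{i}$ with $f$ a topological contraction toward $p_{0}$; along the two leaves bordering a repulsive sector, $f^{-1}(\alpha_{i})\subset \alpha_{i}$ symmetrically. Then, in each attractive sector $A_{k}$, bordered by chosen subarcs $\alpha, \alpha'$, I join the outer endpoints of $\alpha$ and $\alpha'$ by a simple arc $\tau_{k}^{A}\subset A_{k}$ which is positively transverse to $\cF$ and sufficiently close to $p_{0}$ that $f(\tau_{k}^{A})$ lies inside the closed topological triangle $T_{k}^{A}$ bounded by $\tau_{k}^{A}, \alpha, \alpha'$; such arcs exist because $\cF$ is a local radial foliation of the hyperbolic sector and $A_{k}$ is attractive, so any sufficiently small transverse arc is pushed in. Dually, in each repulsive sector $R_{k}$ I pick an analogous arc $\tau_{k}^{R}$ with $f^{-1}(\tau_{k}^{R})$ inside the corresponding triangle. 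Concatenating the $\alpha_{i}$'s and $\tau_{k}^{A/R}$'s in cyclic order yields a simple closed curve $\partial D$; take $D$ to be the disk it bounds.

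Next I would construct the conjugating chart $\Phi$ explicitly. The boundary $\partial D$ decomposes into $8$ arcs; the unit circle $S^{1}$ similarly decomposes into $8$ arcs by the four points of $S^{1}\cap m_{-1}(S^{1})$. I send the $\alpha_{i}$'s bordering repulsive sectors to the expanding arcs of $S^{1}$ near $(\pm 1,0)$, the $\alpha_{i}$'s bordering attractive sectors to the contracting arcs near $(0,\pm 1)$, and each $\tau_{k}$ to the corresponding complementary arc of $S^{1}$. On each $\alpha_{i}$ the matching is chosen to intertwine $f|_{\alpha_{i}}$ with the one-dimensional hyperbolic dynamics of $m_{-1}$ on its stable/unstable axis; this is possible because both maps are topological contractions (resp.\ expansions) of an arc to a subarc fixing an endpoint, so they are conjugate. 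The resulting homeomorphism $\partial D \to S^{1}$ intertwines $f|_{\partial D}$ with $m_{-1}|_{S^{1}}$ by construction, and extends to a homeomorphism $\Phi : U \to \mathbb{R}^{2}$ with $\Phi(D) = \overline{\mathbb{D}}$ via the Schoenflies theorem applied separately to $D$ and $U \setminus \operatorname{int}(D)$.

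The main obstacle is the first step: producing arcs $\alpha_{i}$ along the separating leaves that are genuinely forward- or backward-$f$-invariant as sets. Dynamical transversality tells us only that trajectories of the isotopy cross leaves of $\cF$ from left to right, not that $f$ preserves any particular leaf, so a priori $f(\ell_{i}) \neq \ell_{i}$. One must exploit the attractive/repulsive character of the neighboring sectors: for an $\ell_{i}$ adjacent to an attractive sector $A$, the fact that points of $A$ are swept toward $p_{0}$ by $f$, combined with the fact that $\ell_{i}$ bounds $A$, forces $f(\ell_{i}\cap W')$ to lie in the closure of the attractive sector, and hence after a small preliminary area-preserving perturbation supported off of $p_{0}$ (absorbed into the passage from $f$ to $f'$ of Proposition~\ref{pro:smoothable-negative1}) one can arrange $f(\ell_{i}\cap W') \subset \ell_{i}$. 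This surgery-type adjustment, which is the delicate heart of the argument, follows the strategy of Pelikan--Slaminka and Bonino referenced in the paper.
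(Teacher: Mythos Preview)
Your construction has a structural incompatibility with the very definition of canonical position. If $D$ is in canonical position then $\Phi f\Phi^{-1}=m_{-1}$ on $S^1$, so $f(\partial D)=\Phi^{-1}(m_{-1}(S^1))$ and hence $\partial D\cap f(\partial D)=\Phi^{-1}\bigl(S^1\cap m_{-1}(S^1)\bigr)$, which is exactly four points. But your boundary $\partial D$ contains arcs $\alpha_i$ on the separatrices with $f(\alpha_i)\subset\alpha_i$ (or $f^{-1}(\alpha_i)\subset\alpha_i$); this forces $\partial D\cap f(\partial D)$ to contain whole arcs, not just four points. Equivalently, no arc of $S^1$ is taken into itself by $m_{-1}$, so there is nothing on the model side to which an $f$-invariant arc of $\partial D$ can be matched. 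The ``main obstacle'' you isolate (arranging $f(\ell_i\cap W')\subset\ell_i$) is therefore not only hard but counterproductive: even if you could achieve it by an area-preserving perturbation, the resulting disk could not be in canonical position. Note also that Proposition~\ref{pro:smoothable-negative1} does not provide such a straightening; it only eliminates parabolic sectors.

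The paper avoids this entirely. It never makes the separatrices $\ell_i$ invariant. Instead it exploits that, by transversality, the iterates $f^k(\ell_i)$ are locally pairwise disjoint with a fixed cyclic order near $p_0$. For each $i$ one takes a short arc $\alpha_i$ joining a point of $\ell_i$ to its $f$-image inside the thin sector between $\ell_i$ and $f(\ell_i)$, and sets $A_i=f^{-2}(\alpha_i)\cup f^{-1}(\alpha_i)\cup\alpha_i\cup f(\alpha_i)$, which is simple because that thin sector is disjoint from its first few iterates. These four arcs $A_i$ are joined by leaf segments $\beta_i$ in the hyperbolic sectors to form a Jordan curve $C$; by construction $C\cap f(C)$ is a union of four short segments (one in each $A_i$), and a tiny local modification gives $C'$ with $C'\cap f(C')$ reduced to four points $P_0,\ldots,P_3$ in the correct cyclic order. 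Then $\Phi$ is defined on $C'$ by sending $P_i\mapsto Q_i$ and $f^{-1}(P_i)\mapsto m_{-1}^{-1}(Q_i)$ (the $Q_i$ being the four points of $S^1\cap m_{-1}(S^1)$), extended to $f(C')$ by the intertwining relation, and finally to all of $U$ component-by-component via Schoenflies. The point is that the combinatorics of the eight marked points on $C'$, not any invariance of arcs, is what makes the intertwining with $m_{-1}$ possible.
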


We consider the standard area on the plane. It is preserved by $m_{-1}$.
\begin{lemma}\label{lem:smoothable-negative3}
Let $D$ be a disk in canonical position for $f$. Then 
there exists a continuous injective mapping $\Psi: U \to \mathbb{R}^2$ such that 
\begin{itemize}
\item $f = \Psi^{-1} m_{-1} \Psi $ holds on $\partial D$,
\item $\Psi$ sends the area on $U$ to the standard area,
\item $\Psi$ is smooth near $p_{0}$ and sends $p_{0}$ to $0$.
\end{itemize}
\end{lemma}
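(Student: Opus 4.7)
The plan is to start from the canonical-position map $\Phi$ produced by Lemma~\ref{lem:smoothable-negative2} and modify it in three successive steps, each preserving the conjugation $\Phi f \Phi^{-1} = m_{-1}$ on $\partial D$.

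First, I would normalize areas. Since $m_{-1}$ commutes with every dilation $(x,y) \mapsto (\lambda x, \lambda y)$, replacing $\Phi$ by $\lambda \Phi$ with $\lambda = \sqrt{\mu_{0}(D)/\pi}$ leaves the boundary conjugation untouched but arranges $\Phi(D) = \lambda B$ and $\mu_{0}(D) = \mathrm{Leb}(\lambda B)$, where $B$ denotes the unit disk.

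Second, I would make $\Phi$ smooth and area-preserving near $p_{0}$. Choose nested open disks $V'' \subset V$ around $p_{0}$ in the interior of $D$, and a smooth symplectomorphism $\alpha$ defined on $V''$ with $\alpha(p_{0}) = 0$ and image a small round disk inside $\Phi(V)$; such $\alpha$ is provided by Darboux's theorem in dimension two. By the Schoenflies theorem I can splice $\alpha$ on $V''$ with $\Phi|_{\partial V}$, extending continuously across the annulus $V \setminus V''$, to obtain a continuous injective map $\Phi_{1}: U \to \R^{2}$ that coincides with $\Phi$ outside $V$, still sends $D$ onto $\lambda B$, is a smooth symplectomorphism on $V''$ with $\Phi_{1}(p_{0}) = 0$, and satisfies
\[
\mathrm{Leb}(\Phi_{1}(D \setminus V'')) = \mathrm{Leb}(\lambda B) - \mathrm{Leb}(\alpha(V'')) = \mu_{0}(D) - \mu_{0}(V'') = \mu_{0}(D \setminus V'').
\]

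Third, I would correct the area globally by two applications of the Oxtoby--Ulam theorem. On the closed annulus $\Phi_{1}(\overline{D \setminus V''})$, both $(\Phi_{1})_{*} \mu_{0}$ and Lebesgue measure are non-atomic, positive on open sets, vanish on the boundary, and have equal total mass by the previous computation; Oxtoby--Ulam then yields a homeomorphism $\Theta$ of this annulus, equal to the identity on its boundary, with $\Theta_{*}(\Phi_{1})_{*} \mu_{0} = \mathrm{Leb}$. Extending $\Theta$ by the identity on $\alpha(V'')$ and outside $\lambda B$, the map $\Theta \circ \Phi_{1}$ is area-preserving on $D$, still a smooth symplectomorphism on $V''$, and still equals $\Phi$ on $\partial D$. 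To extend to $\overline{U \setminus D}$, I would first modify $\Theta \circ \Phi_{1}$ topologically outside $\lambda B$ by a radial homeomorphism so that its image on $\overline{U \setminus D}$ is an annular region of Lebesgue area equal to $\mu_{0}(U \setminus D)$, and then apply Oxtoby--Ulam a second time on that annulus while fixing $\partial(\lambda B)$. The resulting map $\Psi$ is continuous and injective on $U$, conjugates $f$ to $m_{-1}$ on $\partial D$, sends $\mu_{0}$ to Lebesgue measure, and is a smooth area-preserving diffeomorphism near $p_{0}$ with $\Psi(p_{0}) = 0$.

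The main technical difficulty is the bookkeeping of total masses required by Oxtoby--Ulam: the initial dilation arranges $\mathrm{Leb}(\Phi(D)) = \mu_{0}(D)$, and the local Darboux/Moser normalization around $p_{0}$ precisely matches the masses on the small disk $V''$, so that the two measures on the intermediate annulus automatically have equal total mass and Oxtoby--Ulam can be applied with boundary-fixing $\Theta$ and without disturbing either the boundary conjugation or the smooth model near $p_{0}$.
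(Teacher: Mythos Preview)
There is a genuine gap. The conjugation condition $f = \Psi^{-1} m_{-1} \Psi$ on $\partial D$ constrains $\Psi$ not only on $\partial D$ but also on $f(\partial D)$: for $x \in \partial D$ it reads $\Psi(f(x)) = m_{-1}(\Psi(x))$, so $\Psi$ is forced on the curve $f(\partial D)$ as well. Your Oxtoby--Ulam correction $\Theta$ on the annulus $\Phi_{1}(\overline{D \setminus V''})$ is only required to fix the two boundary circles $\partial(\lambda B)$ and $\partial(\alpha(V''))$; it has no reason to fix the arc $\Phi_{1}(f(\partial D)) \cap \lambda B$, which lies in the interior of that annulus. Hence after composing with $\Theta$ the identity $\Psi(f(x)) = m_{-1}(\Psi(x))$ will in general fail for those $x \in \partial D$ with $f(x) \in D$. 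The same problem recurs in your treatment of $U \setminus D$, since $f(\partial D)$ also crosses that region.

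This is exactly why the paper does not work with the annuli $D \setminus V''$ and $U \setminus D$, but rather with the \emph{five} bounded components of $U \setminus (\partial D \cup f(\partial D))$, whose areas are denoted $A,A',B,B',C$. To apply Oxtoby--Ulam on each of these components one needs the target regions in $\R^{2}$ to have the same areas, and a single dilation $\lambda$ cannot achieve this: it matches only the total area $\mu_{0}(D)$, not the four independent quantities among $A,A',B,B',C$ (subject to $A+A'=B+B'$). The substantive work in the paper's proof is the claim that for any admissible tuple $(A,A',B,B',C)$ one can build a disk $D_{0}$ in canonical position for $m_{-1}$ realizing exactly those component areas; only then can $\Psi$ be defined on $\partial D \cup f(\partial D)$ (and on a small smooth disk around $p_{0}$) and extended by Oxtoby--Ulam independently on each of the five pieces. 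Your argument skips this area-matching step and therefore cannot preserve the boundary conjugation.
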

 Let $D$ be the disk given by the first lemma, and  $\Psi$ be given by the second lemma. Define $g$ as the homeomorphism that coincides with $f$ on $U \setminus D$ and with $\Psi^{-1} m_{-1} \Psi$ on $D$. Then $g$ has only one fixed point and is smooth near $p_{0}$, as wanted. This concludes the proof in the case when $L(f,p_{0}) = -1$.

The proof when $L(f,p_{0})$ is any non-positive number $p$ is entirely similar. 
Instead of $m_{-1}$, one uses a model map $m_{p}$ of index $p$ which is smooth and is made of $2(1-p)$ hyperbolic sectors (obtained, for instance, by integrating an autonomous hamiltonian function with a degenerate critical point).  Likewise, the proofs of Lemma~\ref{lem:smoothable-negative2} and~\ref{lem:smoothable-negative3} are entirely similar for every nonpositive value of the index, and so, for expository reasons, they will be described only in the $-1$ case. 
\end{proof}

\begin{figure}[h!]
\centering
\def\svgwidth{0.8\textwidth}
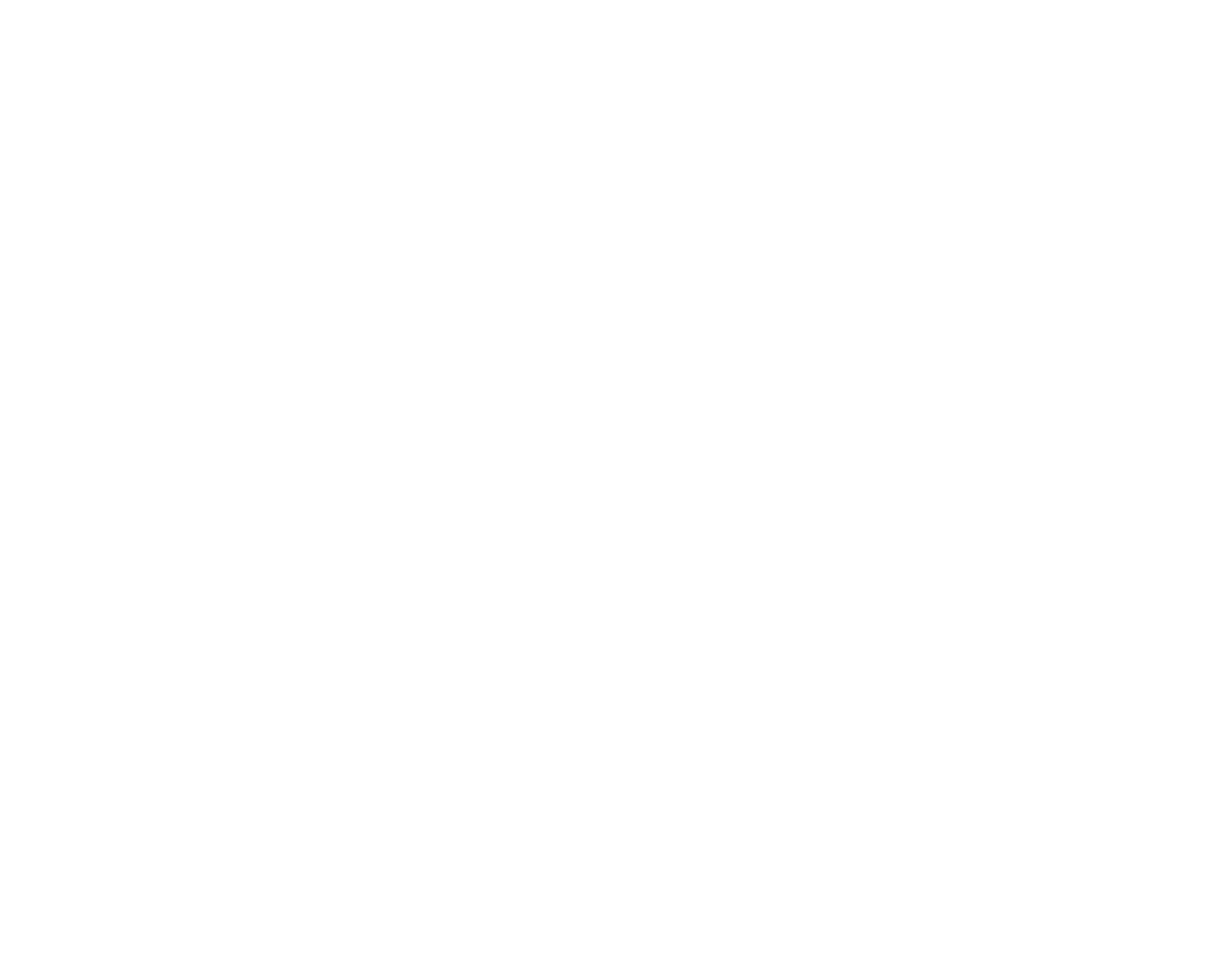
\caption{$f(\ell), f(\ell')$ are on the right-hand sides of  $\ell, \ell'$, respectively.  As a consequence $f(S\cap U') \subset S$.}
\label{fig:canonical1}
\end{figure}

\begin{figure}[h!]
\centering
\def\svgwidth{1.1\textwidth}
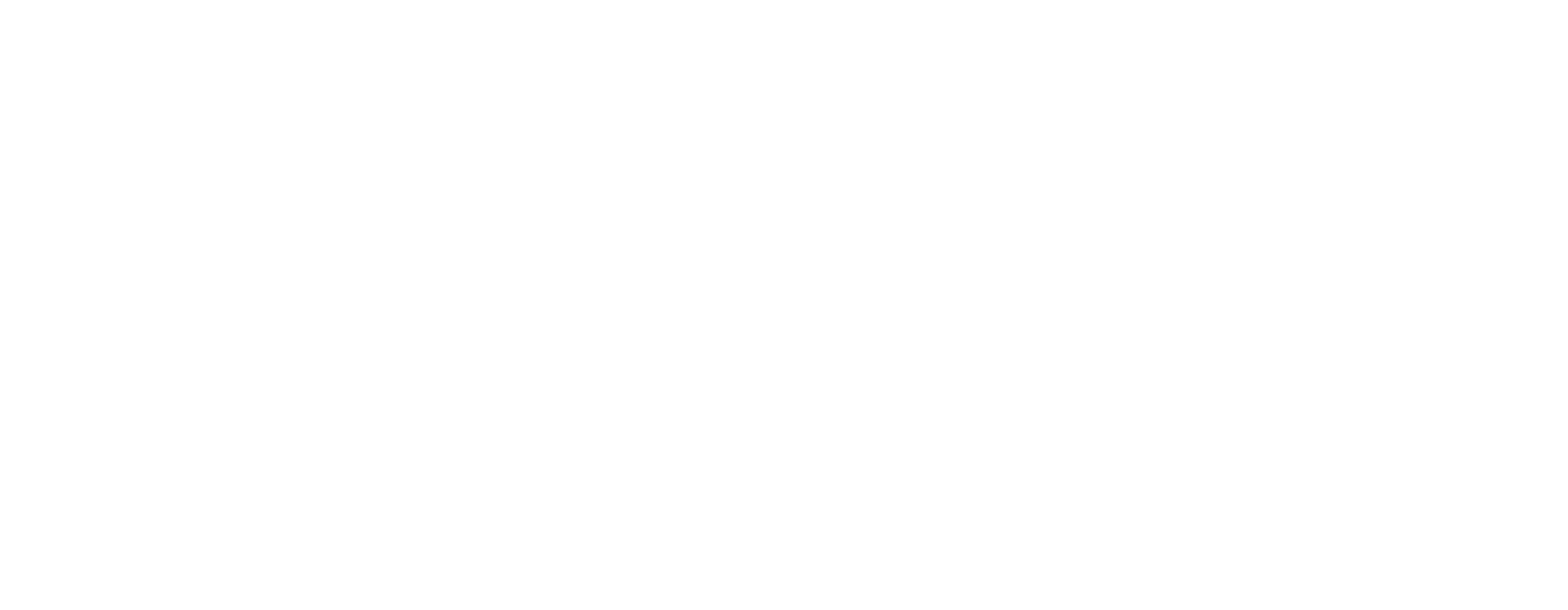
\caption{The figure on the left shows the cyclic order of the images of $\ell, \ell'$ under the iterates of $f$. The figure on the right depicts the construction of the curves $A_0, A_1, A_2, A_3$.}
\label{fig:canonical2}
\end{figure}

\begin{figure}[h!]
\centering
\def\svgwidth{0.8\textwidth}
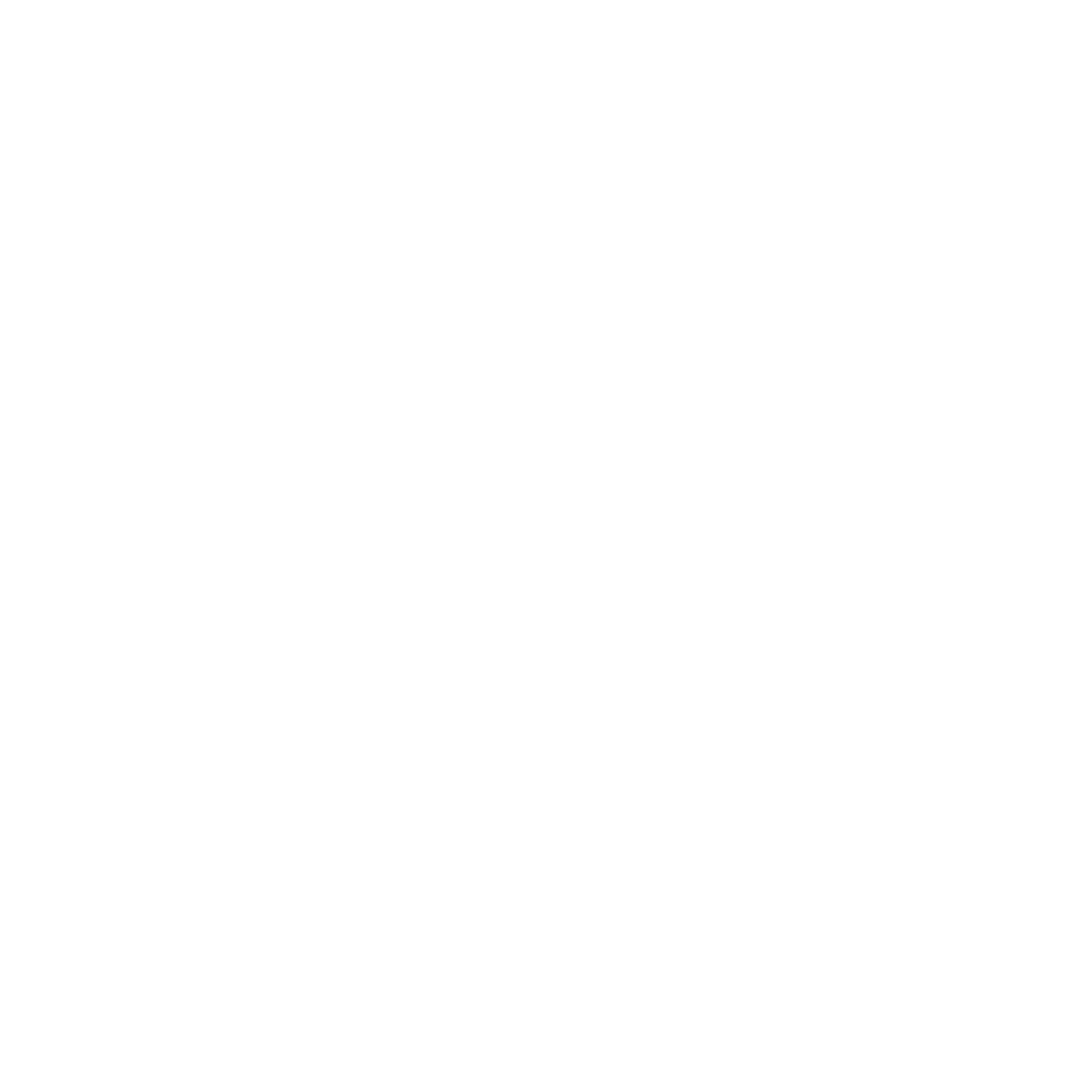
\caption{$\beta_1, \ldots, \beta_4$ are leaves of the transverse foliation.  The simple closed curve $C$ is the union of $\beta_1$, a segment along $A_1$, $f^{-1}(\beta_2)$, a segment along $A_2$, $\beta_3$, a segment along $A_3$, $f^{-1}(\beta_4)$, and a segment along $A_4$.  In the above picture, $C$ consists  of the green curves and segments along the blue curves.  The image $f(C)$ consists of the red curves and segments along the blue curves. }
\label{fig:canonical4}
\end{figure}

\begin{figure}[h!]
\centering
\def\svgwidth{1.0\textwidth}
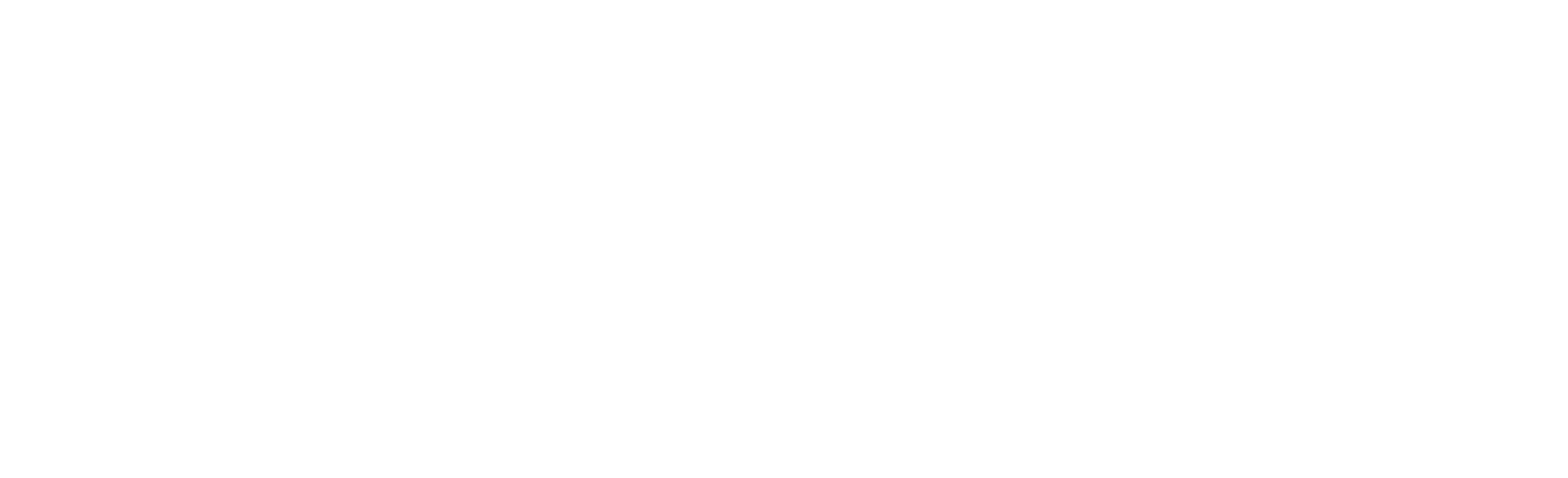
\caption{In the above image $P_i', Q_i'$ denote the preimages of $P_i, Q_i$ under $f, m_{-1}$, respectively. 
The curve $C'$ is in canonical position for $f$.  Note that the cyclic order of the points $P_0', P_0, P_1 , P_1', P_2', P_2, P_3, P_3'$ on $C'$ matches that of the points $Q_0', Q_0, Q_1 , Q_1', Q_2', Q_2, Q_3, Q_3'$.}
\label{fig:canonical5}
\end{figure}

\begin{proof}[Proof of Lemma~\ref{lem:smoothable-negative2}]
Remember that the isotopy $(I,V)$ is dynamically transverse to the foliation $(\cF,W)$, which by Proposition \ref{pro:smoothable-negative1} has no parabolic sectors, and whose Lefschetz index is $-1$: there exists a neighbourhood $U' \subset  V\cap W$ such that 
the trajectory of every point in $U' \setminus \{p_{0}\}$ is homotopic in $W{ \setminus \{p_0\}}$ to a curve which is positively transverse to $\cF$.
Let us call a leaf of $\cF$ whose $\alpha$ or $\omega$-limit set is $\{p_{0}\}$ a \emph{separatrix}. Two consecutive separatrices in the cyclic order around $p_{0}$ are of opposite types, one is a stable separatrix oriented towards $p_{0}$ and the other one an unstable separatrix oriented from $p_{0}$.
 Let $S$ be a sector in $U$ bounded by two consecutive separatrices $\ell, \ell'$, and which is on the right-hand side of both $\ell$ and $\ell'$.
By dynamical transversality we have $f(S \cap U') \subset S$; see Figure~\ref{fig:canonical1}. This implies that for every $n>0$, there is some neighborhood of $p_{0}$ in which the $n$ first iterates of $\ell$ and $\ell'$ are pairwise disjoint, and their cyclic order around $p_{0}$ is either given by $\ell, f(\ell), \dots, f^{n}(\ell), f^n(\ell'), \dots, f(\ell'), \ell'$ or by the reverse order; see Figure ~\ref{fig:canonical2}.

Denote by $\ell_0,\ell_1, \ell_2, \ell_3$ the four separatrices of $\cF$, {choosing the numbering so that the sector between $\ell_{0}$ and $\ell_{1}$ is attracting, as on the right-hand part of Figure~\ref{fig:canonical2}}. Let $S_i$ denote a small open sector between $\ell_i$ and its image. Choose for each $i$ a simple curve $\alpha_i$ close enough to the fixed point, joining one point on $\ell_i$ to its image, and whose interior is included in $S_i$. By local transversality the sector $S_i$ is disjoint from $f(S_i), f^2(S_i), f^3(S_i)$, so that $A_i = f^{-2}(\alpha_i) \cup f^{-1}(\alpha_i) \cup \alpha_i \cup f(\alpha_i)$ is a simple curve. Note that again by local transversality the $A_i$'s are pairwise disjoint; see Figure~\ref{fig:canonical2}.

Choose for each $i=1, \dots, 4$ a segment $\beta_i$ of a leaf of the foliation included in the hyperbolic sector between $\ell_{i-1}$ and $\ell_{i}$ (where the indices are taken modulo 4), very close to the boundary $\ell_{i-1} \cup \ell_{i}$ of the sector, with 
\begin{itemize}
\item one endpoint on $\alpha_{i-1}$  and the other  on $\alpha_i$ for $i=1,3$
\item one endpoint on $f^{-1}(\alpha_{i-1})$  and the other on $f^{-1}(\alpha_i)$ for $i=2,4$
\end{itemize}
  and whose interior is disjoint from $A_{i}$ and $A_{i-1}$; see Figure ~\ref{fig:canonical4}.
Let $C$ be the (only) simple closed curve included in the union of the curves
$$
\beta_1, A_1, f^{-1}(\beta_2), A_2, \beta_3, A_3, f^{-1}(\beta_4), A_4.
$$
The reader may check that $C \cap f(C)$ is the union of four segments included respectively in $A_1, A_2, A_3, A_4$; see Figure \ref{fig:canonical4}.  % figure~\ref{fig:canonical-position}, (5)).
We modify slightly  $C$ near these segments to get a Jordan curve $C'$ such that $C' \cap f(C')$ consists in four points $P_0, P_1, P_2, P_3$ respectively close to  $A_0, A_1, A_2, A_3$; see Figure \ref{fig:canonical5}.

We claim that the disk bounded by $C'$ is in canonical position for $f$. To see this we define the homeomorphism $\Phi$ as follows. 
Let $D$ be the unit disk, $Q_{0}, Q_1, Q_2, Q_3$ be the points in $D \cap m_{-1}(D)$ as in Figure~\ref{fig:canonical5}. We orient  $\partial D$ and $C'$ in the counter-clockwise direction.
Note that the cyclic order of the points $P_i, f^{-1}(P_{i})$'s along $C'$ coincides with the cyclic order of the points $Q_i, m_{-1}^{-1}(Q_{i})$  on $\partial D$, so that there exists an orientation preserving homeomorphism from $C'$ to $\partial D$ that sends $P_i$ to $Q_i$ and $f^{-1}(P_i)$ to  $m_{-1}^{-1}(Q_i)$ for every $i$. Let $\Phi$ coincide with this homeomorphism on $C'$. Then we extend $\Phi$ on $f(C')$ by the formula 
$$
\Phi = m_{-1} \Phi_{\mid C'} f^{-1}.
$$
The complement of $C'\cup f(C')$ in $U$  has six connected components.  For each connected component, say $\Delta$, it can easily be checked that $\Phi(\partial \Delta)$ is the boundary of some connected component $\Delta'$ of $\partial D \cup m_{-1}(\partial D)$ in the plane. Furthermore, the map $\Delta \mapsto \Delta'$ is a bijection.
Thus, we can use Schoenflies's theorem independently on each $\Delta$ to extend $\Phi$ to a homeomorphism between $U$ and $\mathbb{R}^2$ as required by the definition of canonical position.
\end{proof}

\begin{proof}[Proof of Lemma~\ref{lem:smoothable-negative3}]
If $D$ is a disk in canonical position for $f$, then $U \setminus (\partial D \cup f(\partial D))$ has five bounded connected components (we call unbounded the connected component which contains a neighborhood of the boundary of $U$). We denote their respective areas by $A,A',B,B',C$ so that 

\begin{enumerate}
\item $C$ is the area of $D \cap f(D)$, 
\item $D \setminus f(D)$ has two connected components with areas $A,A'$, 
\item $f(D) \setminus D$ has two connected components with areas $B,B'$.
\end{enumerate}
 Note that since $f$ preserves the area we have $A+A'=B+B'$.  See Figure \ref{fig:canonical6}.
 
\begin{figure}[h!]
\centering
\def\svgwidth{.8\textwidth}
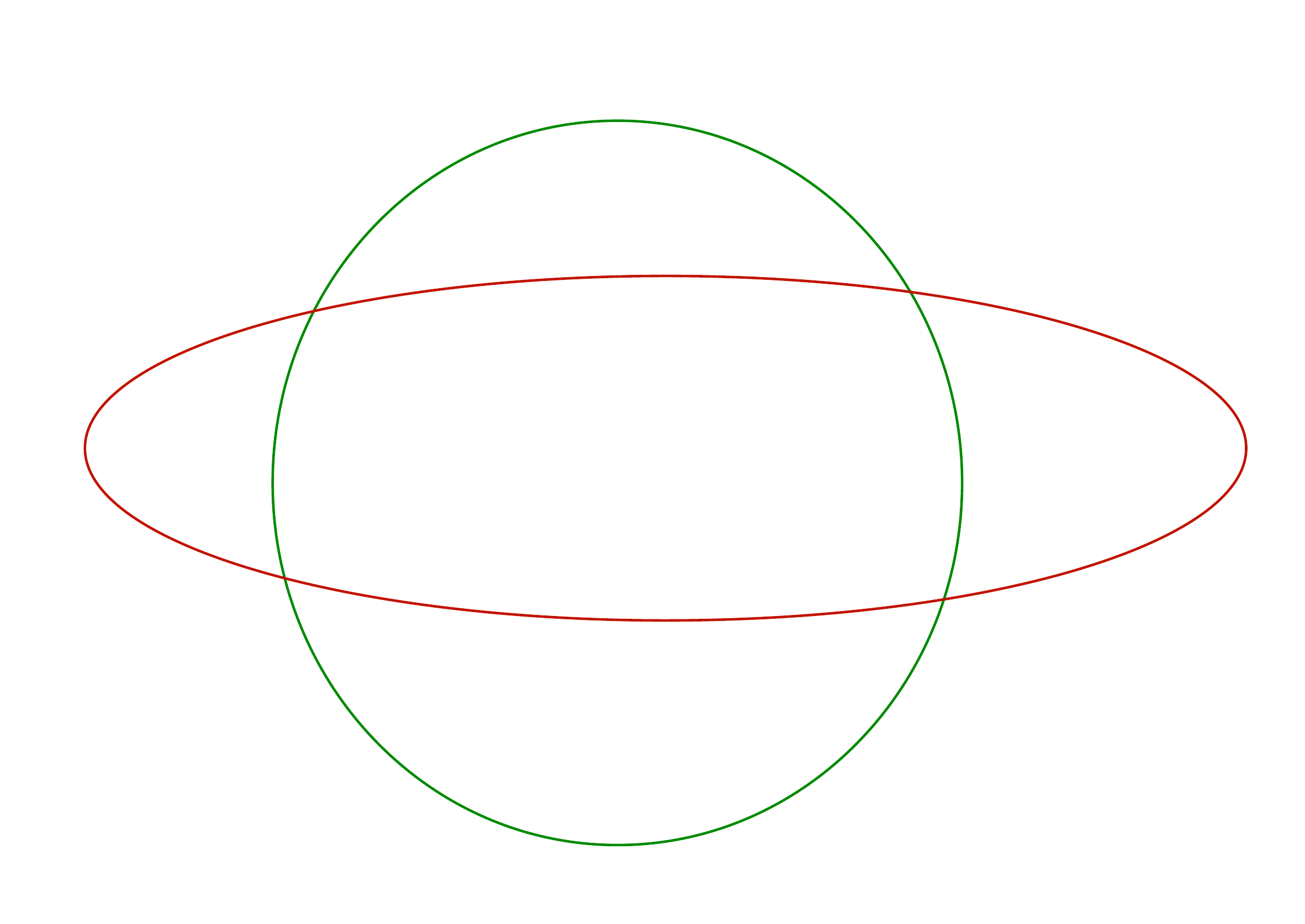
\caption{The areas of the five bounded connected components of $D \cup f(D)$ are denoted by $A,A', B, B'$.  Note that $A+A' = B + B'$.}
\label{fig:canonical6}
\end{figure}

\begin{figure}[h!]
\centering
\def\svgwidth{.8\textwidth}
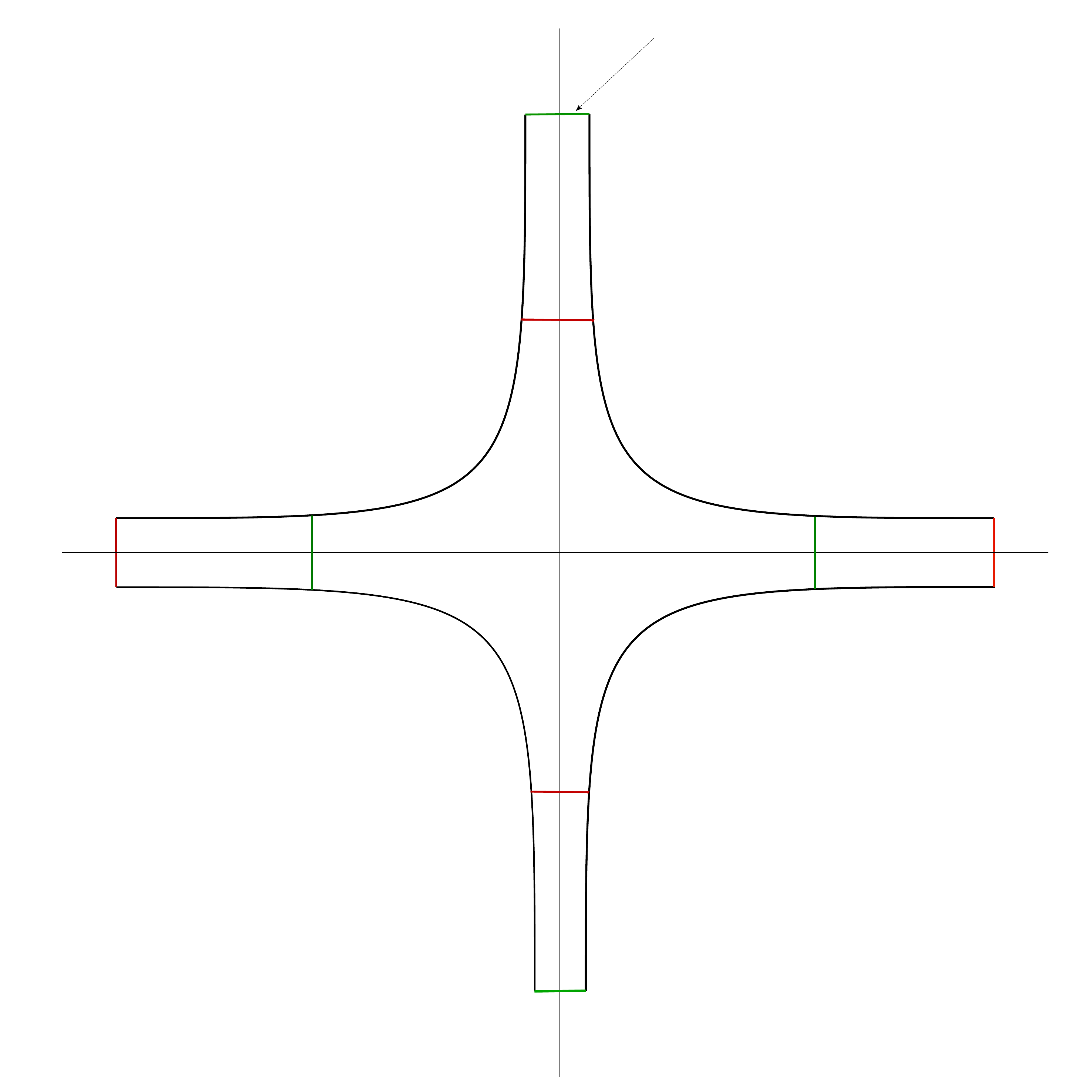
\caption{Obtaining small values of $\rho :=\frac{A}{C}$ in the case where $A = A' = B =B'$: $D_0$ is the disk bounded by the curves $x= \pm \frac{1}{2 \eps}, y = \pm \frac{1}{\eps}$ (in green) and segments along $xy \pm 1$. Note that $xy = \pm 1$ is invariant under $m_{-1}$ and so the image of $D_0$ under $m_{-1}$ is bounded by $x= \pm \frac{1}{\eps}, y = \pm \frac{1}{2\eps}$ (in red) and segments along $xy = \pm 1$.  By picking $\eps$ to be small we can attain arbitrarily large values of $C$. The disk $D_0$ is not in canonical position but one can make a small perturbation to place it in canonical position.}
\label{fig:canonical7}
\end{figure}
 
 We will make use of the following claim whose proof is given below.
\begin{claim}
There exists a disk $D_0$ in canonical position for the model map $m_{-1}$ such that the above properties (1), (2), (3) hold with the same values of $A,A',B,B', C$ when  $D$ and $f$ are replaced by $D_0$ and $m_{-1}$.
\end{claim}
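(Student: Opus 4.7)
\textit{Proof proposal.} The strategy is to construct $D_0$ explicitly, exploiting the fact that $m_{-1}$ has the same combinatorial dynamics as $f$ near $p_0$: four hyperbolic sectors separated by the invariant axes, with the invariant curves $\{xy=c\}$ providing a controlled family of level sets. Since the claim is a purely local matching of five numerical invariants, the linear model affords us complete freedom in deforming the disk. I will first produce a flexible ``base'' construction of canonical-position disks, and then verify by a parameter count that every admissible 5-tuple $(A,A',B,B',C)$ is realized.

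For the base construction, fix four points $Q_0, Q_1, Q_2, Q_3$ arranged cyclically (one in each open quadrant of the coordinate axes), in the same combinatorial position as the $P_i$'s on $C'$ in Figure~\ref{fig:canonical5}, so that $Q_0, m_{-1}^{-1}(Q_1), Q_1, m_{-1}(Q_0), \ldots$ appear in the prescribed cyclic order along any Jordan curve through them. Then $D_0$ will be the Jordan domain whose boundary consists of four arcs $a_1,a_2,a_3,a_4$ joining consecutive $Q_i$'s, each arc contained in one of the open quadrants. By construction, $\partial D_0 \cap m_{-1}(\partial D_0)$ consists of exactly the four points $\{Q_i\}$, and a direct verification (using the same Schoenflies argument as in the proof of Lemma~\ref{lem:smoothable-negative2}) shows that the disk is in canonical position for $m_{-1}$. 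The four complementary bounded regions of $\partial D_0 \cup m_{-1}(\partial D_0)$ are of two types: two are ``outgoing'' pieces of $D_0 \setminus m_{-1}(D_0)$ (in the contracting $y$-direction), and two are ``incoming'' pieces of $m_{-1}(D_0) \setminus D_0$ (in the expanding $x$-direction), with the fifth region being the central intersection.

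To match the prescribed areas $(A,A',B,B',C)$ with $A+A'=B+B'$, I would proceed in two steps. First, I would adjust the overall size and aspect ratio of the $Q_i$'s to achieve the central area $C$, using the family in Figure~\ref{fig:canonical7} bounded by the segments of $x=\pm\frac{1}{2\varepsilon}, y=\pm\frac{1}{\varepsilon}$ and pieces of $xy=\pm 1$: this family realizes arbitrary values of $C$ (as $\varepsilon \to 0$) with $A+A'$ comparatively small, and more general configurations of $Q_i$'s span all admissible ratios. Second, keeping the four points $Q_i$ (and hence the central region) fixed, I would deform the four arcs $a_1,\ldots,a_4$ outward or inward inside their respective quadrants. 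Each arc $a_i$ carries one real parameter (say, the signed area it sweeps relative to its initial position), and these four parameters independently control $A, A', B, B'$. Area preservation under $m_{-1}$ automatically enforces $A+A'=B+B'$, so the four-parameter family surjects onto the four-dimensional space of admissible $(A,A',B,B')$ by a straightforward continuity/intermediate-value argument.

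The main obstacle I anticipate is verifying that the deformation can be carried out while keeping $\partial D_0 \cap m_{-1}(\partial D_0)$ equal to exactly the four points $\{Q_i\}$ throughout: as one pushes an arc $a_i$ across the invariant hyperbolae, one must avoid creating spurious tangencies or extra intersections with $m_{-1}(\partial D_0)$. This can be arranged by keeping each arc $a_i$ strictly inside the open quadrant containing it and bounded away from the image arcs, exactly as in Figure~\ref{fig:canonical7} where a small perturbation off the invariant curves restores canonical position. Once this combinatorial control is in place, the rest is a straightforward parameter-matching argument.
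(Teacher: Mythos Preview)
Your proposal has a genuine gap: the coupling between $\partial D_0$ and $m_{-1}(\partial D_0)$ is ignored. The claim ``keeping the four points $Q_i$ (and hence the central region) fixed'' is false. The region $C = D_0 \cap m_{-1}(D_0)$ is bounded not only by arcs of $\partial D_0$ but also by arcs of $m_{-1}(\partial D_0)$; when you deform an arc $a_i$ of $\partial D_0$, its image $m_{-1}(a_i)$ moves as well, and portions of $m_{-1}(a_i)$ lie on $\partial C$. Concretely, for the unit disk the top arc $a_1$ of $\partial D_0$ maps under $m_{-1}$ to the whole top of the ellipse $m_{-1}(\partial D_0)$; the middle segment of that image separates $A$ from $C$, while its two ends bound parts of $B$ and $B'$. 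So deforming $a_1$ alone changes $A$, $C$, $B$, and $B'$ simultaneously. Your subsequent claim that the four arc-parameters ``independently control $A, A', B, B'$'' is therefore unjustified, and no argument is supplied for surjectivity of the resulting coupled map.

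The paper's proof is organised precisely around this coupling. It first treats the symmetric case $A=A'=B=B'$: since $m_{-1}$ commutes with homotheties, only the ratio $\rho = A/C$ matters, and explicit disks realise every value of $\rho$. Crucially, the strip $X$ attached to $\partial D_0$ in Figure~\ref{fig:canonical8} is placed so that its image $m_{-1}(X)$ lands on the \emph{outer} boundary of a $B$-region, leaving $C$ untouched. For the general case the paper then \emph{exploits} the coupling rather than trying to decouple: starting from the symmetric disk for $(A,A,A,A,C)$, it modifies $\partial D_0$ by strips whose $m_{-1}$-images add areas $B-A$ and $B'-A$ to the two side regions, while the strips themselves (their $m_{-1}^{-1}$-preimages, of the same total area) all land in the single region destined to become $A' = A + (B-A) + (B'-A)$. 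That is the missing idea: one cannot make the four outer regions independent, but one can place the modifications so that the forced coupling produces exactly the required asymmetry.
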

Postponing the proof of the above claim, we continue with the proof of Lemma~\ref{lem:smoothable-negative3}. Using the definition of canonical position both for $f$, $D$ and for $m_{-1}$, $D_0$,
we get a local homeomorphism $(\Phi,U)$ 
%{\sobhan [I think technically $\Phi$ should be defined on $U$, the domain of $f$. \fred I have replaced $\Phi$ by $(\Phi,U)$ ]}  
taking $D$ to $D_0$ such that 
$\Phi f \Phi^{-1} = m_{-1}$ on the boundary of $D_0$.
We first define $\Psi$ on $\partial D \cup f(\partial D)$ by setting $\Psi=\Phi$ there.
Then, {we choose a small smooth disk $\delta$ around $p_{0}$, a smooth disk $\delta_{0}$ around $0$, included in $D_{0}$, having the same area as $\delta$,
 and define $\Psi$ on $\delta$ to be any area-preserving diffeomorphism between $\delta$ and $\delta_{0}$ that sends $p_{0}$ to $0$}.
Let $M$ be one of the five bounded connected components of the complement of $(\partial D \cup f(\partial D)) \cup \delta$. Let $M'$ be the corresponding  connected component of the complement of $\partial D_0 \cup m_{-1}(\partial D_0) \cup \delta_{0}$.
Then $M'$ has the same area as $M$, so we can apply the Oxtoby-Ulam Theorem (\cite{OxUl41}, Corollary~1 in part~II; see also Theorem~3.1 in \cite{fathi80}) and extend  $\Psi$ to an area preserving homeomorphism from $M$ to $M'$. 

%\begin{figure}[ht!]
%\includegraphics[width=4cm]{FIGURES/canonical-position-2-1.pdf}
%\includegraphics[width=4cm]{FIGURES/canonical-position-2-2.pdf}
%\includegraphics[width=4cm]{FIGURES/canonical-position-2-3.pdf}
%\caption{proof of the claim in Lemma~\ref{lem:smoothable-negative3}}
%\label{fig:canonical-position2}
%\end{figure}

\begin{figure}[h!]
\centering
\def\svgwidth{.6\textwidth}
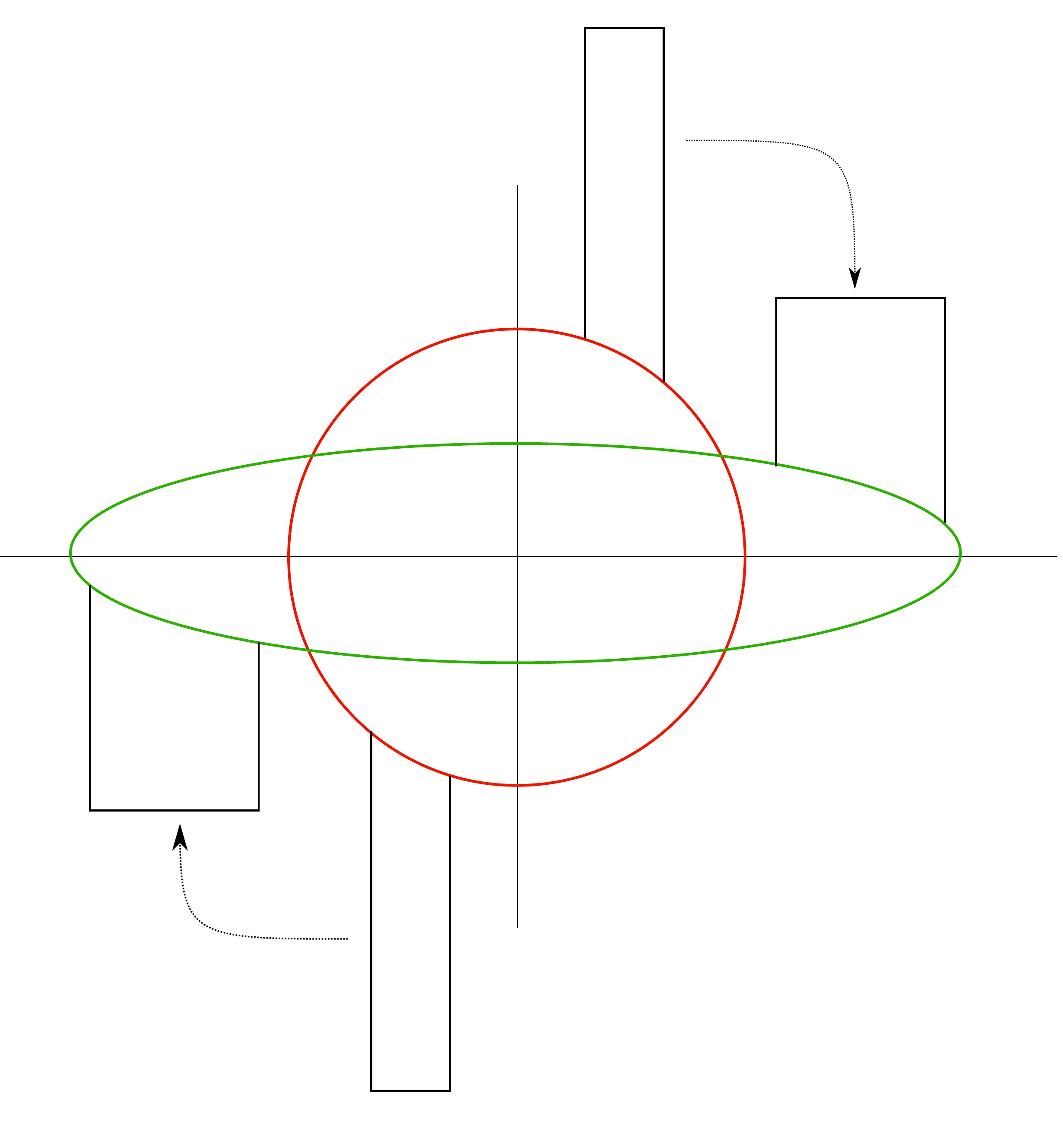
\caption{Obtaining values of $\rho > \rho_0$ in the case where $A=A' = B = B'$: The disk bounding the regions $A, A', C$ is in canonical position and $\rho_0 =\frac{A}{C}$.   We attach two strips of equal area labeled $X, X'$ as depicted above.  The strips $Y, Y'$ are the images of the previous strips under $m_{-1}$. The disk bounding the regions $X, A, C, A', X'$ is also in canonical position.  For this disk we have $\rho = \frac{A+X}{C}$.  By picking $X$ suitably we can attain any value $\rho > \rho_0$.}
\label{fig:canonical8}
\end{figure}

It remains to prove the claim.
We first treat the case when $A=A'$ and $B=B'$. In this case we have $A=B$.
Let $\rho = A/C$. The map $m_{-1}$ commutes with the homothety $z \to \lambda z$, thus the image under $z \to \lambda z$ of a disk in canonical position for $m_{-1}$  is still a disk on canonical position. Consequently we just have to check that we may obtain every value of $\rho$.

{Figures~\ref{fig:canonical7} explains how to construct a disk in canonical position with an arbitrarily small value of $\rho$.  On the other hand, consider given a disk in canonical position with some value of $\rho = \rho_0$. Figure \ref{fig:canonical8} explains how to modify the given disk to obtain a disk in conical position for any value of $\rho > \rho_0$. }

%We first explain how to get an arbitrarily small value of $\rho$. Consider a disk whose boundary consists of four large curves, one in each connected component of the curve $xy=\pm 1$, two small horizontal segment and two small vertical segments. Since the curve $xy=\pm 1$ is invariant under $m_{-1}$, we can turn this disk  into a disk $D_0$ in canonical position by making a small pertubation. A simple computation gives $A \simeq 2\log(2)$ while $C$ is arbitrarily large, so that $\rho$ is arbitrarily small. Now let $\rho_0$ be an obtaineable value. Let $S$ be the open strip bounded by the two lines $x=\varepsilon$ and $x=2\varepsilon$ for some small $\varepsilon$. This strip is disjoint from its image under $m_{-1}$. Using this strip and the symmetric one, we can add to $D$ four rectangles with arbitrarily large equal area. For the resulting new disk the value of $C$ is unchanged while $A$ and $B$ are increased by the the added areas.

Now we turn to the general case. Note that the rotation by one quarter of a turn conjugates $m_{-1}$ and its inverse. 
Up to replacing the disk $D$ by its image under this rotation, we may assume $A < B < B' < A'$.  We begin by constructing a disk, say $D_0$, in canonical position for which $A, C$ have the required values and $B=B'=A'=A$. Such disk exists by what was explained above.  Now, we modify the disk $D_0$ by attaching two strips of areas $B-A$ and $B'-A$ respectively to the regions which  are supposed to eventually have areas $B$ and $B'$; this is depicted in Figure \ref{fig:canonical9}. To adjust the area of the region which is supposed to have area $A'$, we attach to it the preimages, under $m_{-1}$, of the previous two strips.  It is easy to see that this region will have area $A + (B-A) + (B' - A) = A'$.

\begin{figure}[h!]
\centering
\def\svgwidth{.6 \textwidth}
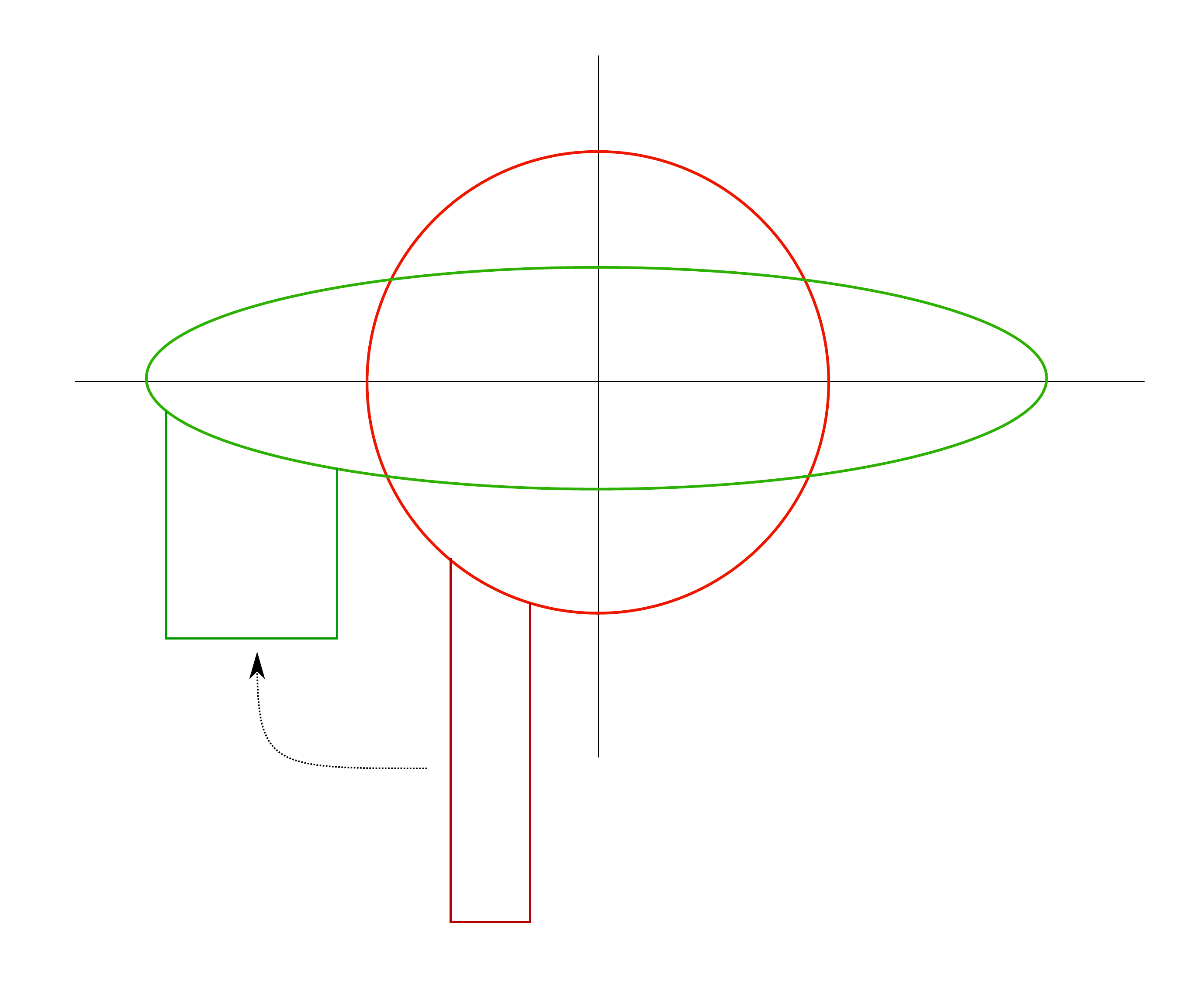
\caption{We construct the regions $B, B'$ by attaching strips of area $B-A$, $B'-A$ (in green) as above.  The region $A'$ is obtained by attaching the preimages, under $m_{-1}$, of the previous strips (in red).}
\label{fig:canonical9}
\end{figure}

\end{proof}
   
 \medskip

\begin{proof}[Proof of Proposition~\ref{pro:smoothable-negative1}]~ Let $(f,U)$  be as in the statement, we apply Theorem~\ref{theo.locally-transverse-foliation} to get a local isotopy $(I,V)$ for $(f,U)$ and a local foliation $(\cF,W)$ which is dynamically transverse to $I$ and has the right index. Since $L(\cF,p_{0}) = L(f,p_{0}) \leq 0$, the foliation $\cF$ consists of at least two hyperbolic sectors, and maybe some parabolic sectors. If there is no parabolic sector then we are done.  Assume that $\cF$ has a parabolic sector.
{Proposition~\ref{pro:smoothable-negative1} is an immediate consequence of the following claim by a descending induction on the number of parabolic sectors around $p_{0}$.
\begin{claim}\label{claim:removing-sector}
In this situation, there exists an area preserving local homeomorphism $(f',U)$ with a single fixed point $p_{0}$, that coincides with $f$ near the boundary of $U$, and a local isotopy $(I',V)$ for $f'$ with a dynamically transverse foliation $(\cF', W)$ that has one less parabolic sector than $(\cF,W)$.
\end{claim}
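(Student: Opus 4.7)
My plan is to perform an area-preserving surgery inside one parabolic sector, using the normal-form chart provided by Lemma~\ref{lemma-normal-form-foliation}. Choose a parabolic sector $S$ of $\cF$ with edges $\partial^\pm S$, and apply the lemma to get an area-preserving diffeomorphism $\Phi\co O' \to \Phi(O')\subset \R^2$, defined on a neighbourhood $O'$ of $S \setminus \{p_0\}$, sending $\partial^-S$, $\partial^+ S$ to Euclidean rays in $\{y=0, x>0\}$, $\{y=x, x>0\}$, and sending $\cF|_{O'}$ to a smooth foliation transverse to the vertical foliation. The idea is to modify $f$ inside $S$ so that in a sub-sector the new transverse foliation is the pullback by $\Phi^{-1}$ of the horizontal foliation $\{y=\text{const}\}$; the horizontal leaves go from one edge of $S$ to the other without reaching the singularity, so $S$ is absorbed into its two hyperbolic neighbours, which now meet along a single common separatrix. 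This produces a new gradient-like foliation with one fewer parabolic sector and the same number of hyperbolic sectors, hence the same index at $p_0$.

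Working in normal-form coordinates, I modify $\tilde f = \Phi f \Phi^{-1}$ by post-composition with the time-one map $\sigma$ of a Hamiltonian flow $\sigma_t$. The Hamiltonian $K$ is chosen so that it is compactly supported in a sub-region $R$ of $\Phi(S)$ strictly bounded away from the origin and from the rays $\{y=0\}$, $\{y=x\}$; inside a smaller sub-sector $S' \subset R$, the Hamiltonian vector field $X_K$ is a fixed positive multiple of $\partial/\partial y$. Concretely, one can take $K$ to be the product of the linear function $x$ (whose Hamiltonian vector field, for the standard symplectic form, is $\partial/\partial y$) with a suitable bump function. This guarantees that $\sigma$ is area-preserving, compactly supported, identity near $p_0$ and near the boundary of $U$, and has no fixed points in its support. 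Since the only fixed point of $\tilde f$ is $(0,0)$, which lies outside $R$, the composition $\tilde f' = \sigma \circ \tilde f$ has no new fixed points in $R$.

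Define the new map $f'$ on $U$ to agree with $f$ outside $\Phi^{-1}(R)$ and with $\Phi^{-1}\tilde f' \Phi$ inside; define the new local foliation $\cF'$ to agree with $\cF$ outside $\Phi^{-1}(S')$ and, inside $\Phi^{-1}(S')$, to be the pullback under $\Phi^{-1}$ of $\{y=\text{const}\}$, smoothly interpolated with $\cF$ across $\partial \Phi^{-1}(S')$; and define the new local isotopy $I'$ as the concatenation of $I$ (reparametrised onto $[0,\frac{1}{2}]$) with the pullback under $\Phi^{-1}$ of $\sigma_t$ (reparametrised onto $[\frac{1}{2},1]$). Then $f'$ is area-preserving, agrees with $f$ near $\partial U$, and has $p_0$ as its only fixed point, while $\cF'$ has one fewer parabolic sector than $\cF$ and the same hyperbolic ones. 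Dynamic transversality of $(I',\cF')$ is inherited from $(I,\cF)$ outside $\Phi^{-1}(S')$; inside $\Phi^{-1}(S')$, the $\sigma_t$-portion of the trajectory is nearly vertical by the choice of $X_K$, hence positively transverse to the horizontal foliation, provided the amplitude of $K$ is large enough to dominate the $\tilde f$-contribution.

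The main obstacle is this last verification of dynamic transversality together with the smooth matching across $\partial\Phi^{-1}(S')$: the concatenated trajectory under $I'$ of every point in the modified region must be homotopic, in $W\setminus\{p_0\}$, to a curve which is positively transverse to $\cF'$ everywhere, and the interpolation between the horizontal foliation on $\Phi(S')$ and the original $\Phi_*\cF$ on $\Phi(R)\setminus \Phi(S')$ must be performed without creating a closed leaf or destroying the gradient-like character. The normal form plays a crucial role here because it reduces the problem to an explicit Euclidean model, where area-preserving horizontal perturbations combine freely with vertical pushes; careful choice of nested cut-offs $S' \Subset R$ and of the amplitude of $K$ then yields the required $(f',I',\cF')$.
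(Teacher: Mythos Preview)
There are two genuine gaps. First, you place the support $R$ of $\sigma$ and the foliation-modification region $S'\subset R$ ``strictly bounded away from the origin''. Consequently, on a neighbourhood of $p_0$ you have $f'=f$ and $\cF'=\cF$. But the decomposition of a gradient-like local foliation into hyperbolic and parabolic sectors is a property of its germ at $p_0$; a modification supported away from $p_0$ cannot change the sector count. (Your picture of horizontal leaves ``going from one edge of $S$ to the other'' is also off: in the normal form $\partial^-S\subset\{y=0\}$ and $\partial^+S\subset\{y=x\}$, so the line $y=c>0$ meets $\partial^+S$ but never $\partial^-S$.) Second, the assertion that $\sigma\circ\tilde f$ has no fixed points in $R$ is a non-sequitur: such a fixed point is a solution of $\tilde f(p)=\sigma^{-1}(p)$, and knowing that $\tilde f$ and $\sigma$ separately have no fixed points there does not rule this out. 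You give no mechanism (size control, transversality argument, index obstruction) preventing such a coincidence.

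The paper's argument is organised differently. It first applies the ``parabolic pushing'' Proposition~\ref{prop:parabolic-pushing}: the auxiliary isotopy $J$ is supported on an annulus encircling $p_0$ (so its support accumulates on $p_0$), and near $p_0$ its time-one map $g$ carries $\partial^-S$ into $\partial^+S$. The absence of new fixed points for $fg$ is not free; it follows because a fixed point of $fg$ would produce a \emph{closed} curve positively transverse to $\cF$, which by Lemma~\ref{lemma-locally-transverse} would force $\cF$ to be a sink or source, contradicting $L(\cF,p_0)\le 0$. With $I'=IJ$ now \emph{sweeping out} the sector (every trajectory from $S$ near $p_0$ crosses $\partial^+S$), Lemma~\ref{lemma-modification-foliation} builds a new foliation $\cF'$ whose leaves in $S$ are, near $p_0$, horizontal segments continued into leaves of the adjacent hyperbolic sector; the parabolic sector is thereby collapsed to a single separatrix, and the sweeping-out property is exactly what is used to check dynamical transversality of $I'$ to $\cF'$.
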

It remains to prove the claim.
}
We consider a parabolic sector $S$ which is \emph{maximal} in the following sense: for every parabolic sector $S'$ that contains $S$ there exists a neighborhood $U'$ of $p_{0}$ such that $S' \cap U' = S \cap U'$. Proposition~\ref{prop:parabolic-pushing} will apply and provide an isotopy $(I',V)$ that ``sweeps out the parabolic sector $S$'' in the following sense.
\begin{definition}
Let $(I,V)$ 
be a local isotopy, dynamically transverse to a local foliation $(\cF,W)$, and $S$ be a parabolic sector of $\cF$. We denote by $\partial^- S, \partial^+ S$ the two leaves of $\cF$ which bound $S$ near $p_{0}$, so that $S$ is locally on the right-hand side of the oriented leaf $\partial^- S$ and on the left-hand side of $\partial^+ S$.
We say that \emph{$I$ sweeps out the parabolic sector $S$} if the trajectory of every point $x$ in $S \setminus\{p_{0}\}$ close enough to the fixed point $p_{0}$ has positive intersection with the leaf $\partial^+S$; see Figure \ref{fig:sweep}.
\end{definition}

\begin{figure}[h!]
\centering
\def\svgwidth{.6 \textwidth}
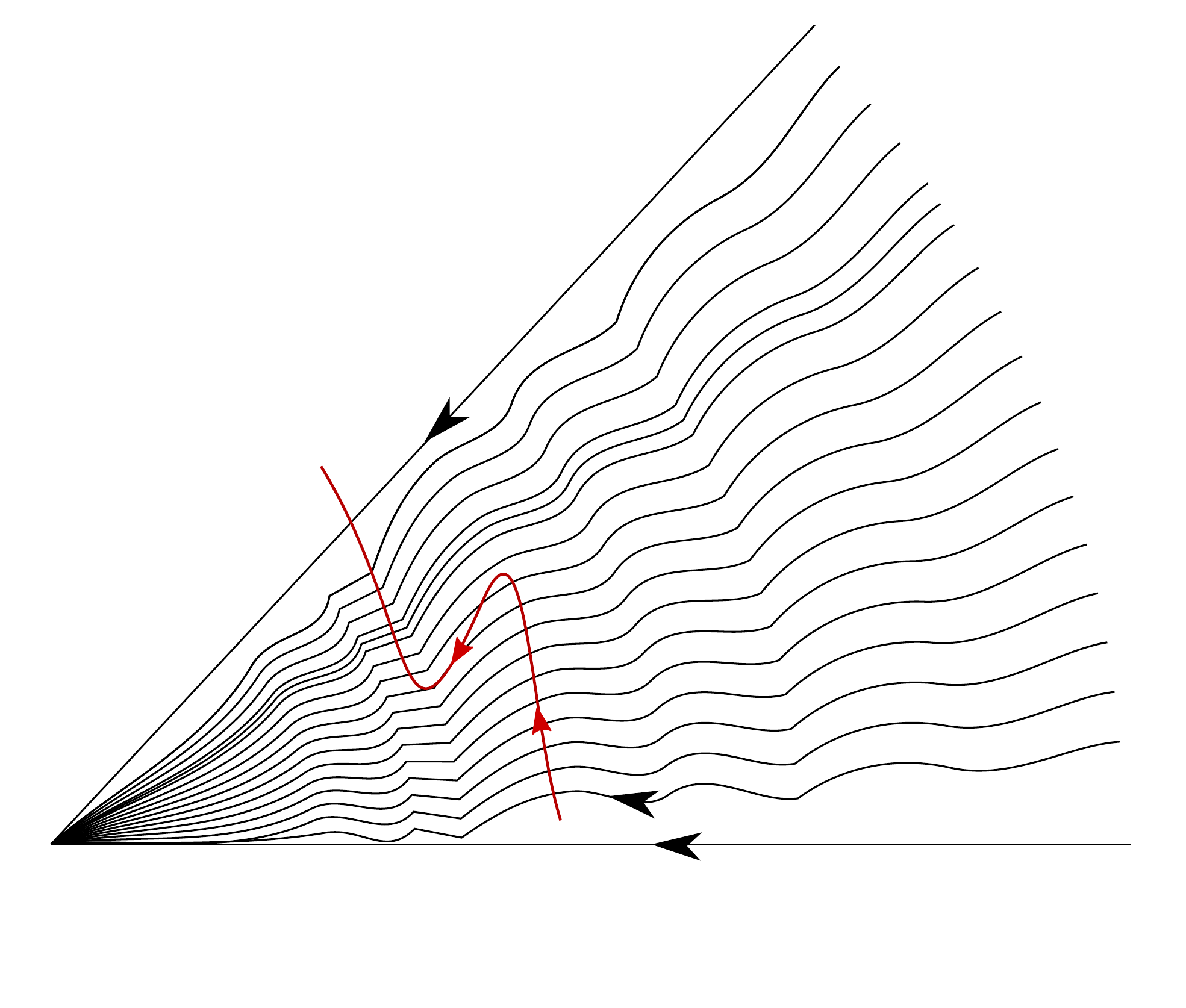
\caption{The isotopy $I$, whose trajectories are in red, sweeps out the parabolic sector $S$.}
\label{fig:sweep}
\end{figure}
Indeed, point 3 of the proposition ensures that $I'=IJ$ sweeps out the sector $S$. In our situation $\cF$ has non positive index, thus the last part of the proposition applies and says that $p_{0}$ is the only fixed point of $f'=fg$.

\begin{prop}[Parabolic Pushing]
\label{prop:parabolic-pushing}
Let $(I,V)$ be a local isotopy for an area preserving local homeomorphism $(f,U)$ dynamically transverse to a local foliation $(\cF,W)$. Assume $p_{0}$ is the only fixed point of $f$ in $U$. Let $S\subset U $ be a parabolic sector of $\cF$ and $O \subset U$ be some open set  containing $S\setminus \{p_{0}\}$.

Then, there exists an isotopy $J$ for some area preserving  homeomorphism $g,$ with the following properties:
\begin{enumerate}
\item $J$  is supported in $\overline{O}$,
%\item $(J,U)$  is dynamically  transverse to $(\cF,W)$, {\fred this is not adequate, since $J$ is supported in $O$: the transversality is only in a weak sense, since lots of points are not moving. Maybe we should just say that $IJ$ is transverse to $\cF$.}
\item the local isotopy $(IJ,U)$ is dynamically transverse to $(\cF,W)$,
\item there exists some neighborhood $U'$ of $p_{0}$ such that $g(U' \cap \partial^-S) \subset \partial^+S$.

\end{enumerate}
Furthermore, under any of the following two hypotheses, we may require that $p_{0}$ be the only fixed point of  $fg$ in $U$:
\begin{enumerate}
\item [(i).]  $\cF$ has nonpositive index, 
\item [(ii).] for every $p$ in $O$ such that $f(p)$ is in $O$, the trajectory $I.p$ is homotopic in $U\setminus \{p_{0}\}$ to a curve included in $O$.
\end{enumerate}
\end{prop}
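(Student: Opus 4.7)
The construction of $J$ is entirely local to the parabolic sector $S$: I will build $J$ inside the normal-form chart provided by Lemma~\ref{lemma-normal-form-foliation}, then extend by the identity. First, shrink $O$ to an open neighborhood $O''$ of $S\setminus\{p_{0}\}$ contained in $O$ and chosen small enough that $O''\setminus\{p_{0}\}$ is contained in a simply-connected open subset of $W\setminus\{p_{0}\}$ (e.g.\ so that $W\setminus O''$ still contains a curve from $p_{0}$ to $\partial W$); this reserve will be crucial for Step~3. Apply Lemma~\ref{lemma-normal-form-foliation} to get an area-preserving diffeomorphism $\Phi:O'\to\Phi(O')\subset\R^{2}$ with $O'\subset O''$, sending $\partial^{-}S,\partial^{+}S$ to rays on $\{y=0,x>0\}$ and $\{y=x,x>0\}$, and such that $\Phi(\cF)$ is transverse to vertical lines near $0$ in $\{x\geq 0\}$. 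Choose $\varepsilon>0$ small, a smooth function $\chi$ with $\chi(x)=x$ on $[0,\varepsilon]$ and $\mathrm{supp}(\chi)\subset[-2\varepsilon,2\varepsilon]$, and a smooth bump $\rho:\R^{2}\to[0,1]$ equal to $1$ on a rectangle containing $\{(x,y):0\leq x\leq\varepsilon,\ 0\leq y\leq x\}$ and compactly supported in $\Phi(O')$. Let $H(x,y)=-\rho(x,y)\int_{0}^{x}\chi(u)\,du$, and let $J_{\Phi}$ be the associated Hamiltonian isotopy. Near the origin, $X_{H}=\chi(x)\partial_{y}$, and its time-$1$ flow sends $(x,0)$ to $(x,x)$ for $x\in[0,\varepsilon]$. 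Transport $J_{\Phi}$ back through $\Phi$ and extend by the identity to obtain an area-preserving isotopy $J$ of $U$ with support in $\overline{O'}\subset\overline{O}$, which already yields properties~(1) and~(3) with $U'=\Phi^{-1}(\{0<x<\varepsilon\})$.

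For property~(2), observe that the $IJ$-trajectory of a point $p$ is, by the definition of concatenation, a reparametrization of the $J$-trajectory of $p$ followed by the $I$-trajectory of $g(p)$. In the chart the first piece is a vertical segment traced upward; since the sector $\{0<y<x\}$ is $S$ and since moving \emph{into} $S$ from $\partial^{-}S$ is the direction consistent with the orientation induced by positive transversality of $I$ near $\partial^{-}S$ (replace $\chi$ by $-\chi$ and swap the labels of the edges otherwise), this segment is positively transverse to $\Phi(\cF)$. The second piece is homotopic rel endpoints in $W\setminus\{p_{0}\}$ to a positively transverse curve by the assumed dynamical transversality of $I$. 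Since the concatenation of two piecewise positively transverse curves is positively transverse, property~(2) holds.

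For the non-existence of new fixed points, note that $g$ is the identity off $\overline{O}$, so any fixed point $p\neq p_{0}$ of $fg$ must lie in $\overline{O}$; its $IJ$-trajectory is then a closed loop $\gamma$ in $U\setminus\{p_{0}\}$. By property~(2) just proved, $\gamma$ is freely homotopic in $W\setminus\{p_{0}\}$ to a closed curve positively transverse to $\cF$. Under hypothesis~(i), the foliation $\cF$ has nonpositive index, so by the topological classification recalled in Section~\ref{sec:trans-fol} it has at least two hyperbolic sectors and is therefore neither a sink nor a source; the second assertion of Lemma~\ref{lemma-locally-transverse} then rules out the existence of \emph{any} positively transverse closed curve, contradicting the previous sentence. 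Under hypothesis~(ii), the $I$-trajectory of $g(p)$ is homotopic in $U\setminus\{p_{0}\}$ to a curve in $O$, while the $J$-trajectory of $p$ lies in $\overline{O'}\subset\overline{O''}$ by construction, so $\gamma$ is freely homotopic in $U\setminus\{p_{0}\}$ to a loop contained in $\overline{O''}\setminus\{p_{0}\}$; by the choice of $O''$ at the start of the proof, such a loop is contractible in $W\setminus\{p_{0}\}$, contradicting the first assertion of Lemma~\ref{lemma-locally-transverse}.

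The main obstacle is case~(ii): there the loop $\gamma$ is not immediately localized, and one has to rule out winding around $p_{0}$ using only the weak homotopy-into-$O$ property of $I$. The resolution is to prepare $O''$ in advance so that its intersection with a small neighborhood of $p_{0}$ is contained in a simply-connected wedge of $W\setminus\{p_{0}\}$ (this is possible since $S\setminus\{p_{0}\}$ is itself simply connected and accumulates $p_{0}$ only along the two edges of the sector). The orientation verifications in Step~2, and the compatibility of the direction of the vertical push with the dynamical transversality convention, are the remaining delicate points; they are handled by the freedom to replace $\chi$ by $-\chi$ (equivalently, to reverse the roles of $\partial^{-}S$ and $\partial^{+}S$), which is legitimate since both parabolic-sector conventions (positive and negative) are covered by Lemma~\ref{lemma-normal-form-foliation}.
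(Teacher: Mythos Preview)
Your construction has the right shape, but there is a genuine gap in the ``no new fixed points'' step. You argue that any fixed point $p\neq p_{0}$ of $fg$ lies in $\overline{O}$ (correct), and then invoke property~(2) to say that the closed trajectory $IJ.p$ is homotopic in $W\setminus\{p_{0}\}$ to a positively transverse curve. But property~(2), i.e.\ dynamical transversality of $IJ$, is a \emph{local} statement: it only guarantees this for points in some small neighborhood of $p_{0}$. Nothing in your construction forces the hypothetical fixed point $p$ to lie in that neighborhood. Likewise, your verification of property~(2) itself assumes that the $J$-trajectory of $p$ is a vertical segment, which is only true where $\rho\equiv 1$; where $\rho$ varies, $X_{H}$ picks up an $x$-component and the trajectory need not be transverse to $\cF$. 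So you cannot appeal to Lemma~\ref{lemma-locally-transverse} at a fixed point that might sit in the cutoff region of $\rho$.

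The paper closes exactly this gap by a $C^{0}$ smallness condition: it first fixes a small box $W'$ on which transversality of $I$ is already known, then sets $\delta=\min_{p\in U\setminus W'} d(f(p),p)>0$, and only then constructs $g$ (via an annular twist $g_{t}(r,\theta)=(r,\theta+t\Delta(r))$ with $\Delta(r)=r$ near $0$ and $\Delta$ small elsewhere) so that $d(g(p),p)<\delta$ for \emph{every} $p$ and $g(W')\subset V'$. These two conditions force any fixed point of $fg$ into $W'$, where the transversality argument applies verbatim. Your Hamiltonian push could be salvaged along the same lines---take $\varepsilon$ small and arrange the cutoff so that the time-$1$ displacement is everywhere less than $\delta$---but this step is essential and cannot be omitted.
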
   

Next, we apply Lemma~\ref{lemma-modification-foliation} below to modify the foliation into a new transverse foliation $(\cF',W)$ with the parabolic sector $S$ replaced by a single leaf.  To state the lemma we need the following notions.  Let $\ell$ be a leaf of $\cF$ whose $\alpha$ or $\omega$-limit set is $\{p_{0}\}$, and let $p$ be a point on $\ell$.  We will refer to the connected component of $\ell\setminus\{p\}$ whose closure contains $p_{0}$ as a \emph{half-leaf of $\cF$}.  
%We say that two foliations $\cF$ and $\cF'$ are equal on some open set $O$ if every point in $O$ has a neighborhood which is in a common chart for $\cF$ and $\cF'$.   
\begin{lemma}[Modification of the foliation]
\label{lemma-modification-foliation}
Let $(I,V)$ be a local isotopy, dynamically transverse to a gradient-like local  topological foliation $(\cF,W)$. Let $S$ be a maximal parabolic sector of $\cF$, and  $O$ be some open set containing  $S\setminus \{p_{0}\}$, such that every half-leaf of $\cF$ which is included in $O$ is also included in $S$.

Assume $I$ sweeps out the sector $S$. Then there exists a local  topological  foliation $(\cF',W)$ which is dynamically transverse to $(I,V)$, equal to $\cF$ outside $\overline{O}$, and that has a single half leaf included in $O$.
\end{lemma}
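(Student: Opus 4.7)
The plan is to use Lemma~\ref{lemma-normal-form-foliation} to reduce the parabolic sector to a standard smooth model, replace $\cF$ inside a slightly smaller sub-sector by an explicit model foliation having only one half-leaf ending at $p_{0}$, and interpolate between the old and new foliations in a buffer region; the sweeping hypothesis is then used to verify dynamical transversality of the resulting foliation with $I$. Since both hypotheses and conclusion of the lemma are topological, I would first replace $\cF$ in a neighbourhood of $\overline S$ by a homeomorphic smooth gradient-like foliation with the same combinatorial type of sectors, using the topological classification of gradient-like local foliations from~\cite{leroux13}, Appendix~B. I then apply Lemma~\ref{lemma-normal-form-foliation} to obtain a smooth area-preserving chart $\Phi$ defined on an open set $O' \subset O$ containing $S \setminus \{p_{0}\}$, sending $\partial^{-}S$ and $\partial^{+}S$ to the rays $\{y=0,\, x>0\}$ and $\{y=x,\, x>0\}$, and sending $\cF$ to a smooth foliation $\cF_\Phi$ positively transverse to the vertical foliation of $\{x \geq 0\}$.

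Next, in the image $T = \Phi(S)$, I would construct a model foliation $\cF_0$ in which $\partial^{\pm}S$ remain leaves, a single half-leaf $\ell_* = \{y = x/2,\, x \geq 0\}$ ends at the origin, and the remaining leaves fill the two sub-regions $\{0 < y < x/2\}$ and $\{x/2 < y < x\}$ by arcs bounded away from the origin and positively transverse to the vertical foliation. I then choose nested sub-sectors $S'' \subset S' \subset S$ with $\overline{S'} \setminus \{p_{0}\} \subset O'$, and define $\cF'$ to equal $\cF$ outside $\overline{S'}$, $\Phi^{*}\cF_{0}$ inside $S''$, and an interpolation through foliations transverse to vertical lines on the buffer $S' \setminus \overline{S''}$. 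The resulting topological foliation $\cF'$ coincides with $\cF$ outside $\overline O$, and its only half-leaf contained in $O$ is $\Phi^{-1}(\ell_*)$.

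Finally, the sweeping hypothesis, combined with the dynamical transversality of $I$ with $\cF$, provides for every $p$ in $S$ sufficiently close to $p_{0}$ a curve $\gamma$ homotopic to $I.p$ in $W \setminus \{p_{0}\}$ whose image under $\Phi$ has strictly increasing $x$-coordinate on its portion inside $O'$. Any such monotone-in-$x$ curve is positively transverse to every foliation transverse to vertical lines, and hence in particular to $\cF'$. Concatenating with a transverse representative of $I.p$ for $\cF$ outside $O'$ (where $\cF' = \cF$) yields a representative of $I.p$ positively transverse to $\cF'$, establishing the desired dynamical transversality.

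The main obstacle is the construction of the model foliation $\cF_0$ so that $\partial^{\pm}S$ remain leaves, $\ell_*$ is the unique half-leaf at the origin, and all other leaves stay bounded away from the origin while remaining transverse to the vertical foliation; this requires a careful choice of asymptotic behaviour of the new leaves as they approach $\ell_*$ and $\partial^{\pm}S$. The sweeping hypothesis plays the crucial role in the transversality argument, since it is exactly what yields the monotonicity of the $x$-coordinate along trajectories of $I$ inside $S$, and without this monotonicity the interpolation could easily fail to be dynamically transverse.
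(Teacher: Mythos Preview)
Your construction has a counting error that cannot be repaired within your framework. You keep both edges $\partial^{-}S$ and $\partial^{+}S$ as leaves of $\cF'$ and insert a new middle half-leaf $\ell_{*}$; since all three end at $p_{0}$ and lie in $S \setminus \{p_{0}\} \subset O$, your $\cF'$ has at least three half-leaves in $O$, not one. More fundamentally, as long as you modify $\cF$ only inside a region $S' \subset S$ and set $\cF' = \cF$ outside $\overline{S'}$, both edges $\partial^{\pm}S$ survive as half-leaves of $\cF'$ and you can never drop below two. The paper's construction is essentially different: it exploits that $S$, being maximal, borders a hyperbolic sector $H$ along $\partial^{+}S$, and modifies the foliation in $S$ \emph{and} in a thin strip of $H$ along $\partial^{+}S$ (this is permitted because $O$ is open, contains $\partial^{+}S \setminus \{p_{0}\}$, and the interior of $H$ carries no half-leaves of $\cF$). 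The new leaves are ``good curves'' that start as horizontal segments inside $S$, cross the old edge $\partial^{+}S$, and then continue along a genuine leaf of $\cF$ inside $H$. In this way $\partial^{+}S$ is dissolved, and $\partial^{-}S$ becomes the single surviving half-leaf in $O$.

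Your transversality argument is also unsupported. Lemma~\ref{lemma-normal-form-foliation} only asserts that the leaves of $\Phi(\cF)$ are transverse to vertical lines; a curve positively transverse to such a foliation need not have monotone $x$-coordinate (for a radial sink foliation, positively transverse curves are roughly circular arcs). The sweeping hypothesis says trajectories cross $\partial^{+}S$, which is an angular condition and does not give monotonicity in $x$. The paper argues differently: for each good curve $F'$ it checks directly that $f(F' \cap V)$ lies on the right-hand side of $F'$ in $W$. For the horizontal portion of $F'$ inside $S$ this uses the sweeping property (points of $S$ are sent by $f$ into $H$); for the portion of $F'$ inside $H$ it uses that $H$ is a hyperbolic sector of the original transverse foliation, so images of its leaves already lie on the correct side.
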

The new foliation has one less parabolic sector than the original one. This completes the proof of Claim~\ref{claim:removing-sector}, and thus also the proof of Proposition~\ref{pro:smoothable-negative1}.
\end{proof}

% Given a maximal parabolic sector $S$, Proposition~\ref{prop:parabolic-pushing} below allows to modify the isotopy into a new isotopy that ``sweeps out the parabolic sector $S$'': 

\begin{proof}[Proof of the parabolic pushing proposition]
 Up to conjugation by an orientation reversing map, we may assume that the leaves of $\cF$ through any point of $S$ have their $\omega$-limit set equal to $\{0\}$, that is, $S$ is a positive parabolic sector.
%Define the parabolic sector $S$ to be positive if the leaves of $\cF$ throug any point in $S$ have their $\omega$-limit set equal to $\{0\}$, and negative otherwise.
%In the negative case, the $\alpha$-limit sets are $\{0\}$. If $\Phi$ is a local orientation reversing homeomorphism then conjugating the isotopy $I$ by $\Phi$ yields an isotopy which is dynamically transverse to the foliation $\Phi(\cF)$ with the opposite orientation. Thus we may always assume that we are in the positive case. {\fred just say that we may assume we are in the positive case?}
By the ``normal form'' lemma (Lemma \ref{lemma-normal-form-foliation}), we may assume the following: $U,V,W \subset \R^2$, $p_{0}= 0$, $S$ locally coincides with the sector $0< x, 0 \leq y \leq x$, the leaf $\partial^-S$ is included in $y=0$ and oriented towards $0$,  $O$ contains $(0,\varepsilon) \times (-\varepsilon,\varepsilon)$ and on this set the topological foliation $\cF$ is transverse to the vertical foliation. Up to shrinking $U$ and $O$ we may assume that $O = U \cap \{x>0\}= (0,\varepsilon) \times (-\varepsilon,\varepsilon)$.

%\begin{figure}[ht]
%\begin{center}
%\includegraphics[width=6cm]{FIGURES/pushing-foliation.pdf}
%\end{center}
%\caption{Pushing a parabolic sector}
%\end{figure}

\begin{figure}[h!]
\centering
\def\svgwidth{.8 \textwidth}
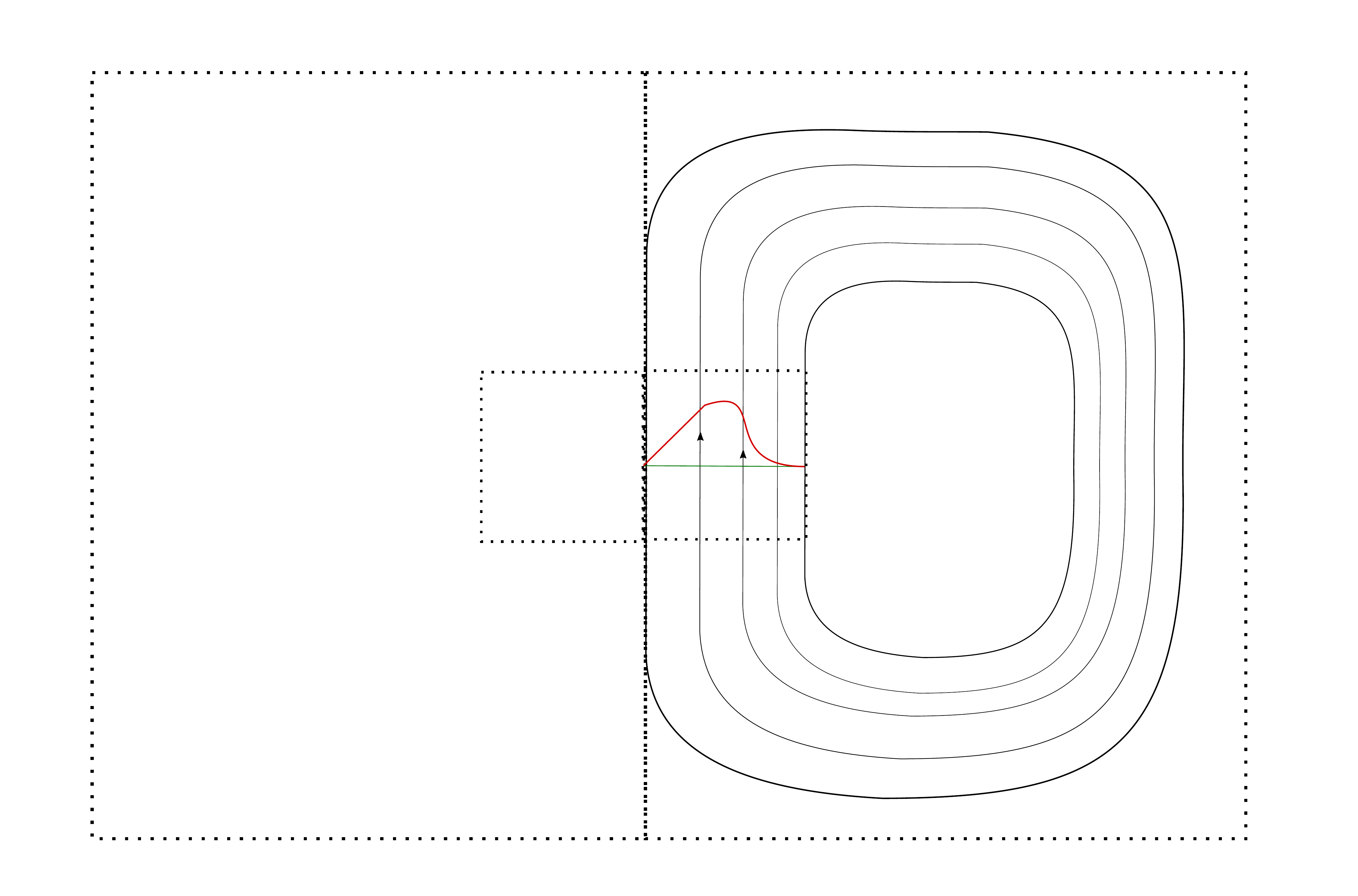
\caption{The isotopy $J$ is supported in the annulus $i(A)$ and sends $\partial^{-}S$ to $\partial^{+}S$ near $p_0$.}
\label{fig:push}
\end{figure}

Let $V'= (-v,v) \times (-v,v) \subset U$  be an open neighborhood of $0$ on which $I$ is transverse to $\cF$ (in the sense of the definition of local transversality).
Fix some small $a>0$ and consider the annulus $A = [0,a] \times \mathbb{S}^1$ equipped with coordinates $(r,\theta)$ and the standard area form $dr d\theta$, and let $i : A \to \overline{O}$ be a symplectic embedding 
such that for every $(r,\theta)$ near $(0,0)$, $i(r,\theta)=(r,\theta)$, where we use the cartesian coordinate system on the target $\R^2$. In particular the curves $r=\text{constant}$ coincide with the vertical lines. Up to shrinking $V'$, we assume $i(r,\theta)=(r,\theta)$ on $V' \cap \{x>0\}$.

Let $W' = (-w,w) \times (-w,w)$ with $w<v$. Since $f$ has no fixed point in $U$ there is some $\delta>0$ such that $d(f(p),p)>\delta$ for every $p \in U \setminus W'$.
Let $J=(g_{t})$ be an isotopy supported on the annulus $i(A)$ defined by $g_{t}(r,\theta) = (r,\theta+t\Delta(r))$
where $\Delta(r)=r$ near $r=0$ and $\Delta$ is small enough so that $g_{1}(W') \subset V'$ and $d(g_{1}(p),p)< \delta$ for every $p$; see Figure \ref{fig:push}.

We leave it to the reader to check that $J$ meets  the first and the third properties. As for the second property, note that for every $p$ in $W'$, the trajectory of $p$ under $J$ is a vertical segment, and thus it is transverse to the foliation. Note that  if $p \in W'$, then $g(p) \in V'$ and thus the trajectory of $p$ under $IJ$ is homotopic to a curve positively transverse to $\cF$ ; in particular the local isotopy $(IJ,V)$ is dynamically  transverse to $(\cF,W)$.

Now assume some point $p \in U \setminus\{p_{0}\}$ is fixed under $fg$. We will prove that none of the two hypotheses (i), (ii) of the last part of the proposition hold.
Since $g$ is supported in $\overline{O}$ and $f$ has no fixed point in $U \setminus \{p_{0}\}$, $p$ must belong to $O$. By definition of $\delta$, the map $fg$ has no fixed point in $U \setminus W'$, thus $p$ must belong to $W' \cap O$. By the previous paragraph, the trajectory $IJ.p$ is homotopic in $U$ to a closed curve positively transverse to $\cF$.  According to Lemma~\ref{lemma-locally-transverse},  $IJ.p$ is not contractible in $U \setminus \{0\}$  and  $\cF$ must be a sink or a source. In particular, $L(\cF,0) =1$, and hypothesis (i) does not hold. Since $J.p$ is included in $O$, the point $p'=g(p)$ is in $O$ and its image $f(p')=p$ is also in $O$, and the trajectory $I.p'$ is not homotopic in $\R^2 \setminus \{0\}$  to a curve included in $O$ (otherwise $IJ.p$ would be contractible in $O \subset \R^2 \setminus \{0\}$). Thus hypothesis (ii) does not hold.
\end{proof}

\begin{proof}[Proof of Lemma~\ref{lemma-modification-foliation}]
{As in the proof of the previous lemma, we may assume the parabolic sector $S$ is positive in the sense that $\{0\}$ is the $\omega$-limit set of any leaf of $\cF$ which is contained in $S$}. By dynamical transversality and the fact that the sector is swept out by the isotopy, up to decreasing $V,W$ we may assume that 
 (1) the trajectory of every point in $V \setminus \{0\}$ is homotopic in $W \setminus \{0\}$ to a curve which is positively transverse to $\cF$, and
 (2) the trajectory of every point in $S \cap V \setminus \{0\}$ has positive intersection number with $\partial^+S$.
%From now on we consider the foliation $\cF_{U}$ which is the trace of the foliation in $U$. {\fred notation to be defined before}
Since $\cF$ is gradient-like and the parabolic sector is maximal, it is adjacent to two hyperbolic sectors.
We consider a (non area preserving) chart in which  $V = [-1,1]^2$, $W=[-2,2]^2$, the parabolic sector $S$ is $W \cap \{x \geq0, y \geq 0\}$, 
 and $W \cap \{x \leq0, y \geq 0\}$ is a hyperbolic sector $H$; see Figure \ref{fig:gluing1}. The open ball with center $P$ and radius $\varepsilon$ will be denoted by $B_{\varepsilon}(P)$.
 
 \begin{figure}[h!]
\centering
\def\svgwidth{.8 \textwidth}
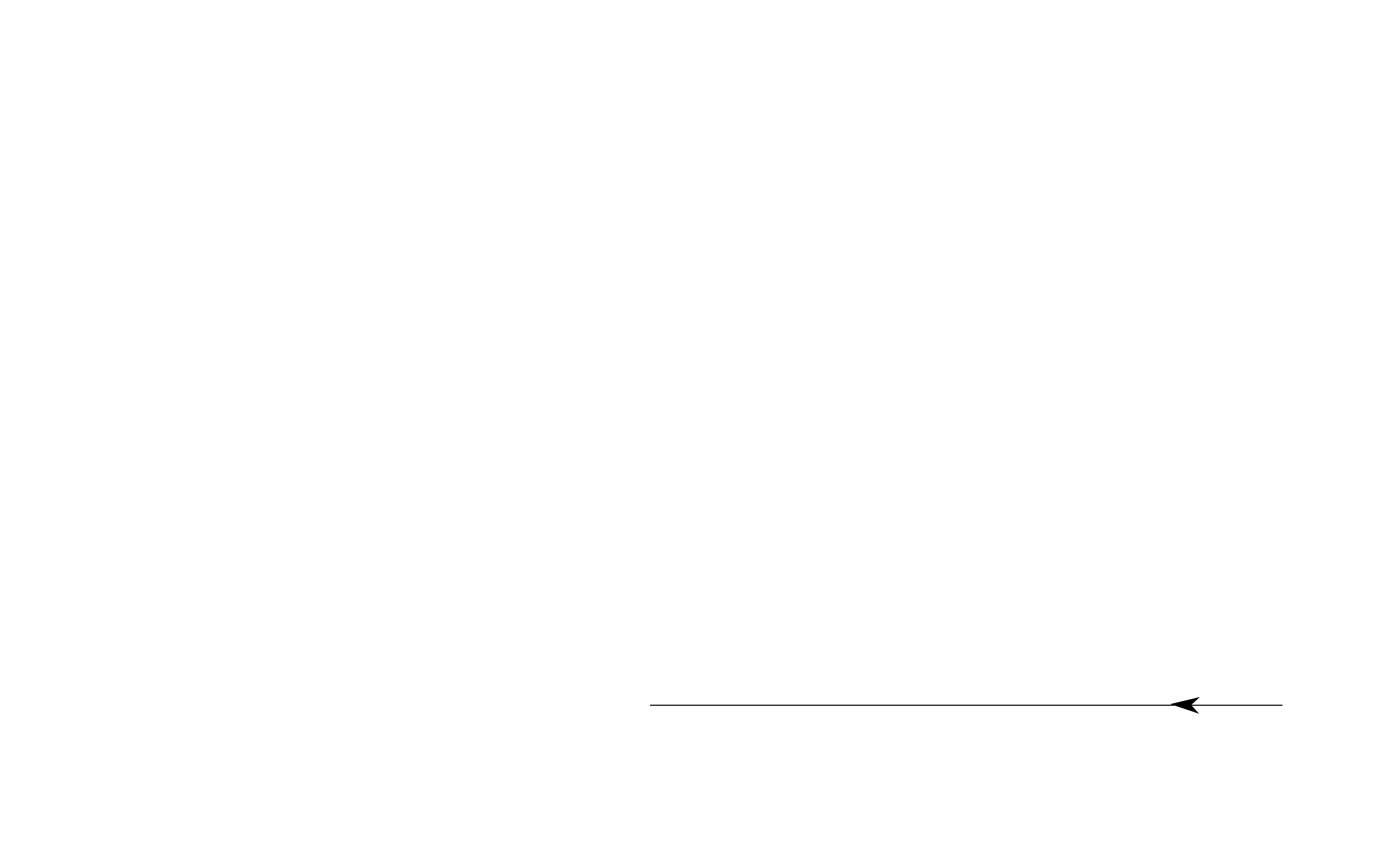
\caption{The maximal parabolic sector $S$ and an adjacent hyperbolic sector $H$.}
\label{fig:gluing1}
\end{figure}

 \begin{figure}[h!]
\centering
\def\svgwidth{.8 \textwidth}
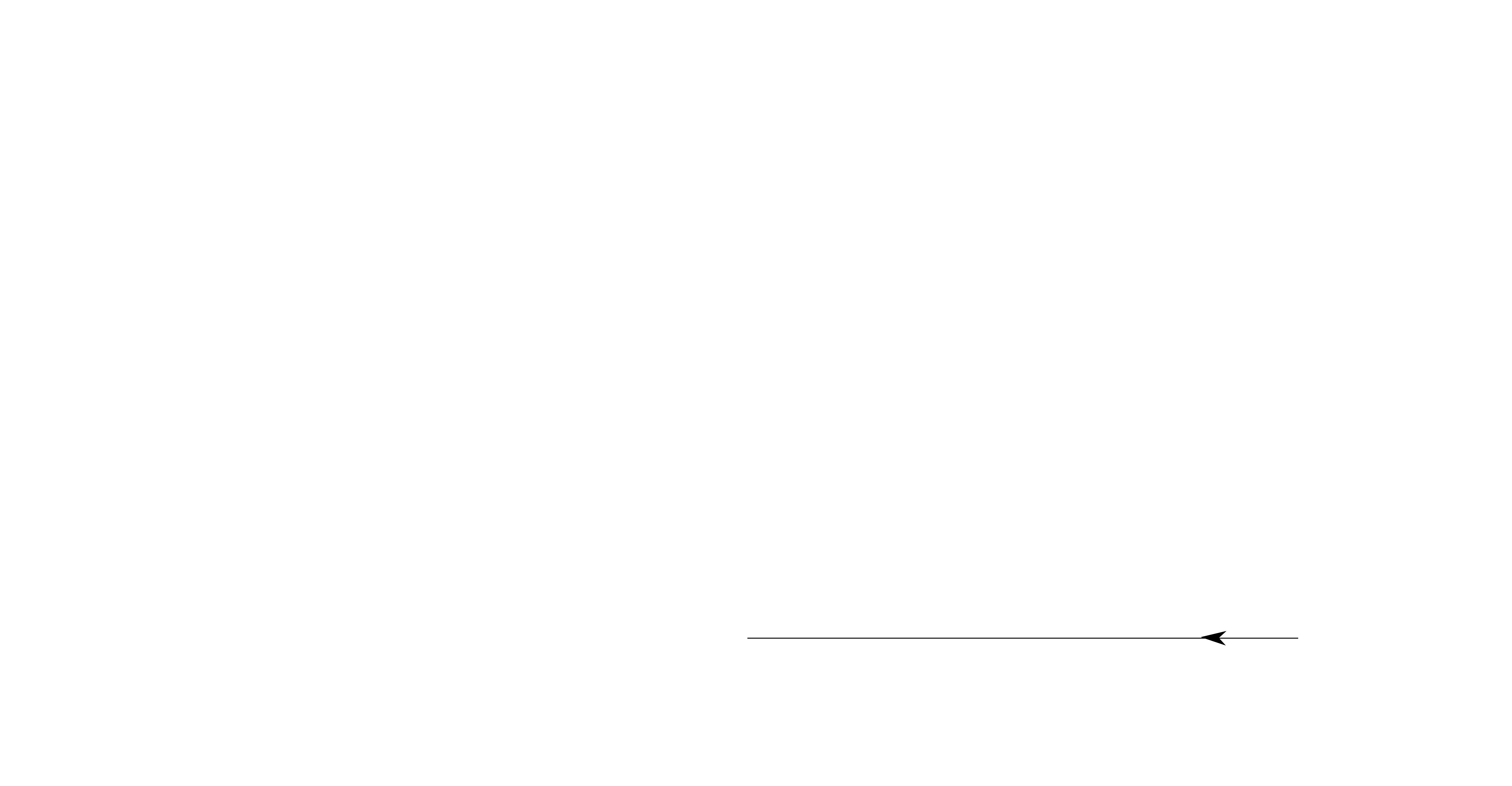
\caption{As a consequence of transversality $f((S \cup H) \cap V)$ is included in $H$ and  $f(F\cap V)$ is on the right-hand side of $F$ in $H$.  }\label{fig:gluing2}
\end{figure}

%\begin{figure}[ht]
%\centering
%\includegraphics[width=6cm]{FIGURES/modif-foliation.pdf}
%\includegraphics[width=6cm]{FIGURES/gluing-foliation1.jpg}
%\includegraphics[width=6cm]{FIGURES/gluing-foliation2.jpg}
%\caption{Modification of the foliation}
%\end{figure}

Denote by $f$ the time one of the isotopy $I$. Note that by transversality, $f((S \cup H) \cap V)$ is included in $H$, and for any leaf $F$ in the interior of $H$, $f(F\cap V)$ is on the right-hand side of $F$ in $H$; see Figure \ref{fig:gluing2}.
 The foliation $\cF'$ will be obtained from $\cF$ by modifying it on $V\cap \{y>0\}$.
Let $y \in (0,1)$, let $P=(0,y)$, and define $F'_{y} = [0,1] \times \{y\}$.

By dynamical transversality there exists $\varepsilon>0$ such that the ball $B_{\varepsilon}(P)$ is included in $V\cap O$ and there exists a leaf $F$ in the interior of $H$ that separates in $W$ the set 
$F'_{y} \cup B_{\varepsilon(y)}(P)$ from its image under $f$; we denote by $\varepsilon(y)$ the maximum of such numbers. 
%We also assume that $\varepsilon(y)$ is small enough so that $B_{\varepsilon(y)}(P)$ is included in $O$.
Let us call $F'$ a good curve if it is obtained the following way (see Figure \ref{fig:gluing3}).
\begin{figure}[h!]
\centering
\def\svgwidth{.8 \textwidth}
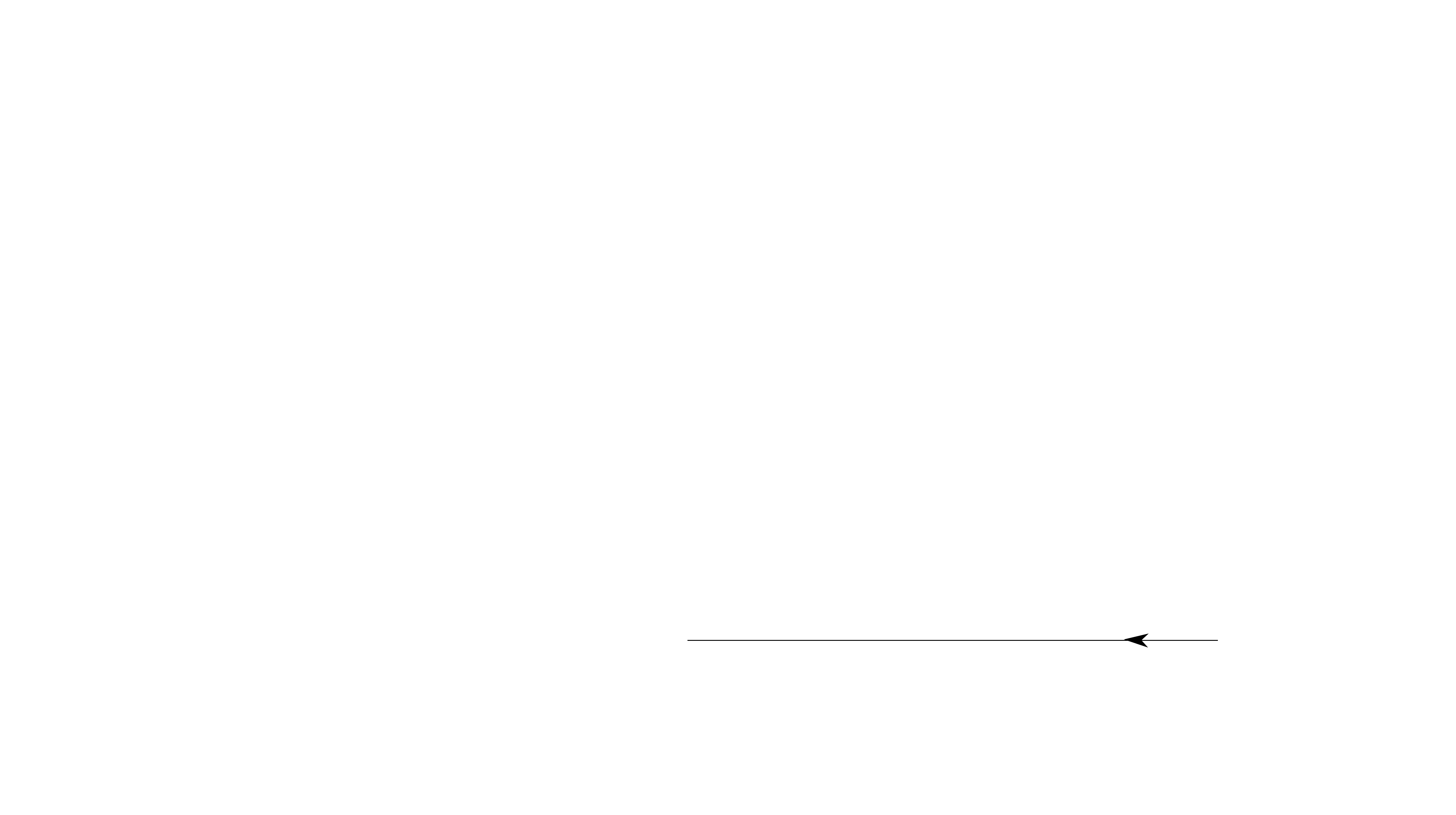
\caption{ There exists a leaf of the foliation separating $F'_y \cup B_{\varepsilon}(y)$ from its image.  The good curve $F'$ consists of $F'_y$ (in blue), $\gamma$ (in black) and a part of the leaf $F$ (in green).  }\label{fig:gluing3}
\end{figure}
We choose a point $P_{1}=(-2,y_{1})$  with $y_{1}>0$ small enough so that the leaf $F$ through $P_{1}$ meets the ball $B_{\varepsilon(y)}(P)$. Let $\gamma$ be a curve in that ball that joins a point $P_{2}$ in $F$ to the point $P$ and whose interior is on the left-hand side of $F$ and on the right-hand side of $\partial^+S$ in $W$. Finally define $F'$ to be the union of $F'_{y}$, $\gamma$ and the piece of $F$ from $P_{2}$ to $P_{1}$, oriented in this order. It is easy to check that the image of $F' \cap V$ is on the right-hand side of $F'$ in $W$.
We leave it to the reader to check that there is a topological  foliation $\cF'$, defined on some neighbourhood of $0$, whose leaves in the upper half plane $\{y >0\}$ are good curves.
Any such foliation  is dynamically transverse to the isotopy $I$. Furthermore it coincides with $\cF$ outside $\overline{O}$ and the $x$--axis is the only half-leaf of $\cF'$ included in $O$.
\end{proof}

  %%%%
\subsection{Smoothability of fixed points of index one}\label{subsec:index-one}

\subsubsection{Local rotation set and transverse foliations} \label{subsubsec:rotation}
Let  $(f,U)$ be an area preserving local homeomorphism with fixed point $p_{0}$. Choose a local isotopy $(I,V)$ for $(f,U)$. 
Let us outline the basic properties of the local rotation set $\rho(I)$, and in particular the relationship with transverse foliations (we refer to~\cite{leroux13} for more details). It is by definition a closed subset of $\R \cup \{\pm \infty\}$. 
Let $(R,V)$ be a local isotopy from the identity to the identity that makes a full turn around $p_{0}$ in the positive direction. The formulae $\rho(I^p)=p\rho(I)$, $\rho(R^pI)=\rho(I)+p$ hold. 

Let $\alpha:[0,1] \to U$ be a simple curve with $\alpha(1)=p_{0}$, and orient $\alpha$ from $\alpha(0)$ to $\alpha(1)$. Let $\gamma:[0,1] \to U$ be a curve which is disjoint from $\alpha(0), \alpha(1)$ and such that $\gamma(t) \not \in \alpha$ except for finitely many values of $t$.  We define the intersection number $\sharp \gamma \wedge \alpha$ as follows. We count $1$ each time $\gamma$ crosses $\alpha$ from the left-hand side of $\alpha$ to the right-hand side, $-1$ when it crosses in the opposite direction, $\pm\frac{1}{2}$ if $\gamma(0) \in 
\alpha$ according to whether $\gamma$ starts on the right-hand side or left-hand side of $\alpha$, and similarly if $\gamma(1) \in \alpha$. We remark that we have $\sharp \gamma' \wedge \alpha = \sharp \gamma \wedge \alpha$ if $\gamma'$ is another curve which is homotopic to $\gamma$ in $U \setminus \{\alpha(0),\alpha(1)\}$ with respect to its endpoints. This remark allows to extend the definition to the case of a curve $\gamma$ that intersects $\alpha$ infinitely many times.

We say that $\alpha$ is a \emph{positive arc} for the isotopy $(I,V)$ 
 if the intersection number $\sharp I.x \wedge \alpha$ is positive for every $x \in \alpha$ close enough to $p_{0}$.
%for every $x=\alpha(s)$ with $s$ close enough to $1$, the trajectory $I.x$ is homotopic in $U \setminus \{p_{0}\}$, relatively to its endpoints, to a curve $\gamma$ which has positive intersection number with $\alpha$.  %One way to make this precise is to use coordinates in which $\alpha$ is a ray, see~\cite{leroux13}, Section *** ; 
%We will only use this notion in the following situation:
 %if $f(\alpha)$ is disjoint from $\alpha$ around $p_{0}$ and $I$ pushes $\alpha$ in the positive direction around $p_{0}$, then $\alpha$ is a positive arc. In this situation, 
 We  say that $\alpha$ is a \emph{direct positive arc} %if $I$ pushes $\alpha$ less than one full turn, in other words
   if $\sharp I.x \wedge \alpha = \frac{1}{2}$ for every $x \in \alpha$ close enough to $p_{0}$. Note that in this last case  $\alpha$ is locally disjoint from its image, that is, there exists some neighborhood $O$ of $p_{0}$ such that  $f(\alpha) \cap \alpha \cap O= \{p_{0}\}$.
The following simple claim will be used repeatedly (this is a special 
and simple case of the more general fact that the local rotation set is included in the \emph{rotation interval}, see again~\cite{leroux13}).
\begin{claim}\label{claim:positive-arc}
If there is a positive arc for the local isotopy $I$, then the local rotation set $\rho(I)$ is included in $[0,+\infty]$. If there is a direct positive arc then $\rho(I)$ is included in $[0,1]$. 
\end{claim}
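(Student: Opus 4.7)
The plan is to prove the two assertions in sequence, using the definition of the local rotation set via intersection numbers (as in \cite{leroux13}) and the algebraic identities $\rho(I^p)=p\rho(I)$ and $\rho(R^pI)=\rho(I)+p$ already recalled in the text.

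For the first assertion, I would recall that every element $r \in \rho(I)$ is, by definition, a limit of ``rotation numbers'' of the form $\frac{1}{n_k}\sharp I^{n_k}.x_k \wedge \alpha$ where $n_k \to \infty$ and $x_k \to p_0$. The intersection number of the concatenated trajectory $I^{n_k}.x_k$ with $\alpha$ splits, up to bounded error, as the sum $\sum_{j=0}^{n_k-1} \sharp I.(I^j x_k) \wedge \alpha$. Because $\alpha$ is a positive arc and the intermediate iterates $I^j x_k$ remain arbitrarily close to $p_0$ for $k$ large (provided we first restrict to an $I$-invariant neighborhood of $p_0$, which is allowed up to shrinking $V$), every term in this sum is nonnegative as soon as the iterate lies on $\alpha$, and otherwise contributes an integer whose sign we can control via a standard homotopy argument. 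Dividing by $n_k$ and passing to the limit gives $r \geq 0$, hence $\rho(I) \subset [0,+\infty]$.

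For the second assertion, I would reduce to the first one by applying it to $R^{-1}I$. If $\alpha$ is a \emph{direct} positive arc for $I$, meaning $\sharp I.x \wedge \alpha = \tfrac{1}{2}$ for $x \in \alpha$ near $p_0$, then the trajectory of $x$ under the isotopy $R^{-1}I$ is obtained by first performing an $I$-trajectory (which contributes $+\tfrac{1}{2}$ to the intersection with $\alpha$) and then performing a full negative turn around $p_0$ (which contributes $-1$, since a negative full turn crosses any arc ending at $p_0$ exactly once from right to left). Hence $\sharp (R^{-1}I).x \wedge \alpha = -\tfrac{1}{2}$, which means that the reversed arc $\bar\alpha$ (with the opposite orientation) is a direct positive arc for $R^{-1}I$. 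Applying the first assertion to this new isotopy and arc yields $\rho(R^{-1}I) \subset [-\infty, 0]$ (the sign flips because reversing $\alpha$ reverses the sign of all intersection numbers). Using the identity $\rho(R^{-1}I)=\rho(I)-1$, we conclude $\rho(I) \subset [-\infty, 1]$, and combined with the first assertion this gives $\rho(I) \subset [0,1]$.

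The main technical obstacle is the additivity of intersection numbers along concatenations of trajectories: the identity $\sharp I^n.x \wedge \alpha = \sum_{j=0}^{n-1} \sharp I.(I^j x) \wedge \alpha$ is only true up to an $O(1)$ correction accounting for endpoints lying on $\alpha$ and for the homotopy needed to put consecutive trajectories in general position with $\alpha$. One must therefore be careful in showing that these $O(1)$ errors vanish after division by $n_k$, which requires using the hypothesis $x_k \to p_0$ and the fact that $\alpha$ is positive only in a neighborhood of $p_0$. Once this bookkeeping is done, both assertions follow cleanly from the two displayed properties of $\rho$.
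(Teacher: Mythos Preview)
The paper itself does not give a proof of this claim; it only remarks that it is ``a special and simple case of the more general fact that the local rotation set is included in the rotation interval'' and refers to \cite{leroux13}. So there is no in-paper argument to match against, but your sketch has a real gap and one minor misstatement worth naming.

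The minor point: there is in general \emph{no} $f$-invariant neighbourhood of $p_{0}$ (think of a hyperbolic fixed point), so ``restrict to an $I$-invariant neighbourhood, which is allowed up to shrinking $V$'' is not available. This is not fatal: the definition of the local rotation set in \cite{leroux13} already restricts attention to finite orbit segments $x, f(x),\dots, f^{n}(x)$ that remain in a fixed $V$, which is all you need.

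The substantive gap is your treatment of the terms $\sharp I.(f^{j}x)\wedge\alpha$ when $f^{j}x\notin\alpha$. The positive-arc hypothesis says nothing about these directly, and calling the required control ``a standard homotopy argument'' hides the actual content. What is needed is a Brouwer-line argument in the universal cover $\tilde A$ of the punctured neighbourhood: the hypothesis $\sharp I.x\wedge\alpha>0$ for every $x\in\alpha$ near $p_{0}$ says precisely that a lift $\tilde f(\tilde\alpha)$ lies strictly to the right of $\tilde\alpha$ near the end, and since $\tilde f$ is orientation-preserving this forces $\tilde f(R)\subset R$ for the closed right half-plane $R$ bounded by $\tilde\alpha$. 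By $T$-equivariance the same holds for every translate $T^{k}\tilde\alpha$, and it then follows that \emph{every} single step satisfies $\sharp I.y\wedge\alpha\geq 0$ for $y$ near $p_{0}$, not just those with $y\in\alpha$. Once this is established your additive decomposition goes through; but this step is the heart of the matter, not a formality.

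Your reduction for the second assertion is essentially correct: from $\sharp(R^{-1}I).x\wedge\alpha=-\tfrac{1}{2}$ the mirror-image Brouwer-line argument (left and right swapped) gives $\rho(R^{-1}I)\subset[-\infty,0]$, hence $\rho(I)=\rho(R^{-1}I)+1\subset(-\infty,1]$, and combining with the first part yields $[0,1]$. One terminological quibble: reversing the orientation of $\alpha$ does not produce an arc in the paper's sense (which requires $\alpha(1)=p_{0}$); it is cleaner to say that $\alpha$ is a \emph{negative} arc for $R^{-1}I$ and invoke the symmetric version of the first assertion.
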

For the next claim, {please note that the sector $S$ is not between $\alpha$ and $f(\alpha)$ but between $f(\alpha)$ and $\alpha$.}
\begin{claim}\label{claim:sector-direct}
Assume that  $\alpha$ is a direct positive arc for $I$. Let $S=S(f(\alpha),\alpha)$ be a sector between $f(\alpha)$ and $\alpha$. Then   for every point $x \in S \setminus \{p_{0}\}$, close enough to $p_{0}$,  and such that $f(x)$ also belongs to $S$, the trajectory $I.x$ is homotopic in $U \setminus p_{0}$ to a curve included in $S$.
\end{claim}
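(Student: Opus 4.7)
Since $U$ is a topological disk, $\pi_{1}(U\setminus\{p_0\})\cong \mathbb{Z}$, so two curves from $x$ to $f(x)$ are homotopic rel endpoints in $U\setminus\{p_0\}$ if and only if they have the same algebraic intersection number with $\alpha$ (here $\alpha$ serves as a generator-detecting arc, since it joins an interior point to $p_0$). The plan is therefore to choose a reference curve $\beta$ from $x$ to $f(x)$ with $\beta \subset \mathrm{int}(S)$, and to show that $\sharp I.x \wedge \alpha = \sharp \beta \wedge \alpha$. Because $\mathrm{int}(S)$ is disjoint from $\alpha$ and $x,f(x)\in \mathrm{int}(S)$, we have $\sharp \beta \wedge \alpha = 0$, so everything reduces to proving $\sharp I.x \wedge \alpha = 0$.

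To compute $\sharp I.x \wedge \alpha$ I will use a connectedness argument. The integer-valued function $g(z) := \sharp I.z \wedge \alpha$ is locally constant on the open set $\{z\in V : z \notin \alpha,\ f(z)\notin \alpha\}$ (the value only changes when an endpoint of the trajectory crosses $\alpha$). Near $p_0$, the set
$$A := \{z \in S \cap V : f(z)\in S\}$$
is a connected ``inner sector'' (both $S$ and $f^{-1}(S)$ are sectors with $\alpha$ as one of the boundary rays of each, so their intersection near $p_0$ is again a sector), so $g$ is constant on $A$ close to $p_0$.

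To evaluate this constant I will push $z\in A$ to a point $y\in\alpha$ close to $p_0$, approaching $\alpha$ from the $S$-side. The direct positive hypothesis gives $\sharp I.y\wedge\alpha = \tfrac12$; the half-integer contribution arises from the endpoint $y\in\alpha$ and records the fact that the trajectory $I.y$ initially leaves $\alpha$ on the side prescribed by the direct positive convention. Using Claim~\ref{claim:positive-arc}/the definition of a direct positive arc, this trajectory does not cross $\alpha$ again near $p_0$, so $I.y$ is homotopic rel endpoints (in $U\setminus\{p_0\}$) to an arc contained in $\overline{S}$. Perturbing the starting point $y$ into $\mathrm{int}(S)$ and (since $f$ is continuous and preserves orientation) the endpoint $f(y)$ correspondingly into $\mathrm{int}(S)$ removes both the $+\tfrac12$ endpoint contribution from $y$ and the $-\tfrac12$ contribution that would otherwise arise at $f(y)\in f(\alpha)$ when one computes relative to the oriented $\alpha$; no interior crossing is created, giving $g(z)=0$.

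The main obstacle is the sign bookkeeping in the last step: one must verify that perturbing $y$ into $S$ to obtain $z$, and simultaneously letting the image move from $f(y)\in f(\alpha)$ to $f(z)\in S$, does not generate a net full crossing of $\alpha$. This amounts to checking that the $S$-side of $\alpha$ is mapped by $f$ into a neighborhood of $f(\alpha)$ which lies on the $S$-side of $f(\alpha)$, a statement forced by orientation-preservation of $f$ together with the direct positive condition (which fixes the side into which $I.y$ initially departs $\alpha$). Once this is verified, constancy of $g$ on $A$ combined with the limiting value $0$ completes the proof.
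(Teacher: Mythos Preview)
Your overall plan---reduce to an intersection-number computation and use local constancy on a connected domain---is exactly the paper's, but there is a genuine error in your limiting step, and the paper's choice to intersect with $f(\alpha)$ rather than with $\alpha$ is precisely what avoids it.

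The crucial mistake is the assertion that ``the $S$-side of $\alpha$ is mapped by $f$ into a neighborhood of $f(\alpha)$ which lies on the $S$-side of $f(\alpha)$''. Orientation-preservation only says that the $S$-side of $\alpha$ maps to the $f(S)$-side of $f(\alpha)$, and near $f(\alpha)$ the sectors $S$ and $f(S)$ lie on \emph{opposite} sides. (In a rotation model with $\alpha$ at angle $0$ and $f(\alpha)$ at angle $\varepsilon$, the large sector is $S=\{\varepsilon\le\theta\le 2\pi\}$; a point at $\theta$ just below $2\pi$ lies in $S$, but its image, at $\theta$ just below $\varepsilon$, does not.) So when you perturb $y\in\alpha$ to $z\in\mathrm{int}(S)$, the image $f(z)$ lands \emph{outside} $S$: these perturbed points never satisfy the hypothesis ``$f(x)\in S$''. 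Moreover, since $f(y)\in f(\alpha)$ is disjoint from $\alpha$ near $p_{0}$, there is no ``$-\tfrac12$ contribution at $f(y)$'' to $\sharp I.y\wedge\alpha$; the lone $+\tfrac12$ at $y$ becomes a full $+1$ crossing when $y$ moves to the $S$-side of $\alpha$ (which is the \emph{left} side, whereas the direct-positive trajectory departs to the right). Your limit therefore gives $g=1$, not $0$, and it does so on the component of $\mathrm{int}(S)\setminus f^{-1}(\alpha)$ adjacent to $\alpha$, which is disjoint from $A$.

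The paper's fix is to set $n(x)=\sharp I.x\wedge f(\alpha)$. Local constancy now requires $x\notin f(\alpha)$ and $f(x)\notin f(\alpha)$; the second is equivalent to $x\notin\alpha$, so both hold on the whole connected set $\mathrm{int}(S)$, with no reference to $f^{-1}(\alpha)$ or to the hypothesis $f(x)\in S$. One then legitimately evaluates $n$ for $x\in\mathrm{int}(S)$ close to $\alpha$ and reads off $n(x)=0$ from the direct-positive-arc condition. When in addition $f(x)\in S$, a reference curve in $S$ from $x$ to $f(x)$ also has zero intersection with $f(\alpha)$, and the claim follows.
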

\begin{proof}
If $x$ is close enough to $p_{0}$ then its trajectory does not meet the end-points of $\alpha$ and $f(\alpha)$. Since $\alpha$ is a direct positive arc, the intersection number $n(x) = \sharp I.x \wedge f(\alpha)$vanishes for $x$ in the interior of $S$ and close to $\alpha$.
Furthermore the function $x \mapsto n(x)$ is constant on the interior of $S$: indeed the interior of $S$ is connected and this function is locally constant, since when $x$ is in the interior of $S$ the endpoints of $I.x$ do not belong to $f(\alpha)$.  Thus it vanishes identically on $S$. The claim follows.
\end{proof}

Assume now that there is a local foliation $(\cF,W)$ dynamically transverse to $(I,V)$. 
A leaf whose $\omega$-limit set is $\{p_{0}\}$ is a positive arc for $I$, thus Claim~\ref{claim:positive-arc} applies if there is such a leaf, and $\rho(I) \subset [0,+\infty]$. Symmetrically, if $\cF$ admits a leaf whose $\alpha$-limit set is $\{p_{0}\}$ then $\rho(I) \subset [-\infty,0]$.
Since $f$ preserves the area, the foliation $\cF$ is gradient-like, and every gradient-like foliation admits a leaf whose $\alpha$ or $\omega$-limit set is $\{p_{0}\}$: thus  we see that $\rho(I)$ is either included in $[-\infty,0]$ or in $[0,+\infty]$. 

If $p_{0}$ is an isolated fixed point of $f$ then by Theorem~\ref{theo.locally-transverse-foliation} for every $p$ the isotopy $R^pI$ admits a dynamically transverse local foliation, thus we see that $\rho(I)$ is included in an interval $[q,q+1]$ for some integer $q$. Up to replacing $I$ by $R^{-q}I$ we may assume $\rho(I) \subset [0,1]$. The same kind of arguments, applied to the powers of $f$, shows that the closed set $\rho(I)$ is actually an interval: every rational number in the interior of the convex hull of $\rho(I)$ is the rotation number of a sequence of periodic orbits accumulating $p_{0}$, and thus actually belongs to $\rho(I)$.

\subsubsection{Strategy}
Let us start the proof of Theorem~\ref{theo:smoothable}, namely that  $f$ is smoothable at an isolated fixed point $p_{0}$ provided that the local rotation set is not equal to $[0,1]$ modulo $1$. 
 If the index $L(f,p_{0})$ is non positive then the result follows from the previous section. Thus we may and we will assume that $L(f,p_{0})=1$. As explained at the beginning of section~\ref{subsec:negative-index}, we need only to consider an area preserving local homeomorphism $(f,U)$ with a single fixed point $p_{0}$, and we want to find an area preserving local homeomorphism $(g,U)$ that fixes only $p_{0}$, coincides with $f$ near the boundary of $U$, and in addition is smooth near $p_{0}$. By the above considerations we may choose a local isotopy $(I,V)$ from the identity to $f$ whose rotation set is included in $[0,1]$. Theorem~\ref{theo.locally-transverse-foliation} provides a local foliation which is dynamically transverse to $(I,V)$, and whose index is one. Since it is gradient-like, it is a sink or a source. The source case may happens only when $\rho(I)=\{0\}$, and we can avoid it by changing $f$ into $f^{-1}$, $I$ into $I^{-1}$, and $\cF$ into the same foliation with reverse orientation.

By hypothesis, the fixed point is not totally degenerate, \emph{i. e.} the local rotation set is a proper sub-interval of $[0,1]$.
We first note that up to changing $f$ into $f^{-1}$ and changing $I$ into $RI^{-1}$, we may assume that the local rotation set in included in $[0,1)$. 
Let us describe the strategy of the proof. The starting point is a situation in which it is easy to smooth $f$ at $p_{0}$ : in this situation we assume that the fixed point belongs to the interior of a curve $\alpha$ such that  $f$ permutes the two connected components of $\alpha \setminus\{p_{0}\}$ (Lemma~\ref{lem:periodic-arc2} below). This situation is certainly very special, in particular it implies that the local rotation set equals $\{\frac{1}{2}\}$. 
In the general case the rotation set is positive, and the local isotopy is dynamically transverse to the sink foliation $\cF$. The general idea of the proof is to push $f$ transversally to the foliation $\cF$, in the positive direction, in order to reduce the size of the rotation set until it becomes equal to $\{\frac{1}{2}\}$ and we can hope to have this special situation. The transverse foliation guarantees that the pushing creates no new fixed point with rotation number zero (points are already turning in the positive direction around $p_{0}$ and we push in the same direction). One difficulty will be to avoid the creation of fixed points with positive rotation number. This control will be made possible by using a transverse foliation with the following extra dynamical property: there is a leaf of $\cF$ whose first iterates are also leaves of $\cF$; this is the content of the ``iterated leaf lemma''~\ref{lem:iterated-leaf} below.

More concretely, in Proposition~\ref{prop.smoothability-0-un-demi} we consider the case when the local rotation set is not too large, namely it is included in $[0,\frac{1}{2})$, and we show how to apply the pushing strategy to get to the special situation that we have described at the beginning (Lemma~\ref{lem:periodic-arc1}).
Finally we will consider the general case, when the rotation set is included in $[0,1)$. 
    Roughly speaking we will apply the pushing strategy to reduce the rotation set to a subset of $[\frac{2}{3},1]$. Then by considering $f^{-1}$ and a well chosen isotopy this amounts to a rotation set included in $[0,\frac{1}{3}]$, and we conclude by applying Proposition~\ref{prop.smoothability-0-un-demi}.

\bigskip
We end this subsubsection by proving Proposition~\ref{prop.smoothability-0-un-demi} (the $[0,\frac{1}{2})$ case). The general case is treated in the next subsubsection, and the last subsubsection contains the proof of the iterated leaf lemma.

\begin{lemma}\label{lem:periodic-arc2}
Let  $(f,U)$ be an area preserving local homeomorphism with a single fixed point $p_{0}$.
Assume there is a simple curve $\gamma$ in $U$ such that $p_{0}$ is in the interior of $\gamma$, $f$ exchanges both connected components of $\gamma \setminus\{p_{0}\}$, and $f^2$ is the identity on $\gamma$.
Then $(f,U)$ is smoothable.
\end{lemma}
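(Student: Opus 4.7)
The plan is to mimic the canonical-position strategy of Lemmas~\ref{lem:smoothable-negative2}–\ref{lem:smoothable-negative3}, with the smooth model now being the rotation by $\pi$, namely $\sigma: (x,y)\mapsto(-x,-y)$ on $\R^2$. The decisive simplification compared to the nonpositive-index case is that $\sigma$ preserves the unit disk $\mathbf{D} = \{|z|\le 1\}$ and fixes only the origin, so one can hope to find an $f$-invariant Jordan curve on which $f$ is conjugate to $\sigma|_{\partial \mathbf{D}}$, and then to replace $f$ inside by $\sigma$.

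First I would parametrize $\gamma:[-1,1]\to U$ with $\gamma(0)=p_{0}$ and $f(\gamma(t))=\gamma(-t)$. Fix a small $t_{0}>0$, and set $A=\gamma(t_{0})$, $B=\gamma(-t_{0})=f(A)$. The central topological step is to pick a simple arc $\alpha$ from $A$ to $B$ in $U\setminus\{p_{0}\}$, staying on one local side of $\gamma$ and close to $\gamma([-t_{0},t_{0}])$, so that its image $f(\alpha)$ (an arc from $B$ back to $A$) lies on the \emph{other} local side of $\gamma$; this is possible because $f$ is orientation-preserving and swaps the two local sides of $\gamma$. By shrinking $t_{0}$ I may assume furthermore that $\alpha$ and $f(\alpha)$ are disjoint except at their common endpoints $A,B$, so that $C:=\alpha\cup f(\alpha)$ is a Jordan curve bounding a topological disk $D\subset U$ with $p_{0}$ in its interior and $\gamma\cap D$ a diameter-like arc; moreover $f(C)=C$ (as sets) and $\mathrm{Fix}(f)\cap\overline{D}=\{p_{0}\}$.

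Next, I would build the canonical-position chart. Rescaling $\sigma$ by the homothety $z\mapsto (2\cdot\mathrm{area}(D)/\pi)^{1/2}z$, I may assume $\mathbf{D}$ has the same area as $D$. Parametrize $\alpha:[0,1]\to C$ from $A$ to $B$ and define $\Phi$ on $C$ by
\[
\Phi(\alpha(s))=e^{i\pi s},\qquad \Phi(f(\alpha(s)))=\sigma(\Phi(\alpha(s)))=e^{i\pi(s+1)},
\]
so that $\Phi\circ f = \sigma\circ\Phi$ holds on $\partial D$. Extend $\Phi:D\to\overline{\mathbf{D}}$ to a homeomorphism via Schoenflies, arranging that $\Phi(\gamma\cap D)=[-1,1]\times\{0\}$ and $\Phi(p_{0})=0$. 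To make $\Phi$ area-preserving and smooth near $p_{0}$, proceed exactly as in the end of the proof of Lemma~\ref{lem:smoothable-negative3}: choose a small smooth disk $\delta$ around $p_{0}$ and a smooth disk $\delta_{0}$ around $0$ of the same area, redefine $\Phi$ on $\delta$ to be any area-preserving diffeomorphism $\delta\to\delta_{0}$ sending $p_{0}$ to $0$, and invoke Oxtoby–Ulam on the annular region $D\setminus(\partial D\cup\delta)$, whose area equals that of its prescribed image $\overline{\mathbf{D}}\setminus(\partial\mathbf{D}\cup\delta_{0})$ by construction.

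Finally, define $g$ to coincide with $f$ on $U\setminus D$ and with $\Phi^{-1}\sigma\Phi$ on $D$. The relation $\sigma\Phi=\Phi f$ on $\partial D$ guarantees continuity across $\partial D$; the map $g$ is area-preserving (because $\Phi$ pushes the area to the Lebesgue measure and $\sigma$ preserves it), smooth near $p_{0}$ (because $\Phi$ is a diffeomorphism on $\delta$ and $\sigma$ is smooth), and has $p_{0}$ as its only fixed point in $U$ (inside $D$ because $\sigma$ fixes only $0$; outside $D$ because $g=f$ there). I would expect the main obstacle to be the construction of the $f$-invariant Jordan curve $C=\alpha\cup f(\alpha)$, specifically ensuring that $\alpha$ and $f(\alpha)$ meet only at $\{A,B\}$; once this is in place, the rest of the argument runs parallel to the nonpositive-index case.
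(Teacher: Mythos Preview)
Your argument has a genuine gap: the curve $C=\alpha\cup f(\alpha)$ is \emph{not} $f$-invariant. Indeed $f(C)=f(\alpha)\cup f^{2}(\alpha)$, and for this to equal $C$ you would need $f^{2}(\alpha)=\alpha$; but the hypothesis only gives $f^{2}=\mathrm{id}$ on $\gamma$, and the interior of $\alpha$ lies off $\gamma$. Concretely, for $p=f(\alpha(s))\in f(\alpha)\subset\partial D$ your two definitions disagree: from outside $g(p)=f(p)=f^{2}(\alpha(s))$, while from inside $g(p)=\Phi^{-1}\sigma\Phi(p)=\Phi^{-1}(e^{i\pi(s+2)})=\alpha(s)$. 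So the glued map is discontinuous along $f(\alpha)$ unless $f^{2}$ happens to fix $\alpha$ pointwise, which you have no way to arrange. The obstacle you anticipated (disjointness of $\alpha$ and $f(\alpha)$ away from $A,B$) is actually easy once you work in coordinates where $\gamma$ is horizontal; the real obstruction is the missing $f$-invariance of $C$.

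The paper sidesteps this entirely by \emph{not} looking for an $f$-invariant Jordan curve. After conjugating so that $\gamma$ is a horizontal diameter through $0$ and $f|_{\gamma}$ is the rotation $R$ by $\pi$, it takes any small round disk $D$ and observes (by continuity) that $f$ sends the open upper half-disk into $\{y\le 0\}$ and the lower one into $\{y\ge 0\}$. It then chooses a smaller disk $D'$ and, by Oxtoby--Ulam applied separately on $D^{+}$ and $D^{-}$, finds an area-preserving $\Phi$ supported in $D$, equal to the identity on $\partial D\cup\gamma$ and to $fR$ on $D'$. The map $f'=\Phi^{-1}f$ then equals $f$ outside $D$, equals $R$ on $D'$, and has no fixed point other than $0$ because $\Phi$ preserves each half-disk while $f$ (hence $f'$) sends each open half-disk to the opposite half-plane. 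The key difference is that the modification is a \emph{composition} $f'=\Phi^{-1}f$ with $\Phi$ supported in $D$, rather than a cut-and-paste replacement of $f|_{D}$; this requires no invariance of $\partial D$.
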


\begin{proof}[Proof of Lemma~\ref{lem:periodic-arc2}]
By conjugation we may assume $U$ is in the plane, $\gamma$ is a horizontal segment, the fixed point $0$ is the midpoint of $\gamma$, and $f$ coincides on $\Gamma$ with the euclidean rotation $R$ of angle $\pi$.
Let $D$ be a small disk centered at $0$, and denote $D^+$ and $D^-$ the upper and lower half-disks.
By continuity, if $D$ is small enough, then $f(D^+)$ is included in the half-plane $\{y \leq 0\}$, and in particular the interior  of $D^+$ is disjoint from its image~; likewise for $D^-$.
Choose a euclidean disk $D'$ centered at $0$, such that $fR(D')$ is included in the interior of $D$. Applying independently the Oxtoby-Ulam theorem on $D^+$ and $D^-$, we may find a homeomorphism $\Phi$ of $D$, which preserves the area, which is the identity on the circle $\partial D$ and on the diameter $\gamma$, and which coincides with $fR$ on $D'$. We extend $\Phi$ by the identity outside $D$. Let $f'=\Phi^{-1} f$. Then $f'$ preserves the area,  coincides with $f$ outside $D$, and with $R$ on $D'$. 
Furthermore, since the interiors of $D^+$ and $D^-$ are disjoint from their images, $f'$ has no fixed point but $0$. Of course, $f'$ is smooth near $0$.
\end{proof}

\begin{prop}\label{prop.smoothability-0-un-demi}
Let  $(f,U)$ be an area preserving local homeomorphism with a single fixed point $p_{0}$. Let $(I,V)$ be a local isotopy for $(f,U)$, and assume that the local rotation set of $I$ is included in $[0, \frac{1}{2})$. Then $(f,U)$ is smoothable.
\end{prop}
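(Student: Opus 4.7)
\medskip
\noindent\textbf{Proof plan.} The plan is to reduce to the situation of Lemma~\ref{lem:periodic-arc2}, namely to construct, after a compactly supported area-preserving perturbation of $f$ that creates no new fixed points, a simple arc $\gamma$ through $p_{0}$ whose two halves are exchanged by the perturbed map and on which the perturbed map is an involution. Since the index is $1$, Theorem~\ref{theo.locally-transverse-foliation} provides a local foliation $(\cF,W)$ dynamically transverse to $(I,V)$ of index one which, being gradient-like, is a sink or a source; by the discussion preceding the proposition, we may assume it is a sink, and the rotation hypothesis $\rho(I)\subset [0,\tfrac12)$ gives us ``room'' to push the dynamics in the positive direction transversally to $\cF$ without crossing the critical rotation value $1$.

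The key input will be the iterated leaf lemma~\ref{lem:iterated-leaf}, which supplies a leaf $\alpha$ of $\cF$ whose first iterate $f(\alpha)$ is also a leaf of $\cF$. First I would use this to pick such an $\alpha$ and set $\alpha' = f(\alpha)$; both are direct positive arcs for $I$ by Claim~\ref{claim:positive-arc}. I would then look at the sector $S = S(f(\alpha'),\alpha)$ between $f^{2}(\alpha)$ and $\alpha$: by Claim~\ref{claim:sector-direct} (applied to the relevant direct positive arcs) and by the fact that $\rho(I)\subset[0,\tfrac12)$, orbits in $S$ have not yet ``completed'' a rotation of $\tfrac12$, so the sector $S$ is nonempty near $p_0$ and we are free to perform a positive transverse push supported in a neighborhood of $S\setminus\{p_0\}$.

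The main construction is then a parabolic-pushing procedure analogous to (but in the opposite rotational direction of) Proposition~\ref{prop:parabolic-pushing}: using the normal form of Lemma~\ref{lemma-normal-form-foliation} around the sector between $\alpha$ and $\alpha'$, I would build an area-preserving isotopy $J$ supported in a small neighborhood of that sector, whose trajectories are positively transverse to $\cF$, and which moves $\alpha$ across $S$ until $(Jf)(\alpha') = \alpha$. Then $\beta := \alpha\cup(Jf)(\alpha) = \alpha\cup\alpha'$ is a simple arc through $p_0$, both halves of which are exchanged by $Jf$. A further area-preserving surgery supported in a disk neighborhood of $\beta$ (using an Oxtoby--Ulam matching as in the proof of Lemma~\ref{lem:periodic-arc2}) modifies $Jf$ on the two halves of a thin band around $\beta$ so that the resulting map $g$ satisfies $g^{2} = \id$ on $\beta$, while still preserving area and coinciding with $f$ outside $U$. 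Lemma~\ref{lem:periodic-arc2} then yields the smoothability of $g$, hence of $f$.

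The hard part will be to ensure that the positive transverse push $J$ creates no new fixed points. Pushing positively transversally to $\cF$ cannot create a fixed point whose local rotation number is $0$ (the push adds strictly positive rotation to every trajectory it moves), so the only danger is a new fixed point $x$ with $\rho(IJ,x)>0$, which by Claim~\ref{claim:positive-arc} would mean that the trajectory of $x$ under $IJ$ winds fully around $p_0$. This is exactly what the rotation bound $\rho(I)<\tfrac12$ plus the use of the \emph{iterated} leaf $\alpha$ is meant to prevent: the push is supported in a sector of angular width less than a full turn of $\cF$, and since the original rotation never reaches $\tfrac12$, the push can be calibrated (controlling the size of $J$ as in the proof of Proposition~\ref{prop:parabolic-pushing}) so that composed trajectories never close up. This is the step requiring the most delicate bookkeeping, and it is where the precise value $\tfrac12$ in the hypothesis is used.
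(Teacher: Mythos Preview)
Your overall strategy is correct and matches the paper's: apply the iterated leaf lemma, then use parabolic pushing to produce a period-$2$ arc, then invoke Lemma~\ref{lem:periodic-arc2}. However, there is a genuine gap in how you invoke the iterated leaf lemma, and this propagates into the rest of the argument.

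You apply Lemma~\ref{lem:iterated-leaf} only with $q=1$, obtaining $\alpha$ and $f(\alpha)$ as leaves of $\cF$. But the hypothesis $\rho(I)\subset[0,\tfrac12)$ is exactly $[0,\tfrac{1}{q})$ with $q=2$, and the paper uses this to get $\alpha$, $f(\alpha)$, \emph{and} $f^{2}(\alpha)$ as leaves of $\cF$. This extra leaf is not a luxury: the sector $S=S(f^{2}(\alpha),\alpha)$ you want to push across is a parabolic sector of $\cF$ only when $f^{2}(\alpha)$ is itself a leaf, and Proposition~\ref{prop:parabolic-pushing} requires this. Without it, $f^{2}(\alpha)$ is an arbitrary curve, the ``parabolic pushing'' machinery does not apply as stated, and you have no mechanism to push $f^{2}(\alpha)$ onto $\alpha$ transversally to $\cF$ while controlling fixed points. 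This is the precise place where the $\tfrac12$ in the hypothesis is used, and you have identified it only vaguely (``this is where the precise value $\tfrac12$ is used'') without actually exploiting it.

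There is also confusion in the geometry. You say $J$ is supported ``in a small neighborhood of the sector between $\alpha$ and $\alpha'=f(\alpha)$'', but the push must act on $f^{2}(\alpha)$, which lies in the complementary sector. In the paper the push $g$ is supported in the large sector $O=S(f(\beta),\beta)$ containing $f^{2}(\alpha)$ and $\alpha$ but \emph{not} $f(\alpha)$, where $\beta$ is an auxiliary curve between $\alpha$ and $f(\alpha)$. The point of this choice is that $g$ is then the identity on $f(\alpha)$, which is what makes $(fg)^{2}(\gamma')\subset f^{2}(\alpha)$ for $\gamma'\subset f^{2}(\alpha)$. Your no-new-fixed-points argument is also too soft: the paper does not appeal directly to the rotation bound, but rather verifies hypothesis~(ii) of Proposition~\ref{prop:parabolic-pushing} for $O$ via Claim~\ref{claim:sector-direct}, using that $\beta$ is a direct positive arc. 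Finally, the phrase ``opposite rotational direction of Proposition~\ref{prop:parabolic-pushing}'' is wrong: the push is in the same positive transverse direction as in that proposition.
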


The proof of the proposition relies on the following fundamental lemma which will be proved in the next subsubsection.
\begin{lemma}[iterated leaf lemma]
\label{lem:iterated-leaf}
Let  $(f,U)$ be an area preserving local homeomorphism with a single fixed point $p_{0}$, such that $L(f,p_{0})=1$.
Let $(I,V)$ be a local isotopy for $(f,U)$, and  assume that the local rotation set of $I$ is included in $[0, \frac{1}{q})$ for some positive integer $q$.
 
Then there exists a local foliation $(\cF,W)$ with singularity $p_{0}$ which is a sink or a source, which is dynamically transverse to the isotopy $(I,V)$, and an open arc $\alpha$ of which $p_{0}$ is an end-point, such that $\alpha, f(\alpha), \dots, f^{q}(\alpha)$ are included in some leaves of $\cF$.
\end{lemma}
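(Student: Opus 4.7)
The strategy is to start from any dynamically transverse foliation guaranteed by Theorem~\ref{theo.locally-transverse-foliation}, reason about the first $q$ iterates of a leaf using the rotation-set bound $\rho(I)\subset[0,1/q)$, and then modify the foliation sector-by-sector so the chosen iterates become parts of leaves without destroying dynamical transversality.

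First, I would apply Theorem~\ref{theo.locally-transverse-foliation} to obtain a local foliation $(\cF_0,W_0)$ which is dynamically transverse to $(I,V)$ and satisfies $L(\cF_0,p_0)=L(f,p_0)=1$. Since $f$ preserves area, $\cF_0$ is gradient-like, so having index one forces $\cF_0$ to be a sink or a source; the hypothesis $\rho(I)\subset[0,1/q)\subset[0,+\infty]$ rules out the source case (the leaves of a source would give negative arcs, forcing $\rho(I)\subset[-\infty,0]$, contradicting $\rho(I)\not\ni 0$ is not required, but by the classification discussion in Section~\ref{subsubsec:rotation} the source would force the rotation set into $[-\infty,0]$, incompatible with a nonzero positive rotation number, and if $\rho(I)=\{0\}$ we may replace $I$ by $RI$ at the end). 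Thus we may take $\cF_0$ to be a sink.

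Second, I would pick a small sub-arc $\alpha$ of a leaf of $\cF_0$ having $p_0$ as an endpoint, small enough that $\alpha, f(\alpha), \ldots, f^q(\alpha)$ are contained in $V\cap W_0$ and that $\alpha$ is a direct positive arc for $I$. The key geometric fact I would establish is that these $q{+}1$ arcs can be arranged to be pairwise disjoint (outside $p_0$) and, viewed as arcs emanating from $p_0$, to appear in cyclic order $\alpha,f(\alpha),\dots,f^q(\alpha)$ sweeping strictly less than one full turn around $p_0$. This uses $\rho(I)\subset[0,1/q)$ via Claim~\ref{claim:positive-arc} applied to $I^k$ for $k=1,\dots,q$: the rotation number of $I^k$ lies in $[0,k/q)\subset[0,1)$, so each $f^k(\alpha)$ is again a direct positive arc and, inductively, lies in the sector between $f^{k-1}(\alpha)$ and a direct positive ``reference'' arc obtained from a full turn, preventing overlap. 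Possibly shrinking $\alpha$ and perturbing slightly near its finite endpoint (far from $p_0$), one can make the collection $\{\alpha,f(\alpha),\dots,f^q(\alpha)\}$ embedded and cyclically ordered.

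Third, the main construction: I would modify $\cF_0$ into a new sink foliation $\cF$ that contains each $f^k(\alpha)$, $0\le k\le q$, in a leaf, while remaining dynamically transverse to $(I,V)$. The arcs $\alpha,f(\alpha),\dots,f^q(\alpha)$ cut a neighborhood of $p_0$ into $q{+}1$ sectors. On each such sector $S_k$ between $f^k(\alpha)$ and $f^{k+1}(\alpha)$, I would apply Le Calvez's sector-by-sector construction (the existence half of Theorem~\ref{theo.locally-transverse-foliation}, applied to the restriction of $I$ to $S_k$ after identifying the two boundary arcs) to build a gradient-like, dynamically transverse foliation of $S_k$ with the two boundary arcs as leaves of sink type; the positivity of the rotation set and the fact that $f^k(\alpha)$ is a direct positive arc guarantee that the ``wedge'' of $I$-trajectories across $S_k$ is orientable so that such a foliation exists as a sink on $S_k$. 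Gluing these sector foliations along the arcs $f^k(\alpha)$ produces $\cF$.

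The principal obstacle will be this third step: producing the sector foliations compatibly so that after gluing one obtains a genuine local foliation (smooth away from $p_0$, with the boundary arcs as actual leaves) which remains dynamically transverse to the global isotopy $I$, not only to its restrictions. To handle the gluing I would work in charts given by Lemma~\ref{lemma-normal-form-foliation} along each arc $f^k(\alpha)$, where the foliation on both sides of the arc can be chosen transverse to a common vertical direction; smoothing across the shared leaf then amounts to a standard matching, and dynamical transversality to $I$ on a full neighborhood follows because the trajectory of each point either stays within one sector (where transversality was built in) or crosses one of the $f^k(\alpha)$, in which case the crossing is positive by the direct-positive-arc property, and one concatenates transverse sub-curves. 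Once $\cF$ is built, the conclusion $\alpha,f(\alpha),\dots,f^q(\alpha)\subset$ leaves of $\cF$ is automatic by construction.
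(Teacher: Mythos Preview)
Your plan founders at Step~3, specifically on the sector between $f^q(\alpha)$ and $\alpha$ (going the ``long'' way around $p_0$). On the $q$ ``small'' sectors between $f^k(\alpha)$ and $f^{k+1}(\alpha)$ you are right that essentially any radial foliation is transverse to $I$; the paper itself remarks this after the statement of the lemma. The whole content of the lemma lies in the remaining big sector, and there your appeal to ``Le Calvez's sector-by-sector construction (the existence half of Theorem~\ref{theo.locally-transverse-foliation}, applied to the restriction of $I$ to $S_k$ after identifying the two boundary arcs)'' is not a theorem one can invoke. Restricting $I$ to that sector does not give an isotopy of a closed surface; the natural identification of the two boundary arcs is by $f^q$, and making sense of this requires exactly the global machinery the paper develops: first the extension lemma~\ref{lem:extension} to get an area-preserving $\bar f$ on all of $\R^2$ with global rotation set still in $[0,1/q)$, then the quotient lemma~\ref{lemm:translation} to show that $T'=T\tilde f^{-q}$ is a plane translation, so that $A'=\tilde A/T'$ is an honest annulus. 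Le Calvez's \emph{global} theorem is then applied on $A'$, and the hard part is Claim~\ref{claim.source} (that $N'$ is not a source for the resulting foliation), whose proof in the degenerate case $\rho(I)=\{0\}$ uses prime-end theory and results from~\cite{lecalvez03}. A leaf of $\cF'$ landing at $N'$ lifts to $\tilde\alpha\subset\tilde A$, and \emph{that} is what foliates the big sector; the remaining strips between $\tilde f^p(\tilde\alpha)$ and $\tilde f^{p+1}(\tilde\alpha)$ are filled by pushing one product foliation forward by $\tilde f$. None of this is captured by ``identify the boundary arcs and apply the local theorem''.

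Your Step~2 is also not justified as written. Claim~\ref{claim:positive-arc} goes the wrong way: it deduces a rotation-set bound from the existence of a (direct) positive arc, not the disjointness of iterates from a rotation-set bound. Starting from a leaf $\alpha$ of a local sink foliation $\cF_0$, you know $\alpha\cap f(\alpha)=\{p_0\}$ locally, but $f(\alpha)$ is not a leaf of $\cF_0$, so there is no immediate reason $f^2(\alpha),\dots,f^q(\alpha)$ are disjoint from $\alpha$ or from each other. In the paper's argument this disjointness is not assumed in advance; it comes out for free because the chosen $\tilde\alpha$ is a lift of a leaf of a foliation on $A'$ transverse to $I'$, hence a Brouwer line for $\tilde f$, so that $\tilde f^{-1}(\tilde\alpha)<\tilde\alpha<\tilde f(\tilde\alpha)<\cdots$ automatically. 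In short, you have correctly located where the difficulty is, but the mechanism that actually resolves it (the quotient annulus $\tilde A/T\tilde f^{-q}$ together with Claim~\ref{claim.source}) is missing from your proposal.
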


{Note that under the hypothesis of this lemma, if furthermore the local rotation set is not $\{0\}$, then $\cF$ is a sink,  $\alpha$ is a direct positive arc for $I$, and $\alpha \setminus \{0\}, \dots, f^q(\alpha) \setminus \{0\}$ are pairwise disjoint and in this cyclic order around $p_{0}$.
% (indeed if $\alpha$ was not a direct positive arc for $I^q$ then the local rotation set would be included in $[\frac{1}{q},+\infty]$). 
On top of the existence of such an arc $\alpha$, the lemma provides the foliation $\cF$ dynamically transverse to $(I,V)$ ; this adds some information \emph{only in the sector between $f^q(\alpha)$ and $\alpha$} (in the other sectors, any topologically radial foliation is transverse to the isotopy), but this little piece of information will be crucial in what follows.}

\begin{proof}[Proof of Proposition~\ref{prop.smoothability-0-un-demi}]
Let $f,I$ be as in the statement of the proposition. We may assume that $L(f,p)=1$.
 Apply Lemma~\ref{lem:iterated-leaf} with $q=2$ to get a foliation $\cF$ dynamically transverse to $I$ with a half leaf $\alpha$ such that $f(\alpha)$ and $f^2(\alpha)$ are also half-leaves of $\cF$. As said before the source case may happen only when the rotation set is $\{0\}$, thus we may always assume that $\cF$ is a sink up to changing $f$ into $f^{-1}$, $I$ into $I^{-1}$ and reversing the orientation of $\cF$.
The proposition is now an immediate consequence of lemma~\ref{lem:periodic-arc2} and lemma~\ref{lem:periodic-arc1} below.
\end{proof}

\begin{lemma}\label{lem:periodic-arc1}
In the situation of the previous proof, there is a local homeomorphism $(f',U)$ that satisfies the hypotheses of lemma~\ref{lem:periodic-arc2} and which
equals $f$ near the boundary of $U$.
%$f^{-1}f'$ is compactly supported in $U$.
\end{lemma}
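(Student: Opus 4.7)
The plan is to take $\gamma := \overline{\alpha} \cup \overline{f(\alpha)}$, which is a simple curve through $p_{0}$ with $p_{0}$ in its interior, and to construct $f'$ as a composition $f' = \phi \circ f$ where $\phi$ is an area-preserving homeomorphism of $U$, supported in a neighborhood of the sector $S = S(f^{2}(\alpha), \alpha)$ of the sink foliation $\cF$ that does \emph{not} contain $f(\alpha)$. Because $f(\alpha)$ sits outside this support, $\phi$ will automatically be the identity on $f(\alpha)$, hence $f'(\alpha) = \phi(f(\alpha)) = f(\alpha)$. The single substantive requirement on $\phi$ is that it carry $f^{2}(\alpha)$ onto $\alpha$ via the rule $f^{2}(x) \mapsto x$; granted this, a direct calculation yields $f'|_{f(\alpha)} = (f|_{\alpha})^{-1}$, whence $(f')^{2} = \mathrm{id}$ on $\gamma$ and $f'$ exchanges the two halves of $\gamma \setminus \{p_{0}\}$, which is exactly the hypothesis of Lemma~\ref{lem:periodic-arc2}.

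To build $\phi$, apply the parabolic normal form (Lemma~\ref{lemma-normal-form-foliation}) to $S$, which is parabolic because $\cF$ is a sink. This produces an area-preserving chart $\Phi$ on an open neighborhood $O'$ of $\overline{S} \setminus \{p_{0}\}$ reaching slightly beyond both boundary leaves, in which $\alpha$ becomes (a segment of) $\{y = 0,\, x > 0\}$, $f^{2}(\alpha)$ becomes (a segment of) $\{y = x,\, x > 0\}$, the area form is Lebesgue, and the leaves of $\cF$ are transverse to vertical lines. In these coordinates $f^{2}|_{\alpha}$ reads $(x, 0) \mapsto (g(x), g(x))$ for some increasing homeomorphism $g$. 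The local model $\phi_{0} := \Phi_{*} \phi$ can be taken as a composition of (i) the area-preserving vertical shift $(x, y) \mapsto (x, y - h(x))$ for a suitable $h$, which collapses the diagonal onto the $x$-axis, followed by (ii) the area-preserving reparametrization $(x, y) \mapsto (g^{-1}(x),\, g'(g^{-1}(x))\, y)$, which adjusts the prescription on the $x$-axis so that $(g(x), g(x))$ lands at $(x, 0)$. One then cuts this composition off away from a bounded neighborhood of $\overline{S} \setminus \{p_{0}\}$ using a Moser--plus--Oxtoby--Ulam argument in the spirit of the proof of Lemma~\ref{lem:smoothable-negative3}; pulling back by $\Phi$ and extending by the identity gives the desired $\phi$.

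The main obstacle is to verify that $f' = \phi \circ f$ has $p_{0}$ as its unique fixed point in $U$. Any new fixed point would have to lie in $\mathrm{supp}(\phi) \cap f^{-1}(\mathrm{supp}(\phi))$, so the strategy is to realize $\phi$ as the time-one map of a Hamiltonian isotopy $J$ supported in a thin neighborhood of $S$ and chosen so that the concatenated isotopy $IJ$ is still dynamically transverse to $\cF$. This way, the ``parabolic pushing'' philosophy of Proposition~\ref{prop:parabolic-pushing} (combined with the direct positivity of $\alpha$ given by the iterated leaf lemma and Claim~\ref{claim:positive-arc}) rules out any extra fixed points, since the composed rotation remains strictly positive on any point whose trajectory meets $\mathrm{supp}(\phi) \cap f^{-1}(\mathrm{supp}(\phi))$. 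The technical heart of the proof is carrying out this large, finite push in one step (so as to send $f^{2}(\alpha)$ all the way onto $\alpha$, rather than only part-way as in Proposition~\ref{prop:parabolic-pushing}) while keeping $IJ$ dynamically transverse to $\cF$ and the area preserved.
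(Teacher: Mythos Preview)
Your overall plan is sound and matches the paper's, but the one-step construction of $\phi$ has a genuine gap that the paper avoids by splitting the modification into two pieces.

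\medskip

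\textbf{The gap.} Your local model for $\phi_0$ uses the map $(x,y)\mapsto (g^{-1}(x),\,g'(g^{-1}(x))\,y)$, which presupposes that $g$ is differentiable; but $f$ is merely a homeomorphism, so $g$ (the coordinate expression of $f^2|_\alpha$) need not be. More seriously, even granting smoothness, the prescription $\phi(f^2(x))=x$ forces $\phi_0$ to have a nontrivial \emph{horizontal} component in the normal-form chart: a purely vertical push $(x,y)\mapsto (x,y-\Delta(x))$ sends $(g(x),g(x))$ to a point with first coordinate $g(x)$, not $x$. In the normal form the leaves of $\cF$ are transverse to vertical lines, so the ``push positively transverse to $\cF$'' used to control fixed points (Proposition~\ref{prop:parabolic-pushing}) is exactly the vertical push; a horizontal component moves points essentially along leaves and can even move them \emph{negatively} across leaves wherever $g'<1$. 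Hence there is no reason the isotopy $J$ realizing your $\phi$ can be chosen with $IJ$ dynamically transverse to $\cF$, and without that your fixed-point argument collapses. You flag this yourself (``the technical heart of the proof\ldots'') but do not resolve it.

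\medskip

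\textbf{How the paper gets around this.} The paper does \emph{not} insist on the prescription in the push. It first applies Proposition~\ref{prop:parabolic-pushing} verbatim to the sector $S=S(f^2(\alpha),\alpha)$ inside $O=S(f(\beta),\beta)$ (where $\beta$ lies between $\alpha$ and $f(\alpha)$, so Claim~\ref{claim:sector-direct} gives hypothesis~(ii)); this yields $g$ with $p_0$ the only fixed point of $fg$, and with $g(\gamma')\subset\alpha$ for some sub-arc $\gamma'\subset f^2(\alpha)$ near $p_0$, but with \emph{no control} over which point of $\alpha$ each point of $\gamma'$ lands on. Since $g=\mathrm{id}$ on $f(\alpha)$ one still gets $(fg)^2(\gamma')\subset f^2(\alpha)$, so both $\gamma'$ and $(fg)^2(\gamma')$ sit inside the single arc $f^2(\alpha)$, which is locally disjoint from its $fg$-image. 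Now the period-two condition is achieved by a \emph{second}, independent correction: choose a small $Z\supset (\gamma'\cup(fg)^2(\gamma'))\setminus\{p_0\}$ with $fg(Z)\cap Z=\emptyset$, and use Oxtoby--Ulam to build an area-preserving $h$ supported in $Z$ with $h=(fg)^2$ on $\gamma'$. Then $f'=h^{-1}fg$ satisfies $(f')^2=\mathrm{id}$ on $\gamma'$ (hence on $\gamma=\gamma'\cup f'(\gamma')$), and the disjointness $fg(Z)\cap Z=\emptyset$ guarantees $h^{-1}fg$ has no fixed points in $Z$ while it equals $fg$ outside $Z$. The point is that the correction $h$ lives in a \emph{free} region, so no transversality argument is needed for it; this is precisely what lets one impose the exact pointwise prescription that your single push cannot deliver.
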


\begin{figure}[h]
\centering
\def\svgwidth{.65 \textwidth}
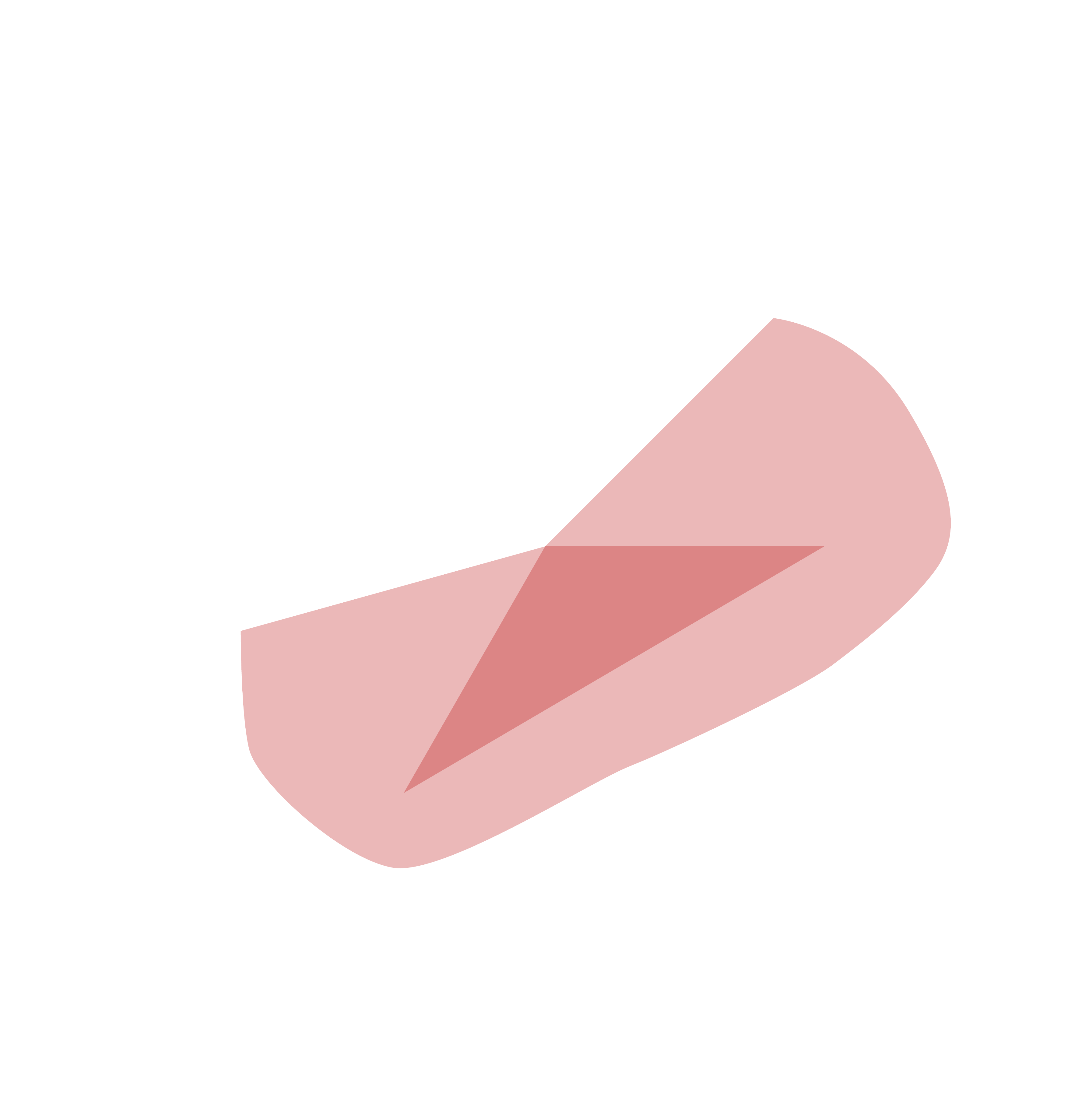
\caption{The foliation $\cF$ and the parabolic sector $S$}
\label{fig:index1-onehalf}
\end{figure}

\begin{proof}[Proof of Lemma~\ref{lem:periodic-arc1}]
Since $\cF$ is a sink the arc $\alpha$ is positive, and since $\alpha$ and $f(\alpha)$ are leaves of $
\cF$ and $\rho(I) \subset [0,\frac{1}{2})$, $\alpha$ is a direct positive arc.
Let $\beta$ be a simple curve with $\beta(1)= p_{0}$ and otherwise included in the interior of a small sector between $\alpha$ and $f(\alpha)$. Then it is easy to see that $\beta$ is also a direct positive arc.

 Let $S$ be a small parabolic sector of $\cF$ between $f^2(\alpha)$ and $\alpha$, and $O$ be a small  sector between $f(\beta)$ and $\beta$  (see the figure). Claim~\ref{claim:sector-direct} applies:  
 for every point $x$ close enough to $p_{0}$ in $O$ and such that $f(x)$ also belongs to $O$, the trajectory $I.x$ is homotopic in $U \setminus p_{0}$ to a curve included in $O$. In other words, the hypothesis (ii) at the end of the parabolic pushing proposition~\ref{prop:parabolic-pushing} holds.
We apply proposition~\ref{prop:parabolic-pushing} to the parabolic sector $S$. We get a local isotopy $(J,U)$ for some area preserving homeomorphism $(g,U)$ supported in $O$, dynamically transverse to $\cF$, with some small sub-arc $\gamma'$ of $f^2(\alpha)$ such that $g(\gamma') \subset \alpha$. Furthermore  $p_{0}$ is the only fixed point of $fg$ in $U$. 

We have $(fg)^2(\gamma')  \subset fgf(\alpha) \subset f^2(\alpha)$ since $g$ is the identity on $f(\alpha)$. So both $\gamma'$ and $(fg)^2(\gamma')$ are included in $f^2(\alpha)$, which is disjoint from its image except at $p_{0}$.
Let $Z$ be an open set containing $\gamma' \cup (fg)^2(\gamma') \setminus \{p_{0}\}$, and sufficiently small so that $fg(Z) \cap Z = \emptyset$.
Using the Oxtoby-Ulam theorem, one can construct an area preserving homeomorphism
 $h$, supported  on $Z$, and such that $h = (fg)^2$ on $\gamma'$. Let $f'=h^{-1}fg$. Since $h$ is the identity on $fg(\gamma')$ we get that  $f'^2$ is the identity on $\gamma'$, and thus also on $\gamma= \gamma' \cup f'(\gamma')$. Furthermore $f'$ coincides with $f$ near the boundary of $U$ as required by the lemma, and  the proof is complete.
 \end{proof}

\subsubsection{Proof of the general case}

We now proceed to the proof of Theorem~\ref{theo:smoothable} in the case when the local rotation set is included in $[0,1)$.
We consider an area preserving local homeomorphism   $(f,U)$ with a single fixed point $p_{0}$, a local isotopy $(I,V)$ for $(f,U)$, and we  
assume that $\rho(I)$ is included in $[0, 1)$. By Proposition~\ref{prop.smoothability-0-un-demi} we may also assume that $\rho(I)$ is not included in $[0,\frac{1}{2})$, and in particular it is not $\{0\}$ and we have $L(f,p_{0})=1$.
We want to show that $(f,U)$ is smoothable. The strategy will be to make a small modification of $f$ and $I$ to get $f''$ and an isotopy $I''$ whose local rotation set is included in $[-\frac{1}{3},0]$. Then the inverse of $I''$ is a local isotopy whose time one map is $f''^{-1}$, and whose rotation set is included in $[0, \frac{1}{3}]$. Then, as explained in the strategy, Proposition~\ref{prop.smoothability-0-un-demi} entails that $(f''^{-1},U)$ is smoothable, thus $(f'',0)$ is also smoothable, and so is $(f,U)$. It remains to describe the construction of $f''$, which will take three steps.

\begin{lemma}[Step 1]%\label{lem:smoothable-negative1}
There exists a foliation $\cF$ transverse to $I$ with two leaves $\alpha,\alpha'$ of $\cF$ whose images are also leaves of $\cF$.
\end{lemma}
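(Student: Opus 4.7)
My plan is to apply the iterated leaf lemma (Lemma~\ref{lem:iterated-leaf}) with $q=1$ directly to the isotopy $I$. Since we are assuming $\rho(I) \subset [0,1)$ and $L(f,p_0) = 1$, the hypotheses of the lemma are satisfied, and we obtain a sink foliation $\cF$ dynamically transverse to $I$ together with a half-leaf $\alpha$ such that $f(\alpha)$ is also a half-leaf of $\cF$. Because $\rho(I)$ is not contained in $\{0\}$, the arc $\alpha$ is a direct positive arc and $f(\alpha) \neq \alpha$ near $p_0$. In particular, $\alpha$ and $f(\alpha)$ are disjoint half-leaves (outside of $p_0$), cutting a punctured neighbourhood of $p_0$ into two sectors: the ``small'' sector $S_+ = S(\alpha, f(\alpha))$ swept by the trajectories of points on $\alpha$ in the positive rotation direction, and the complementary ``large'' sector $S_- = S(f(\alpha), \alpha)$.

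The second leaf $\alpha'$ will be produced inside the sector $S_-$ by repeating, locally in $S_-$, the construction underlying the iterated leaf lemma. More precisely, I would pick an auxiliary half-leaf $\beta$ of $\cF$ lying deep inside $S_-$, well separated from $\alpha$ and $f(\alpha)$, so that $f(\beta)$ is also contained in $S_-$; this is where the hypothesis $\rho(I) \subset [0,1)$ is essential, since it prevents trajectories starting in $S_-$ from wrapping all the way back to $\alpha$. Then I would modify $\cF$ inside a small neighbourhood of $\beta \cup f(\beta)$ in $S_-$, using exactly the sector-by-sector surgery used in the proof of Lemma~\ref{lem:iterated-leaf}, so that in the new foliation $\cF'$ both $\beta$ and $f(\beta)$ become half-leaves; set $\alpha' = \beta$. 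The modification is localized away from $\alpha \cup f(\alpha)$, so $\alpha$ and $f(\alpha)$ remain half-leaves of $\cF'$, and the isotopy $I$ remains dynamically transverse to $\cF'$ because on the support of the modification we reproduce the transversality condition established in the proof of Lemma~\ref{lem:iterated-leaf}.

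The main obstacle is making sure that the local modification in $S_-$ can be performed without destroying either the existence of $\alpha, f(\alpha)$ as leaves or the dynamical transversality to $I$. The control comes from two observations: first, the sector $S_-$ is genuinely ``large'' (its closure near $p_0$ has non-empty interior away from $\alpha \cup f(\alpha)$) because $\rho(I) \subset [0,1)$ implies $f$ rotates by strictly less than a full turn, so $\beta$ and $f(\beta)$ can be chosen in a compact piece of $S_-$ sitting at positive distance from $\alpha \cup f(\alpha)$; second, the surgery used in the iterated leaf lemma is purely local around the chosen arc, so confining it to a neighbourhood of $\beta \cup f(\beta)$ disjoint from $\alpha \cup f(\alpha)$ leaves the already-constructed structure intact. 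Combining these, we obtain a single foliation $\cF'$ transverse to $I$ with the two desired pairs of leaves $(\alpha, f(\alpha))$ and $(\alpha', f(\alpha'))$, which is the content of Step 1.
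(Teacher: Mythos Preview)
Your proposal has a genuine gap at the point where you claim that you can choose a half-leaf $\beta$ ``deep inside $S_-$'' with $f(\beta)$ also in $S_-$. The justification you give --- that $\rho(I)\subset[0,1)$ prevents trajectories from wrapping back to $\alpha$ --- does not work: the local rotation set is an \emph{asymptotic} invariant and says nothing about how far a single iterate rotates a given leaf. Concretely, for $\beta\subset S_-=S(f(\alpha),\alpha)$ one has $f(\beta)\subset S(f^2(\alpha),f(\alpha))$, and at this stage of the argument we have no control whatsoever over the position of $f^2(\alpha)$; arranging that $f^2(\alpha)$ lies in $S_-$ is precisely the content of Step~2, so invoking it here would be circular. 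In fact the paper itself observes that when $\alpha'\subset S_-$ is close to $\alpha$ then $f(\alpha')$ lands in the \emph{other} sector $S(\alpha,f(\alpha))$, not in $S_-$.

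There is a second issue: the proof of the iterated leaf lemma is not a ``sector-by-sector surgery'' that can be localised inside $S_-$. It is a global construction (extend $f$ to the plane, pass to the quotient annulus $\tilde A/T'$ with $T'=T\tilde f^{-q}$, apply Le Calvez's theorem there, and transport the resulting foliation back). There is no obvious way to confine this procedure to a sector disjoint from $\alpha\cup f(\alpha)$.

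The paper's argument proceeds differently and avoids both problems. It takes $\alpha'$ Whitney-close to $\alpha$ inside $S(f(\alpha),\alpha)$ and uses a Whitney-small homeomorphism $\Phi$ fixing $f(\alpha)$ and sending $\alpha$ to $\alpha'$; Whitney stability of dynamical transversality ensures $\Phi(\cF')$ is still transverse to $I$, with $\alpha'$ and $f(\alpha)$ as leaves. On the complementary small sector $S(\alpha',f(\alpha))$ (which now contains $\alpha$ and $f(\alpha')$) one replaces $\Phi(\cF')$ by a purely radial foliation containing $\alpha$ and $f(\alpha')$; transversality there follows from the elementary fact that $f$ maps $S(\alpha',\alpha)$ into the disjoint sector $S(f(\alpha'),f(\alpha))$. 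The four leaves $\alpha,\alpha',f(\alpha),f(\alpha')$ are thus interleaved rather than separated into the two sectors.
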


\begin{figure}[h]
\centering
\def\svgwidth{\textwidth}
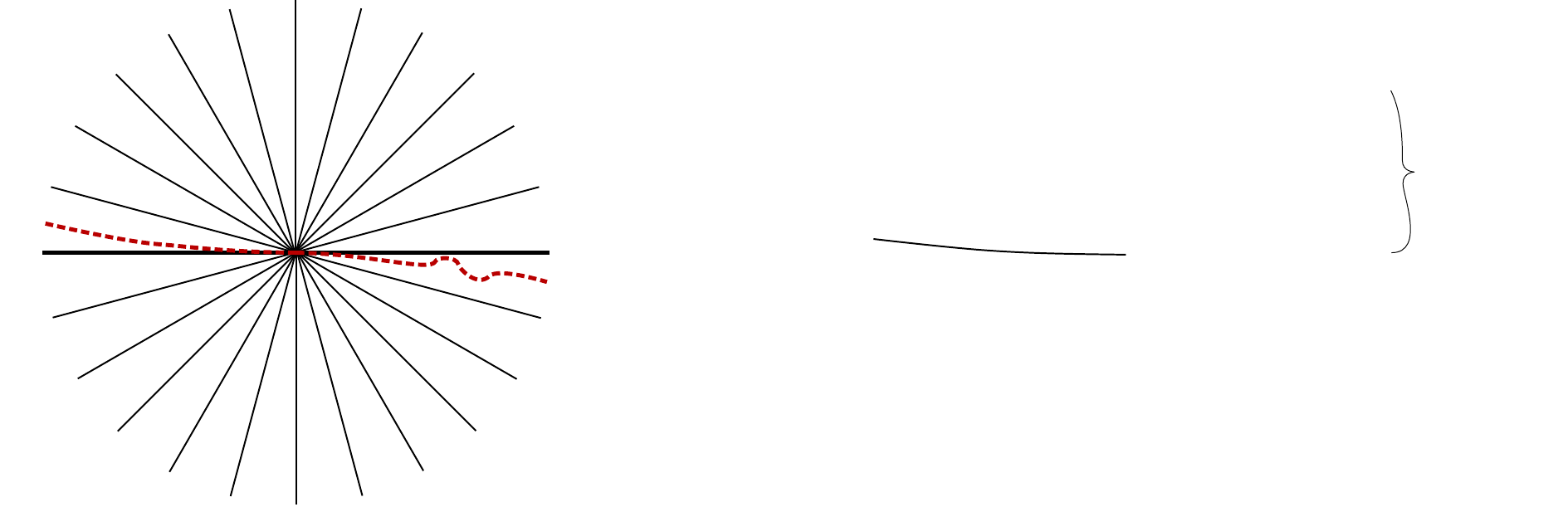
\caption{Proof of step 1}
\label{fig:index1-2leaves}
\end{figure}

\begin{proof}
We apply the iterated leaf lemma~\ref{lem:iterated-leaf} with $q=1$ and get a first foliation $\cF'$ with a leave $\alpha$ whose image is also a leaf of $\cF'$. We have assumed that $\rho(I)$ is included in $[0,1)$ and is not $\{0\}$, thus $\cF$ is a sink, and $\alpha$ is a direct positive arc for $I$.
We will modify $\cF'$ into a foliation $\cF$ having the desired property; this modification will be a small perturbation in the Whitney topology, and thus $\cF$ will still be dynamically transverse to $I$ by the stability property (Proposition 3.3 of \cite{leroux13}).
Let $\alpha'$ be a simple curve with $\alpha'(1)=p_{0}$ and the remaining of $\alpha'$ included in the open sector between $f(\alpha)$ and $\alpha$, and which is  Whitney-close to $\alpha$. Then $f(\alpha')$ is disjoint from $\alpha$ and  included in the open sector between $\alpha$ and $f(\alpha)$.
We choose a Whitney small local homeomorphism $\Phi$ that fixes $f(\alpha)$ and sends  $\alpha$ to  $\alpha'$.
By Whitney stability, the foliation $\Phi(\cF')$ is still locally transverse to $I$, and $f(\alpha)$ and $\alpha'$ are leaves of this foliation. Let $\cF$ be the foliation that coincides with with $\Phi(\cF')$ on  $S(f(\alpha), \alpha')$, and which is a topologically radial foliation on $S(\alpha',f(\alpha))$ that includes $\alpha$ and $f(\alpha')$ as leaves: 
%as in the proof of the iterated leaf lemma~\ref{lem:iterated-leaf} pas encore prouv\'e !
to construct $\cF$, we can use coordinates in which all four curves are rays and take the foliation to be radial on that sector. Note that, near $p_{0}$, $f$ sends the parabolic sector $S(\alpha',\alpha)$ to the disjoint parabolic sector $S(f(\alpha'), f(\alpha))$. We leave it to the reader to check, using this property, that the foliation $\cF$ is again locally transverse to $I$.

%This new foliation is again locally transverse to $I$ by the following easy claim whose proof is left to the reader.
% (again the argument is the same as in the proof of the iterated leaf lemma).
%\begin{claim}\label{claim:foliation-surgery}
%Let $(\cF',W)$ be a local foliation dynamically transverse to a local isotopy $(I,V)$ for a local homeomorphism $(f,U)$.
%Assume $\cF'$ admits a parabolic sector $S(\alpha,f(\alpha))$ bounded by a leaf $\alpha$ and its image under $f$.
%Let $(\cF,W)$ be another local foliation wich coincides with $\cF$ on $W \setminus S(\alpha,f(\alpha))$, and for which $S(\alpha,f(\alpha))$ is still a parabolic sector.
%Then $(\cF,W)$ is also dynamically transverse to $(I,V)$.
%\end{claim}
%\begin{proof}[Proof of the claim]
%****
%\end{proof}
\end{proof}

\begin{figure}[h]
\centering
\def\svgwidth{0.9\textwidth}
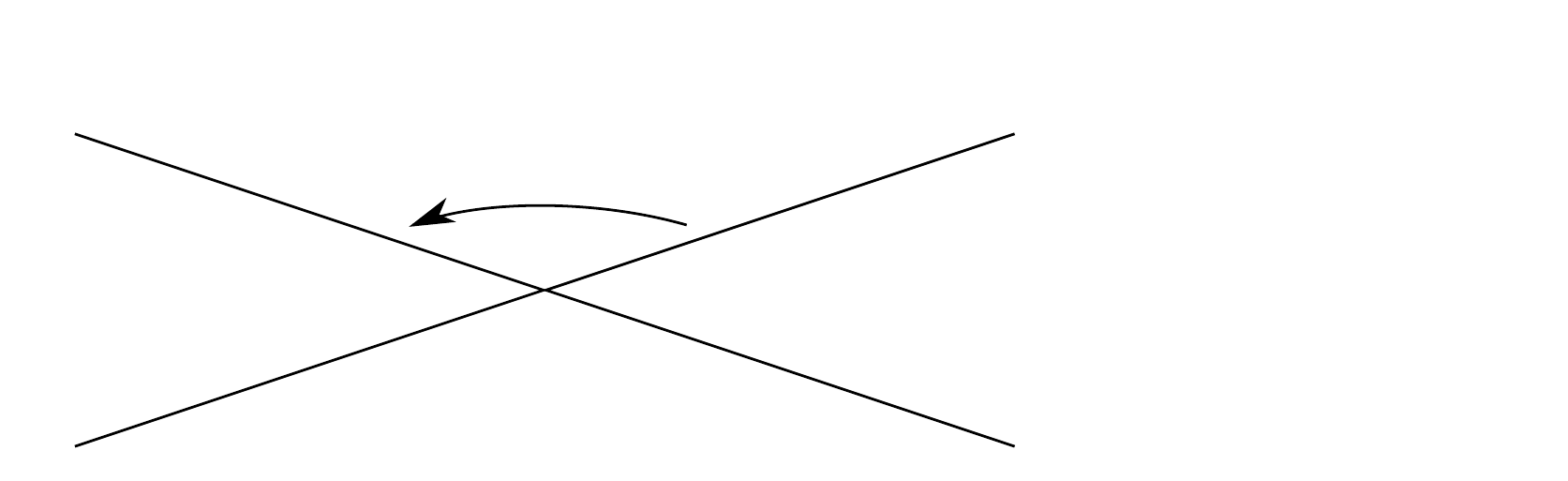
\caption{Proof of step 2}
\label{fig:index1-3iterates}
\end{figure}

{In the next lemmas we will make a slight language abuse by saying that the curves $\alpha,\alpha'$ are disjoint if they meet only at $p_{0}$.}
\begin{lemma}[Step 2]%\label{lem:smoothable-negative1}
Consider $f,U,\alpha,\alpha'$ as in step 1. Then there exists a  local homeomorphism $(f',U)$ whose only fixed point is $p_{0}$, which coincides with $f$ near the boundary of $U$, and such that the three curves  $\alpha$, $f'^2 (\alpha)$,  $f'(\alpha)$ are pairwise disjoint and in this cyclic order around $p_{0}$.
\end{lemma}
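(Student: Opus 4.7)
My plan is to construct $f'$ as a composition $f' = g \circ f$, where $g$ is an area-preserving homeomorphism supported in a small sector $O$ of $\cF$ near $p_{0}$ bounded by arcs of the leaves $f(\alpha)$ and $f(\alpha')$. The map $g$ will be produced by parabolic pushing, engineered so that near $p_{0}$ it sends (a sub-arc of) $f(\alpha)$ onto $f(\alpha')$, while being the identity on $\alpha$ and $\alpha'$. This way, $f'(\alpha)$ and $f'^{2}(\alpha)$ coincide, near $p_{0}$, with two distinct leaves of $\cF$ lying on the correct sides of $\alpha$, which automatically gives the disjointness and the required cyclic positive order.

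Concretely, I would first invoke the normal-form lemma (Lemma~\ref{lemma-normal-form-foliation}) to set up area-preserving coordinates on the parabolic sector $S$ of $\cF$ bounded by $f(\alpha)$ and $f(\alpha')$ so that in these coordinates $\cF$ is transverse to the vertical foliation. Next I apply the parabolic-pushing proposition (Proposition~\ref{prop:parabolic-pushing}) to $S$, obtaining a local isotopy $J$ with time-one map $g$ supported in a small open $O$ containing $S\setminus\{p_{0}\}$, whose effect near $p_{0}$ is the identification $g(f(\alpha)) \subset f(\alpha')$. To activate the ``$p_{0}$ is the only fixed point'' clause of that proposition, I verify hypothesis (ii) in its statement: any $p \in O$ with $f(p) \in O$ has trajectory $I.p$ homotopic in $U\setminus\{p_{0}\}$ to a curve in $O$. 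This follows from Claim~\ref{claim:sector-direct} applied to the direct positive arc $\alpha$ (or to an auxiliary direct positive arc $\beta$ chosen in the sector between $\alpha$ and $\alpha'$, together with its image $f(\beta)$ shown in Figure~\ref{fig:index1-3iterates}), because the sector $O$ is contained in an appropriate $S(f(\alpha),\alpha)$-type sector determined by a direct positive arc.

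To verify the cyclic order, observe that the support $O$ may be chosen disjoint from $\alpha$, so $f'(\alpha) = g(f(\alpha))$ coincides near $p_{0}$ with the leaf $f(\alpha')$; and $f'^{2}(\alpha) = g(f(g(f(\alpha)))) = g(f(f(\alpha'))) = g(f^{2}(\alpha'))$, which by our design of $g$ is pushed onto a leaf of $\cF$ lying strictly between $\alpha$ and $f(\alpha')$ in the positive cyclic order (distinct from each of $\alpha, f(\alpha'), f(\alpha)$). The three curves $\alpha$, $f'^{2}(\alpha)$, $f'(\alpha)$ are therefore sub-arcs of three distinct leaves of $\cF$ near $p_{0}$, hence pairwise disjoint, and by construction lie in this cyclic positive order around $p_{0}$. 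Finally, since $g$ is supported in $O \subset U$ and coincides with the identity near $\partial U$, $f'$ coincides with $f$ near $\partial U$.

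The main obstacle I anticipate is the fixed-point control in the last clause of Proposition~\ref{prop:parabolic-pushing} when the pushing $g$ is forced to extend far enough inside $O$ to also displace $f^{2}(\alpha')$ onto a leaf of $\cF$ strictly between $\alpha$ and $f(\alpha')$, rather than only concentrating the push near the apex of $S$. The interplay here between the direct-positive-arc property of $\alpha$ (which enforces the cyclic-winding bookkeeping) and the sink structure of $\cF$ is delicate. If a single application of parabolic pushing does not suffice, a fall-back strategy is to split the construction into two stages: first a small pushing realising $g(f(\alpha)) \subset f(\alpha')$, then an independent pushing in a disjoint sector adjusting the position of $f'^{2}(\alpha)$; this reduces the lemma to two applications of Proposition~\ref{prop:parabolic-pushing}, each with a clearly-verifiable hypothesis~(ii).
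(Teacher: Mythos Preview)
Your plan has a genuine gap at the computation of $f'^{2}(\alpha)$. With $f'=gf$ and $g$ designed so that $g(f(\alpha))\subset f(\alpha')$ near $p_{0}$, you correctly get $f'(\alpha)=f(\alpha')$; but then
\[
f'^{2}(\alpha)=g\!\left(f\bigl(f(\alpha')\bigr)\right)=g\!\left(f^{2}(\alpha')\right),
\]
and $f^{2}(\alpha')$ is \emph{not} a leaf of $\cF$. Step~1 only provides that $\alpha,\alpha',f(\alpha),f(\alpha')$ are leaves; nothing whatsoever is known about the position of $f^{2}(\alpha')$, and nothing in the construction of $g$ (which is tailored to push the leaf $f(\alpha)$ onto another leaf) tells you where $g$ sends this unknown curve. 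The sentence ``which by our design of $g$ is pushed onto a leaf of $\cF$ lying strictly between $\alpha$ and $f(\alpha')$'' is not justified, and your proposed two-stage fallback would require controlling $f^{2}(\alpha')$ first, which is the very thing you do not have.

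The paper's proof avoids this by making two different choices. First, the push sends $f(\alpha)$ onto $\alpha'$ (not onto $f(\alpha')$): the parabolic sector is $S=S(f(\alpha),\alpha')$, and $O=S(f(\beta),\beta)$ for an auxiliary direct positive arc $\beta$ between $\alpha'$ and $\alpha$, so that Claim~\ref{claim:sector-direct} applies directly to $O$ and yields hypothesis~(ii). Second, one sets $f'=fg$ rather than $gf$. Since $\alpha$ lies outside $O$ one gets $f'(\alpha)=f(g(\alpha))=f(\alpha)$; and since $g(f(\alpha))\subset\alpha'$ near $p_{0}$ one gets
\[
f'^{2}(\alpha)=f\bigl(g(f(\alpha))\bigr)=f(\alpha').
\]
Now \emph{both} $f'(\alpha)=f(\alpha)$ and $f'^{2}(\alpha)=f(\alpha')$ are leaves of $\cF$, so the three curves $\alpha,f(\alpha'),f(\alpha)$ are pairwise disjoint sub-arcs of distinct leaves, in the required cyclic order. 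The point is that the composition order $fg$ and the target $\alpha'$ together arrange for $f$ to be applied \emph{last}, to a leaf whose $f$-image is again a known leaf; your choice forces a second application of $f$ to a leaf ($f(\alpha')$) whose image is uncontrolled.
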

%(In particular, the local rotation set is included in $[\frac{1}{2},1)$.)
\begin{proof}
Consider the foliation $\cF$ provided by step 1. Choose a curve $\beta$ such that $\beta(1)=p_{0}$ between $\alpha'$ and $\alpha$ in the cyclic order around $p_{0}$ (see Figure~\ref{fig:index1-3iterates}). Note that $f(\beta)$  is between $f(\alpha')$ and $f(\alpha)$. 
%We consider  coordinates in which all these six curves are rays,  the curves  $\beta, f(\beta)$ are opposite vertical rays, $f(\alpha), \alpha'$ are rays in the half-plane $\{x>0\}$.  We notice that no point in this half-plane close enough to $0$ has its image again in this half-plane with a trajectory under $I$ not homotopic rel $0$ to a curve in this half-plane (this is due to the position of the curve $\beta$ and its image, as in the proof of Lemma~\ref{lem:periodic-arc1} {\fred Remplacer par une r\'ef\'erence au Claim}). 
Let $S = S(f(\alpha),\alpha')$ be a small parabolic sector of $\cF$ included in $U$, and $O = S(f(\beta),\beta)$ be a small sector containing $S \setminus \{p_{0}\}$. We apply the ``parabolic pushing'' Proposition~\ref{prop:parabolic-pushing} to the parabolic sector $S$ to get an isotopy $J$ compactly supported in $U$ that pushes $f(\alpha)$ to $\alpha'$ near $0$ and is locally transverse to $\cF$. Let $g$ be the time one of $J$. Claim~\ref{claim:sector-direct} applies to $O$, and
according to the end of the proposition the map $f'=fg$ has no fixed point in $U$ but $p_{0}$. Finally note that since $\alpha$ does not belong to the support of $J$, $f'(\alpha) = f(\alpha)$ and thus $f'^2(\alpha) = f(\alpha')$ and we get  $\alpha, f'^2 (\alpha), f'(\alpha)$ as wanted.
\end{proof}

\begin{lemma}[Step 3]%\label{lem:smoothable-negative1}
Let $f',U,\alpha$ be as in the conclusion of step 2.
Then there exists a  local homeomorphism $(f'',U)$ whose only fixed point is $p_{0}$, which coincides with $f$ near the boundary of $U$, and such that the curves 
$$
\alpha, f''^3(\alpha), f''^2(\alpha),  f''(\alpha)
$$
are pairwise disjoint and in that cyclic order around $p_{0}$. In particular, there exists a local isotopy $I''$ from the identity to $f''$ whose local rotation set is included in $[-\frac{1}{3},0]$.
\end{lemma}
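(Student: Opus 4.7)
The plan is a second application of parabolic pushing, analogous to Step~2, now aimed at placing $f''^{3}(\alpha)$ in the sector $S_{1} := S(\alpha, f'^{2}(\alpha))$ between $\alpha$ and $f'^{2}(\alpha)$. Starting from the output $(f', U, \alpha)$ of Step~2, I first fix a local isotopy $I'$ from the identity to $f'$ (for instance the concatenation $IJ$ of the original isotopy $I$ with the pushing isotopy $J$ from Step~2). Theorem~\ref{theo.locally-transverse-foliation} produces a local foliation $\cF'$ dynamically transverse to $I'$; the cyclic-order information from Step~2 forces the rotation number of $I'$ to sit near $2/3$, so $\cF'$ is a sink. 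A Whitney-small deformation of $\cF'$ in the spirit of Step~1 ensures that $\alpha$, $f'(\alpha)$ and $f'^{2}(\alpha)$ each coincide with a leaf of $\cF'$ on some small neighbourhood of $p_{0}$.

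Next comes the pushing. The cyclic order $\alpha, f'^{2}(\alpha), f'(\alpha)$ together with the preservation of cyclic order under $f'$ pins $f'^{3}(\alpha)$ down to either $S_{1}$ or $S_{3} := S(f'(\alpha), \alpha)$. In the first case I simply take $f'' := f'$. In the second, I select a parabolic sector $\Sigma \subset S_{3}$ of $\cF'$ whose boundary arcs $\partial^{-}\Sigma, \partial^{+}\Sigma$ lie along pieces of $f'(\alpha)$ and $\alpha$ respectively, and invoke Proposition~\ref{prop:parabolic-pushing} to produce an isotopy $J'$ whose time-one map $\tilde g$ is compactly supported in a narrow open neighbourhood $O$ of $\Sigma$, dynamically transverse to $\cF'$, and sends $\partial^{-}\Sigma$ onto $\partial^{+}\Sigma$ near $p_{0}$. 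Setting $f'' := f' \tilde g$, hypothesis~(ii) at the end of Proposition~\ref{prop:parabolic-pushing} is verified by Claim~\ref{claim:sector-direct} with $\alpha$ viewed as a direct positive arc of $I'$, so $p_{0}$ remains the unique fixed point of $f''$. Choosing $O$ narrow enough that it intersects $\alpha, f'(\alpha), f'^{2}(\alpha)$ only near $p_{0}$ allows one to verify, by tracking each iterate, that the three curves $f''(\alpha), f''^{2}(\alpha), f''^{3}(\alpha)$ fall in the desired cyclic order $\alpha, f''^{3}(\alpha), f''^{2}(\alpha), f''(\alpha)$.

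Finally, the concatenation $I' J'$ is a local isotopy from the identity to $f''$ whose rotation set sits near $2/3$. Because the local rotation set is only well-defined modulo $\mathbb Z$-translation through the choice of lifting isotopy, I set $I'' := R^{-1}(I' J')$, where $R$ is a local isotopy realizing a full positive turn around $p_{0}$; the identity $\rho(R^{p} I) = \rho(I) + p$ recalled in Section~\ref{subsubsec:rotation} then shifts the rotation set by $-1$ into a subinterval of $[-1, 0]$. To pin down the sharp bound, the cyclic order $\alpha, f''^{3}(\alpha), f''^{2}(\alpha), f''(\alpha)$ makes $\alpha$ a direct positive arc for $(I'')^{-1}$, and the fact that three iterates of $\alpha$ under $(f'')^{-1}$ wind exactly once around $p_{0}$, combined with Claim~\ref{claim:positive-arc} applied to $((I'')^{-1})^{3}$, gives $\rho((I'')^{-1}) \subset [0, 1/3]$, hence $\rho(I'') \subset [-1/3, 0]$.

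The main obstacle is the bookkeeping inside the pushing step: the support $O$ of $\tilde g$ must be simultaneously narrow enough to leave $f'(\alpha)$ and $f'^{2}(\alpha)$ essentially unchanged away from $p_{0}$, and wide enough that the pushing drags $f'^{3}(\alpha)$ across $\alpha$ into $S_{1}$ without disturbing the positions of the earlier two iterates. The geometric latitude required for this bookkeeping is precisely what the iterated-leaf structure of $\cF'$, inherited via the Step~1 Whitney-small modification, is designed to provide.
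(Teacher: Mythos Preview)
Your overall strategy---a second parabolic push to land $f''^3(\alpha)$ in $S(\alpha, f'^2\alpha)$---matches the paper. But the execution has a real gap exactly where you flag the ``main obstacle''. With your choice $\Sigma = S(f'\alpha, \alpha)$ and $f'' = f'\tilde g$, the arc $\alpha$ is $\partial^+\Sigma$ and hence lies in the support of $\tilde g$; in the normal-form coordinates of Proposition~\ref{prop:parabolic-pushing} the push near $p_0$ moves every point by one full sector-width, so $\tilde g(\alpha)$ lands roughly as far from $\alpha$ (into $S_1$) as $f'\alpha$ is on the other side. Then $f''(\alpha) = f'(\tilde g(\alpha)) \neq f'(\alpha)$, and tracking $f''^2(\alpha), f''^3(\alpha)$ through the iterated composition $f'\tilde g f'\tilde g(\cdots)$ is genuinely hard. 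Saying ``choose $O$ narrow'' does not help: the push must carry $\partial^-\Sigma$ all the way to $\partial^+\Sigma$, which is not a small displacement.

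The paper avoids this with two devices you are missing. First, it introduces auxiliary curves $\beta,\gamma$ close to $\alpha$, obtaining nine pairwise disjoint rays $\alpha,\beta,\gamma, f'^2\alpha, f'^2\beta, f'^2\gamma, f'\alpha, f'\beta, f'\gamma$ in that cyclic order; the sector to push is $S(f'\gamma, \alpha)$ and the support is $O = S(f'\beta, \beta)$, which \emph{avoids} both $f'\alpha$ and $f'^2\alpha$. Second, it takes $f'' := gf'$ (post-composition by $g$, conjugate to the $f'g$ whose fixed points the Proposition controls). Since $g$ is now the identity on $f'\alpha$ and $f'^2\alpha$, one gets $f''(\alpha)=f'(\alpha)$ and $f''^2(\alpha)=f'^2(\alpha)$ for free, leaving only $f''^3(\alpha) = g(f'^3(\alpha))$ to locate. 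The auxiliary $\gamma$ pins $f'^3(\alpha)$ inside $S(f'\gamma, f'^2\alpha)$, and $g$ collapses that sector to $S(\alpha, f'^2\alpha)$.

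A smaller point: your justification for a transverse sink foliation $\cF'$ containing $\alpha, f'\alpha, f'^2\alpha$ as leaves (``Whitney-small deformation in the spirit of Step~1'') is inadequate as stated---Step~1 relies on the iterated leaf lemma, which requires $\rho \subset [0,1/q)$ and does not apply here. In fact no new argument is needed: the Step~1 foliation $\cF$ already has $\alpha$, $f(\alpha)=f'\alpha$, and $f(\alpha')=f'^2\alpha$ among its leaves, and the Step~2 isotopy $I'=IJ$ remains transverse to it.
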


\begin{figure}[h]
\centering
\def\svgwidth{\textwidth}
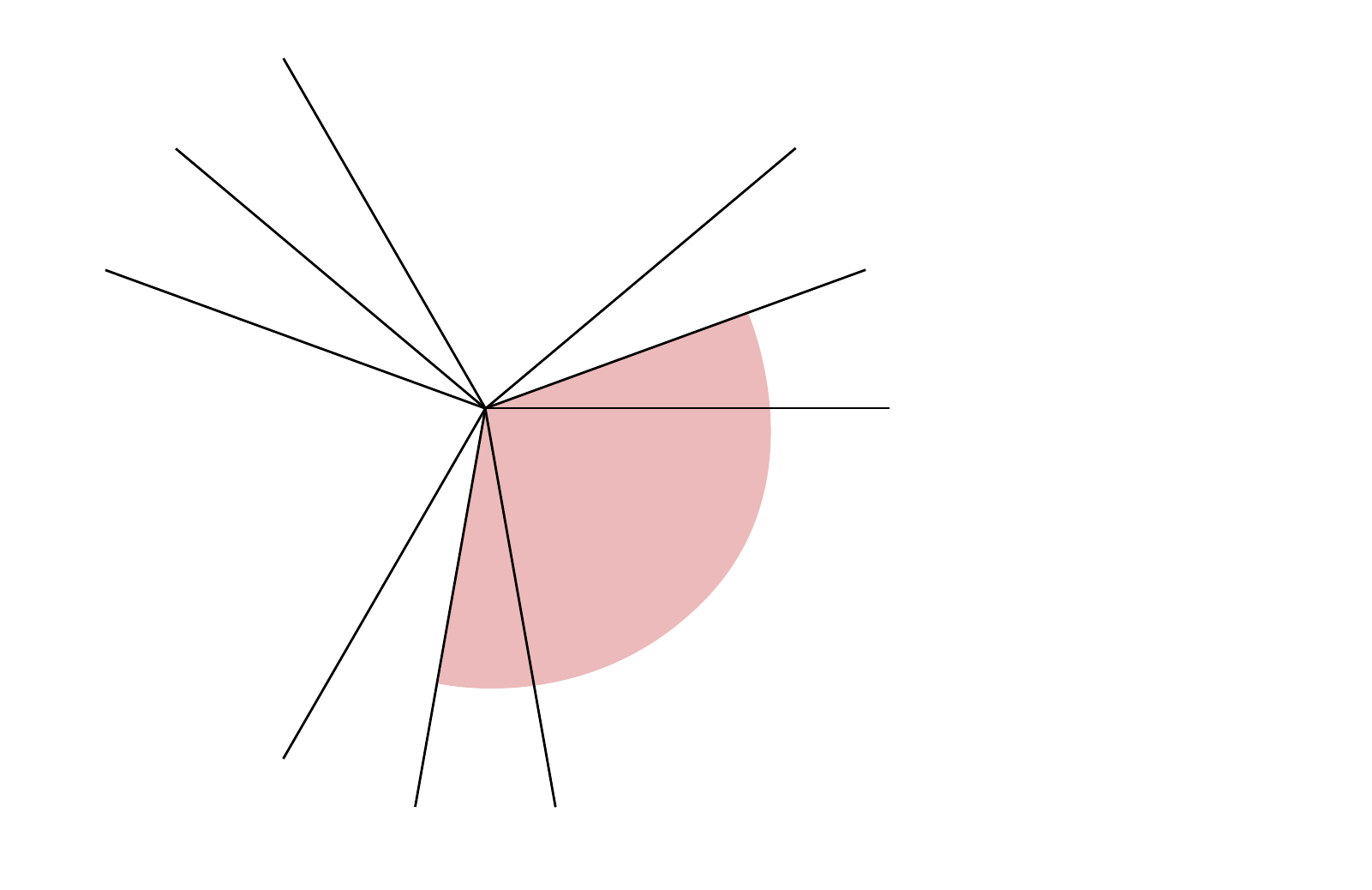
\caption{Proof of step 3}
\label{fig:index1-4iterates}
\end{figure}

\begin{proof}
%Note that $f'^3(\alpha)$ is included in the sector $S(f' \alpha, f'^2 \alpha)$. 
We can find two curves $\beta,\gamma$ very close to $\alpha$, so that we get nine pairwise disjoint curves with the following cyclic order around $p_{0}$:
$$\alpha,\beta,\gamma,f'^2\alpha,f'^2\beta,f'^2\gamma, f'\alpha,f'\beta,f'\gamma.$$
(see Figure~\ref{fig:index1-4iterates}). Furthermore since $f'^2(\alpha)$ is included in the sector  $S(\gamma, f'(\alpha))$, we get $f'^3 \alpha \subset S(f'\gamma, f'^2\alpha)$. 
We consider coordinates in which all the above nine curves are straight rays.
Note that in these coordinates $I$ is dynamically transverse to the radial foliation.
 We consider some small parabolic sector $S = S(f'(\gamma),\alpha)$, and some small open set $O=S(f'(\beta),\beta)$ containing $S \setminus \{0\}$, so that the hypotheses of the parabolic pushing proposition~\ref{prop:parabolic-pushing} is satisfied. Now we push as before by some local isotopy $(J,V)$ for a local homeomorphism $(g,U)$ supported in $O$ and that locally sends $f'(\gamma)$ to $\alpha$. Near $p_{0}$ we have $g(f'^3(\alpha)) \subset g(S(f'\gamma, f'^2\alpha)) = S(\alpha,f'^2\alpha)$.
 By proposition~\ref{prop:parabolic-pushing} $f'g$ has no fixed point but $p_{0}$ in $U$, thus so does $f''=gf'$ since they are conjugate by $g$ which is supported on $U$. Since $g$ is the identity on $f'(\alpha)$ and $f'^2(\alpha)$ we get  $f''(\alpha)=f'(\alpha)$ and $f''^2(\alpha)=f'^2(\alpha)$, so that curves
 $$
 \alpha, f''^3(\alpha),  f''^2(\alpha)=f'^2(\alpha) , f''(\alpha)=f'(\alpha)
 $$
 are pairwise disjoint and in that cyclic order around $p_{0}$,
 as wanted. 
 
 Let $(I'',U)$ be a local isotopy from the identity to $f''$ that pushes $\alpha$ to $f''(\alpha)$ of less than one full turn in the negative direction, then it also pushes similarly 
  $f''(\alpha)$ to $f''^2(\alpha)$ and  $f''^2(\alpha)$ to $f''^3(\alpha)$. Since $\alpha$ is a positive arc for $I^{-1}$, the local rotation set $\rho(I'')$ is included in $[-\infty,0]$ by Claim~\ref{claim:positive-arc}. On the other hand $\alpha$ is also a positive arc for $R I^3$, and thus we also get  $\rho(I'') \subset [-\frac{1}{3},+\infty]$, and finally the local rotation set is included in $[-\frac{1}{3},0]$ as wanted.
 \end{proof}

%It remains to see that a fixed point whose local rotation set is included in $[-\frac{1}{3},0]$ is smoothable. By conjugating by an orientation reversing diffeomorphism we can assume that the local rotation set is included in $[0,\frac{1}{3}]$. Then the result follows by Proposition~\ref{prop.smoothability-0-un-demi}. This completes the proof of the theorem {\fred ref} that every area preserving local homeo with an isolated fixed point and local rotation set not equal to $[0,1]$ is smoothable.

\newcommand{\bbR}{\mathbb{R}}
%%%
\subsubsection{Proof of the ``iterated leaf lemma''}\label{sss.iterated-leaf-lemma}

In order to prove the iterated leaf lemma, our strategy is first to extend our local dynamics to the whole plane. The ``extension lemma'' below shows that this extension can be done in such a way that the global rotation set around the fixed point is not too large, barely larger than the local rotation set of the initial local homeomorphism. In this global setting, we work in the infinite annulus $A=\R^2 \setminus \{0\}$. The dynamics lifts to a map $\tilde f$ of the universal cover $\tilde A \simeq \R^2$. We denote by  $T$ the deck transformation. The second step in the proof of the iterated leaf lemma consists in playing with maps of the form $T^p\tilde f^{-q}$: we prove that if such a map $T'$  has positive (global) rotation set then it is conjugate to a plane translation; this is the content of the ``quotient lemma''. Then $\tilde f$ induces a dynamics in the annulus $\tilde A/T'$; the key point in the construction of the foliation with an iterated leaf will consist in applying Le Calvez's transverse foliation in this quotient. Note that in this section all foliations are just topological (non smooth) foliations.

We begin by recalling some definitions in this global setting. We work in the plane $\bbR^2$ which we compactify by adding the point $\infty$. The set $\bbR^2 \setminus \{0\}$ is identified with the open annulus $A = \mathbb{S}^1 \times \R$, whose ends $N,S$ correspond respectively to $0,\infty$ in this identification. We let $\tilde A$ be the universal cover of $A$, with deck transformation $T$. In the course of the proof we will encounter another transformation $T'$ of the plane $\tilde A$ which will be conjugate to $T$. Then the quotient $A'=\tilde A/T'$ is again an open annulus. We choose an invariant topological line for $T'$ and orient it in the sense in which $T'$ pushes points. In the quotient $A'$ the line becomes an oriented circle. We let $N',S'$ be the two ends of the annulus $A'$, so that $N'$ is on the lefthand side of this circle, and $S'$ is on the righthand side, and call them the North and South ends.

Let $f:\R^2 \to \R^2$ that fixes $0$, and $I$ an isotopy that fixes $0$, from the identity to $f$. This isotopy induces a lift $\tilde f:\tilde A \to \tilde A$. The (global) rotation set of $I$ in $A$ may be defined in this context. It is a closed subset of $\R \cup \{\pm\infty\}$ which, roughly speaking, contains the average speeds of rotation of long pieces of orbits that start and end not too close to one end of the annulus $A$.  We refer to~\cite{leroux13} or~\cite{conejeros16} for the precise definition. The definitions make it clear that this rotation set contains the local rotation set of $I$ around the fixed point $0$.

In this global setting we will apply Le Calvez's theorem, already mentioned in the local setting.
Assume for simplicity that $\tilde f$ has no fixed point; this will always be the case in what follows.
Then Le Calvez's theorem provides a topological oriented foliation $\cF$ of the annulus $A$ which is dynamically transverse to $I$: every trajectory of $I$ is homotopic to a curve which crosses each leaf of $\cF$ from left to right (this is made precise by using the charts of the foliation). As in the local setting, the property of being transverse is stable: any topological oriented foliation $\cF'$ of $A$ which is Whitney close to $\cF$ is again dynamically transverse to the isotopy $I$ (the proof is almost identical as the proof of the local stability, see~\cite{leroux13}, Section~4.4 b after Lemma 4.4.2).

Assume in addition that $f$ preserves the area of the plane $\R^2$. Then the transverse foliation is gradient-like near $N$ (see Section~\ref{section:transverse-foliations}): $N$ is a sink, a source or a saddle singularity for $\cF$. Moreover, $\cF$ has no closed leaf in $A$. However there may be some leaf going from $S$ to $S$ (``petals''), and some leaf $\phi$ whose $\alpha$-limit set is $\{N\}$ and whose $\omega$-limit set contains a petal leaf (the leaf spirals around and accumulate on a family of petals). 
%In the presence of such a leaf, the Poincar\'e-Bendixson theory implies that every leaf coming from $N$ accumulates on the same family of petals at $S$. 
This phenomenon is unstable: indeed in this situation, it is easy to prove that there exists an arbitrarily small perturbation of $\cF$ that produces a foliation $\cF'$ for which some leaf has $\alpha$-limit set $\{N\}$ and $\omega$-limit set $\{S\}$. Then,  by Poincar\'e-Bendixson's theory, every leaf of $\cF'$ whose $\alpha$-limit set is $\{N\}$ must have $\{S\}$ as its $\omega$-limit set. By the above mentioned stability property, $\cF'$ is still dynamically transverse to the isotopy $I$. To summarize, up to perturbing $\cF$, we may always assume that \emph{every leaf that comes from $N$ goes to $S$}, and likewise that \emph{every leaf that goes to $N$ comes from $S$}.

In particular, since $N$ is a sink, a source or a saddle, there is at least one  leaf going from $N$ to $S$ or from $S$ to $N$. By transversality, the first case implies that the rotation set of $I$ in $A$ in included in $[0,+\infty]$, and the second case that it is included in $[-\infty,0]$. A straightforward generalization of this argument shows that for every rational number $\frac{p}{q}$, if there exist no periodic orbit whose rotation number equals $\frac{p}{q}$, then the rotation set is included in $[-\infty,\frac{p}{q}]$ or in $[\frac{p}{q}, +\infty]$. In particular, the density of rational numbers implies that the rotation set is an interval.

\bigskip

The following lemma allows to extend a local homeomorphism into a homeomorphism of the plane, with a global rotation set equal or slightly larger than the local rotation set of the local homeomorphism. For related (unpublished) results, see~\cite{patou06}; the techniques are already used in~\cite{leroux13}, appendix A.

\begin{lemma}[Extension lemma]\label{lem:extension}
Let  $(f,U)$ be an area preserving local homeomorphism fixing $0$, where $U$ is a neighbourhood of $0$ in the plane. Let $(I,V)$ be a local isotopy associated to $f$. Let $[\alpha,\beta]$ be the local rotation set of $I$. 

Then there exists an area preserving homeomorphism $\overline{f}: \bbR^2 \to \bbR^2$, and an  isotopy $\overline{I}$ on $\bbR^2$ for $\overline{f}$,
such that $\overline{f}$ and $\overline{I}$ coincide respectively with $f$ and $I$ on a neighbourhood of $0$, and the rotation set of $\overline{I}$ in the annulus $A = \bbR^2 \setminus \{0\}$ is included in $[\alpha',\beta']$, where

\begin{itemize}
\item[(i)] if $\alpha > -\infty$ is a rational number different from $\beta$, and $0$ is not accumulated by periodic orbits of rotation number $\alpha$, then $\alpha'=\alpha$, furthermore in this case $\overline{f}$ has no periodic point of rotation number $\alpha$;
\item[(ii)] if $\alpha=-\infty$ then $\alpha'=\alpha$; in the other cases $\alpha'$ can be chosen to be any real number less than $\alpha$.
\end{itemize}
And $\beta'$ is defined by a symmetric set of conditions.
\end{lemma}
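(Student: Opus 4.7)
My plan is to build $\bar f$ in two stages: a rough extension of $f$ to the whole plane, followed by a surgery in a large annulus that replaces the far-field behavior by an explicit rigid-rotation model whose contribution to the global rotation set can be computed directly. The rough extension is produced by Appendix~A of \cite{leroux13}, which gives a plane homeomorphism whose only fixed point is $0$; area-preservation is then imposed by the Oxtoby-Ulam theorem, exactly as in the proof of Lemma~\ref{lem:smoothable-negative3}, and extending each time $f_t$ in parallel produces a global isotopy $\hat I$. This crude $\hat f$ has an a priori uncontrolled rotation set, which the next step will fix.

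For the surgery, shrink $W \subset V$ so that the local rotation set $[\alpha, \beta]$ is already realized in $W$, pick a large closed disk $D \supset W$, and on $\R^2 \setminus D$ replace $\hat f$ by an explicit model $m_{\mathrm{out}}$. On the annulus $A' = D \setminus \overline{W}$, build an area-preserving interpolation that is a monotone twist in the angular direction, gluing continuously to $\hat f$ on $\partial W$ and to $m_{\mathrm{out}}$ on $\partial D$. Area matching during the interpolation is arranged by Moser's method combined with an Oxtoby-Ulam adjustment on the complement. The choice of $m_{\mathrm{out}}$ depends on the case: in the ``soft'' cases ($\alpha=-\infty$ or $\alpha'<\alpha$ arbitrary), take $m_{\mathrm{out}}$ to be rigid rotation by $2\pi\gamma$ for a rational $\gamma \in (\alpha',\alpha)$ and design the twist in $A'$ so its rotation rates cover $[\gamma, \alpha]$; in the tight case (i) with $\alpha = p/q$ rational, take instead $\gamma$ irrational and slightly greater than $\alpha$, and design the twist so its rotation rates lie strictly inside $(\alpha, \gamma)$. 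The symmetric procedure is carried out near $\beta$.

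With this construction the rotation-set bound follows by orbit analysis: any orbit of $\bar I$ in $A = \R^2 \setminus \{0\}$ either recurs in $W$ (contributing a rotation number in $[\alpha,\beta]$), stays in $\R^2 \setminus D$ (contributing $\gamma$), or spends long intervals crossing $A'$ (where the monotone twist bounds the instantaneous rate), so that long-time averages lie in $[\alpha',\beta']$ in every case. For case (i), the absence of periodic orbits of rotation $\alpha$ in $\bar f$ is checked region by region: none exist locally by hypothesis, none outside $D$ because $\gamma$ is irrational, and none in $A'$ because every rotation rate there is strictly above $\alpha$.

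The main obstacle is the tight rational case: gluing the twist to $\hat f|_{\partial W}$ when $\hat f$ itself realizes rotation rate $\alpha$ near $\partial W$ risks producing $q$-periodic orbits of winding $p$ in $A'$. The hypothesis that $0$ is not accumulated by periodic orbits of rotation number $\alpha$ enters precisely here: after shrinking $W$, no point of $\partial W$ is $q$-periodic for $\hat f$ with winding $p$, which allows the twist to be strictly monotone through the value $\alpha$ across $A'$ without generating any closed orbit of that rotation number. Framing the whole construction through a Le Calvez transverse foliation for $\hat I$, in the language of Section~\ref{subsubsec:rotation}, makes this monotonicity and the rotation-set bound easy to formalize via Claim~\ref{claim:positive-arc} applied to the $q$-th iterates.
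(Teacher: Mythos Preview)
Your approach is genuinely different from the paper's, and the central step---the surgery/interpolation in the annulus $A'=D\setminus\overline W$---does not work as described.

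The basic problem is the gluing. On $\partial W$ the rough extension $\hat f$ is an arbitrary area-preserving local homeomorphism: it need not preserve $\partial W$, and even after choosing nice coordinates it is not a rigid rotation or a twist there. A map which is ``a monotone twist in the angular direction'' on $A'$ has the form $(r,\theta)\mapsto (r,\theta+\rho(r))$; such a map preserves every circle $r=\text{const}$, so it cannot agree with $\hat f$ on $\partial W$ unless $\hat f$ already does. Saying that Moser plus Oxtoby--Ulam fixes the area does not address this: you first need a homeomorphism that matches $\hat f$ on $\partial W$ and $m_{\mathrm{out}}$ on $\partial D$, and \emph{then} you would have to argue that the area correction does not destroy the monotone rotation-rate structure you need. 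Neither step is provided.

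The orbit analysis is also too coarse. The global rotation set in $A=\bbR^2\setminus\{0\}$ is built from limits $\frac{1}{n}(\text{winding of }\bar I^n.x)$ over arbitrary points and times. An orbit may enter and leave $W$ infinitely often, and nothing in your construction prevents it from spending a positive fraction of its time in $W$ rotating at a rate close to $\alpha$, and another positive fraction in $A'$ rotating at a rate close to $\gamma$: the resulting average can be any convex combination, so to get the containment $[\alpha',\beta']$ you must in particular show that the contribution from $W$ alone never strays outside $[\alpha,\beta]$ for \emph{finite} orbit segments beginning and ending on $\partial W$, which is strictly stronger than the statement about the local rotation set. In the tight case~(i) you further conflate ``no $q$-periodic points of winding $p$ on $\partial W$'' with ``rotation rate strictly above $\alpha$ near $\partial W$''; the hypothesis only rules out closed orbits, not points whose finite-time winding equals $p/q$, and the latter are exactly what block your strict-monotonicity argument. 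The appeal to Claim~\ref{claim:positive-arc} at the end does not help, since that claim gives one-sided bounds from a single positive arc, not two-sided control over an annulus where the map is not explicitly known.

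For comparison, the paper avoids all of this by a covering-space trick. After a crude area-preserving extension to the plane, one lets $F$ be the closure of the set of periodic orbits of rotation number $\alpha''$ or $\beta''$ (rationals just outside $[\alpha,\beta]$, or $\alpha$ itself in case~(i)); by hypothesis $F$ does not accumulate on $0$. One then takes the connected component $M$ of $\bbR^2\setminus F$ containing $0$, passes to the annulus $\widetilde M/T'$, and transports $f$ there. Any periodic orbit of the new $\bar f$ of rotation number $\alpha''$ or $\beta''$ would project to one in $M$, which is impossible since those were excised. Since the global rotation set is an interval containing $[\alpha,\beta]$ and missing $\alpha'',\beta''$ in its interior, it is contained in $[\alpha'',\beta'']$. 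No interpolation, no explicit model, no orbit-by-orbit bookkeeping.
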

Note that since $\overline{I}$ extends $I$, the rotation set of $\overline{I}$ will automatically contain the local rotation set of $I$.
The following remark is added for completeness, though we will not need it here.
In the setting of the lemma, in the case when the local rotation set is a single rational number $\alpha$ and $0$ is not accumulated by periodic orbits of rotation number $\alpha$, a variation on the proof below produces an extension with rotation set included in an arbitrarily small interval $[\alpha-\varepsilon,\alpha+\varepsilon]$, one of whose endpoints is $\alpha$.

%{\fred \`a revoir dans le cas d'un singleton rationnel.}

\begin{proof}
By a standard extension argument, we may assume $f$ is an area preserving homeomorphism of the plane (with no assumption on the global rotation set).

In the case when $\alpha'=\alpha$ (case (i) of the statement and when $\alpha=-\infty$) we set $\alpha''=\alpha'=\alpha$; otherwise choose any rational number $\alpha''$ such that $\alpha' < \alpha'' < \alpha$. Define $\beta''$ symmetrically. 
Let $F$ be the closure of the set of periodic points of $f$ in the annulus $A$ whose rotation number is $\alpha''$ or $\beta''$.
Clearly $F$ is a (maybe empty) closed subset of the annulus $A$.
By hypothesis (in particular the assumption on the local rotation set of $I$ in case (i)), $F$ is also a closed subset of the plane, \emph{i.e.}\ it does not accumulate $0$.

Let $M$ be the connected component of $\bbR^2 \setminus F$ that contains $0$, which is an open set invariant under $f$.
Let $\widetilde M$ be the universal cover of $M$, which is homeomorphic to the plane, and choose some  covering map $T'$ corresponding to a simple loop in the positive direction around $0$.
The annulus $A' = \widetilde M/T'$ projects to $M \setminus \{0\}$, and the projection $p$ restricts to a homeomorphism  between some neighborhood $V_{N'}$ of the end $N'$ of $A'$ and some neighborhood $V_{0}$ of $0$.
The map $f$ lifts to a homeomorphism $\tilde f$ of the plane $A' \cup \{N'\}$ which, near $N'$, is locally conjugate to $f$ near $0$. We may extend the restriction $p_{\mid V_{N'}}$ to a homeomorphism $\Phi$ between $A' \cup \{N'\}$ and $\bbR^2$. The map $\overline f' = \Phi \tilde f \Phi^{-1}$ is a homeomorphism of $\bbR^2$ that coincides with $f$ near $0$. Furthermore, $\overline f'$ preserves the image by $\Phi$ of the pull-back of the area by the covering map $p$, which is a measure that is positive on open sets and has no atoms. Using the Oxtoby-Ulam theorem, we see that this measure is the image of the area by a homeomorphism that is the identity near $0$. We finally conjugate $\overline f'$ by this homeomorphism to get an area preserving homeomorphisms $\overline f$ which coincide with $f$ near $0$.

Let $\overline{I}$ be an isotopy associated to $\overline{f}$ that extends (some restriction of) $I$. 
Let $\bar x$ be a periodic point of $\bar f$ with rational rotation number $\frac{p}{q}$ around $N$: this means that a lift of the trajectory $\overline{I}^q. \bar x$ joins a lift of $\bar x$ to its image under $\Phi T^p \Phi^{-1}$. Then $x = p\Phi^{-1}( \bar x)$ is a periodic point in $M$ with the same rotation number $\frac{p}{q}$ around $0$.
We conclude that $\overline{f}$ has no periodic point of rotation number $\alpha''$ nor $\beta''$.
Since  the rotation set of $\overline{I}$  relative to the annulus $\bbR^2 \setminus \{0\}$ is an interval, and because every rational number in the interior of this interval is  the rotation number of some periodic orbit, we deduce that
this interval does not contain $\alpha''$ nor $\beta''$ in its interior.
Furthermore it contains the local rotation set $[\alpha,\beta]$ of $I$ at $0$. Thus the rotation set of $\overline{I}$ is  included in $[\alpha'',\beta'']$.  In particular it is included in $[\alpha',\beta']$, as required.
\end{proof}

\bigskip

%Let $T$ be a homeomorphism of the plane which is conjugate to a translation. Then the quotient $\bbR^2/T$ is homeomorphic to the infinite annulus. We choose an invariant topological line for $T$ and orient it in the sense in which $T$ pushes points. In the quotient space the line becomes an oriented circle. We let $N,S$ be the two ends of the annulus $A$, so that $N$ is on the lefthand side of this circle, and $S$ is on the righthand side, and call them the North and South ends. Note that $A \cup \{N\}$ is homeomorphic to the plane. 
Let $A = \tilde A / T$ be an annulus as before, and  denote the quotient map by $\pi: \tilde A \to A$. Let $\mu$ be a measure on $\tilde A$ which is invariant under $T$. This measure induces a unique measure $\pi_\star \mu$ on $A$ with the property that for every topological disk $D$ in $\tilde A$  which projects injectively into $A$, $\pi_\star \mu(\pi(D)) = \mu(D)$. Conversely, given a measure $\mu_A$ on $A$, there is a unique measure $\pi^\star \mu_A$ on $\tilde A$ which is invariant under $T$ and such that $\mu_A$ is the measure induced by $\pi^\star \mu_A$; again we will say that $\pi^\star \mu_A$  is induced by $\mu_A$. Given a measure $\mu_A$ on $A$, we say that \emph{the end $N$ has finite measure} if some neighborhood of $N$ in $A$ has finite measure. 

A compact subset of the plane is said to be \emph{full} if its complement is connected. Compact connected full subsets of the plane are characterized as those subsets which can be written as a decreasing intersection of closed topological disks.

\begin{lemma}[Quotient lemma]~

\label{lemm:translation}
\begin{enumerate}
\item
Let  $g: \bbR^2 \to \bbR^2$ be  an area preserving homeomorphism of the plane fixing $0$, and ${J}$ be  an associated  isotopy fixing $0$.
Assume that the rotation set of ${J}$ in the annulus $A = \bbR^2 \setminus \{0\}$ is included in $(0,+\infty]$. Then the associated lift $T' = \tilde g: \tilde A \to \tilde A$ is conjugate to a translation.

\item Let $\mu$ denotes the area of the plane $\R^2$. Then $\mu$ induces a measure $\tilde \mu$ on $\tilde A$, and this measure in turn induces a measure $\mu'$ on $A' = \tilde A/T'$.
Assume furthermore that the local rotation set of $J$ around $0$ is bounded. Then the north end $N'$  of $A'$ has finite measure.

\item
Let $\pi: \tilde A \to A$ and $\pi': \tilde A' = \tilde A \to A'$ denote the quotient maps.
Let $(K_k)_{k \geq 0}$ be a decreasing sequence of compact connected full subsets of the plane $A \cup \{0\}$
whose intersection is $\{0\}$, each of which is invariant under $g$. Define $K'_k = \pi'(\pi^{-1}(K_k)) \cup \{N'\}$ for each $k$. Then $(K'_k)_{k \geq 0}$ is a decreasing sequence of compact connected full subsets of the plane $A' \cup \{N'\}$ whose intersection is ${N'}$.
\end{enumerate}
\end{lemma}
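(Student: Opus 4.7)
The plan is to handle the three parts sequentially, with Part 3 drawing on Part 2 which builds on Part 1.

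\textbf{Part 1.} The hypothesis on the rotation set forces $T'$ to be fixed-point free in $\tilde A$: a fixed point of $T'$ would witness rotation number $0$. To upgrade this to a conjugacy to a translation, I would use the discussion immediately preceding the lemma. Applying Le Calvez's global transverse foliation theorem to $(J,A)$ produces a topological foliation $\cF$ on $A$ dynamically transverse to $J$; after the perturbation described in the text, $\cF$ admits a leaf $\phi$ going from $N$ to $S$ (the rotation set being positive rules out $S$-to-$N$ leaves). Lifting $\phi$ to $\tilde A$ gives a proper arc $\tilde\phi$ from $\tilde N$ to $\tilde S$. Dynamical transversality of $J$ with respect to $\cF$ lifts to the fact that $T'(\tilde\phi)$ is disjoint from $\tilde\phi$ and lies on a definite side of it. Hence the arcs $\{T'^n(\tilde\phi)\}_{n \in \Z}$ are pairwise disjoint, ordered, and exhaust $\tilde A$; the region between $\tilde\phi$ and $T'(\tilde\phi)$ is a topological strip, and is a fundamental domain for the action of $T'$. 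A Brouwer--Kerékjártó-type argument then concludes that $T'$ is conjugate to a translation of $\R^2$.

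\textbf{Part 2.} Using the coordinates on $\tilde A$ provided by Part 1, I may assume $T'(x,y) = (x+1,y)$, and that the invariant line chosen for the statement of the lemma is the radial lift $L$ of a leaf $\phi$ from $N$ to $S$; this makes $\tilde N$ and $\tilde S$ the two ends of $L$. A small neighborhood $V$ of $0$ in $\R^2$ lifts to a neighborhood $\tilde V$ of $\tilde N$ in $\tilde A$ which, in the normalized coordinates, sits below some line $\{y = y_0\}$. Let $[\alpha,\beta] \subset (0,+\infty)$ denote the local rotation interval of $J$ at $0$. The width of a fundamental domain for $T'$ inside $\tilde V$ corresponds, at each height, to the local rotation speed of $T'$, which is bounded by $\beta$; integrating against $\tilde\mu = \pi^*\mu$ and using the fact that $\tilde\mu$ of a $T$-fundamental domain of $\tilde V$ equals $\mu(V)$, one obtains $\mu'(\pi'(\tilde V)) \leq \beta \cdot \mu(V) < +\infty$, so that $N'$ has a neighborhood of finite $\mu'$-measure.

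\textbf{Part 3.} Each $K_k$ is a compact, connected, full, $g$-invariant subset of $\R^2$ containing $0$; hence $\pi^{-1}(K_k)$ is a closed $T$- and $T'$-invariant subset of $\tilde A$ which, together with $\tilde N$, is a closed subset of $\tilde A \cup \{\tilde N\}$ accumulating only at $\tilde N$. Projecting by $\pi'$ and adjoining $N'$ gives $K'_k$. Compactness of $K'_k$ in $A'\cup\{N'\}$ follows from the measure estimate of Part 2 combined with the fact that $\pi^{-1}(K_k)$ projects to a $T'$-bounded subset of $A'$ near $N'$. Connectedness and fullness of $K'_k$ are inherited from those of $K_k$ via standard covering-space reasoning; the decreasing chain $K'_{k+1}\subset K'_k$ is immediate, and $\bigcap_k K'_k = \{N'\}$ follows directly from $\bigcap_k K_k = \{0\}$ since any point in $A'$ other than $N'$ lifts to a $T'$-orbit confined to a fixed compact region of $\tilde A$ that is disjoint from $\pi^{-1}(K_k)$ for $k$ large.

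The main obstacle is Part 1: upgrading positive rotation to a genuine conjugacy to a translation. Brouwer's plane translation theorem only supplies local Brouwer lines, not a single $T'$-invariant one, so the delicate step is to use the structure of the transverse foliation $\cF$, together with the perturbation recalled before the lemma, to exhibit a leaf of $\cF$ joining $N$ to $S$ whose lifts assemble into an invariant tiling of $\tilde A$; with that tiling in hand, both the conjugacy in Part 1 and the subsequent width-and-measure bookkeeping in Parts 2 and 3 become tractable.
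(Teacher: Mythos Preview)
Your outline has the right architecture but there are substantive gaps in Parts 1 and 2.

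\textbf{Part 1.} Having a Brouwer line $\tilde\phi$ for $T'=\tilde g$ (so that $T'^{-1}(\tilde\phi)<\tilde\phi<T'(\tilde\phi)$) is \emph{not} enough to conclude that $T'$ is conjugate to a translation: you need the iterates $T'^n(\tilde\phi)$ to exhaust $\tilde A$, and nothing in your argument guarantees this. A priori the $T'^n(\tilde\phi)$ could accumulate on some limiting line. The paper resolves this by not applying Le Calvez to $J$ but to $J^qR^{-1}$ for $q$ large enough that the rotation set lies in $(1/q,+\infty]$. The resulting Brouwer line is for $\tilde h=\tilde g^qT^{-1}$, which immediately yields the comparison $T^n(\tilde\phi)<\tilde g^{nq}(\tilde\phi)$; since the $T$-translates of $\tilde\phi$ certainly exhaust $\tilde A$, so do the $\tilde g^q$-translates, and then $\tilde g$ is a translation by a standard covering argument. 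This comparison with the deck transformation $T$ is the missing idea.

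\textbf{Part 2.} Your bound $\mu'(\pi'(\tilde V))\le\beta\cdot\mu(V)$ is a heuristic, not an argument: boundedness of the \emph{local rotation set} does not give a pointwise bound on the horizontal width of a $T'$-fundamental domain at each height, and there is no meaningful ``height'' coordinate to integrate against. The paper's route is genuinely different and more delicate: after reducing to rotation set $\subset(1,+\infty]$, it identifies the measure of the relevant half-band with $\int_A (J_c.x\wedge\phi)\,d\mu$ for a compactly supported modification $g_c$ of $g$, and then bounds this integral via Birkhoff's ergodic theorem together with the fact that the Birkhoff averages $\rho(x)$ land in the rotation set, which is bounded. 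The ergodic argument is the crux; your ``width $\times$ area'' estimate is not a substitute.

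\textbf{Part 3.} Compactness of $K'_k$ does not follow from a measure estimate. The paper instead shows that $\pi'(\tilde U_K)$ is an essential sub-annulus of $A'$, splits its complement into components $K'_{N'}$ and $K'_{S'}$, and proves $K'_{S'}=\{S'\}$ by constructing explicit continuous extensions of $\pi$ and $\pi'$ to the two-point compactifications of the fundamental bands $B$ and $B'$. This continuity argument (using that $g$ fixes $N$ and $S$, hence the second coordinate of $g^q\circ\phi$ is proper) is what rules out $K'$ touching $S'$.
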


\begin{proof} 
To prove the first point, we try to follow the argument from Section 3 of~\cite{BCLR06}. The hypothesis there was that $g$ preserves a finite measure, so we have to adapt the proof to the present setting where the area is infinite. Note that here we may have some wandering points, so that even~\cite{guillou11} does not apply directly. (Also note that the definition of the rotation numbers in the annulus used in~\cite{BCLR06} involves only recurrent points and thus is not adapted to the present setting.)

Let  $R$ be the isotopy that makes one turn around $0$ in the positive direction, whose associated lift is the deck transformation $T$.
Let $g$ and $J$ be as in the statement.
Consider $q>0$ such that the rotation set is included in $(\frac{1}{q}, +\infty]$. 
Then the rotation set of the isotopy $J^q R^{-1}$ is included in $(0,+\infty]$.
Apply Le Calvez' theorem to get a singular foliation of the plane, with singularity $0$, transverse to the isotopy $J^q R^{-1}$. Since $g$ is area preserving, so is the time one map $h=g^q$ of this isotopy. As explained at the beginning of Section~\ref{sss.iterated-leaf-lemma}, up to perturbing $\cF$ we may assume that there is a leaf $\phi$ from $\infty$ to $0$.

Now the argument of~\cite{BCLR06} applies, let us recall it. 
Given two oriented proper lines $\beta, \beta'$, we will denote $\beta < \beta'$ if the two lines are disjoint, $\beta$ is on the left-hand side of $\beta'$ and $\beta'$ is on the right-hand side of $\beta$. Note that this is a transitive relation which is preserved when we apply an orientation preserving homeomorphism.
Let $\tilde h = \tilde g^q T^{-1}$, which is the lift of $h$ associated to the isotopy $J^q R^{-1}$.
Let $\tilde \phi$ be a lift of $\phi$, we have
$$
%T^{-1}(\tilde \phi) < \tilde \phi < T(\tilde \phi) \text{ and } 
\tilde h^{-1}(\tilde \phi) < \tilde \phi < \tilde h(\tilde \phi)
$$
($\tilde \phi$ is said to be an \emph{oriented Brouwer line} for $\tilde h$).
From this we first get
$$
\tilde g^{-q}(\tilde \phi) < T^{-1}(\tilde \phi) \text{ and }  T(\tilde \phi) < \tilde g^{q}(\tilde \phi).
$$
Let $B'$ denote the closed strip bounded by $\tilde \phi$ and  $\tilde g^q(\tilde \phi)$. We claim that the union of the iterates of $B'$ under $\tilde g^q$ cover the plane:
indeed, from the previous inequalities we get, for every $n>0$,
$$
\tilde g^{-nq}(\tilde \phi) < T^{-n}(\tilde \phi) \text{ and }  T^n(\tilde \phi) < \tilde g^{nq}(\tilde \phi)
$$
and moreover the strip between $\tilde \phi$ and $T(\tilde \phi)$ is a fundamental domain for $T$.
We deduce that $\tilde g^q$ is conjugate to a translation. By a classical lemma, $\tilde g$ is also conjugate to a translation (indeed the quotient $\tilde A/g$ is a quotient of the annulus $\tilde A/g^q$ by a covering map of degree $q$, thus it is also an annulus).

\bigskip
Note that some orbit of $g$ may accumulates on both ends of $A$, which is why it is not so easy to compare properties in $A$ and in $A'$ in what follows.
\bigskip

Let us prove the second point. We assume that the local rotation set of $J$ around $N$ is bounded.
%Let $\mu$ be the measure of $\bbR^2 = \tilde A$ which lifts the area of the plane. 
The annulus $\tilde A/{\tilde g}^q$ is a $q$-fold cover of the annulus $A'$, thus the end $N'$ of $A'$ has finite measure if and only if the corresponding end of $\tilde A/{\tilde g}^q$ has finite measure for the measure induced by $\tilde \mu$. Thus up to changing $g$ into $g^q$, we may assume that the rotation set of $J$ in $A$ is included in $(1, +\infty]$. Let $\phi$ be a topological line in $A$ as in the first point, whose lift $\tilde \phi$ satisfies
$$
\tilde \phi < T(\tilde \phi) < \tilde g(\tilde \phi).
$$
The band $B'$ between $\tilde \phi$ and $\tilde g (\tilde \phi)$ is a fundamental domain for $A'$.
Choose some curve $\tilde \alpha$ joining a point on $\tilde \phi$ to its image under $\tilde g$, and whose interior is included in the interior of the band $B'$.
Then $\alpha' = \pi'(\tilde \alpha)$ is a Jordan curve surrounding $N'$, we aim to prove that  the disk bounded by $\alpha'$ in $A'$ has finite $\mu'$-measure.
 Denote by $\tilde \phi_{+}$ the half-leaf of $\phi$ after the point $\tilde \alpha(0)$ on $\tilde \phi$. 
Let $B'_{+}$ be the connected component of $B' \setminus \tilde \alpha$ bounded by $\tilde \phi_{+}, \tilde \alpha, \tilde g(\tilde \phi_{+})$, which projects into this disk.
By perturbing $\phi$ if necessary we may assume $\phi$ has zero measure, and then the measure of this disk is also the $\tilde \mu$-measure of $B'_{+}$.

By modifying $g$ far from $0$, we may find an area-preserving homeomorphism $g_{c}$ of the plane $A\cup \{N\}$ which is compactly supported, and which has a lift $\tilde g_{c}$ which is the identity on a half-leaf of $\tilde \phi$, and coincides with $\tilde g$ on $\tilde \phi^+$. We denote by $J_{c}$ the corresponding isotopy on $A$ from the identity to $g_{c}$. To simplify the picture, up to modifying again $g_{c}$ on a compact band of $A$, we may assume that $\tilde g_{c}(\tilde \phi)$ is included in the closed half-plane on the right-hand side of $\tilde \phi$, and in particular the rotation set of $J_{c}$ in $A$ is still non negative.
Note that $g_{c}$ is the identity except on a set of finite measure, thus we may apply the Poincar\'e recurrence theorem: $\mu$ almost every point is recurrent under $g_{c}$.
Let $Z$ be the set of points which are on the right-hand side of $\tilde \phi$ and on the left-hand side of $\tilde g_{c} (\tilde \phi)$ (note that we do not care about measure zero sets, thus from now it is enough to define sets up to measure zero). Note that $Z$ equals $B'_{+}$ up to a finite $\tilde \mu$-measure set, and thus it is sufficient to prove that $Z$ has finite measure. Moreover  the local rotation set of $J_{c}$ around $N$ equals that of $J$, and thus is bounded by hypothesis. Since $g_{c}$ has compact support in $A \cup \{N\}$, we deduce that the (global) rotation set of $J_{c}$ in the annulus $A$ is also bounded, say it is included in $[0,M]$.
The idea of what follows is that the measure of $Z$ is the mean rotation number of $J_{c}$ in $A$, which is finite since the rotation set of $J_{c}$ is bounded.

Here are the details. Let $Q$ denote the band bounded by $\tilde \phi$ and $T(\tilde \phi$), $Q'=\tilde g_{c}(Q)$, $P_{i,j} = T^i(Q) \cap T^j(Q')$. The half-plane on the right-hand side of $\tilde \phi$ is $\cup_{i \geq 0} T^i(Q)$, the half-plane on the left-hand side of $\tilde g_{c} (\tilde \phi)$ is $\cup_{j<0} T^j(Q')$, and thus
$$
Z = \bigcup_{i\geq0, j<0} P_{i,j}.
$$
For each $i,j$ we have $T(P_{i,j})=P_{i+1, j+1}$ and since $\tilde \mu$ is invariant under $T$ we get
$$
\tilde \mu (Z) = \sum_{i \geq 0} i \tilde \mu(P_{i,0}).
$$
Now we establish the relation with the mean rotation number. For almost every point $x$ in $A$, we may define the algebraic intersection number $J_{c}.x \wedge \phi$ of the trajectory of $x$ under $J_{c}$ with the leaf $\phi$. Furthermore, since $\phi$ is a leaf of a foliation transverse to $J$, the function $x\mapsto J.x \wedge \phi$ is non-negative, and thus the function $x \mapsto J_{c}.x \wedge \phi$ is bounded below. In addition it vanishes except on a subset of $A$ which has finite area. In particular, its integral is well defined (maybe infinite).
Let $x \in A$, let $\tilde x$ be a lift of $x$ in the fundamental domain $Q$ of $A$, and let $i=J_{c}.x \wedge \phi$.
Then $\tilde g(\tilde x)$ belongs to $T^i(Q) \cap Q'= P_{i,0}$. Thus for every integer $i$, the points $\tilde x$ in $Q$ for which $J_{c}.x \wedge \phi=i$
are exactly the elements of $\tilde g_{c}^{-1}(P_{i,0})$. Denoting by $\mu$ the area of the plane $A \cup \{N\}$, since $\tilde \mu$ is preserved by $\tilde g_{c}$, we get
$$
\int_{A} \left( J_{c}.x \wedge \phi \right) d \mu = \sum_{i \geq 0} i \tilde \mu( \tilde g_{c}^{-1}(P_{i,0}) ) = \sum_{i \geq 0} i \tilde \mu(P_{i,0}) = \tilde \mu(Z).
$$
It remains to show that this integral is finite. Since  $x \mapsto J_{c}.x \wedge \phi$ is bounded below we may apply Birkhoff's ergodic theorem for positive functions: 
the function
$$
\rho(x) = \lim_{n \to +\infty} \frac{1}{n} J_{c}^n.x \wedge \phi
$$
is defined $\mu$-almost everywhere on $A$ (allowing the value $+\infty$), and its integral equals that of $x \mapsto J_{c}.x \wedge \phi$. Let $x$ be a recurrent point for which this limit exists, then it is easy to see that the number $\rho(x)$ belongs to the rotation set of $J_{c}$. Thus $\rho(x) \leq M$ almost everywhere, and since $\rho$ vanishes outside a finite measure subset, we get  
$\int_{A}\rho(x) d\mu < +\infty$. This completes the proof of the second point.

\bigskip

Let us prove the last point of the lemma. Let $K$ be a compact connected full subset of $A\cup \{N\}$ which strictly contains $N$ and is invariant under $g$. Denote $U_{K} = A \setminus K$, which is an essential invariant sub-annulus of $A$. Let $\tilde K = \pi^{-1}(K), \tilde U_{K} = \pi^{-1}(U_{K})$: $\tilde U_{K}$ is a topological plane which is invariant by both $T$ and $T'=\tilde g$. Since $T'$ is conjugate to a translation, so is its restriction to $\tilde U_{K}$, and thus $\pi'(\tilde U_{K})$ is an essential open sub-annulus of $A'$ whose complement in $A' \cup \{N'\}$ is $K':=\pi'(\tilde K) \cup \{N'\}$, and we want to show that this is a compact connected full subset of the plane $A' \cup \{N'\}$. We can write
$$
A' \cup \{S'\} = K'_{S'} \sqcup \pi'(\tilde U_{K}) \sqcup K'_{N'}
$$
where  $K'_{S'}, K'_{N'}$ are the connected components of the complement of $\pi'(\tilde U_{K})$ in the sphere $A' \cup \{S',N'\}$, respectively containing $S'$ and $N'$. By considering a nested family of topological disks bounded by simple closed curves included in $\pi'(\tilde U_{K})$, we see that $K'_{N'}$ is a compact connected full subset in the plane $A' \cup \{N'\}$, and likewise for $K'_{S'}$ in the plane  $A' \cup \{S'\}$. %Note that $T$ induces a homeomorphism of $A'$ which we denote by $T_{A'}$. It fixes both ends $S',N'$, and thus $K'_{S'}, K'_{N'}$ are also invariant by $T_{A'}$.

We will prove that $K'_{S'} = \{S'\}$. For this we consider an oriented topological line $\tilde \phi$ as in the proof of the first point, with $\tilde \phi < T(\tilde \phi)$ and $T(\tilde \phi) < T'^q(\tilde \phi)$.
We denote by $B$ the closed band between $\tilde \phi$ and $T(\tilde \phi)$. Likewise let $B'$ be the closed band between $\tilde \phi$ and $T'^q(\tilde \phi)$, which we compactify by adding two ends $\tilde N, \tilde S$, with $\tilde N$ on the left-hand side of any simple curve that goes from $\tilde \phi$ to $T'^q(\tilde \phi)$. Note that $B'$ contains $B$, and that $B \cup \{\tilde S, \tilde N\}$ is a compactification of the band $B$. The natural maps $B \cup \{\tilde S, \tilde N\} \to A\cup \{S,N\}$ and $B' \cup \{\tilde S, \tilde N\} \to A'\cup \{S',N'\}$ induced by the projection $\pi,\pi'$ are continuous: indeed $B$ is a fundamental domain for $A$; and $B'$ is a fundamental domain for the annulus $\tilde A/T'^q$ and $\pi'$ is the composition of the projection from $\tilde A$ to $\tilde A/T'^q$ and from $\tilde A/T'^q$ to $A'=\tilde A/T'$. We claim that the map from $B' \cup \{\tilde S, \tilde N\}$ to $A\cup \{S,N\}$ induced by $\pi$, that sends $\tilde S'$ to $S$ and $\tilde N'$ to $N$, is also continuous. To see this let us identify $A$ with $\mathbb{S}^1 \times \R$; since $\phi$ is a topological line from $S$ to $N$ in $A$, we may assume that in these coordinates $\phi$ is parametrized by $\phi(t) = (0,t)$. Since $g$ is continuous and fixes $N,S$, denoting $g^q(\phi(t)) = (x(t), y(t))$ we get that the function $y$ is proper, namely $y(t)$ tends to $\pm \infty$ when $t$ tends to $\pm \infty$. In these coordinates the projection $\pi$ writes as $\pi(\tilde x, \tilde y) = (\tilde x \text{ mod } 1, \tilde y)$, $\tilde \phi(t) = (0,t)$, and $\tilde g^k(\tilde \phi(t)) = (\tilde x(t),y(t))$. Let $M>0$, since the function $y$ is proper, we get that the intersection of the band $B'$ with the horizontal band $\R \times [-M,M]$ is compact. Let $(x_{k} \text{ mod } 1, y_{k})$ be a sequence in $B'$ that tends to $\tilde S$ (respectively $\tilde N$), then it has finitely many terms in each given compact subset, and $(y_{k})$ must tends to $-\infty$ (respectively $+\infty$). Thus the projection of the sequence in $A$ tends to $S$ (respectively $N$), which proves the claim.

Now assume by contradiction that $K'_{S'} \neq \{S'\}$. Then, since $K'_{S'}$ is connected, there exists a sequence $(x'_{k})$ in $K'_{S'} \neq \{S'\}$ that converges to $S'$. Let $(\tilde x_{k})$ be a pre-image under $\pi'$ of this sequence in $B'$, and note that it is included in $\tilde K$.
Since the natural map $B' \cup \{\tilde S, \tilde N\} \to A'\cup \{S',N'\}$ is continuous, the sequence  $(\tilde x_{k})$ converges to $\tilde S$. Let $x_{k}= \pi(\tilde x_{k})$; since the map  $B' \cup \{\tilde S, \tilde N\} \to A\cup \{S,N\}$ induced by $\pi$ is continuous we get that the sequence $(x_{k})$ converges to $S$. But this last sequence is included in $K$, and this contradicts the fact that $K$ is a compact subset of $A\cup \{N\}$.

Finally if $(K_{k})$ and $(K'_{k})$ are as in the hypothesis of the lemma, then clearly $(K'_{k})$ is a decreasing sequence of compact connected full subsets of $A' \cup \{N'\}$, and it is easy to see that its intersection is $\{N'\}$. The proof of the lemma is complete.
\end{proof}

\begin{proof}[Proof of the ``iterated leaf lemma'']
By the ``extension lemma''~\ref{lem:extension}, we may assume that $f$ is a homeomorphism of the plane that preserves the area, whose (global) rotation set is included in $[0,\frac{1}{q})$, and which has no fixed point of rotation number $0$ by the second point of the conclusion of this lemma (and thus no fixed point at all).

Let $\cF$ be a transverse oriented foliation on $\bbR^2 \setminus \{0\}$ for $I$. Recall the following properties from Theorem~\ref{theo.locally-transverse-foliation} and Section~\ref{sec:local-homeo-area-pres}.
Since $f$ preserves the area, $\cF$ has no circle leaf, and no petal. Since $L(f,0)=1$, the index of the singularity $0$ for $\cF$ must also be equal to 1. Thus $0$ is a sink or a source for $\cF$. If the local rotation set contains some positive number then $0$ has to be a sink. If the local rotation set is $\{0\}$ and $0$ is a source for $\cF$, then up to changing $f$ into its inverse we may assume again that $0$ is a sink for $\cF$, without changing the hypotheses of the lemma; note that if $f^{-1}$ satisfies the conclusion of the lemma then so does $f$. Up to modifying $\cF$ as in the proof of the quotient lemma, we may assume that there is a proper leaf that crosses the annulus, that is, its $\omega$-limit set is $0$ and its $\alpha$-limit set is the other end $S$ of the annulus.  Note that as a consequence of the Poincar\'e-Bendixson theorem, this is the case for every leaf whose $\omega$-limit set is $0$.

Consider the isotopy  $J = R I^{-q}$, and let $A, \tilde A, T$ be as in the ``quotient lemma''~\ref{lemm:translation}.
Let $\tilde f:\tilde A \to \tilde A$ be the lift of $f$ associated to $I$.
The lift associated to $J$ is $T' = T \tilde f^{-q}$. Its rotation set is included in $(0,1]$. According to the quotient lemma, $T'$ is conjugate to a translation. We consider the quotient annulus $A' = \bbR^2 / T'$, and the measure $\mu'$ induced by the area of the plane.

The map $\tilde f$ commutes with $T'$, thus it induces a homeomorphism $f'$ of the annulus $A'$ which fixes the end $N'$ of $A'$. Let $I'$ be an isotopy from the identity to $f'$ on $A'$, whose associated lift is $\tilde f$. Note that the trajectories of any point $\tilde x$ under $\tilde I$ and $\tilde I'$ are homotopic relative to their endpoints $\tilde x, \tilde f (\tilde x)$, and thus an oriented foliation is transverse to one if and only if it is transverse to the other one.

The homeomorphism $f'$ preserves the measure $\mu'$, and  according to the quotient lemma the end $N'$ of $A'$ has finite measure. Let $\cF'$ be a transverse oriented foliation for $I'$ on $A'$. As a consequence, $\cF'$ has no closed leaf and no petal at $N'$, thus $N'$ is either a source, a sink, or a saddle singularity for $\cF'$. Up to perturbing $\cF'$ we can assume that it has a leaf that crosses $A'$ either from $N'$ to $S'$ or from $S'$ to $N'$, and then this is the case for every leaf whose $\omega$ or $\alpha$-limit set is $\{N'\}$.
\begin{claim}\label{claim.source}
$N'$ is not a source for $\cF'$.
\end{claim}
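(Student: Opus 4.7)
The plan is to derive a contradiction by bounding the local rotation set $\rho(I')|_{N'}$ in two incompatible ways. First, suppose for contradiction that $N'$ is a source for $\cF'$; by the perturbation step just above, some leaf $\phi'$ of $\cF'$ has $\alpha$-limit set $\{N'\}$ and $\omega$-limit set $\{S'\}$. Reversing its orientation produces a simple arc terminating at $N'$ whose intersections with $I'$-trajectories are negative, equivalently positive for $I'^{-1}$. Applying the local analogue of Claim~\ref{claim:positive-arc} to $I'^{-1}$ at the fixed point $N'$, one obtains $\rho(I'^{-1})|_{N'}\subset[0,+\infty]$, equivalently $\rho(I')|_{N'}\subset[-\infty,0]$.

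Second, I compute $\rho(I')|_{N'}$ using the identity $T'=T\tilde f^{-q}$. A $\tilde f$-orbit near the upper end of $\tilde A$ that is translated by approximately $\alpha\, e_T$ per iteration (where $\alpha\in\rho(I)$ and $e_T$ denotes the generator of the $T$-deck group) is translated by $(1-q\alpha)\,e_T$ per application of $T'$, so its rotation number measured in units of the $T'$-deck generator equals $\alpha/(1-q\alpha)$. Since $\rho(I)\subset[0,1/q)$, the M\"obius-type transformation $\alpha\mapsto\alpha/(1-q\alpha)$ sends this into $[0,+\infty)$, and so $\rho(I')|_{N'}\subset[0,+\infty)$. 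Intersecting with the first bound yields $\rho(I')|_{N'}\subset\{0\}$, and already gives a contradiction whenever $\rho(I)$ has a strictly positive value, which is the generic situation after extension.

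The delicate boundary case $\rho(I)|_0=\{0\}$ is handled by invoking the leaf of $\cF$ from $S$ to $0$, whose existence is part of our setup (recall that $\cF$ is a sink at $0$). Lifting this leaf to $\tilde A$ as a simple arc $\tilde\phi$ approaching the upper end and projecting via $\pi'$ to $A'$ produces, after extracting a simple sub-arc near $N'$, a direct positive arc for $I'$ at $N'$ oriented \emph{toward} $N'$. Such an incoming direct positive arc witnesses counterclockwise rotation of $I'$-trajectories around $N'$, whereas a source $\cF'$ at $N'$ would force clockwise rotation (the local computation shows that positive transversality with outward leaves requires trajectories to cross them clockwise). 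This orientation conflict rules out the source scenario and closes the proof. The main technical obstacle is precisely this orientation-tracking in the degenerate rotation case; one must verify that $\pi'(\tilde\phi)$ provides a positive (and not negative) arc for $I'$ at $N'$, which follows from $\pi'$ being an orientation-preserving local homeomorphism and $\tilde\phi$ being positively transverse to $\tilde I$ in $\tilde A$ near the upper end.
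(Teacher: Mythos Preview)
Your treatment of the case where the global rotation set $\cal R$ contains a positive number is essentially correct and matches the paper's argument: the M\"obius-type relation $\rho\mapsto\rho/(1-q\rho)$ transfers a positive rotation number from $A$ to $A'$, and a leaf of $\cF'$ from $N'$ to $S'$ would force the global rotation set ${\cal R}'\subset(-\infty,0]$, a contradiction. (One small correction: you should phrase this with the \emph{global} rotation set in $A'$, not the local one $\rho(I')|_{N'}$; a crossing leaf constrains the global set, and that is what is needed.)

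The gap is in the degenerate case ${\cal R}=\{0\}$. Your proposed shortcut---lift a sink leaf $\tilde\phi$ of $\cF$ to $\tilde A$ and project via $\pi'$ to obtain a positive arc for $I'$ at $N'$---does not go through as stated. First, there is no reason for $\pi'(\tilde\phi)$ to be simple near $N'$: the foliation $\tilde\cF$ is $T$-invariant but not $T'$-invariant, so the iterates $T'^k(\tilde\phi)$ may intersect $\tilde\phi$. Second, and more seriously, being a ``positive arc'' for $I'$ is a statement about the intersection number $\sharp I'.x'\wedge\alpha'$ in $A'$, which unwinds to $\sharp\,\tilde I.\tilde x\wedge\bigcup_k T'^k(\tilde\phi)$ in $\tilde A$. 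You only control the contribution from the $k=0$ term; the other translates $T'^k(\tilde\phi)=(T\tilde f^{-q})^k(\tilde\phi)$ are not leaves of $\tilde\cF$ and their intersections with the trajectory are completely uncontrolled. The appeal to ``$\pi'$ is an orientation-preserving local homeomorphism'' addresses only local transversality, not the global winding that the intersection number encodes. So you have not produced a positive arc for $I'$, and without one the ``orientation conflict'' does not materialise: both a source $\cF'$ and $\rho(I')|_{N'}=\{0\}$ are simultaneously possible as far as your argument goes.

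The paper explicitly flags this case as the delicate one (``conjectured not to happen, but in the absence of a proof we have to cope with it'') and handles it with a substantially different apparatus: it shows the fixed point is \emph{indifferent} (via the Le~Calvez--Yoccoz theorem), passes to the prime-end compactification of the complement of the maximal invariant set $K$, and compares the induced boundary dynamics coming from the sink $\cF$ and the putative source $\cF'$ to obtain points $\bar p,\bar p'$ on the prime-end circle with $\bar f(\bar p)>\bar p$ and $\bar f(\bar p')<\bar p'$. A refined index argument from \cite{lecalvez03} (their Theorem~9.4 and Proposition~9.5) then shows this is incompatible with $L(f,0)=1$. That machinery, or something of comparable strength, is what is missing from your argument.
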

Let us first admit the claim and complete the proof of the lemma. According to the claim, $N'$ is either a sink or a saddle singularity of $\cF'$; in both cases there is a leaf $\alpha$ whose  $\omega$-limit set is $\{N'\}$. Let $\tilde \alpha$ be a lift of $\alpha$ in $\bbR^2$.
The strip $B'$ between $\tilde \alpha':=T'^{-1}(\tilde \alpha) =  (T \tilde f^{-q})^{-1}(\tilde \alpha)$ and $\tilde \alpha$  is a fundamental domain for  $A'$, thus the foliation $\cF'$ lifts to a foliation $\tilde \cF'$ of this strip. 
Furthermore we have
$$
\tilde \alpha' < \tilde \alpha = T'(\tilde \alpha') < T(\tilde \alpha')
$$
and from this we deduce that the strip $B$ bounded by $\tilde \alpha'$ and $T(\tilde \alpha')$ is a fundamental domain for $T$ (the argument is entirely analogous to the one used in the proof of the first point of the quotient lemma). In particular the projection of $\tilde \alpha'$ in $A$ is a proper line that goes from $\infty$ to $0$.

%{\fred Given two oriented proper lines $\beta, \beta'$, we will denote $\beta < \beta'$ if the two lines are disjoint, $\beta$ is on the left-hand side of $\beta'$ and $\beta'$ is on the right-hand side of $\beta$. Note that this is a transitive relation. Since $\cF'$ is transverse to $I'$, we have
%$$
%\tilde \alpha'=(T \tilde f^{-q})^{-1}(\tilde \alpha) < \tilde \alpha <  \tilde f(\tilde \alpha) < \cdots < \tilde f^q(\tilde \alpha) = T (\tilde \alpha').
%$$
%Likewise we get for every $p>0$, since $\tilde f$ commutes with $T'$ and $\tilde \alpha < \tilde f \tilde \alpha$,
%$$
% T'^p \tilde \alpha' < \tilde f^{qp} T'^p \tilde \alpha' = T^p \tilde \alpha' \text{ and } T^{-p} \tilde \alpha' < T'^{-p} \tilde \alpha'.
%$$
%}

%We deduce that the strip $B$ bounded by $\tilde \alpha'$ and $T(\tilde \alpha')$ is a fundamental domain for $T$: indeed, since $B'$ is a fundamental domain for $A'$, for every compact set $K$ of the plane, there is some $p>0$ such that $K$ is included in the strip bounded by $T'^{-p} \tilde \alpha'$ and $T'^{p} \tilde \alpha'$, and then $K$ is also included in the strip bounded by $T^{-p} \tilde \alpha'$ and $T^{p} \tilde \alpha'$ by the previous inequalities; this proves that the image of $\tilde \alpha'$ in the annulus $A = \bbR^2 / T$ is a proper line.

Foliate the strip bounded by $\tilde \alpha$ and $ \tilde f(\tilde \alpha)$ with with any foliation $\cF_{0}$ homeomorphic to the foliation by parallel lines, then for every $p \in \{1, \dots, q-1\}$, foliate the strip bounded by $\tilde f^p(\tilde \alpha)$ and $\tilde f^{p+1}(\tilde \alpha)$ by the foliation $\tilde f^p(\cF_{0})$. By gluing those foliations together with the foliation $\tilde \cF'$ of $B'$ we  get a foliation $\tilde \cF''$ of the fundamental domain $B$. Furthermore, this foliation is easily seen to be transverse to $\tilde f$. The projection of $\tilde \cF''$ to our original plane yields a foliation $\cF''$ of $A$ which is transverse to $I$. Also note that since $\tilde \alpha' < T \tilde \alpha'$, the $\omega$-limit set of the projection of $\tilde \alpha'$ in $A$ is $\{0\}$. Thus $0$ is either a saddle or a sink for $\cF''$.
But since $L(f,0)=1$, by Theorem~\ref{theo.locally-transverse-foliation} $L(\cF'',0)=1$ and $0$ cannot be a saddle singularity of $\cF''$ .
Thus it is a sink and $\cF''$ satisfies the conclusion of the lemma (by the way we also see now that $N'$ was a sink of $\cF'$ and not a saddle). 

It remains to prove Claim~\ref{claim.source}. For this  we first relate the rotation set $\cal R$ of $I$ in the annulus $A$ and the rotation set $\cal R'$ of $I'$ in the annulus $A'$ (note that $\cal R$ and $\cal R'$ are global rotation sets, not local ones). First note that $I'$ has no contractible fixed point in $A'$, because this would correspond to a fixed point of $\tilde f$. 
Next, we claim that $\cal R'$ is the image of $\cal R$ under the map
$$
\rho \mapsto \frac{\rho}{1-q\rho}.
$$ 
Indeed, this can be checked directly by using the definitions of the rotation sets in $A$ and $A'$. (Note that in the case when $\cal R$ is a non trivial interval, every rational number in the interior of $\cal R$ is realized by a periodic orbit, and then it suffices to check that if a point $\tilde x$ projects to a periodic point in $A$ with rotation number $\rho$ then it  projects to a periodic orbit in $A'$ with rotation number $ \frac{\rho}{1-q\rho}$).

%Let us check that the rotation set of $I'$ in $A'$ is included in $[0, +\infty]$. Let $Q$ be a fundamental domain of $A'$ in $\tilde A' = \tilde A$.
%Assume by contradiction that the rotation set contains some negative number. This means that there exists $\varepsilon >0$ a compact set $K$ of $A'$, a sequence $(\tilde x_{k})$ of points in the compact set $\tilde K \cap Q$, and two sequences of positive integers $(q_{k})$, $(p_{k})$ tending to $+\infty$ such that for every $k$,
%$$
%p_{k} \geq \varepsilon q_{k} \text{ and } \tilde f^{q_{k}}(\tilde x_{k}) \in \left( T \tilde f^{-q} \right)^{-p_{k}} (\tilde K \cap Q).
%$$
%Taking converging subsequences, we get a rotation number 
%$$
%\rho= \lim \frac{1}{q-\frac{q_{k}}{p_{k}}}
%$$
% for the original isotopy $I$ in the original annulus $A$ and either $\rho \geq \frac{1}{q}$ or $\rho <0$, which contradicts our assumptions.

First assume that  the rotation set $\cal R$ is not $\{0\}$.
Since it is included in $[0,\frac{1}{q})$, it must contain some positive number.
Then by the above formula relating $\cal R$ and $\cal R'$, the rotation set $\cal R'$ also contains some positive number. Thus no leaf of $\cF'$ can cross $A'$ from $N'$ to the other end. This shows that $N'$ is not a source of $\cF'$ (nor a saddle).

It remains to address the case when the rotation set ${\cal R}=\{0\}$.  This case is conjectured not to happen, but in the absence of a proof we have to cope with it.
In particular, there is no periodic orbit: indeed since $I$ is transverse to the foliation $\cF$, a periodic orbit must have non zero rotation number, contrarily to the assumption on $\cal R$.

Since $I$ is transverse to a sink foliation $\cF$ on $A$, the isotopy $I$ has ``some positive rotation''. The idea of what follows is to try to see this positive rotation on the dynamics of some compact invariant set near the fixed point $0$, and then to track this positive rotation in the universal cover $\tilde A=\tilde A'$ where it will in turn imply some positive rotation for $I'$ in $A'$; this will prevent $I'$ to be transverse to a source foliation.

To implement this idea we will heavily use the material in~\cite{lecalvez03}. Following Le Calvez, we say that the point $0$ is indifferent for $f$ if for every small enough Jordan domain $U$ containing $0$, the connected component $K$ of the set $\cap_{k \in \mathbb{Z}} f^{-k}(\overline U)$ that contains $0$ meets the boundary of $U$. Let us prove that in our situation, the fixed point is indifferent for $f$. Assume by contradiction that $0$ is not indifferent. Since the area is preserved, it is neither a sink nor a source, and
the Le Calvez-Yoccoz theorem applies (\cite{LCY97}): there is some $q>0$ such that $L(f^q,0) < 1$. Then the local rotation set of $g=f^q$ is $0$ modulo $1$ (see for instance~\cite{leroux13}). There is a local isotopy $J$ from the identity to $g$ whose local rotation set is $\{0\}$ ; this local isotopy is also characterized, up to homotopy, by having nonpositive index (\cite{leroux04}). On the other hand the isotopy $I^q$ has index one since it is transverse to the sink foliation $\cF$. Thus $I^q$ is not homotopic to $J$, and its local rotation set is a non zero integer. On the other hand its local rotation set should be $q$ times the local rotation set of $I$, which is $\{0\}$, a contradiction.

Thus the fixed point is indifferent. Let $U$ be a small Jordan domain containing $0$ in $\bbR^2$, such that $K$ meets the boundary of $U$.
Let $U_{K}$ be the unbounded connected component of $\bbR^2 \setminus K$. This open set is an annulus, and the end corresponding to $K$ admits the \emph{prime ends compactification}, which is a topology on the disjoint union $U_{K} \sqcup \mathbb{S}^1$ that is homeomorphic to the half-infinite annulus $\mathbb{S}^1 \times [0,+\infty)$ (see~\cite{lecalvez03}, Section~4). Since $f(K) = K$ the restriction of $f$ to $U_{K}$ extends continuously to a circle homeomorphism $\hat f: \mathbb{S}^1 \to \mathbb{S}^1$.

The universal cover of $U_{K} \sqcup \mathbb{S}^1 \simeq \mathbb{S}^1 \times [0,+\infty)$ identifies with $\tilde U_{K} \sqcup \Delta \simeq \bbR \times [0,+\infty)$ where $\tilde U_{K}$ is included in $\tilde A$ and $\Delta$ is the universal cover of the circle compactification $\mathbb{S}^1$.
The lift $\tilde f$ of $f$ to $\tilde U_{K}$ induces on $\bbR$ a homeomorphism which is the natural lift ${\bar f}$ of $\hat f$. Since the rotation set of $I$ in $A$ is $\{0\}$,the natural lift has translation number $0$ for the covering transformation $\bar T$ induced by $T$ on $\Delta$. We orient $\Delta$ according to $T$.
A curve $\gamma$ is called an \emph{access arc} of $K$  if it is included in $U_{K}$ except for one end-point $ \gamma(0) = p$ which belongs to $K$. A classical result from prime-ends theory is that $\gamma \setminus\{p\}$ has a limit in the prime-end compactification, \emph{i. e.} there exists a point $\hat p \in \mathbb{S}^1$ such that $\gamma \setminus\{p\} \cup \{\hat p\}$ is a continuous curve in $U_{K} \sqcup \mathbb{S}^1$. The following claim uses access arcs to detect to sign of the rotation on $\mathbb{S}^1$.
\begin{claim}
 Let $\Gamma : (-\infty, 0] \to \tilde A$ be an injective curve whose projection under the quotient map $\pi : \tilde A \to A$ is proper, with $\pi \circ \Gamma(t)$ converging to the end opposite to $0$ when $t$ tends to $-\infty$, such that $\pi \circ \Gamma((-\infty,0) ) \subset U_{K}$ and $\pi \circ \Gamma(0) \in K \setminus \{0\}$.
Assume that $\tilde f(\Gamma)$ and $T(\Gamma)$ are disjoint from $\Gamma$. 
Finally let $\bar p$ be the point in $\Delta$ to which $\Gamma(t)$ converges when $t$ tends to $0$.
Then $ {\bar f}(\bar p) > \bar p$  if and only if $\tilde f(\Gamma)$ and $T(\Gamma)$ are in the same connected component of $\tilde U_{K} \setminus \Gamma$.
\end{claim}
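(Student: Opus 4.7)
The plan is to reduce everything to a separation argument in the half-plane model of the prime-ends compactification, so that the ``direction'' in which $\bar f$ moves the point $\bar p$ is read off topologically from the side of $\Gamma$ on which $\tilde f(\Gamma)$ sits.

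First I would fix the identification $\Psi : \tilde U_{K} \sqcup \Delta \xrightarrow{\sim} \mathbb{R} \times [0,+\infty)$, sending $\Delta$ to $\mathbb{R} \times \{0\}$ so that the orientation of $\Delta$ (given by $\bar T$) corresponds to the standard orientation of $\mathbb{R}$. The key preliminary fact I need is that $\Psi \circ \Gamma$ extends continuously to $(-\infty, 0]$ by sending $0$ to $(\bar p, 0)$ and that, as $t \to -\infty$, $\Psi \circ \Gamma(t)$ escapes every compact subset of $\mathbb{R} \times [0,+\infty)$. The first extension is the definition of $\bar p$ as the prime end associated to the access arc $\Gamma$. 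For the escape statement I would use the hypothesis that $\pi \circ \Gamma$ is proper in $A$ and that $\pi \circ \Gamma(t)$ converges to the end of $A$ opposite to $0$: lifted to $\tilde A$ and projected to $\tilde U_{K} \cup \Delta$, properness in $A$ forces $\Psi \circ \Gamma(t)$ to leave every compact subset of the half-plane, and the convergence to the opposite end of $A$ prevents the curve from accumulating on $\Delta$. Thus $\widehat \Gamma := \Psi \circ \Gamma \cup \{(\bar p,0)\}$ is a proper simple arc in the closed half-plane with exactly one endpoint on the boundary line, namely $(\bar p,0)$.

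Next I would invoke a Jordan-type separation result for proper arcs in the closed half-plane: $\widehat \Gamma$ splits $\mathbb{R} \times [0,+\infty)$ into two open connected components, whose traces on $\Delta = \mathbb{R} \times \{0\}$ are respectively the open half-lines $\{x < \bar p\}$ and $\{x > \bar p\}$. Call them $C^-$ and $C^+$ (with $C^+$ corresponding to the positive direction, that is, the direction of $\bar T$). Intersecting with $\tilde U_{K}$ gives the two connected components of $\tilde U_{K} \setminus \Gamma$; I will still write $C^\pm$ for them. The content of this step is the following principle, which I would prove by a continuity argument in the half-plane: if $\gamma : (-\infty, 0] \to \tilde U_{K} \cup K$ is any injective curve with $\pi \circ \gamma$ proper, $\pi \circ \gamma(-\infty)$ equal to the end of $A$ opposite to $0$, $\gamma((-\infty,0)) \subset \tilde U_{K}$, $\gamma(0) \in K$, $\gamma((-\infty,0)) \cap \Gamma = \varnothing$, and if $\bar q$ denotes the prime end to which $\gamma$ converges, then $\gamma((-\infty,0)) \subset C^+$ if and only if $\bar q > \bar p$, and similarly for $C^-$. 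This is just the observation that $\Psi \circ \gamma$, being connected and disjoint from $\widehat \Gamma$ except possibly at its endpoint, lies entirely in one of the two components of the half-plane cut by $\widehat \Gamma$, and the component is determined by whichever open half-line of $\Delta \setminus \{\bar p\}$ the point $\bar q$ lies in.

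Finally I apply the principle to the two curves $T \circ \Gamma$ and $\tilde f \circ \Gamma$. Both are injective, both have the same asymptotic behavior as $\Gamma$ (because $T$ and $\tilde f$ are homeomorphisms commuting with $\pi$ respectively lifting $f$, so properness in $A$ and convergence to the correct end are preserved; $T(\Gamma(0))$ and $\tilde f(\Gamma(0))$ lie in $K$ because $K$ is $f$-invariant and $T$ acts on lifts), and by hypothesis both are disjoint from $\Gamma$. The prime end to which $T \circ \Gamma$ converges is $\bar T(\bar p)$, which is strictly greater than $\bar p$ by our orientation convention on $\Delta$; hence $T(\Gamma) \subset C^+$. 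The prime end to which $\tilde f \circ \Gamma$ converges is $\bar f(\bar p)$, so $\tilde f(\Gamma) \subset C^+$ exactly when $\bar f(\bar p) > \bar p$, and $\tilde f(\Gamma) \subset C^-$ exactly when $\bar f(\bar p) < \bar p$ (the equality case is excluded because then $\bar f(\bar p) = \bar p$ would force $\tilde f(\Gamma)$ to accumulate on $\bar p$, which combined with disjointness from $\Gamma$ forces $\tilde f(\Gamma) \cap \Gamma \neq \varnothing$ by a local argument in the half-plane near $(\bar p,0)$). Putting it together, $\tilde f(\Gamma)$ and $T(\Gamma)$ lie in the same component of $\tilde U_{K} \setminus \Gamma$ if and only if $\bar f(\bar p) > \bar p$, as claimed.

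The main obstacle I anticipate is the careful justification that $\Psi \circ \Gamma$ does not re-enter compact sets of the half-plane as $t \to -\infty$, and that the prime-end endpoint of an access arc disjoint from $\Gamma$ really does distinguish the two sides. Both facts are essentially the content of the ``continuous extension to $U_{K} \sqcup \mathbb{S}^1$ of curves converging to prime ends'' in Carath\'eodory's theory, transported to the universal cover; the bookkeeping between $\tilde A$, $\tilde U_{K}$, and the half-plane model is the only delicate point and would be handled by working throughout with the identification $\Psi$.
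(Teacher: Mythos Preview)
Your overall architecture is exactly the natural one, and it is what underlies the reference to \cite[Section~5.1]{lecalvez03} that the paper gives in lieu of a proof: pass to the half-plane model of $\tilde U_{K}\sqcup\Delta$, note that the completed arc $\widehat\Gamma$ is a proper simple arc landing at $(\bar p,0)$ and hence cuts the closed half-plane into two pieces whose traces on $\Delta$ are $(-\infty,\bar p)$ and $(\bar p,+\infty)$, and then read off which side any disjoint access arc lies on from its prime-end limit. The verification that $\Psi\circ\Gamma$ is proper (via properness of $\pi\circ\Gamma$ in $A$ and the fact that the $S$-end of $U_{K}$ corresponds to the point at infinity of the half-plane) and the application to $T(\Gamma)$ are fine.

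There is, however, a genuine gap in your treatment of the case $\bar f(\bar p)=\bar p$. Your assertion that two disjoint access arcs landing at the same prime end must intersect ``by a local argument in the half-plane near $(\bar p,0)$'' is simply false: in $\mathbb{R}\times[0,+\infty)$ the rays $\{(t,t):t>0\}$ and $\{(t,2t):t>0\}$ are disjoint proper arcs both landing at the origin. So disjointness of $\Gamma$ and $\tilde f(\Gamma)$ does not by itself exclude $\bar f(\bar p)=\bar p$, and in that situation your separation principle no longer determines the side on which $\tilde f(\Gamma)$ lies. As written, the biconditional in the claim is therefore unproven in the equality case. In the paper's application this is harmless because Proposition~9.5 of \cite{lecalvez03} is later invoked to show that $\bar f$ is fixed-point free on $\Delta$; but for the claim as a stand-alone statement you should either add the hypothesis $\bar f(\bar p)\neq\bar p$, import that fixed-point-free conclusion in advance, or weaken the relevant direction to $\bar f(\bar p)\geq\bar p$.
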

This claim follows from the considerations in~\cite[Section 5.1]{lecalvez03}.

Let $\alpha$ be any leaf of $\cF$ that goes from the other end of $A$ to $0$ and that meets $K \setminus \{0\}$, let $\tilde \alpha$ be some lift of $\alpha$ in $\tilde A$. Then let $\Gamma \subset \tilde \alpha$ be obtained from $\tilde \alpha$ by running along $\alpha$ from $-\infty$ till the first point on $K$. Since $\cF$ is positively transverse to $I$, $\tilde f(\Gamma)$ and $T(\Gamma)$ are both on the right-hand side of $\tilde \alpha$. Thus they are included in the same connected component of $\tilde U_{K} \setminus \Gamma$. The criterion applies and provides a point $\bar p$ in $\Delta$ such that ${\bar f}(\bar p) > \bar p$.

Let $\tilde K$ be the inverse image of $K$ in $\tilde A$.
Let $K'$ be the projection in $A'$ of $\tilde K$, to which we add the fixed point $N'$.
According to point (3) of the quotient lemma, $K'$ is a compact connected full set which is invariant under $f'$ and included in some small neighbourhood of $N'$.
Note that $\tilde U_{K} \sqcup \Delta$ also identifies with the universal cover of the prime end compactification of the complement $U_{K'}$ of $K'$ in $A'$. Also note that the rotation number defined by Proposition~4.1 in~\cite{lecalvez03}  coincides with the element of our local rotation set by~\cite[Corollaire 3.14]{leroux13}, and thus is equal to $0$.

 Remember that $T' = T \tilde f^{-q}$. Let $\bar T'$ be induced by $T'$ on $\Delta$, we get $\bar T' = \bar T \bar f^{-q}$. Since the translation number of $\bar f$ on $\Delta$ with respect to $T$ is zero, there is a point $\bar p_{0}$ of $\Delta$ which is fixed by $\bar f$, and thus $\bar T'(\bar p_{0})=\bar T(\bar p_{0})$. Thus the orientations of $\Delta$  according to $T$ and $T'$ coincide. Now to finish the proof of Claim~\ref{claim.source}, let us assume by contradiction that  $N'$ is a source for $\cF'$.
The above argument applies symmetrically to any leaf of $\cF'$ that crosses the annulus $A'$, and provides a point $\bar p'$ in $\Delta$ such that ${\bar f}(\bar p') < \bar p'$.

Now we will get a contradiction following an argument of~\cite{lecalvez03}. The point $0$ is indifferent and non accumulated. Theorem 9.4 of~\cite{lecalvez03} applies and, since the rotation number is zero and $L(f,0)=1$ in our case, says that the sequence of indices $L(f^k,0)$ is constantly equal to $1$.
Then Proposition 9.5 of~\cite{lecalvez03} applies and says that, given that the rotation number is zero, either every point $p$ satisfies ${\bar{f}}(p) > p$ or every point $p$ satisfies ${\bar{f}}(p) < p$. This contradicts the existence of $p$ and $p'$ and completes the proof of Claim~\ref{claim.source}.

Alternatively, we may argue as in point (c) of the proof of Proposition 9.5 in~\cite{lecalvez03} (this argument is more precise, but it makes use of many definitions of~\cite{lecalvez03}). Let $O_+$ be the projection in $\mathbb{S}^1$ of the connected component containing $p$ of the complement in $\Delta$ of the set of fixed points of ${\bar f}$. Likewise, $p'$ defines a component $O_{-}$. By identifying the connected components of the complement of $O_{+} \cup O_{-}$ into two points, we get a quotient of $\mathbb{S}^1$ on which $\hat f$ induces a North-South map.
 According to Lemma 7.1 and Proposition 7.2 of~\cite{lecalvez03}, there is a finer factor $F$ of $\hat f$ which is finite and not indifferent, and its restriction to $\mathbb{S}^1$ has at least one sink and one source with respect to the dynamics on $\mathbb{S}^1$. The sinks and sources are saddle points of $F$, and the formula of Proposition 8.1 says that $L(f,0)<1$, which is a contradiction.
\end{proof}

% #############################################
%
%   BIBLIO
%
% #############################################

\bibliographystyle{abbrv}
\bibliography{biblio}

%%%%%%%%%%%%%%%%%%%%%%%%%%%%%%%%%%%%%%%%%%%%%%%%%%%%%%%%%%%%%%%%%%%%%%%%%%%%%%%%
%%%%%%%%%%%%%%%%%%%%%%%%%%%%%%%%%%%%%%%%%%%%%%%%%%%%%%%%%%%%%%%%%%%%%%%%%
%%%%%%%%%%%%%%%%%%%%%%%%%%%%%%%%%%%%%%%%%%%%%
\end{document}